\def\RSthmtxt{theorem~}\newref{thm}{name = \RSthmtxt}}
\def\RSlemtxt{lemma~}\newref{lem}{name = \RSlemtxt}}
\numberwithin{equation}{section}
\numberwithin{figure}{section}
\theoremstyle{plain}
\newtheorem{thm}{\protect\theoremname}[section]
\theoremstyle{definition}
\newtheorem{defn}[thm]{\protect\definitionname}
\theoremstyle{definition}
\newtheorem{example}[thm]{\protect\examplename}
\theoremstyle{remark}
\newtheorem{notation}[thm]{\protect\notationname}
\theoremstyle{remark}
\newtheorem{rem}[thm]{\protect\remarkname}
\theoremstyle{plain}
\newtheorem*{lem*}{\protect\lemmaname}
\theoremstyle{plain}
\newtheorem*{thm*}{\protect\theoremname}
\theoremstyle{plain}
\newtheorem*{conjecture*}{\protect\conjecturename}
\theoremstyle{plain}
\newtheorem{prop}[thm]{\protect\propositionname}
\theoremstyle{plain}
\newtheorem{lem}[thm]{\protect\lemmaname}
\theoremstyle{plain}
\newtheorem{cor}[thm]{\protect\corollaryname}
\theoremstyle{plain}
\newtheorem{conjecture}[thm]{\protect\conjecturename}
\newtheorem{propositiona}{Proposition}
\newcommand{\hooklongrightarrow}{\lhook\joinrel\longrightarrow}
\newcommand\<{\langle}
\renewcommand\>{\rangle}
\def\nicefracbig#1#2{
    \raise.5ex\hbox{$#1$}%
    \kern-.25em\bigr/\kern-.15em%
    \lower.35ex\hbox{$#2$}}
\newcommand{\xyR}[1]{%
\xydef@\xymatrixrowsep@{#1}
}
\newcommand{\xyC}[1]{%
\xydef@\xymatrixcolsep@{#1}
}
\theoremstyle{definition}
\newtheorem{defthm}[thm]{Definition/Theorem}
\providecommand{\conjecturename}{Conjecture}
\providecommand{\corollaryname}{Corollary}
\providecommand{\definitionname}{Definition}
\providecommand{\examplename}{Example}
\providecommand{\lemmaname}{Lemma}
\providecommand{\notationname}{Notation}
\providecommand{\propositionname}{Proposition}
\providecommand{\remarkname}{Remark}
\providecommand{\theoremname}{Theorem}
\begin{document}
\title{Minimally dominant elements of finite Coxeter groups}
\author{Wicher Malten}
\maketitle
\begin{abstract}
Recently, Lusztig constructed for each reductive group a partition
by unions of sheets of conjugacy classes, which is indexed by a subset
of the set of conjugacy classes in the associated Weyl group. Sevostyanov
subsequently used certain elements in each of these Weyl group conjugacy
classes to construct strictly transverse slices to the conjugacy classes
in these strata, generalising the classical Steinberg slice, and similar
cross sections were built out of different Weyl group elements by
He-Lusztig.

In this paper we observe that He-Lusztig's and Sevostyanov's Weyl
group elements share a certain geometric property, which we call minimally
dominant; for example, we show that this property characterises involutions
of maximal length. Generalising He-Nie's work on twisted conjugacy
classes in finite Coxeter groups, we explain for various geometrically
defined subsets that their elements are conjugate by simple shifts,
cyclic shifts and strong conjugations. We furthermore derive for a
large class of elements that the principal Deligne-Garside factors
of their powers in the braid monoid are maximal in some sense. This
includes those that are used in He-Lusztig's and Sevostyanov's cross
sections, and explains their appearance there; in particular, all
minimally dominant elements in the aforementioned conjugacy classes
yield strictly transverse slices. These elements are conjugate by
cyclic shifts, their Artin-Tits braids are never pseudo-Anosov in
the conjectural Nielsen-Thurston classification and their Bruhat cells
should furnish an alternative construction of Lusztig's inverse to
the Kazhdan-Lusztig map and of his partition of reductive groups.

\end{abstract}
\tableofcontents{}

\section{Introduction}

During the 1990s, the character tables of finite Iwahori-Hecke algebras
were determined through case-by-case analysis and extensive computer
calculations on the conjugacy classes of finite Coxeter groups \cite{MR1778802}.
More specifically, these algebras have a basis $\{T_{w}\}$ indexed
by elements $w$ of the corresponding finite Coxeter group $W$, and
Geck-Pfeiffer discovered properties of conjugacy classes of $W$ which
imply that the value of a character $\chi$ is constant on these basis
elements when they correspond to minimal length elements in the same
conjugacy class, and moreover that those values can be used to compute
$\chi(T_{w})$ for arbitrary elements $w$ in this conjugacy class
\cite{MR1250466,MR1769289}. Furthermore, Geck-Michel noticed that
every conjugacy class of $W$ contains minimal length elements whose
powers in the braid monoid are particularly simple, which was subsequently
used to determine the ``rationality'' of these characters \cite{MR1425324,MR1769289,MR2355597}.
These results on conjugacy classes were later reused in other domains
involving Weyl group elements (e.g.\ Bruhat cells \cite{MR2042932,MR2657442,MR2833465},
Deligne-Lusztig varieties \cite{MR2427637,MR2427642}, 0-Hecke algebras
\cite{MR3495798} and partitions of the wonderful compactification
\cite{MR2355597}), and in particular He-Lusztig applied them to construct
cross sections in reductive groups out of elliptic Weyl group elements
of minimal length \cite{MR2904572}.

Recently, He-Nie gave a simple, geometric proof of these statements
by focusing on the eigenspace decomposition for $w$ acting in the
real reflection representation \cite{MR2999317}. Meanwhile, Sevostyanov
used similar data to construct transverse slices to conjugacy classes
in reductive groups \cite{MR2806525}, and verified that for any stratum
in Lusztig's partition \cite{MR3495802} a suitable choice yields
slices which are strictly transverse to its conjugacy classes \cite{MR3883243}.
His data can be recast in the same framework:
\begin{defn}
\label{def:parabolic} A \emph{twisted Coxeter group} is a group generated
by some automorphisms and some simple reflections of a Coxeter system.
We may assume that each of these automorphisms $\{\delta_{i}\}_{i=1}^{m}$
preserves the set of these simple reflections $\{s_{i}\}_{i=1}^{n}$;
more concretely, a twisted Coxeter group is then a semidirect product
$W=\Omega\ltimes\tilde{W}$, where $\tilde{W}=\<s_{i}\>_{i=1}^{n}$
is a Coxeter group and $\Omega=\<\delta_{i}\>_{i=1}^{m}$ is a group
acting on $\tilde{W}$ by automorphisms, each of which preserves its
simple reflections. We call an element $\delta$ of $\Omega$ a \emph{twist
of }$\tilde{W}$ and denote elements of $W$ by $\delta\tilde{w}:=(\delta,\tilde{w})$.
Given a twisted Coxeter group $W$ (with such data) we always denote
its untwisted part by $\tilde{W}$.
\end{defn}

When $W$ is generated by all automorphisms and simple reflections,
one obtains the group of automorphisms of the corresponding root (or
reflection) system $\mathfrak{R}$.  Twists were originally motivated
by the theory of Iwahori-Hecke algebras and non-split reductive groups
but coincidentally, they also played a crucial rôle in He-Nie's proof
of the aforementioned statements for ordinary finite Coxeter groups.
\begin{example}
When the Coxeter system is irreducible and of finite type, their classification
by Coxeter-Dynkin diagrams implies that every twist $\delta$ satisfies
$\mathrm{ord}(\delta)\leq3$.
\end{example}

\begin{defn}
We define the \emph{standard parabolic} subgroups of a twisted Coxeter
group $W$ to be the subgroups of the form $\Omega'\ltimes\tilde{W}'$,
where $\tilde{W}'$ is a standard parabolic subgroup of $\tilde{W}$
and $\Omega'$ is a subgroup of $\Omega$ whose elements preserve
(the set of simple reflections in) $\tilde{W}'$. Their conjugates
are called \emph{parabolic} subgroups.

Let $W$ be a twisted finite Coxeter group and let $W'$ be a standard
parabolic subgroup of $\tilde{W}$. By a \emph{$W'$-orbit} we then
mean an orbit of $W'$ on $W$ under the restriction of the conjugation
action to $W'$; every conjugacy class in $W$ is then partitioned
into $W'$-orbits.
\end{defn}

\begin{example}
In type $\mathsf{A}_{n}$, the only nontrivial twist coincides with
conjugation by the longest element, so $\tilde{W}$-orbits in $W$
agree with conjugacy classes.
\end{example}

\begin{example}
As twists permute Coxeter elements of minimal length\footnote{By a Coxeter element we mean a conjugate of the product (in any order)
of all of the simple reflections in the group.} and they are already conjugate under $\tilde{W}$, it again follows
that their $\tilde{W}$-orbit is the entire conjugacy class.
\end{example}

\begin{example}
Let $\delta$ be the twist in $\mathsf{D}_{4}$ interchanging the
third and fourth simple reflections\footnote{Throughout this paper we follow Bourbaki's labelling for simple roots
$\alpha_{i}$ in irreducible root systems \cite[§VI.4]{MR0240238},
so for $\mathsf{D}_{n}$ the degree 3 vertex has label $n-2$. Furthermore,
we write $\alpha_{i_{1}\cdots i_{m}}:=\sum_{j=1}^{m}\alpha_{i_{j}}$
if this sum of simple roots is a root.} and consider the twisted Coxeter group $W:=\<\delta\>\ltimes\tilde{W}$.
Then $s_{3}s_{1}$ and $\delta(s_{3}s_{1})=s_{4}s_{1}$ are not $\tilde{W}$-conjugate,
so the conjugacy class of $s_{3}s_{1}$ splits into the $\tilde{W}$-orbits
of $s_{3}s_{1}$ and $s_{4}s_{1}$.
\end{example}

\begin{notation}
Throughout this paper, we have for the sake of brevity employed symbols
like $\pm,\gtrless,\mathrm{max}/\mathrm{min}$. This always means
that there are \emph{two} statements being made simultaneously: one
where the top and left symbols are used at each such position, and
another one for the bottom and right symbols. This duality is explained
in Remark \ref{rem:duality}.
\end{notation}

\begin{defn}
Let $W$ be a twisted Coxeter group. When the untwisted part $\tilde{W}\subseteq W$
is finite, we will call $W$ an twisted\emph{ finite} Coxeter group
and denote the longest element of $\tilde{W}$ by $w_{\circ}$. We
denote its set of positive (resp.\ negative) roots by $\mathfrak{R}_{+}$
(resp.\ $\mathfrak{R}_{-}$), and for any element $w$ in $W$ we
denote its \emph{(right) (root) inversion set} by 
\[
\mathfrak{R}_{w}:=w^{-1}(\mathfrak{R}_{-})\cap\mathfrak{R}_{+}=\{\beta\in\mathfrak{R}_{+}:w(\beta)\in\mathfrak{R}_{-}\}.
\]
Given an element $w$ of $W$, we say that a root is \emph{stable}
if its $w$-orbit consists solely of positive or solely of negative
roots. We then denote by $\mathfrak{R}_{\mathrm{st}}^{w}$ the set
of stable roots, i.e.\
\[
\mathfrak{R}_{\mathrm{st}}^{w}=\{\beta\in\mathfrak{R}:w^{i}(\beta)\notin\mathfrak{R}_{w}\textrm{ for }1\leq i\leq\mathrm{ord}(w)\}.
\]
 In particular, there is an inclusion $\mathfrak{R}^{w}\subseteq\mathfrak{R}_{\mathrm{st}}^{w}$,
where $\mathfrak{R}^{w}$ denotes the set of roots that are fixed
by $w$.

We say that $w$ is\emph{ convex }when $\mathfrak{R}_{\mathrm{st}}^{w}$
forms a standard parabolic subsystem, and then denote by $\mathrm{pb}(w)$
its \emph{(braid) power bound}: this is the unique element of $\tilde{W}$
which makes negative all positive roots except for those in $\mathfrak{R}_{\mathrm{st}}^{w}$.
Thus $\mathrm{pb}(w)=w_{\circ}w_{\mathrm{st}}$, where $w_{\mathrm{st}}$
now denotes the longest element of the standard parabolic subgroup
corresponding to $\mathfrak{R}_{\mathrm{st}}^{w}$. If furthermore
we have an equality $\mathfrak{R}^{w}=\mathfrak{R}_{\mathrm{st}}^{w}$,
then we say that the element $w$ is \emph{firmly convex}. 
\end{defn}

The name braid power bound was derived from part (i) of Proposition
\ref{prop:dg-bound}. In Proposition \ref{prop:closed-complement}
we show that an element $w$ is convex if and only if the set of non-stable
positive roots $\mathfrak{R}_{+}\backslash\mathfrak{R}_{\mathrm{st}}^{w}$
is convex (see Definition \ref{def:convex-ordering}); this property
plays an important rôle in the sequel to this paper \cite{WM-cross}.
\begin{notation}
The action of a Coxeter group $\tilde{W}$ on its real reflection
representation $V$ (which is due to Tits \cite[§5.4.4]{MR0240238})
extends to twisted Coxeter groups $W=\Omega\ltimes\tilde{W}$ by letting
elements of $\Omega$ act on the simple roots as prescribed, and extending
linearly. We will then denote the $W$-invariant inner product on
$V$ by $(\cdot,\cdot)$. Given a standard parabolic subgroup $W'$,
we will typically interpret its reflection representation $V_{W'}$
as sitting inside of $V$ through the span of the corresponding simple
roots.

The \emph{rank} of $W$ is the dimension of $V$ (which equals the
number of simple reflections in $W$) and is sometimes denoted by
$\mathrm{rank}(W)$ or simply $\mathrm{rk}$. For any other element
$w'$ in $W$ we write $w'\geq w$ if and only if $\mathfrak{R}_{w}\subseteq\mathfrak{R}_{w'}$,
extending the weak left Bruhat-Chevalley (partial) order on $\tilde{W}$
to a preorder on $W$; antisymmetry holds up to left-multiplication
by twists. We let $V_{w}=\mathrm{im}(\mathrm{id}_{V}-w)\subseteq V$
denote the orthogonal complement to the subspace $V^{w}=\mathrm{ker}(\mathrm{id}_{V}-w)\subseteq V$
of $w$-fixed points in $V$.

Moreover, we denote the length of $w$ by $\ell(w):=|\mathfrak{R}_{w}|$
and the number of roots it fixes by $\ell_{f}(w):=|\mathfrak{R}^{w}|$.
For any subset $\CMcal O$ of $W$ we denote by $\CMcal O^{\mathrm{cx}}$
the subset of convex elements in $\CMcal O$ and write $\CMcal O_{\mathrm{max}/\mathrm{min}}$
for the subset of maximal/minimal length elements inside of $\CMcal O$.
If the length function is constant on $\CMcal O$ then we will simply
denote this value by $\ell(\CMcal O)$, and similarly for $\ell_{f}(\cdot)$
and the order (or period) function $\mathrm{ord}(\cdot)$.
\end{notation}

\begin{defn}
[{\cite[§1.3, §2.1]{MR2999317}}] Let $W$ be a twisted finite Coxeter
group and let $V'\subseteq V$ be a subset of its reflection representation.
Then we let $W_{V'}\subseteq W$ denote the subgroup of $W$ consisting
of elements fixing $V'$, and we denote the set of root hyperplanes
containing $V'$ by $\mathfrak{H}_{V'}$.

A vector $v$ in $V'$ is called \emph{a regular point of }$V'$ if
$\tilde{W}_{V'}=\tilde{W}_{\{v\}}$, which by Steinberg's fixed-point
theorem \cite[Theorem 1.5]{MR167535} is equivalent to $\mathfrak{H}_{V'}=\mathfrak{H}_{\{v\}}$.
If $V'$ is convex then such points form a dense open subset of $V'$,
by finiteness of the number of root hyperplanes. Given an element
$w$ of $W$, we denote by $\mathfrak{H}^{w}:=\mathfrak{H}_{V_{w}}$
the subset of root hyperplanes corresponding to roots of $\mathfrak{R}^{w}$.
\end{defn}

Proposition \ref{prop:parabolic} shows that an isotropy group $W_{V'}$
is a parabolic subgroup, and if the closure of the dominant Weyl chamber
contains a regular point (or open subset) of $V'$ then $W_{V'}$
is moreover a standard parabolic subgroup.
\begin{defn}
\label{def:sequence-eigenspaces} Let $W=\Omega\ltimes\tilde{W}$
be a twisted finite Coxeter group and pick an element $w$ in $W$.
As $w$ is a linear isometry it naturally decomposes the real reflection
representation
\[
V=\bigoplus_{\Im(\lambda)\geq0}V_{\lambda}^{w}=\bigoplus_{\Im(\lambda)\geq0}\bigoplus_{k}V_{\lambda,k}^{w}
\]
into an orthogonal sum of subspaces: given a complex eigenvalue $\lambda=e^{2\pi i\theta}$
whose imaginary part $\Im(\lambda)$ is nonnegative, the element $w$
acts as rotation with angle $2\pi\theta$ on the ``real eigenspace''
\[
V_{\lambda}^{w}:=\{v\in V:w(v)+w^{-1}(v)=2\cos(2\pi\theta)v\},
\]
 whose complexification is the sum of the usual complex eigenspaces
for $\lambda$ and $\lambda^{-1}$ in $V\otimes_{\mathbb{R}}\mathbb{C}$.
We may further (non-uniquely) decompose $V_{\lambda}^{w}$ into smaller
real eigenspaces $V_{\lambda,k}^{w}$ ($k\geq1$) of dimension $\geq1$
for $\lambda\in\{1,-1\}$ and of even dimension $\geq2$ for all other
$\lambda$. We will then consider sequences of distinct eigenspaces
\[
\underline{\Theta}=(V_{\lambda_{m},k_{m}}^{w},\ldots,V_{\lambda_{1},k_{1}}^{w})
\]
 and leave out the underscore to denote the underlying set of eigenspaces
by $\Theta$. Given an arbitrary set $\Theta=\{V_{m},\ldots,V_{1}\}$
of eigenspaces of some element $w$, we denote its span by
\[
V_{\Theta}^{\,}:=\bigoplus_{i=1}^{m}V_{i}.
\]

Given a sequence of eigenspaces $\underline{\Theta}=(V_{m},\ldots,V_{1})$,
we define a filtration on this span $V_{\Theta}\subseteq V$ via
$F_{i}:=\sum_{j=1}^{i}V_{j}$ for $0\leq i\leq m$. Then setting $\tilde{W}_{i}:=\tilde{W}_{F_{i}}$
for $0\leq i\leq m$, we obtain a filtration of parabolic subgroups
\cite[§5.2]{MR2999317}
\begin{equation}
\tilde{W}_{\Theta}:=\tilde{W}_{V_{\Theta}}=\tilde{W}_{m}\subseteq\cdots\subseteq\tilde{W}_{1}\subseteq\tilde{W}_{0}=\tilde{W}.\label{eq:parabolic-sequence}
\end{equation}
Equivalently, one obtains a sequence of root hyperplanes $\mathfrak{H}_{F_{i}}$;
when $\mathfrak{H}_{F_{i}}=\mathfrak{H}_{F_{i-1}}$ (or $\tilde{W}_{i}=\tilde{W}_{i-1}$)
one says that $V_{i}$ is \emph{redundant}. We will say that a Weyl
chamber $C$ is \emph{in good position with respect to $\underline{\Theta}$}
if the closure of this Weyl chamber contains a regular point of $F_{i}$
for each $i$. In Proposition \ref{prop:good-position} we prove that
this is equivalent to the definition given in \cite[§5.2]{MR2999317}.

Moreover, there is always at least one Weyl chamber in good position
\cite[Lemma 5.1]{MR2999317} and typically we will require that the
dominant Weyl chamber is one of them; as we've noted, this implies
that the parabolic subgroups $W_{i}$ are standard parabolic subgroups.
If indeed the dominant Weyl chamber is in good position and the set
of hyperplanes $\mathfrak{H}_{\Theta}:=\mathfrak{H}_{V_{\Theta}}$
containing $V_{\Theta}$ is a subset of $\mathfrak{H}^{w}$, then
we say that $\underline{\Theta}$ is \emph{braiding}; if moreover
$\mathfrak{H}_{\Theta}$ is the empty set then we say that it is \emph{complete}.

We will say that an element $w$ is \emph{quasiregular} if $\mathfrak{H}_{V_{\lambda}^{w}}\subseteq\mathfrak{H}^{w}$
for some eigenvalue $\lambda$ of $w$. In Proposition \ref{prop:quasiregular}
we explain that regular elements are quasiregular elements satisfying
$\mathfrak{H}_{V_{\lambda}^{w}}=\varnothing$ for some $\lambda$
and in Proposition \ref{prop:Coxeter-braiding} we explain that there
are only two Coxeter elements of minimal length in $W$ that admit
a braiding sequence of eigenspaces, namely the bipartite ones.

Given a sequence of eigenspaces $\underline{\Theta}=(V_{m},\ldots,V_{1})$
there is an underlying sequence of eigenvalues $(\lambda_{m},\ldots,\lambda_{1})$
whose principal arguments we will normalise to lie in $[0,1/2]$ and
then denote by 
\[
(\theta_{m},\ldots,\theta_{1}),
\]
so $\lambda_{j}=e^{2\pi i\theta_{j}}$ with $0\leq\theta_{j}\leq1/2$
for each $1\leq j\leq m$. If none of the eigenvalues of $\underline{\Theta}$
equals 1, then we will call the sequence \emph{anisotropic. } 

Given a set of eigenspaces $\Theta$, we write $\mathrm{eig}(\Theta):=\{\lambda_{m},\ldots,\lambda_{1}\}$
for the corresponding set of eigenvalues. We denote by $\underline{\Theta}_{\pm}$
the sequence obtained from the set $\Theta$ by merging eigenspaces
if their eigenvalues agree, and then ordering them so that $\theta_{m}<\cdots<\theta_{1}$
(resp.\ $\theta_{m}>\cdots>\theta_{1}$); we will call such a sequence
\emph{increasing}/\emph{decreasing}. We furthermore write $\Theta=\{\lambda_{m},\ldots,\lambda_{1}\}$
when $\Theta=\{V_{\lambda_{m}}^{w},\ldots,V_{\lambda_{1}}^{w}\}$.
\end{defn}

\begin{defn}
Let $W$ be a twisted finite Coxeter group. An element $w$ of $W$
is called \emph{elliptic} (or \emph{cuspidal} or \emph{anisotropic})
if it has no nonzero fixed points in the reflection representation
of $W$, and a subset of $W$ is labelled thusly all of its elements
have this property.
\end{defn}

Sevostyanov's data for his transverse slices is equivalent to a Weyl
group element with an anisotropic braiding sequence of eigenspaces
(with each eigenspace of dimension 1 if the eigenvalue is $-1$ and
of dimension 2 otherwise; he puts eigenspaces with eigenvalue 1 at
the end of the sequence, but in that position they can be safely ignored)
\cite[§2]{MR2806525}. On the other hand, He-Nie uses decreasing complete
sequences \cite[§5.2]{MR2999317}. Thus for elliptic elements Sevostyanov's
transverse slice conditions are slightly more general, whereas as
the sets of non-elliptic elements these two conditions describe are
entirely disjoint. In case-by-case work Sevostyanov later furnished
elements with an anisotropic braiding sequence which is decreasing,
i.e.\ $\theta_{m}>\cdots>\theta_{1}>0$, and verified that the corresponding
slice is strictly transverse \cite[§1]{MR3883243}. 
\begin{example}
Let $w$ be a non-simple reflection in type $\mathsf{B}_{2}$. Then
the subspace $V^{w}$ of fixed points is nontrivial and contains roots
but its intersection with the closure of the dominant Weyl chamber
is $\{0\}$, so $w$ has no decreasing complete sequence of eigenspaces,
whilst its only nontrivial eigenspace $V_{-1}^{w}$ yields an anisotropic
braiding one. If instead $w$ is a simple reflection then it has no
anisotropic braiding sequence, whilst $\underline{\Theta}:=(V_{-1}^{w},V_{1}^{w})$
is decreasing and complete.
\end{example}

\begin{defn}
\label{def:translates} Let $W$ be a twisted finite Coxeter group
with dominant Weyl chamber $C$, pick a standard parabolic subgroup
$W'\subseteq\tilde{W}$ and let $\CMcal O$ be a $W'$-orbit. Let
$\underline{\Theta}=(V_{m},\ldots,V_{1})$ be a sequence of eigenspaces
of an element $w$ of $\CMcal O$. Given an element $\tau$ in $W'$,
we then obtain a sequence of eigenspaces 
\[
\tau(\underline{\Theta}):=\bigl(\tau(V_{m}),\ldots,\tau(V_{1})\bigr)
\]
 for the element $\tau w\tau^{-1}$ in $\CMcal O$, and we similarly
define $\tau(\Theta)$. We may then define
\[
\CMcal O^{\Theta}:=\{\tau w\tau^{-1}\in\CMcal O:\tau\in W'\text{ and }C\text{ contains a regular point of }V_{\tau(\Theta)}\}
\]
and
\[
\CMcal O^{\underline{\Theta}}:=\{\tau w\tau^{-1}\in\CMcal O:\tau\in W'\text{ and }C\text{ is in good position with respect to }\tau(\underline{\Theta})\},
\]
and finally set $\CMcal O_{\pm}^{\Theta}:=\CMcal O^{\underline{\Theta}_{\pm}}$.
\end{defn}

Rephrasing the usual definition of cyclic shifts sheds some light
on the dualities that follow:
\begin{defn}
\label{def:shifts} Let $W$ be a twisted Coxeter group, let $\CMcal O$
be a subset of $W$ and pick two elements $w$ and $w'$ in $\CMcal O$.
When $\CMcal O$ is not mentioned we set $\CMcal O:=W$ for the following
notions:

\begin{itemize}

\item If there exists a sequence of elements $w'=w_{n+1},\ldots,w_{0}=w$
in $\CMcal O$ and simple reflections $s_{i_{0}},\ldots,s_{i_{n}}$
such that 
\[
s_{i_{j}}w_{j}s_{i_{j}}=w_{j+1}\qquad\textrm{and}\qquad\ell(w_{j})\leq\ell(w_{j+1})
\]
 holds for each $0\leq j\leq n$, then we write \emph{$w\overset{+}{\rightarrow}w'$
in $\CMcal O$} and \emph{$w'\overset{-}{\rightarrow}w$ in $\CMcal O$}.
If furthermore $\ell(w)=\ell(w')$ then we write $w\leftrightarrow w'$
\emph{in $\CMcal O$}, and call the set of such $w'$ the \emph{simple
shift class of $w$ in $\CMcal O$} \cite[§1]{MR1250466}. If on the
other hand there is a strict inequality $\ell(w_{j})<\ell(w_{j+1})$
for each $j$, we write
\[
w\overset{+}{\twoheadrightarrow}w'\textrm{ \emph{in} }\CMcal O\qquad\textrm{and}\qquad w'\overset{-}{\twoheadrightarrow}w\textrm{ \emph{in} }\CMcal O.
\]
\item If there exists a sequence of elements $w'=w_{n+1},\ldots,w_{0}=w$
in $\CMcal O$ and $\tau_{n},\ldots,\tau_{0}$ in $\tilde{W}$ such
that
\[
\tau_{j}^{\,}w_{j}^{\,}\tau_{j}^{-1}=w_{j+1}^{\,}\qquad\textrm{and}\qquad\ell(w_{j})\leq\ell(w_{j+1})
\]
 holds for each $0\leq j\leq n$, and furthermore for each such $j$
at least one of
\[
\ell(\tau_{j}w_{j})=\ell(w_{j})+\ell(\tau_{j})\qquad\textrm{or}\qquad\ell(w_{j}^{\,}\tau_{j}^{-1})=\ell(w_{j}^{\,})+\ell(\tau_{j}^{-1})
\]
 holds, then we write $w\overset{+}{\leadsto}w'$\emph{ in $\CMcal O$}.
If furthermore $\ell(w_{j})=\ell(w_{j+1})$ holds for each $j$, we
write $w\overset{+}{\sim}w'$ and call the set of such $w'$ the \emph{strong
conjugacy class of $w$ in $\CMcal O$ }\cite[§1]{MR1250466}\emph{.}

\item If instead $\ell(w_{j})\geq\ell(w_{j+1})$ and for each $0\leq j\leq n$
at least one of
\[
\ell(\tau_{j}w_{j})=\ell(w_{j})-\ell(\tau_{j})\qquad\textrm{or}\qquad\ell(w_{j}^{\,}\tau_{j}^{-1})=\ell(w_{j}^{\,})-\ell(\tau_{j}^{-1})
\]
 hods, then we write $w\overset{-}{\leadsto}w'$ \emph{in $\CMcal O$.
}If furthermore $\ell(w_{j})=\ell(w_{j+1})$ holds for each $j$,
we write $w\overset{-}{\sim}w'$ and call the set of such $w'$ the
\emph{cyclic shift class of $w$ in $\CMcal O$} \cite[Définition 3.16]{MR1429870}.

\item If such a sequence $\tau_{n},\ldots,\tau_{0}$ yields a combination
of cyclic shifts and strong conjugations in $\CMcal O$, we write
$w\overset{\times}{\sim}w'$ \emph{in $\CMcal O$ }and call the set
of such $w'$ the \emph{mixed shift class of $w$ in }$\CMcal O$\emph{.}

\item When $W'\subseteq\tilde{W}$ is a standard parabolic subgroup
then we write \emph{by }$W'$ to express that the conjugators all
lie in $W'$, e.g.\ for $w\overset{\times}{\sim}w'$ by $W'$ we
mean that the $\tau_{j}$ all lie in $W'$.

\end{itemize}
\end{defn}

\begin{rem}
\label{rem:duality} This duality between strong conjugations and
cyclic shifts can be explicitly observed by multiplying with $w_{\circ}$
(e.g.\ \cite[§2.9]{MR1769289}). If we furthermore pick a twist $\delta$
such that $\delta w_{\circ}$ acts as $-1$ then it also exhibits
a duality between eigenvalues, in particular between the eigenvalues
1 and $-1$.
\end{rem}

\begin{defn}
Let $W$ be a twisted finite Coxeter group and pick a standard parabolic
subgroup $W'\subseteq\tilde{W}$. Given two elements $w$ and $w'$
in $W$ and a sequence of eigenspaces $\underline{\Theta}$ of $w$,
we let $\mathrm{Tran}_{W'}^{\underline{\Theta},\star}(w,w')$ (resp.\
$\mathrm{Tran}_{W'}^{\Theta,\star}(w,w')$) for $\star\in\{\leftrightarrow,+,-,\times\}$
denote the set of sequences of elements $\tau_{n},\ldots,\tau_{0}$
in $W'$ such that the sequence of elements
\[
w,\qquad\tau_{0}^{\,}w\tau_{0}^{-1},\qquad\ldots,\qquad\tau_{n}^{\,}\cdots\tau_{0}^{\,}w\tau_{0}^{-1}\cdots\tau_{n}^{-1}=w'
\]
 lies in in $\CMcal O^{\underline{\Theta}}$ (resp.\ $\CMcal O^{\Theta}$)
and consists of simple shifts when $\star=\,\leftrightarrow$, of
strong conjugations when $\star=+$, etc. Multiplication then yields
a natural projection map of \emph{transporters}
\[
\mathrm{Tran}_{W'}^{\underline{\Theta},\star}(w,w')\subseteq\mathrm{Tran}_{W'}^{\Theta,\star}(w,w')\longrightarrow\mathrm{Tran}_{W'}^{\,}(w,w'):=\{\tau\in W':\tau w\tau^{-1}=w'\}.
\]
 If $w=w'$ then we simplify this notation to $\mathrm{Tran}_{W'}^{\underline{\Theta},\star}(w)$,
$\mathrm{Tran}_{W'}^{\Theta,\star}(w)$ and $\mathrm{Tran}_{W'}^{\,}(w)$
respectively. Furthermore when $V_{\Theta}=V$ or $W'=\tilde{W}$
we might drop $\Theta$ or $W'$ from notation.
\end{defn}

\begin{notation}
Every Coxeter group $\tilde{W}$ is the codomain of a natural projection
map $b\mapsto w_{b}$ coming from the corresponding Artin-Tits braid
group $\mathrm{Br}_{\tilde{W}}$. Matsumoto's theorem \cite{MR183818}
furnishes a natural section $\tilde{w}\mapsto b_{\tilde{w}}$, embedding
$\tilde{W}$ (as a set) into the the associated braid monoid $\mathrm{Br}_{\tilde{W}}^{+}\subseteq\mathrm{Br}_{\tilde{W}}^{\,}$.
This projection and this section naturally extend to twisted Coxeter
groups, e.g.\ for the inclusion we will write
\[
W:=\Omega\ltimes\tilde{W}\hooklongrightarrow\Omega\ltimes\mathrm{Br}_{\tilde{W}}^{+}=:\mathrm{Br}_{W}^{+}=:\mathrm{Br}^{+},\qquad w=\delta\tilde{w}\longmapsto\delta b_{\tilde{w}}=:b_{w},
\]
and such braids $b_{w}$ are often called \emph{reduced }(or\emph{
simple} or \emph{minimal})\emph{ braids. }We will also write $b_{i_{l}\cdots i_{1}}:=b_{s_{i_{l}}\cdots s_{i_{1}}}$
when $s_{i_{l}}\cdots s_{i_{1}}$ is a reduced decomposition.
\end{notation}

\begin{propositiona} \label{prop:mixed-shift} Let $W$ be a twisted
Coxeter group and pick two elements $w$ and $w'$ in $W$.

\begin{enumerate}[\normalfont(i)]

\item If $w\overset{\times}{\sim}w'$ via a sequence of elements
$\tau_{n},\ldots,\tau_{0}$ of $W$, then $(\tau_{n}\cdots\tau_{0})(\mathfrak{R}_{\mathrm{st}}^{w})=\mathfrak{R}_{\mathrm{st}}^{w'}$.

\end{enumerate}

In particular, if $w$ and $w'$ are convex and one of them is firmly
convex, then so is the other one.

\begin{enumerate}[\normalfont(i)]

\setcounter{enumi}{1}

\item Pick a standard parabolic subgroup $W'\subseteq\tilde{W}$.
If $\tilde{W}$ is finite, then the projection map
\[
\mathrm{Tran}_{W'}^{\times}(w,w')\longrightarrow\mathrm{Tran}_{W'}^{\,}(b_{w},b_{w'}):=\{\tau\in\mathrm{Br}_{W'}^{+}:\tau b_{w}=b_{w'}\tau\}
\]
sending a sequence of conjugators $\tau_{n},\ldots,\tau_{0}$ to the
braid $b_{\tau_{n}}\cdots b_{\tau_{0}}$, surjects.

\end{enumerate}

In particular, $w\overset{\times}{\sim}w'$ by $W'$ if and only if
$b_{w}$ and $b_{w'}$ are conjugate under the braid subgroup $\mathrm{Br}_{W'}$.

\end{propositiona}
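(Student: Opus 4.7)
The plan is to reduce both parts to a single-step analysis of cyclic shifts and strong conjugations, and then compose.

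For part (i), the identity $\tau w^i = (w')^i \tau$ shows that conjugation by $\tau$ induces a bijection between $w$-orbits and $w'$-orbits on $\mathfrak{R}$ (sending the $w$-orbit of $\beta$ to the $w'$-orbit of $\tau(\beta)$), so it suffices to show this bijection respects sign-constancy of orbits at each cyclic shift or strong conjugation step; the definitions then combine to give the equality. The key input is the length-additivity condition: for a strong conjugation with $\ell(\tau w) = \ell(\tau) + \ell(w)$, the standard formula gives $w^{-1}(\mathfrak{R}_\tau) \subseteq \mathfrak{R}_+$, and length preservation $\ell(w) = \ell(w')$ yields the dual inclusion $\tau^{-1}(\mathfrak{R}_{w'}) \subseteq \mathfrak{R}_+$; together these constrain $\mathfrak{R}_\tau$ so that its intersection with each sign-constant positive $w$-orbit is either empty or the entire orbit, which is exactly what ensures that $\tau$ preserves sign-constancy. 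Cyclic shifts and steps using the right-additive condition are handled dually. The consequence about firm convexity then follows because the inclusion $\tau(\mathfrak{R}^w) = \mathfrak{R}^{w'}$ holds for any conjugator.

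For the well-definedness of the map in (ii), I observe that a strong conjugation step with $\ell(\tau_j w_j) = \ell(\tau_j) + \ell(w_j)$ satisfies $b_{\tau_j} b_{w_j} = b_{\tau_j w_j} = b_{w_{j+1} \tau_j} = b_{w_{j+1}} b_{\tau_j}$ in $\mathrm{Br}^+_{W'}$ by Matsumoto's theorem, the last equality using length preservation $\ell(w_{j+1}) = \ell(w_j)$; cyclic shift steps via the right-additive condition are dual. Composing these per-step identities places $b_{\tau_n} \cdots b_{\tau_0}$ in $\mathrm{Tran}_{W'}(b_w, b_{w'})$. For surjectivity, I would proceed by induction on the braid length of $\tau \in \mathrm{Tran}_{W'}(b_w, b_{w'})$, using the right Garside factorisation $\tau = \tau'' b_\sigma$ with $b_\sigma$ a non-trivial simple braid. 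The key lemma states that $\sigma$ is automatically a strong conjugator for $w$, giving $b_\sigma b_w = b_{w_1} b_\sigma$ for some $w_1 \in W$; right-cancellation in the braid monoid then yields $\tau'' b_{w_1} = b_{w'} \tau''$, whose braid length is smaller, and the inductive hypothesis furnishes a mixed shift $w_1 \to w'$ to which the simple step $w \to w_1$ by $\sigma$ prepends.

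The main obstacle is verifying this key lemma: that the rightmost Garside factor of any $\tau \in \mathrm{Tran}(b_w, b_{w'})$ projects to a strong conjugator for $w$ in the sense of satisfying the correct length-additivity condition. This will follow from a careful comparison of Garside normal forms between the two sides of $\tau b_w = b_{w'} \tau$, exploiting the uniqueness of the rightmost simple divisor in a positive braid and the way $b_w$ and $b_{w'}$ interact with $\tau$. The corollary in the final \emph{in particular} clause then follows from (ii) combined with the Garside-theoretic fact that any braid-group conjugacy between two positive braids $b_w$ and $b_{w'}$ can be realised by a positive-braid conjugator, so that conjugation under $\mathrm{Br}_{W'}$ is equivalent to the existence of some $\tau \in \mathrm{Br}^+_{W'}$ with $\tau b_w = b_{w'} \tau$.
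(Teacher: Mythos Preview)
Your treatment of part~(i) is correct and matches the paper's argument (Lemma~\ref{lem:stable-roots-under-shifts}): the key is that for each step by $\tau$, the intersection of $\mathfrak{R}_\tau$ with any positive stable $w$-orbit is either empty or the whole orbit.

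For part~(ii), however, your key lemma is false. In type $\mathsf{A}_2$ take $w = s_1$, $w' = s_2$, and $\tau = b_{w_\circ}$; then $b_{w_\circ} b_{s_1} = b_{s_2} b_{w_\circ}$, so $\tau \in \mathrm{Tran}(b_w, b_{w'})$, and its sole Garside factor is $\sigma = w_\circ$. But $\ell(w_\circ s_1) = 2 \neq 4 = \ell(w_\circ) + \ell(s_1)$ and likewise on the other side, so $w_\circ$ is not a strong conjugation; and since $\ell(s_1) < \ell(w_\circ)$ it cannot be a cyclic shift either. Peeling off $\mathrm{DG}(\tau)$ therefore does not produce a single mixed-shift step, and your induction stalls.

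The paper's fix is a two-layer induction. A super-summit-set convexity argument (going back to Garside and El-Rifai--Morton) shows that when $b\, b_w\, b^{-1} = b_{w'}$ with both sides reduced, so is $\mathrm{DG}(b)\, b_w\, \mathrm{DG}(b)^{-1}$; this reduces to the case where the conjugator $b = b_v$ is itself simple. But a simple $b_v$ still need not be a single mixed-shift step, so one decomposes it further by setting $b_u := \mathrm{DG}(b_v b_w)\, b_w^{-1}$ (Lemma~\ref{lem:mixed-shift}): if $b_u$ is trivial then $b_w \geq b_v$ exhibits $v$ as a cyclic shift, and otherwise $b_u b_w = \mathrm{DG}(b_v b_w)$ is reduced and $b_u b_w b_u^{-1}$ is reduced, making $u$ a genuine strong conjugation; one then recurses on the strictly shorter $b_{v'}$ with $b_v = b_{v'} b_u$. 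In the $\mathsf{A}_2$ example this gives $u = s_1 s_2$ (strong conjugation $s_1 \to s_2$) followed by $v' = s_2$ (cyclic shift $s_2 \to s_2$), with $b_{s_2} b_{s_1 s_2} = b_{w_\circ}$ as required. The essential missing idea is that the first mixed-shift factor must be read off from $\mathrm{DG}(b_v b_w)$, not from the Garside normal form of $\tau$ alone.
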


Part (ii) is proven in the more general language of Garside categories,
whereas part (i) will be used in §\ref{subsec:Bruhat} and in proving
\cite[Proposition(iii)]{WM-cross}.
\begin{example}
Consider the conjugates $w=s_{1}s_{2}s_{3}s_{4}s_{1}$ and $w'=s_{2}s_{3}s_{4}s_{2}s_{3}$
of the element $s_{2}s_{3}s_{4}$ in type $\mathsf{A}_{4}$. Then
$\mathfrak{R}_{\mathrm{st}}^{w}=\varnothing$ but as $\mathfrak{R}_{\mathrm{st}}^{w'}$
contains (the orbit of) $\alpha_{1}$, it now follows that the braids
$b_{w}$ and $b_{w'}$ are not conjugate in the corresponding Artin-Tits
braid group.
\end{example}

Elements of $\mathrm{Tran}^{-}(w)$ yield universal homeomorphisms
of Deligne-Lusztig varieties (\cite[Case 1 of Theorem 1.6]{MR393266}
and \cite[§0.6]{MR2907958}). In his work on the relationship between
character sheaves and the character theory of finite reductive groups,
Lusztig conjectured that for an elliptic element of minimal length
$w$, the natural map from $\mathrm{Tran}^{\leftrightarrow}(w)\subseteq\mathrm{Tran}^{-}(w)$
onto the centraliser $\mathrm{Tran}(w)$ is a surjection \cite[§1.2]{MR2907958}.
This statement also plays a rôle in work of Broué, Digne and Michel
on consequences of the Broué abelian defect conjecture for finite
reductive groups \cite{MR1429870,MR3187647}. He-Nie subsequently
proved this conjecture by showing more generally that for any other
such element $w'$, the set $\mathrm{Tran}^{\leftrightarrow}(w,w')$
projects onto $\mathrm{Tran}(w,w')$ \cite[Theorem 4.2]{MR2999317}.

Unwinding and advancing the work of \cite[§1-§4]{MR2999317}, in Lemma
\ref{lem:shifts} we will study elements in $\CMcal O^{\underline{\Theta}}$
for sequences of eigenspaces $\underline{\Theta}$ satisfying $\mathfrak{H}_{\Theta}\subseteq\mathfrak{H}^{w}$;
this will be required for the second half of the following 

\begin{propositiona} \label{prop:shifts-braiding} Let $W$ be a
twisted finite Coxeter group. Let $\CMcal O$ be a $\tilde{W}$-orbit
and let $\Theta$ be a set of eigenspaces of some element $w$ in
$\CMcal O$.

\begin{enumerate}[\normalfont(i)]

\item There exists an element $w'$ in $\CMcal O_{\pm}^{\Theta}$
such that $w\overset{\pm}{\rightarrow}w'$ in $W$, $w\overset{\pm}{\leadsto}w'$
in $\CMcal O^{\Theta}$ and $w'\overset{\mp}{\leadsto}w$ in $\CMcal O^{\Theta}$.

\end{enumerate}

Now assume that $\mathfrak{H}_{\Theta}\subseteq\mathfrak{H}^{w}$.

\begin{enumerate}[\normalfont(i)]

\setcounter{enumi}{1}

\item Then $\CMcal O_{\pm}^{\Theta}$ is contained in $\CMcal O{}_{\mathrm{max}/\mathrm{min}}^{\Theta}$
and the length of these elements is given by
\begin{equation}
\ell(\CMcal O_{\mathrm{max}/\mathrm{min}}^{\Theta})=2\sum_{i=1}^{m}\theta_{i}\bigl|\mathfrak{H}_{F_{i-1}}\backslash\mathfrak{H}_{F_{i}}\bigr|,\label{eq:length-formula}
\end{equation}
where the filtration $F_{i}$ and ordering on the arguments $\theta_{i}$
are derived from the sequence $\underline{\Theta}_{\pm}$.

\item Suppose $w,w'$ lie in $\CMcal O_{\mathrm{max}/\mathrm{min}}^{\Theta}$.
Then the projection
\[
\mathrm{Tran}^{\Theta,\star}(w,w')\longrightarrow\mathrm{Tran}(w,w')\qquad\bigl(\textrm{resp.}\qquad\mathrm{Tran}^{\star}(w,w')\longrightarrow\mathrm{Tran}(w,w')\bigr)
\]
 surjects, with
\[
\star=\begin{cases}
\pm & \text{if }\mp1\notin\mathrm{eig}(\Theta),\\
\mp & \text{if }V_{\Theta}=V,\\
\times & \textrm{otherwise}.
\end{cases}\qquad\bigl(\textrm{resp.}\qquad\star=\begin{cases}
\leftrightarrow & \text{if }\mp1\notin\mathrm{eig}(\Theta)\text{ and }V_{\Theta}=V,\\
\leftrightarrow & \text{if }\mathrm{eig}(\Theta)\cap\{1,-1\}=\varnothing.
\end{cases}\bigr)
\]

\end{enumerate}

\end{propositiona}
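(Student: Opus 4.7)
The plan is to adapt and extend the geometric techniques of He--Nie \cite[\S1--\S4]{MR2999317}, building on the forthcoming Lemma~\ref{lem:shifts} which describes the structure of elements in $\CMcal O^{\underline{\Theta}}$ under the hypothesis $\mathfrak{H}_{\Theta}\subseteq\mathfrak{H}^{w}$.

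For part (i), I would construct $w'$ by conjugating $w$ with an element $\tau\in\tilde{W}$ of minimal length such that the dominant Weyl chamber is in good position with respect to $\tau(\underline{\Theta}_{\pm})$. Choosing a reduced decomposition $\tau=s_{i_{n}}\cdots s_{i_{0}}$ and applying the simple reflections one step at a time yields a chain of conjugates; the minimality of $\tau$ guarantees that each step crosses a hyperplane not yet crossed, which furnishes the $\overset{\pm}{\to}$ relation in $W$. Simultaneously, the geometric compatibility of each intermediate position with the filtration $F_{i}$, combined with the length-additivity of the initial (resp.\ terminal) subwords of $\tau$, should give the corresponding $\overset{\pm}{\leadsto}$ and $\overset{\mp}{\leadsto}$ relations in $\CMcal O^{\Theta}$. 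The duality of Remark~\ref{rem:duality} means it suffices to treat one of the signs explicitly.

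For part (ii), the assumption $\mathfrak{H}_{\Theta}\subseteq\mathfrak{H}^{w}$ forces $w$ to act on the orthogonal complement of $V^{w}$ as a direct sum of independent $2$-dimensional rotations indexed by the non-redundant eigenspaces of $\underline{\Theta}_{\pm}$. Each positive root $\beta\notin\mathfrak{R}^{w}$ then has a unique level $i$ determined by $\beta\in\mathfrak{H}_{F_{i-1}}\setminus\mathfrak{H}_{F_{i}}$; for a representative in $\CMcal O_{\pm}^{\Theta}$ the good-position hypothesis means $\beta$ has a definite sign against regular points of $F_{i-1}$ and $F_{i}$, and a standard $2$D count (using $\theta_{i}\in[0,1/2]$) shows that exactly a proportion $2\theta_{i}$ of the level-$i$ positive roots are sent to negative roots, summing to~\eqref{eq:length-formula}. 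The extremality follows because the integers $|\mathfrak{H}_{F_{i-1}}\setminus\mathfrak{H}_{F_{i}}|$ depend only on the multiset $\Theta$ through the parabolic filtration~\eqref{eq:parabolic-sequence}, while the coefficients $\theta_{i}$ against which they are paired are arranged extremally in the $\pm$-orderings by a simple rearrangement inequality.

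Part (iii) is the main obstacle. Using part (i) one first reduces to the case $w,w'\in\CMcal O_{\pm}^{\Theta}$. Any conjugator $\tau$ from $w$ to $w'$ then preserves the eigenspace decomposition up to the permutations allowed by coincident eigenvalues, and hence decomposes relative to the chain $\tilde{W}_{m}\subseteq\cdots\subseteq\tilde{W}_{0}$; processing these pieces top-down and applying part (i) inductively at each step should realise $\tau$ as a sequence of shifts of the claimed type. The dichotomy on $\star$ reflects the special nature of the eigenvalues $\pm1$: each $(-1)$-eigenspace is $1$-dimensional, so its stabiliser contains reflections whose appearance forces a cyclic (or, dually, strong) shift rather than a simple one; likewise a nontrivial $V^{w}$, i.e.\ $V_{\Theta}\neq V$, contributes a parabolic subgroup whose conjugators are only recoverable via strong conjugations (dually cyclic shifts). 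Simultaneously handling both degeneracies while verifying the length conditions at every intermediate step is the principal bookkeeping challenge, again partially mitigated by Remark~\ref{rem:duality}.
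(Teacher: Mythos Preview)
Your approach to part (i) has a genuine gap. Taking a minimal-length conjugator $\tau$ and a reduced decomposition $\tau=s_{i_n}\cdots s_{i_0}$ only tells you that each step crosses a new hyperplane \emph{for $\tau$}; it says nothing about the lengths of the conjugates $w_j=s_{i_j}\cdots s_{i_0}\,w\,s_{i_0}\cdots s_{i_j}$, which is what $\overset{\pm}{\to}$ requires. Indeed the paper later gives an example (in type $\mathsf{A}_4$, after the proof of Lemma~\ref{lem:shifts}(ii)) of a non-minimal $w$ with $\ell(s_iws_i)\geq\ell(w)$ for every simple $s_i$, showing that arbitrary paths do not produce monotone length. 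The paper's proof instead uses the gradient flow of $f_w(v)=\bigl|\bigl|(\mathrm{id}_V-w)v\bigr|\bigr|^2$ (Proposition~\ref{prop:gradient-flow}): flowing a generic regular point drives its half-line toward $V_\Theta^{\pm}$, and Corollary~\ref{cor:gradient-function}(ii) guarantees weak monotonicity of $\ell$ at each wall-crossing along the flow, yielding $\overset{\pm}{\to}$. The $\overset{\pm}{\leadsto}$ and $\overset{\mp}{\leadsto}$ claims require the further analysis of Lemma~\ref{lem:normal-vector-cyclic}, which controls what happens at non-regular points of the flow and is new to this paper. This gradient-flow mechanism is the missing idea.

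Your argument for part (ii) also does not work as stated. The integers $|\mathfrak{H}_{F_{i-1}}\setminus\mathfrak{H}_{F_i}|$ are \emph{not} independent of the ordering: changing $\underline{\Theta}$ changes the filtration $F_\bullet$, so there is no fixed sequence to rearrange against the $\theta_i$. More seriously, a rearrangement inequality would at best compare lengths across different orderings of $\Theta$, whereas extremality is asserted over all of $\CMcal O^\Theta$, which contains elements not lying in $\CMcal O^{\underline{\Theta}}$ for \emph{any} ordering. In the paper, extremality is deduced directly from part (i): given $w\in\CMcal O^\Theta_{\mathrm{max}/\mathrm{min}}$, part (i) produces $w'\in\CMcal O_\pm^\Theta$ with $w\overset{\pm}{\to}w'$, forcing $\ell(w')=\ell(w)$; then Lemma~\ref{lem:shifts}(i) (via Lemma~\ref{lem:factorise}(iv)) gives the explicit formula for $\ell(\CMcal O_\pm^\Theta)$. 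Your outline for part (iii) is broadly aligned with the paper's proof of Lemma~\ref{lem:shifts}(ii), but it too ultimately rests on having part (i) available.
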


Formula (\ref{eq:length-formula}) was proven for elliptic elements
of minimal length in \cite[Theorem 2.3]{yun2020minimal}.
\begin{example}
From (iii) one obtains the usual statement that minimal length elements
lie in the same strong conjugacy class, and furthermore lie in the
same simple shift class when they are elliptic.

Dually, they also imply that maximal length elements lie in the same
cyclic shift class, and furthermore lie in the same simple shift class
if $-1$ is not an eigenvalue.
\end{example}

\begin{example}
Let $\CMcal O$ be the conjugacy class of $s_{1}s_{3}$ in type $\mathsf{B}_{3}$.
If we set $\Theta=\{-1\}$ or $\Theta=\{e^{2\pi i/3},-1\}$ then
\[
\CMcal O_{+}^{\Theta}=\{w_{\circ}s_{1},w_{\circ}s_{2}\},
\]
so the involutions $w_{\circ}s_{1}$ and $w_{\circ}s_{2}$ lie in
the same cyclic shift class,\footnote{In some books and papers, it is claimed that $w\overset{-}{\sim}w'$
if and only if $w\leftrightarrow w'$. However, conjugating these
two elements in $\mathsf{B}_{3}$ by simple reflections either leaves
them unchanged or results in an element of lower length, thus providing
a counterexample.

This example also shows that the strong conjugations in \cite[Proposition 2.8(2) and 2.13(2)]{MR2657442}
must be replaced by cyclic shifts. The subsequent proofs \cite[Proposition 2.9 and 2.14]{MR2657442}
about Bruhat cells don't seem to work for either type of shift, but
they can be replaced with Proposition \ref{prop:conjugacy-cyclic}.} and indeed there are reduced decompositions $w_{\circ}s_{1}=xy$
and $w_{\circ}s_{2}=yx$ for e.g.\ $x:=s_{3}s_{2}s_{3}s_{1}s_{2}s_{3}$
and $y:=s_{1}s_{2}$.
\end{example}

\begin{example}
Consider $w=s_{1}s_{2}s_{1}$ and $v=s_{2}s_{3}s_{2}$ in type $\mathsf{A}_{3}$.
Then $v$ lies in $\mathrm{Tran}(w)$ but $\mathrm{Tran}^{\times}(w)$
is generated by $w$; the centraliser of $b_{w}$ in the braid group
is generated by
\[
b_{121}\qquad\textrm{and}\qquad b_{321}b_{123}.
\]
\end{example}

\begin{defn}
Let $W$ be a twisted finite Coxeter group. Let $\CMcal O$ be a $\tilde{W}$-orbit
and let $\Theta$ be all of its eigenvalues except 1, so $V_{\Theta}=V_{w}$
when we pick an element $w$ in $\CMcal O$. We will sometimes write
$\mathrm{dom}$ instead of $\Theta$, e.g.\
\[
\CMcal O^{\mathrm{dom}}:=\CMcal O^{\Theta},\qquad\textrm{so that}\qquad\CMcal O_{\mathrm{max}/\mathrm{min}}^{\mathrm{dom}}:=\CMcal O_{\mathrm{max}/\mathrm{min}}^{\Theta},
\]
and refer to their elements as \emph{dominant} and \emph{maximally/minimally
dominant }respectively.
\end{defn}

In other words, an element $w$ is dominant if and only if the closure
of the dominant Weyl chamber contains an open subset of $V_{w}$.
From Corollary \ref{cor:dom} it follows that for elements of $W$
there are implications
\[
\textrm{elliptic or has anisotropic braiding sequence of eigenspaces}\Longrightarrow\textrm{dominant}\Longrightarrow\textrm{firmly convex}\Longrightarrow\textrm{convex}.
\]

Some of the properties of maximally and minimally dominant elements
follow from simple geometric statements, others from specialising
Proposition \ref{prop:shifts-braiding}:
\begin{lem*}
Let $W$ be an irreducible twisted finite Coxeter group and let $\CMcal O$
be a $\tilde{W}$-orbit.

\begin{enumerate}[\normalfont(i)]

\item Then there is an equality of lengths $\ell(\CMcal O_{\mathrm{max}}^{\mathrm{dom}})=\ell(\CMcal O_{\mathrm{max}})$
and an inequality of lengths
\[
\ell(\CMcal O_{\mathrm{min}}^{\mathrm{dom}})\geq\ell(\CMcal O_{\mathrm{min}}),
\]
with equality if and only if $\CMcal O$ is either elliptic or trivial
(meaning, the orbit of the identity element).

\item There is an inclusion $\CMcal O_{\pm}^{\mathrm{dom}}\subseteq\CMcal O_{\mathrm{max}/\mathrm{min}}^{\mathrm{dom}}$,
and for any pair of elements $w$ and $w'$ in $\CMcal O_{\mathrm{max}/\mathrm{min}}^{\mathrm{dom}}$
the natural projection map
\[
\mathrm{Tran}^{\mathrm{dom},-}(w,w')\longrightarrow\mathrm{Tran}(w,w')
\]
 is surjective.

\item If $\CMcal O$ is nontrivial and lies in $\tilde{W}$ then
there are inequalities
\[
\ell(\CMcal O_{\mathrm{min}}^{\mathrm{dom}})\geq\frac{|\mathfrak{R}|-\ell_{f}(\CMcal O)}{\mathrm{ord}(\CMcal O)}\qquad\textrm{and}\qquad\ell(\CMcal O_{\mathrm{min}}^{\mathrm{dom}})\geq\mathrm{rank}(W),
\]
where the first one is equality if and only if $\CMcal O$ is quasiregular,
the second one if and only if $\CMcal O$ is the Coxeter class.

\end{enumerate}
\end{lem*}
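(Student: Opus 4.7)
The plan throughout is to apply Proposition \ref{prop:shifts-braiding} with $\Theta$ chosen to be the set of non-$1$ eigenvalues of a representative $w\in\CMcal{O}$; then $V_\Theta=V_w$, so $\mathfrak{H}_\Theta=\mathfrak{H}^w$ and the hypothesis $\mathfrak{H}_\Theta\subseteq\mathfrak{H}^w$ required by parts (ii) and (iii) of that proposition is automatic, while $\CMcal{O}^\Theta=\CMcal{O}^{\mathrm{dom}}$.

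\textbf{Part (i).} Starting with any $w\in\CMcal{O}_{\mathrm{max}}$, Proposition \ref{prop:shifts-braiding}(i) produces $w'\in\CMcal{O}_+^{\mathrm{dom}}$ with $w\overset{+}{\to}w'$, forcing $\ell(w')=\ell(\CMcal{O}_{\mathrm{max}})$ and thus $w'\in\CMcal{O}_{\mathrm{max}}^{\mathrm{dom}}$. The min inequality is immediate from $\CMcal{O}^{\mathrm{dom}}\subseteq\CMcal{O}$. For the equality characterisation, trivial and elliptic classes lie entirely in $\CMcal{O}^{\mathrm{dom}}$, so these cases give equality; conversely, if some $w\in\CMcal{O}_{\mathrm{min}}$ is dominant, then Lemma \ref{lem:shifts} together with the Geck--Pfeiffer cuspidal descent represents $w$ up to $\tilde{W}$-conjugation as a cuspidal element $w_0$ of a standard parabolic $W_J\subseteq\tilde{W}$, giving $V_{w_0}=V_{W_J}$, and a member of the $\tilde{W}$-orbit of $V_{W_J}$ lies in good position with $\bar{C}$ only when $J$ is a union of connected components of the Dynkin diagram --- computed directly from the signs of $(\alpha_i,\alpha_j)$ for adjacent simple roots --- so irreducibility of $\tilde{W}$ restricts to $J=\emptyset$ (trivial) or $J=\{1,\ldots,n\}$ (elliptic).

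\textbf{Part (ii).} The inclusion $\CMcal{O}_\pm^{\mathrm{dom}}\subseteq\CMcal{O}_{\mathrm{max}/\mathrm{min}}^{\mathrm{dom}}$ is Proposition \ref{prop:shifts-braiding}(ii). The transporter surjection with $\star=-$ in the minimally dominant case is immediate from Proposition \ref{prop:shifts-braiding}(iii) since $+1\notin\mathrm{eig}(\Theta)$ places us in its $\star=-$ regime. For the maximally dominant case, one combines the corresponding $\star=+$ (when $-1\notin\mathrm{eig}(\Theta)$) or $\star=\times$ (otherwise) regime of that proposition with the observation that shifts between equal-length elements must be length-preserving, and then applies the $w_\circ$-duality of Remark \ref{rem:duality} exchanging strong conjugations with cyclic shifts.

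\textbf{Part (iii).} Specialising the length formula of Proposition \ref{prop:shifts-braiding}(ii) to $\underline{\Theta}_-$ gives
\[
\ell(\CMcal{O}_{\mathrm{min}}^{\mathrm{dom}}) \;=\; 2\sum_{i=1}^{m}\theta_i\,\bigl|\mathfrak{H}_{F_{i-1}}\setminus\mathfrak{H}_{F_i}\bigr|
\]
with $\tfrac{1}{2}\geq\theta_m>\cdots>\theta_1>0$. Telescoping yields $\sum_i|\mathfrak{H}_{F_{i-1}}\setminus\mathfrak{H}_{F_i}|=|\mathfrak{H}|-|\mathfrak{H}^w|=\tfrac{1}{2}(|\mathfrak{R}|-\ell_f(\CMcal{O}))$, and since every $\lambda\in\mathrm{eig}(\Theta)$ is an $\mathrm{ord}(\CMcal{O})$-th root of unity, $\theta_i\geq 1/\mathrm{ord}(\CMcal{O})$; substituting gives the first bound. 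Equality forces $\theta_i=1/\mathrm{ord}(\CMcal{O})$ at every nonzero increment, which under the decreasing ordering is equivalent to $\mathfrak{H}_{V_{\lambda_1}^w}\subseteq\mathfrak{H}^w$ for $\lambda_1=e^{2\pi i/\mathrm{ord}(\CMcal{O})}$, i.e., to quasiregularity of $\CMcal{O}$. The rank bound $\ell(\CMcal{O}_{\mathrm{min}}^{\mathrm{dom}})\geq\mathrm{rank}(W)$ is the main obstacle: the plan is to sharpen the first bound by establishing $|\mathfrak{R}|-\ell_f(\CMcal{O})\geq\mathrm{rank}(W)\cdot\mathrm{ord}(\CMcal{O})$ for any non-trivial $\CMcal{O}$ in irreducible $\tilde{W}$, with equality iff $\CMcal{O}$ is the Coxeter class --- this follows by partitioning the non-fixed roots into $\langle w\rangle$-orbits and bounding each orbit's size together with the number of orbits from the eigenvalue data of $w$ on $V_w$. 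Combined with the first inequality, this yields $\ell\geq\mathrm{rank}(W)$ with equality iff both bounds are tight, which forces $\mathrm{ord}(\CMcal{O})=h$ and $V_w=V$ --- i.e., $\CMcal{O}$ is the Coxeter class.
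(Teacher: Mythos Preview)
Your handling of the first inequality in part~(iii) and the inclusion in part~(ii) is essentially the paper's approach and is correct. The problems lie in part~(i)'s equality characterisation and, more seriously, in the rank bound of part~(iii).

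\medskip

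\textbf{Part (iii), rank bound.} Your claimed inequality $|\mathfrak{R}|-\ell_f(\CMcal O)\geq\mathrm{rank}(W)\cdot\mathrm{ord}(\CMcal O)$ is false. Take $W$ of type $\mathsf{A}_4$ and $w=(1\,2\,3)(4\,5)\in S_5$: then $\mathrm{ord}(w)=6$, no root is fixed (as $w$ has no fixed points in $\{1,\ldots,5\}$ and the transposition $(4\,5)$ negates $e_4-e_5$), so $|\mathfrak{R}|-\ell_f=20$, whereas $\mathrm{rank}(W)\cdot\mathrm{ord}(w)=4\cdot 6=24$. Thus your route from the quasiregular bound to the rank bound cannot work. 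The paper instead invokes Lemma~\ref{lem:standard-parab}(ii) directly: a nontrivial dominant element does not lie in any proper standard parabolic subgroup, hence every simple reflection must occur in a reduced expression, giving $\ell(w)\geq\mathrm{rank}(W)$ with equality exactly when $w$ is a Coxeter element of minimal length. This is a one-line geometric argument and does not factor through the length formula at all.

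\medskip

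\textbf{Part (i), equality case.} Your claim that a member of the $\tilde{W}$-orbit of $V_{W_J}$ lies in good position with $\bar C$ only when $J$ is a union of connected components is also false: the line through the highest root lies in $\bar C$ and is a $\tilde{W}$-conjugate of $V_{W_{\{i\}}}$ for a single simple reflection $s_i$. What is true is that $V_{W_J}$ \emph{itself} meets $\bar C$ in an open subset only in the trivial cases, but your Geck--Pfeiffer descent only gives $w$ conjugate to a cuspidal $w_0\in W_J$, not that $w$ equals $w_0$. The paper closes this gap via Lemma~\ref{lem:standard-parab}(i): a \emph{minimal length} element that is not elliptic lies in a proper standard parabolic $W_J$ (so indeed $V_w=V_{W_J}$), and then Lemma~\ref{lem:standard-parab}(ii) shows that such a nontrivial element cannot be dominant. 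You were implicitly using both halves of Lemma~\ref{lem:standard-parab} without proving the first.
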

\begin{example}
The conjugates $w=s_{1}s_{2}s_{3}s_{4}s_{2}s_{3}s_{1}$ and $w^{-1}$
of $s_{2}s_{3}s_{4}$ in type $\mathsf{B}_{4}$ are dominant, but
are neither maximally nor minimally dominant; they form two distinct
cyclic shift classes.
\end{example}

 Richardson showed that the minimal length involutions of a finite
Coxeter group are those involutions $w$ such that $\mathfrak{R}_{w}$
agrees with a standard parabolic subsystem \cite[Theorem A(a)]{MR679916},
and he used that to algorithmically classify conjugacy classes of
involutions \cite[§3]{MR679916} in terms of the corresponding Coxeter-Dynkin
diagram. A similar description for maximal length involutions was
given by Hart-Rowley \cite[Theorem 1.1(ii)]{MR2040170}, who also
gave, for each conjugacy class of involutions in every type, explicit
expressions for (and deduced from that the number of) maximal and
minimal length elements \cite{MR1892601}. We show that involutions
have maximal length precisely when they are dominant, and generalise
other statements regarding maximal and minimal length involutions
to orbits of standard parabolic subgroups:
\begin{notation}
Let $W$ be a twisted finite Coxeter group and $W'\subseteq\tilde{W}$
be a standard parabolic subgroup. Then we denote by $_{W'}\mathfrak{R}_{+}$
the subset of positive roots of the corresponding standard parabolic
subsystem. Given an element $w$ in $W$, we let $\mathfrak{R}_{-1}^{w}:=\mathfrak{R}_{w}\cap V_{-1}^{w}$
denote the set of roots on which it acts as $-1$.
\end{notation}

\begin{propositiona} \label{prop:involutions} Let $W$ be a twisted
Coxeter group, pick a standard parabolic subgroup $W'\subseteq\tilde{W}$,
let $\CMcal O$ be a $W'$-orbit consisting of involutions and pick
an element $w$ in $\CMcal O$.

\begin{enumerate}[\normalfont(i)]

\item There exists an element $w'$ in $\CMcal O_{\mathrm{max/min}}$
such that $w\overset{\pm}{\twoheadrightarrow}w'$ by $W'$.

\item The following are equivalent:

\begin{enumerate}[\normalfont(a)]

\item The element $w$ lies in $\CMcal O_{\mathrm{max/min}}$.

\item The set of roots $\mathfrak{R}_{w}\cap{}_{W'}\mathfrak{R}_{+}$
is given by 
\begin{equation}
_{W'}\mathfrak{R}_{+}\backslash\mathfrak{R}^{w}\qquad(\textrm{resp.}\quad{}_{W'}\mathfrak{R}_{+}\cap\mathfrak{R}_{-1}^{w}).\label{eq:max-min-involution}
\end{equation}

\item $w$ satisfies the explicit description of (\ref{eq:explicit-description-involution}),
which is amenable to an algorithmic classification.

\end{enumerate}

If $W'=\tilde{W}$, then this is also equivalent to

\begin{enumerate}[\normalfont(a)]

\setcounter{enumii}{3}

\item The closure of the dominant Weyl chamber contains an open subset
of $V_{\mp1}^{w}$.

\end{enumerate}

\end{enumerate}

In particular, there is an equality
\[
|\mathfrak{R}|=\ell_{f}(\CMcal O)+2\ell(\CMcal O^{\mathrm{dom}}).
\]

\end{propositiona}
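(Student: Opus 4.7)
The plan is to prove each item in turn by combining the combinatorics of involutions with the geometric machinery of Proposition \ref{prop:shifts-braiding}. The preliminary tool I would establish first is the length-change formula: for an involution $w$ and a simple reflection $s$, direct calculation yields $\ell(sws) - \ell(w) \in \{-2, 0, 2\}$, with the value $-2$ occurring exactly when $\alpha_s \in \mathfrak{R}_w \setminus \mathfrak{R}_{-1}^w$, the value $0$ exactly when $w(\alpha_s) = \pm\alpha_s$ (equivalently $sws = w$), and the value $+2$ otherwise.

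For part (i), I would handle the max case (the min case being dual) by inducting on $\ell(\CMcal{O}_{\mathrm{max}}) - \ell(w)$. If $w$ is not of maximum length, pick $\tau \in W'$ of minimal length with $\ell(\tau w \tau^{-1}) > \ell(w)$ and factor $\tau = s \tau'$ through a left descent, so that minimality of $\tau$ forces $\ell(\tau' w \tau'^{-1}) \leq \ell(w)$. The parity invariance of length under conjugation, combined with the length-change formula applied to $v := \tau' w \tau'^{-1}$, then pins $\ell(v) = \ell(w)$ and $\ell(svs) = \ell(w) + 2$. A short argument then allows the strict staircase to start at $w$ itself.

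For part (ii), the direction (b) $\Rightarrow$ (a) is a direct length count from the characterisation in (\ref{eq:max-min-involution}), while (a) $\Rightarrow$ (b) is a Richardson-style propagation: if for instance $w \in \CMcal{O}_{\mathrm{min}}$ and $\beta \in (\mathfrak{R}_w \cap {}_{W'}\mathfrak{R}_+) \setminus \mathfrak{R}_{-1}^w$, I write $\beta = \tau\alpha_s$ with $\tau \in W'$ of minimal length and $\alpha_s$ a simple root of $W'$, and combine (i) with the length-change formula at $s$ to produce a conjugate of $w$ in $\CMcal{O}$ of strictly smaller length, a contradiction. For (ii)(d), which is stated only for $W' = \tilde{W}$, I specialise Proposition \ref{prop:shifts-braiding}(ii) to $\Theta = \{V_{\mp 1}^w\}$: the orthogonality of $V^w$ and $V_{-1}^w$ for involutions makes the hypothesis $\mathfrak{H}_\Theta \subseteq \mathfrak{H}^w$ automatic, and formula (\ref{eq:length-formula}) gives $\ell(\CMcal{O}_{\mathrm{max}}^\Theta) = (|\mathfrak{R}| - \ell_f(\CMcal{O}))/2$, which matches the length extracted from (b); hence the maximally dominant involutions are exactly the maximal length ones, which also yields the final displayed equality. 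For (c), I would unwind (b) in terms of the Coxeter-Dynkin diagram of $W'$ acting on $W$, extending the algorithms of Richardson and Hart-Rowley.

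The main obstacle will be the Richardson-style propagation (a) $\Rightarrow$ (b): lifting the local simple-reflection constraint to the global inversion-set description requires selecting $\tau$ so that the intermediate staircase of conjugates stays at length $\leq \ell(w)$ while producing a simple inversion that violates the $-1$-eigenvalue condition. This is the genuinely new input, extending Richardson's and Hart-Rowley's classical arguments from full conjugacy classes to $W'$-orbits; the rest of the plan combines the length formulas with Proposition \ref{prop:shifts-braiding} in a routine way.
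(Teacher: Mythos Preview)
Your plan for (ii)(d) has a concrete error. You claim that ``the orthogonality of $V^w$ and $V_{-1}^w$ for involutions makes the hypothesis $\mathfrak{H}_\Theta \subseteq \mathfrak{H}^w$ automatic'' for $\Theta=\{V_{\mp1}^w\}$. This is true for the max case $\Theta=\{V_{-1}^w\}$ (then $\mathfrak{H}_\Theta=\mathfrak{H}^w$), but \emph{false} for the min case $\Theta=\{V_1^w\}$: the hyperplanes in $\mathfrak{H}_{V^w}$ correspond to roots lying in $(V^w)^\perp=V_{-1}^w$, whereas $\mathfrak{H}^w$ corresponds to roots in $V^w$, and these two sets of roots are disjoint. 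So Proposition~\ref{prop:shifts-braiding}(ii) and the length formula~(\ref{eq:length-formula}) are simply unavailable for the min case, and your route gives no argument for $(d)\Rightarrow(a)$ there. Even in the max case your length-matching only yields $(d)\Rightarrow(a)$; the reverse inclusion $\CMcal O_{\mathrm{max}}\subseteq\CMcal O^\Theta$ needs the extra observation that for involutions a length-preserving simple conjugation is trivial (so the $\overset{+}{\rightarrow}$ sequence from Proposition~\ref{prop:shifts-braiding}(i) collapses to $w=w'$), which you do not mention. The paper takes an entirely different route to (d): it proves $(b)\Leftrightarrow(d)$ directly by geometry. The direction $(d)\Rightarrow(b)$ is a two-line computation with a regular point $v_w\in V_{\mp1}^w\cap\overline C$; the harder direction $(b)\Rightarrow(d)$ uses a small linear-algebra proposition (given a half-space and a subspace, either some functional is nonpositive on an orthogonal basis, or some vector in the intersection is nonnegative for all functionals) to produce a simple root violating (b) whenever $\overline C$ fails to contain a regular point of $V_{\mp1}^w$.

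Your plan for (i) and for $(a)\Rightarrow(b)$ also diverges from the paper. The paper does not prove (i) first: it establishes the cycle $(a)\Rightarrow$[simple-root form of (b)]$\Rightarrow(c)\Rightarrow(a)$ using the Hart--Rowley lemma and a Howlett-style structural decomposition of $w=w^{J'}w_{J'}$, and only \emph{then} reads off (i) (not-(a) gives, via the lemma, a simple $s_j\in W'$ with $\ell(s_jws_j)\gtrless\ell(w)$, and induction finishes). Your minimal-$\tau$ argument for (i) has a gap at exactly the point you flag as a ``short argument'': from $\ell(v)=\ell(w)$ with $v=\tau'w\tau'^{-1}$ you still need the strict step to start at $w$, and along the conjugation path from $w$ to $v$ the length could dip below $\ell(w)$ before returning, so you cannot conclude $v=w$ without further input. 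Likewise, your Richardson-style propagation for $(a)\Rightarrow(b)$ (conjugating a bad root $\beta$ to a simple one) is a genuine difficulty that the paper sidesteps entirely by passing through the explicit description (c); the paper never needs to track how a non-simple inversion behaves under conjugation.
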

\begin{example}
Let $\mathfrak{R}$ be a root system and let $W$ be its finite Coxeter
group. If $w$ is a reflection in some root, then $V_{w}$ equals
the line through it. Thus the only dominant element in the conjugacy
class $\CMcal O$ of reflecting in a long (resp.\ short) root is
reflecting in a root that lies in the closure of the dominant Weyl
chamber; when $\mathfrak{R}$ is crystallographic, it is well-known
that this is the highest (resp.\ highest short) root.\footnote{Outside of type $\mathsf{A}$ these are also the only convex reflections;
in type $\mathsf{A}$, the only other ones are the two simple reflections
corresponding to the endpoints of the Coxeter-Dynkin diagram.} Denoting by $h^{\vee}$ the dual Coxeter number\footnote{It follows from \cite{MR254174} that for Weyl groups the dual Coxeter
number $h^{\vee}$ agrees with $\bigl(\ell(r_{\beta})+3\bigr)/2$,
where $r_{\beta}$ is the reflection in a long root $\beta$ lying
in the closure of the dominant Weyl chamber; here we extend its definition
to non-crystallographic root systems by defining it at such. For $\mathsf{H}_{3}$,
$\mathsf{H}_{4}$ and $\mathsf{I}_{2}(m)$ we then find that $h^{\vee}$
is given by $8$, $24$ and $\lceil m/2\rceil+1$ respectively.} (resp.\ dual Coxeter number of the dual root system) of $\mathfrak{R}$,
the length of this reflection then yields
\[
\ell(\CMcal O^{\mathrm{dom}})=2h^{\vee}-3,\qquad\ell_{f}(\CMcal O)=|\mathfrak{R}|-4h^{\vee}+6.
\]
\end{example}

In his sole paper (which is based on his 1965 PhD thesis), Garside
gave a solution to the conjugacy problem (posed in \cite{MR3069440})
for the (type $\mathsf{A}$) braid group by introducing the natural
submonoid $\mathrm{Br}_{\tilde{W}}^{+}$ and proving that for any
of its elements $\tilde{b}\in\mathrm{Br}_{\tilde{W}}^{+}$ there exists
a greatest common (right) divisor $\mathrm{DG}(\tilde{b})$ for $\tilde{b}$
and Garside's ``fundamental braid'' $b_{w_{\circ}}$ \cite{MR248801}.
A few years later, Deligne rephrased the existence of this divisor
as a property of paths on the simplicial arrangement corresponding
to $\tilde{W}$, and used that to settle the $K(\pi,1)$-conjecture
for finite Coxeter groups \cite{MR422673}. For any braid $b=\delta\tilde{b}$
in $\mathrm{Br}^{+}$ we may construct this factor $\mathrm{DG}_{1}(b):=\mathrm{DG}(b):=\mathrm{DG}(\tilde{b})$
and then remove it from $b$; repeating this procedure on the remainder
then yields a unique factorisation 
\[
\mathrm{DGN}(b):=\delta\mathrm{DG}_{m}(b)\cdots\mathrm{DG}_{2}(b)\mathrm{DG}_{1}(b)
\]
for any element in $\mathrm{Br}^{+}$, and Thurston proved that this
normal form is computable in quadratic time complexity and linear
space complexity \cite{MR1161694,MR1157320}. The normal form is sometimes
used to prove faithfulness of representations (e.g.\ \cite{MR1888796,MR3648506}),
and in type $\mathsf{A}$ it can be used \cite{MR1361083,MR2869449}
to determine how elements lie in the Nielsen-Thurston classification
\cite{MR956596}; generically they are pseudo-Anosov (e.g.\ \cite{MR2772067}).
The conjugacy problem and normal form have been generalised beyond
Artin-Tits groups of finite type \cite{MR3362691}, and have recently
been employed for studying braid group-based post-quantum cryptography
(e.g.\ \cite{MR3950949}).
\begin{notation}
For any integer $i\geq1$ and element $b$ in $\mathrm{Br}^{+}$ we
denote the product of the first $i$ Deligne-Garside factors by $\mathrm{DG}_{i\geq}(b):=\mathrm{DG}_{i}(b)\cdots\mathrm{DG}_{2}(b)\mathrm{DG}_{1}(b)$,
often implicitly identify a reduced braid like $\mathrm{DG}_{i}(b)$
with the corresponding element of $\tilde{W}$.
\end{notation}

In order to better understand the normal form of an arbitrary braid
$b$ of $\mathrm{Br}^{+}$ (and of $b_{w}^{d}$ in particular), we
associate to it in §\ref{subsec:inversion-seq} a subset of roots
$\mathfrak{R}_{b}$ extending inversion sets of twisted Coxeter group
elements, and show in Lemma \ref{lem:bounding} that this set has
bounding properties.

\begin{propositiona} \label{prop:dg-bound} For any element $w$
in a twisted finite Coxeter group and integer $d\geq1$ we have an
inclusion of sets of roots $\mathfrak{R}_{b_{w}^{d}}\subseteq\mathfrak{R}\backslash\mathfrak{R}_{\mathrm{st}}^{w}$.
This then implies that for any\footnote{The bounds for $i=1$ will reproven in the sequel, using different
techniques \cite[Corollary 2.31]{WM-cross}.} $i\geq1$

\begin{enumerate}[\normalfont(i)]

\item there is an inclusion 
\begin{equation}
\mathfrak{R}_{\mathrm{DG}_{i}(b_{w}^{d})}\subseteq\mathfrak{R}_{+}\backslash\bigl(\mathrm{DG}_{i-1}(b_{w}^{d})\cdots\mathrm{DG}_{1}(b_{w}^{d})(\mathfrak{R}_{\mathrm{st}}^{w})\bigr),\label{eq:roots-of-dg}
\end{equation}
so in particular

\[
\mathfrak{R}_{w}\subseteq\mathfrak{R}_{\mathrm{DG}(b_{w}^{d})}\subseteq\mathfrak{R}_{+}\backslash\mathfrak{R}_{\mathrm{st}}^{w}\qquad\textrm{and}\qquad\ell\bigl(\mathrm{DG}_{i}(b_{w}^{d})\bigr)\leq|\mathfrak{R}_{+}\backslash\mathfrak{R}_{\mathrm{st}}^{w}|.
\]
\item Moreover, set
\[
d:=i\big(|\mathfrak{R}_{+}\backslash\mathfrak{R}_{\mathrm{st}}^{w}|-\ell(w)+1\big),
\]
 then $\mathrm{DG}_{i\geq}(b_{w}^{d})$ has ``stabilised'', i.e.\
for any $d'\geq d$ we have 
\[
\mathrm{DG}_{i\geq}(b_{w}^{d'})=\mathrm{DG}_{i\geq}(b_{w}^{d}).
\]
\end{enumerate} \end{propositiona}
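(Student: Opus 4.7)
The plan is to reduce both parts to the base inclusion $\mathfrak{R}_{b_w^d}\subseteq\mathfrak{R}_+\setminus\mathfrak{R}_{\mathrm{st}}^w$ and then exploit the greedy right-divisor structure of the Deligne-Garside normal form. To establish this base inclusion I would induct on $d$ via the factorisation $b_w^d=b_w\cdot b_w^{d-1}$, invoking Lemma \ref{lem:bounding} to propagate inversion sets through products in $\mathrm{Br}^+$: any $b_1,b_2\in\mathrm{Br}^+$ satisfy an inclusion of the form $\mathfrak{R}_{b_1b_2}\subseteq\mathfrak{R}_{b_2}\cup w_{b_2}^{-1}(\mathfrak{R}_{b_1})\cap\mathfrak{R}_+$. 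A stable positive root $\beta$ satisfies $w(\beta)>0$, so $\beta\notin\mathfrak{R}_w$, and the $w$-invariance of $\mathfrak{R}_{\mathrm{st}}^w$ keeps $w^{-1}(\beta)$ stable and positive; hence a stable $\beta\in\mathfrak{R}_{b_w^d}$ would force a stable positive root in $\mathfrak{R}_{b_w^{d-1}}$, contradicting induction.

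For (i), write $b_w^d=B\cdot C$ with $C:=\mathrm{DG}_{(i-1)\geq}(b_w^d)$ and $B:=\delta\mathrm{DG}_m(b_w^d)\cdots\mathrm{DG}_i(b_w^d)$, and set $u_j:=\mathrm{DG}_j(b_w^d)$. The same product formula applied to this factorisation yields
\[
(u_{i-1}\cdots u_1)^{-1}(\mathfrak{R}_B)\cap\mathfrak{R}_+\subseteq\mathfrak{R}_{b_w^d}\subseteq\mathfrak{R}_+\setminus\mathfrak{R}_{\mathrm{st}}^w,
\]
which rearranges to $\mathfrak{R}_B\subseteq\mathfrak{R}_+\setminus(u_{i-1}\cdots u_1)(\mathfrak{R}_{\mathrm{st}}^w)$. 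Since $\mathrm{DG}_i(b_w^d)$ is a simple right-divisor of $B$, its own inversion set is contained in $\mathfrak{R}_B$, establishing (\ref{eq:roots-of-dg}) and the length bound. The complementary inclusion $\mathfrak{R}_w\subseteq\mathfrak{R}_{\mathrm{DG}_1(b_w^d)}$ comes from $b_w^d=b_w^{d-1}\cdot b_w$, showing $b_w$ is a simple right-divisor, hence $b_w\leq\mathrm{DG}_1(b_w^d)$ in the Garside order. For (ii), monotonicity $\mathrm{DG}_{i\geq}(b_w^d)\leq\mathrm{DG}_{i\geq}(b_w^{d+1})$ in right-divisibility (any right-divisor of $b_w^d$ is one of $b_w\cdot b_w^d$, and greedy extraction preserves this) combined with the length bound from (i) forces stabilisation. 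The explicit threshold follows by induction on $i$: at $i=1$, the length of $\mathrm{DG}_1(b_w^d)$ is confined to $[\ell(w),|\mathfrak{R}_+\setminus\mathfrak{R}_{\mathrm{st}}^w|]$ by (i) and the inclusion $\mathfrak{R}_w\subseteq\mathfrak{R}_{\mathrm{DG}_1(b_w^d)}$, so the monotone sequence admits at most $|\mathfrak{R}_+\setminus\mathfrak{R}_{\mathrm{st}}^w|-\ell(w)$ strict increases and stabilises by $d_1:=|\mathfrak{R}_+\setminus\mathfrak{R}_{\mathrm{st}}^w|-\ell(w)+1$. For $i>1$, once $\mathrm{DG}_{(i-1)\geq}$ has stabilised the right-quotient $e_d:=b_w^d/\mathrm{DG}_{(i-1)\geq}(b_w^d)$ obeys the recursion $e_{d+1}=b_w\cdot e_d$; showing that $b_w\leq e_d$ as right-divisor for $d\geq d_{i-1}+1$, the $i=1$ argument applied to $\mathrm{DG}_1(e_d)=\mathrm{DG}_i(b_w^d)$ adds $d_1$ to the stabilisation threshold, producing $d_i=i\cdot d_1$.

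The main obstacle is verifying ``$b_w\leq e_d$ as right-divisor'' for the inductive step of (ii): while $b_w$ manifestly right-divides $b_w^d$, descending this through the quotient by the stabilised $\mathrm{DG}_{(i-1)\geq}$ is not automatic in a non-commutative monoid. I would derive it by combining the stability of $\mathrm{DG}_{(i-1)\geq}$ with the recursion $e_{d+1}=b_w\cdot e_d$ and tracking how right-divisibility behaves under Garside quotients, likely cleanest in the Garside-categorical framework used for Proposition A(ii). The other technical input, Lemma \ref{lem:bounding}, is the essential tool for Step 1 and Step 2: it must extend the familiar length-additive formula for inversion sets in $\tilde W$ to arbitrary positive braids, where word-length does not cancel.
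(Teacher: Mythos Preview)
Your approach to the base inclusion and to part (i) is essentially the paper's: the concatenation formula for root inversion sequences (this is Lemma~\ref{lem:identification-rw}, not Lemma~\ref{lem:bounding} as you cite) gives $\mathfrak{R}_{b_w^d}=\bigcup_{k=0}^{d-1}w^{-k}(\mathfrak{R}_w)$, and then $w$-invariance of $\mathfrak{R}_{\mathrm{st}}^w$ together with $\mathfrak{R}_w\cap\mathfrak{R}_{\mathrm{st}}^w=\varnothing$ yields the inclusion into $\mathfrak{R}\setminus\mathfrak{R}_{\mathrm{st}}^w$ (note: not $\mathfrak{R}_+\setminus\mathfrak{R}_{\mathrm{st}}^w$; for $d\geq2$ the set $\mathfrak{R}_{b_w^d}$ may contain negative roots). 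Part (i) then follows exactly as you say, which is the content of Lemma~\ref{lem:bounding}(ii).

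For part (ii) you have overcomplicated matters, and the ``main obstacle'' you identify is self-inflicted. The paper does \emph{not} induct on $i$; instead it applies pigeonhole directly to the \emph{total} length $\ell\bigl(\mathrm{DG}_{i\geq}(b_w^{d})\bigr)$. The monotonicity you already stated, together with the observation that at $d=i$ one has $\mathrm{DG}_{i\geq}(b_w^i)=b_w^i$ (a product of $i$ simple factors has at most $i$ Deligne--Garside factors), gives a lower bound of $i\,\ell(w)$; the upper bound $i\,|\mathfrak{R}_+\setminus\mathfrak{R}_{\mathrm{st}}^w|$ comes from summing the individual bounds in (i). A nondecreasing integer sequence confined to an interval of width $i\bigl(|\mathfrak{R}_+\setminus\mathfrak{R}_{\mathrm{st}}^w|-\ell(w)\bigr)$ must repeat within that many steps, and once it repeats Corollary~\ref{cor:stabilise} freezes it. This yields the threshold $d=i\bigl(|\mathfrak{R}_+\setminus\mathfrak{R}_{\mathrm{st}}^w|-\ell(w)+1\bigr)$ directly, with no need to establish that $b_w$ right-divides the quotient $e_d$.
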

\begin{example}
Consider $w=s_{3}s_{1}s_{2}s_{1}$ in type $\mathsf{B}_{3}$. Then
$\mathfrak{R}_{\mathrm{st}}^{w}=\mathfrak{R}^{w}=\{\pm\alpha_{23}\}$
and by induction on $d\geq1$ one finds\begin{alignat*}{2}
& \mathrm{DGN}(b_{w}^{2d}) && =b_{32}^{\,}b_{w_{\circ}s_{3}}^{d-1}b_{121321}^{\,},\\
& \mathrm{DGN}(b_{w}^{2d+1}) && =b_{321232}^{\,}b_{w_{\circ}s_{3}}^{d-1}b_{121321}^{\,}.
\end{alignat*} 
\end{example}

\begin{example}
Consider $w=s_{2}s_{4}s_{3}s_{2}s_{1}$ in type $\mathsf{D}_{4}$.
Then $\mathfrak{R}^{w}=\varnothing$ but as $w^{2}(\alpha_{3})=w(\alpha_{4})=\alpha_{3}$
it now immediately follows that 
\[
w_{\circ}s_{4}s_{3}\geq\mathrm{DG}(b_{w}^{d})
\]
for all $d\geq0$. One may compute that this is an equality for $d>1$,
conversely implying that $\mathfrak{R}_{\mathrm{st}}^{w}=\{\alpha_{3},\alpha_{4}\}$.
\end{example}

It is explained in the sequel to this paper \cite{WM-cross} that
convex elements $w$ of twisted finite Weyl groups whose braid lifts
attain the upper bound $\mathfrak{R}_{\mathrm{DG}(b_{w}^{d})}=\mathfrak{R}_{+}\backslash\mathfrak{R}_{\mathrm{st}}^{w}$
for at least one integer $d\geq0$ yield transverse slices in the
corresponding twisted reductive groups, generalising the slices of
He-Lusztig and Sevostyanov. Proposition \ref{prop:closed-complement}
explains that this upper bound is never attained when $w$ is not
convex, so equivalently $w$ should be a convex element satisfying
the braid equation
\begin{equation}
\mathrm{DG}(b_{w}^{d})=\mathrm{pb}(w)\tag{\ensuremath{*}}\label{eq:braid-equation}
\end{equation}
for some $d$; the previous proposition implies that satisfiability
can checked by picking any $d>|\mathfrak{R}_{+}\backslash\mathfrak{R}_{\mathrm{st}}^{w}|-\ell(w)$.

The original aim of this paper was to show that \eqref{braid-equation}
is easily computable and is satisfied by a large number of twisted
finite Coxeter group elements, including those that appear in the
cross section isomorphisms of He-Lusztig and Sevostyanov:
\begin{thm*}
Let $W$ be a twisted finite Coxeter group and pick an element $w$
in $W$.

\begin{enumerate}[\normalfont(i)]

\item If $w$ has an anisotropic braiding sequence of eigenspaces,
then it satisfies (\ref{eq:braid-equation}) when $d\geq\mathrm{ord}(w)$.

\item If $w$ is convex and $w\overset{-}{\sim}w'$ in $W^{\mathrm{cx}}$
for an element $w'$ satisfiying (\ref{eq:braid-equation}) for some
$d$, then $w$ satisfies (\ref{eq:braid-equation}) when 
\[
d>|\mathfrak{R}_{+}\backslash\mathfrak{R}_{\mathrm{st}}^{w}|-\ell(w).
\]
In particular, (\ref{eq:braid-equation}) holds for all maximally
and minimally dominant elements.

\item If $w$ is quasiregular, minimally dominant and lies in $\tilde{W}$
then

\[
b_{w}^{\mathrm{ord}(w)}=b_{\mathrm{pb}(w)^{-1}}b_{\mathrm{pb}(w)^{\phantom{-1}}}\!\!\!\!\!,
\]
and conversely elements in $\tilde{W}$ satisfying this equation are
quasiregular and satisfy $\ell(w)=\ell(\CMcal O_{\mathrm{min}}^{\mathrm{dom}})$.

\item If $W$ is irreducible, then a nontrivial element lying inside
of a parabolic subgroup of lower rank is never convex. In particular,
the following are then equivalent if $w$ is of minimal length in
its conjugacy class:

\begin{enumerate}[(a)]

\item $w$ is trivial or elliptic,

\item $w$ is convex,

\item $w$ is firmly convex and satisfies the braid equation (\ref{eq:braid-equation})
for some (or any)
\begin{equation}
d>|\mathfrak{R}_{+}|-\ell(w).\label{eq:bound}
\end{equation}

\end{enumerate}

\item If $w$ has maximal or minimal length or is minimally dominant,
then the Artin-Tits braid $b_{w}$ is not pseudo-Anosov.

\end{enumerate}
\end{thm*}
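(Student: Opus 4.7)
The plan is to establish part (i) directly from the definitions, promote it to (ii) via cyclic-shift machinery, and then deduce (iii)-(iv) as specialisations before handling (v).

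For part (i), an anisotropic braiding sequence $\underline{\Theta}$ for $w$ satisfies $V_\Theta = V_w$ and $\mathfrak{H}_\Theta \subseteq \mathfrak{H}^w$, so by the implications recorded before the preceding Lemma, $w$ is dominant and firmly convex. Every non-stable positive root is mapped to a negative root by some $w^k$ with $k \leq \mathrm{ord}(w)$, so Lemma \ref{lem:bounding} yields the inclusion $\mathfrak{R}_+ \setminus \mathfrak{R}^w_{\mathrm{st}} \subseteq \mathfrak{R}_{b_w^{\mathrm{ord}(w)}}$; together with $\mathfrak{R}_{\mathrm{DG}(b_w^d)} \subseteq \mathfrak{R}_+ \setminus \mathfrak{R}^w_{\mathrm{st}}$ from Proposition \ref{prop:dg-bound}(i), this forces the braid equation. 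For part (ii), a single cyclic shift $w \overset{-}{\sim} w' = sws$ preserves the stable root set by Proposition \ref{prop:mixed-shift}(i), and hence $\mathrm{pb}$ is transported consistently under conjugation by $s$; using the stabilisation of $\mathrm{DG}(b_w^d)$ from Proposition \ref{prop:dg-bound}(ii), which kicks in past $d > |\mathfrak{R}_+ \setminus \mathfrak{R}^w_{\mathrm{st}}| - \ell(w)$, a careful tracking of DG factors through the shift propagates the braid equation from $w'$ to $w$, and iteration along the chain of shifts handles the general case. The ``in particular'' assertion then combines this with Proposition \ref{prop:shifts-braiding}(iii) -- which supplies cyclic shift equivalence of maximally/minimally dominant elements -- and the observation that any such class contains a representative admitting an anisotropic braiding sequence of eigenspaces (realised by choosing a regular point of $V_w$ in the closure of the dominant Weyl chamber).

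For part (iii), a length count combines formula (\ref{eq:length-formula}) with the preceding Lemma's lower bound on $\ell(\CMcal O^{\mathrm{dom}}_{\mathrm{min}})$: equality characterises quasiregularity and yields $\mathrm{ord}(w)\ell(w) = 2\ell(\mathrm{pb}(w))$, which together with $\mathrm{DG}(b_w^{\mathrm{ord}(w)}) = \mathrm{pb}(w)$ from (ii) pins down $b_w^{\mathrm{ord}(w)}$: the rightmost DG factor is $b_{\mathrm{pb}(w)}$, and the remaining braid has length $\ell(\mathrm{pb}(w)^{-1})$ while projecting to $\mathrm{pb}(w)^{-1} \in \tilde{W}$, so by uniqueness of reduced braid lifts (Matsumoto's theorem) it must equal $b_{\mathrm{pb}(w)^{-1}}$; the converse follows immediately from reading off lengths. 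For the first assertion of (iv), in irreducible $W$ any element $w$ of a proper standard parabolic $W_J$ preserves the coefficient of each $\alpha_k$ with $k \notin J$ in every root expansion, so all roots with such coefficients nonzero are stable; in particular the highest root $\tilde{\alpha}$ lies in $\mathfrak{R}^w_{\mathrm{st}}$, so if this set were a standard parabolic subsystem it would have to equal all of $\mathfrak{R}$ -- contradicting $\mathfrak{R}_w \neq \varnothing$ for nontrivial $w$. For the equivalences under minimal length, (a)$\Rightarrow$(b)$\Rightarrow$(c) follow from the implications recorded after Corollary \ref{cor:dom} combined with part (ii), while (b)$\Rightarrow$(a) uses the first claim of (iv) together with Geck-Pfeiffer's result that a minimal length non-elliptic conjugacy class admits a representative in a proper standard parabolic subgroup.

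For part (v), reducibility or periodicity is extracted from the factorisations above. In the elliptic minimally-dominant quasiregular case, (iii) gives $b_w^{\mathrm{ord}(w)} = \Delta^2$ (noting $\mathrm{pb}(w) = w_\circ$ is an involution), forcing $b_w$ to be periodic; otherwise the braid equation $\mathrm{DG}(b_w^d) = \mathrm{pb}(w) = w_\circ w_{\mathrm{st}}$ exhibits a nontrivial $W_{\mathrm{st}}$-invariant sub-parabolic structure making $b_w$ reducible in the conjectural Nielsen-Thurston classification. For minimal length non-elliptic $w$, Geck-Pfeiffer's parabolic representative places $b_w$ up to conjugacy inside a proper sub-Artin-Tits group and hence renders it reducible; the maximal length case is dual. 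The main obstacle will be making part (ii)'s inductive step genuinely rigorous -- one must verify that a single cyclic shift propagates the braid equation without introducing spurious DG factors beyond the bound of Proposition \ref{prop:dg-bound}(i) -- together with framing the classification in (v) carefully in light of its conjectural status outside type $\mathsf{A}$.
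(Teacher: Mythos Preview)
Your argument for part (i) contains a genuine gap. You establish the inclusion $\mathfrak{R}_{+}\setminus\mathfrak{R}_{\mathrm{st}}^{w}\subseteq\mathfrak{R}_{b_{w}^{\mathrm{ord}(w)}}$ and combine it with the upper bound $\mathfrak{R}_{\mathrm{DG}(b_{w}^{d})}\subseteq\mathfrak{R}_{+}\setminus\mathfrak{R}_{\mathrm{st}}^{w}$, but these two inclusions do \emph{not} force $\mathfrak{R}_{\mathrm{DG}(b_{w}^{d})}=\mathfrak{R}_{+}\setminus\mathfrak{R}_{\mathrm{st}}^{w}$: the set $\mathfrak{R}_{b}$ of a braid does not determine its principal Deligne--Garside factor. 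In fact your argument never uses the braiding-sequence hypothesis beyond establishing firm convexity, so if valid it would prove that \emph{every} firmly convex element satisfies (\ref{eq:braid-equation}) at $d=\mathrm{ord}(w)$. The paper's own example immediately following the proof refutes this: the elliptic element $w=xyx^{-1}$ in $\mathsf{D}_{6}$ (with $y=s_{5}s_{4}s_{3}s_{2}s_{1}s_{6}s_{4}s_{3}s_{2}s_{1}$ and $x=s_{2}s_{3}s_{4}$) has $\mathfrak{R}_{\mathrm{st}}^{w}=\varnothing$, order $6$, and thus $\mathfrak{R}_{b_{w}^{6}}=\mathfrak{R}$, yet $\mathrm{DG}(b_{w}^{6})=w_{\circ}s_{2}\neq w_{\circ}$. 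The paper instead proves (i) via Proposition~\ref{prop:braiding-dgn} and Corollary~\ref{cor:anisotropic-braid}, which explicitly compute $\mathrm{DGN}(b_{w}^{d})$ by inducting on rank through the factorisation $w=xy$ of Lemma~\ref{lem:factorise}; the braiding sequence is used essentially at every step of that induction.

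This gap propagates into (iii): you invoke $\mathrm{DG}(b_{w}^{\mathrm{ord}(w)})=\mathrm{pb}(w)$ ``from (ii)'', but (ii) only yields the braid equation for $d>|\mathfrak{R}_{+}\setminus\mathfrak{R}_{\mathrm{st}}^{w}|-\ell(w)$, and this bound can exceed $\mathrm{ord}(w)$. The paper circumvents this by first applying Proposition~\ref{prop:braiding-dgn} to a specific conjugate whose quasiregular eigenspace furnishes a length-one braiding sequence, obtaining the identity $b_{w'}^{\mathrm{ord}(w)}=b_{\pm\mathrm{pb}(w')}$ there, and then transporting the \emph{full} identity (not just the principal factor) along cyclic shifts via both parts of Lemma~\ref{lem:move-power-bound}. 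Your sketch of (ii) is closer to the paper's approach but omits the key ingredient, namely the braid identity $b_{\mathrm{pb}(w')}b_{x}=b_{x'}b_{\mathrm{pb}(w)}$ of Lemma~\ref{lem:move-power-bound}(i), which is what makes the ``careful tracking'' work. Finally, your treatment of (v) is too loose: knowing $\mathrm{DG}(b_{w}^{d})=w_{\circ}w_{\mathrm{st}}$ does not by itself exhibit a normalised parabolic, and the case split into quasiregular/otherwise misses elliptic non-quasiregular elements. The paper instead appeals to Corollary~\ref{cor:decreasing-decomp} to show that $b_{w}^{\mathrm{ord}(w)}$ centralises a nontrivial parabolic submonoid for a representative with a decreasing braiding sequence, and then transfers this along the cyclic/strong-shift class.
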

\begin{rem}
After replacing $\mathfrak{R}_{v^{\,}}$ by $\mathfrak{R}_{v^{-1}}$
and the operators in (\ref{eq:roots-of-dg}) by their inverses, the
exact same statements are true for the left Deligne-Garside normal
form.
\end{rem}

Proposition \ref{prop:braiding-dgn} explicitly computes the (right)
Deligne-Garside normal form of $b_{w}^{\mathrm{ord}(w)}$ for any
twisted finite Coxeter group element $w$ that has a braiding sequence
of eigenspaces. Some additional properties of this normal form (that
require different techniques) are proven in the sequel \cite{WM-cross}. 

This paper and its sequel indicate that conjugacy classes in reductive
groups appear to be more closely related to minimally dominant elements
than to minimal length elements \cite[Corollary B]{WM-cross}. This
suggests that it might be more natural to construct Lusztig's inverse
to the Kazhdan-Lusztig map \cite{MR2833465} and its refinement in
\cite{MR3495802} from minimally dominant elements instead of minimal
length elements; the end result would remain unchanged, according
to the following
\begin{conjecture*}
Suppose $W$ is the Weyl group of a maximal torus contained in a Borel
subgroup $B$ of a twisted reductive group over an algebraically closed
field. Let $w$ be a minimal length element in $W$ and let $w'$
a minimally dominant element in the conjugacy class of $w$. Then
for any conjugacy class $C$ of this reductive group, 
\[
C\cap BwB\neq\varnothing\qquad\textrm{if and only if}\qquad C\cap Bw'B\neq\varnothing.
\]
\end{conjecture*}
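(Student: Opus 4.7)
The plan is to show that both sides of the desired equivalence coincide with Lusztig's stratum $\mathcal{S}_{[w]}$ of $G$-conjugacy classes attached to the common Weyl class $[w]$; factoring through this common intermediate then yields the conjecture. Combined with the classical reduction to a single pair of representatives — minimal length elements of $[w]$ form one strong conjugacy class (Geck--Pfeiffer), and by the Lemma~(ii) minimally dominant elements form one cyclic shift class, with both types of elementary shifts preserving the $G$-conjugacy class intersection pattern of Bruhat cells via the standard calculus $BsB\cdot BxB\subseteq BxB\cup BsxsB$ — this reduces the question to a single equality on each side.

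On the minimally dominant side, the Theorem~(ii) gives that $w'$ satisfies the braid equation~\eqref{braid-equation}, which by the sequel \cite{WM-cross} promotes $Bw'B$ to contain a strictly transverse slice $\Sigma_{w'}$ meeting every $G$-conjugacy class in $\mathcal{S}_{[w]}$. Strict transversality moreover implies that $\Sigma_{w'}$, and hence $Bw'B$, meets no $G$-class outside $\mathcal{S}_{[w]}$, yielding $\{C : C\cap Bw'B\neq\varnothing\}=\mathcal{S}_{[w]}$. On the minimal length side, when $[w]$ is elliptic He--Lusztig \cite{MR2904572} construct an analogous cross section inside $BwB$ meeting every class of $\mathcal{S}_{[w]}$, and Lusztig's stratum description \cite{MR3495802} together with the Kazhdan--Lusztig map \cite{MR2833465} supplies the reverse inclusion; the non-elliptic case is then handled by parabolic induction from the underlying elliptic class in the corresponding Levi subgroup.

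The main obstacle is exactly this reverse inclusion on each side — the assertion that the Bruhat cell does not accidentally meet a $G$-class outside $\mathcal{S}_{[w]}$. On the minimally dominant side this is the ``strict'' in strict transversality, which should be the main technical output of \cite{WM-cross}; on the minimal length side it is an identification of Lusztig's map with the combinatorics of Bruhat cells spread across \cite{MR3495802,MR2904572,MR2833465}, and its extension beyond distinguished classes requires care with the Levi reduction. A complementary route I would pursue in parallel, as an internal cross-check that avoids going through the stratum, is to link $w$ and $w'$ directly by a shift chain: Proposition~\ref{prop:shifts-braiding}(i) applied with $\Theta$ the set of non-trivial eigenvalues of $w$ produces ascending chains $w\overset{+}{\leadsto}w_{\mathrm{max}}$ and $w'\overset{+}{\leadsto}w_{\mathrm{max}}$ inside $\mathcal{O}^{\mathrm{dom}}$, along which the length-additive factorisations $\ell(\tau x)=\ell(\tau)+\ell(x)$ built into the definition of $\leadsto$ give $B\tau B\cdot BxB=B\tau xB$, and hence preservation of the set of intersecting $G$-conjugacy classes at each step; the delicate point here is parabolic bookkeeping to ensure that the conjugators $\tau_j$ produced by the proposition actually admit such additive factorisations at every stage of the chain.
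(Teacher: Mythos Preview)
This statement is a \emph{conjecture} in the paper, not a theorem: the paper does not prove it unconditionally but reduces it (in \S\ref{subsec:Bruhat}) to the auxiliary Conjecture~\ref{conj:intersect-bruhat}. So any purported complete proof would go beyond what the paper establishes.

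Your first route, factoring through Lusztig's stratum $\mathcal{S}_{[w]}$, has a gap you partly flag but understate. Even granting that the slice $\Sigma_{w'}\subseteq Bw'B$ is strictly transverse to the classes in $\mathcal{S}_{[w]}$, this says nothing about the rest of $Bw'B$: the slice being contained in $\overline{\mathcal{S}_{[w]}}$ does not force the full Bruhat cell to be. The assertion ``$\Sigma_{w'}$, and hence $Bw'B$, meets no $G$-class outside $\mathcal{S}_{[w]}$'' is precisely the hard direction and does not follow from transversality alone. On the minimal-length side the identification $\{C:C\cap BwB\neq\varnothing\}=\mathcal{S}_{[w]}$ is essentially the content of Lusztig's construction, so invoking it here is close to circular for the purpose the paper has in mind (namely giving an \emph{alternative} construction via minimally dominant elements).

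Your second route, linking $w$ and $w'$ through a common $w_{\max}$, has a more basic problem: ascending chains $w\overset{+}{\rightarrow}w_{\max}$ only give the one-way implication $C\cap BwB\neq\varnothing\Rightarrow C\cap Bw_{\max}B\neq\varnothing$ (this is \cite[Proposition 3.4]{MR2042932}), not an equivalence. Running both $w$ and $w'$ up to the top therefore yields two implications pointing the same way, which cannot be combined into the desired biconditional. The difficulty is precisely in coming back \emph{down}, and this is where the paper's approach differs: Corollary~\ref{cor:minimally-dominant-to-minimal} constructs a descending path from the minimally dominant $w'$ to a minimal-length element along which every simple conjugator satisfies the geometric condition $\alpha_{i_j}\notin V_{w_j}$. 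It is for shifts of this restricted type that the paper conjectures the full equivalence (Conjecture~\ref{conj:intersect-bruhat}); the remaining implications are then handled by \cite[Proposition 3.4]{MR2042932} and strong-conjugacy invariance. Your proposal does not isolate this geometric condition, which is the paper's main new ingredient in this section.
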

My approach to this problem is outlined in §\ref{subsec:Bruhat}.
\begin{rem}
Here are some other open problems: 

\begin{enumerate}[(i)]

\item Does Proposition \ref{prop:mixed-shift}(ii) extend to all
twisted Coxeter groups?

\item Generalise Proposition \ref{prop:shifts-braiding} and the
last part of Proposition \ref{prop:involutions} to standard parabolic
subgroups of arbitrary rank (as in \cite{MR3045154}).

\item Fix a classical type ($\mathsf{A}$, $\mathsf{B}$, $\mathsf{C}$
or $\mathsf{D}$) and pick a natural number $n$. Is the number of
\[
\textrm{convex/firmly convex/dominant/minimally-dominant/(}\ref{eq:braid-equation}\textrm{)-satisfying}
\]
elements of reflection length $n$ independent of $\mathrm{rank}(W)$
when $\mathrm{rank}(W)$ is sufficiently large?

\item Suppose $w$ is firmly convex and has maximal length, does
there exist a dominant element $w'$ such that $w\leftrightarrow w'$
in $W^{\mathrm{cx}}$?

\item Given two firmly convex elements $w,w'$ in a (parabolic) $\tilde{W}$-orbit
$\CMcal O$ such that $w\overset{-}{\sim}w'$ in $\CMcal O$, is it
true that $w\overset{-}{\sim}w'$ in $\CMcal O^{\mathrm{cx}}$?

\item Suppose that $w$ is firmly convex and satisfies the braid
equation (\ref{eq:braid-equation}). Does there exist a dominant element
$w'$ such that $w\overset{-}{\sim}w'$ in $W^{\mathrm{cx}}$? (If
so, this would imply that $\ell(w)\geq\ell(\CMcal O_{\mathrm{min}}^{\mathrm{dom}})$.
For the conjugacy classes appearing in Sevostyanov's case-by-case
work \cite{MR3883243}, that inequality follows from combining his
calculations, this work and the main theorem of the sequel \cite{WM-cross}.)

\end{enumerate}

\end{rem}

\paragraph{Acknowledgements}

 Some of the main results of this paper owe a large intellectual
debt to the ideas of the paper \cite{MR2999317}, and it is my pleasure
to thank the authors here for their highly original work. Furthermore,
I am grateful to Dominic Joyce, Balázs Szendr\H{o}i and the Mathematical
Institute of the University of Oxford for an excellent stay, where
some of this paper was written. This visit was supported by the Centre
for Quantum Geometry of Moduli Spaces at Aarhus University and the
Danish National Research Foundation. This work was also supported
by EPSRC grant EP/R045038/1, and some of the computations in this
paper were done with SageMath \cite{sagemath}.\footnote{The corresponding code is available upon request.}

\section{\label{sec:cyclic-shifts}Shifts and sequences of eigenspaces}

The main goal of this section is to prove Proposition \ref{prop:shifts-braiding}.
Before we commence, let us prove now that our definition of ``good
position'' (in Definition \ref{def:sequence-eigenspaces}) is equivalent
to the one provided in \cite[§5.2]{MR2999317}:
\begin{notation}
Given a set of hyperplanes $\mathfrak{H}'$ in a reflection representation
$V$, we write 
\[
V\backslash\mathfrak{H}':=V\backslash\bigcup_{\mathfrak{h}\in\mathfrak{H}'}\mathfrak{h}.
\]
\end{notation}

\begin{prop}
\label{prop:good-position} Let $w$ be an element of a twisted finite
Coxeter group with a sequence $\underline{\Theta}=(V_{m},\ldots,V_{1})$
of eigenspaces in the reflection representation $V$. Let $F_{i}:=\sum_{j=1}^{i}V_{j}$
denote corresponding filtration of $V$, as in Definition \ref{def:sequence-eigenspaces}.

Then this sequence $\underline{\Theta}$ is in good position w.r.t.\
the dominant Weyl chamber $C$, if and only if for each integer $i\in\{1,\ldots,m\}$
the closure of the connected component of $C$ in $V\backslash\mathfrak{H}_{F_{i-1}}$
contains a regular point of the subspace $V_{i}$.
\end{prop}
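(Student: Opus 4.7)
The plan is to prove both implications by exploiting that Definition~\ref{def:sequence-eigenspaces} makes $V=\bigoplus_{\lambda,k}V_{\lambda,k}^{w}$ an orthogonal sum, so each $F_{i}=F_{i-1}\oplus V_{i}$ is an orthogonal direct sum. As a preliminary I would identify the connected component $C_{i-1}$ as the chamber
\[
C_{i-1}=\{v\in V:\alpha(v)>0\text{ for every }\alpha\in\mathfrak{R}_{+}\text{ with }\alpha\perp F_{i-1}\}
\]
of the sub-arrangement $\mathfrak{H}_{F_{i-1}}$, so that $u_{i}\in\overline{C_{i-1}}$ amounts to $\alpha(u_{i})\geq0$ for those roots $\alpha$. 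The converse implication is elementary, but the forward one hides a subtlety: the naive projection candidate $u_{i}:=v_{i}''$ obtained from a regular point $v_{i}\in\overline{C}$ of $F_{i}$ lands in $\overline{C_{i-1}}$ but need not be regular for $V_{i}$, which will be the main obstacle.

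For the converse I would argue inductively, starting with the basepoint $v_{0}:=0\in\overline{C}$, which is regular for $F_{0}=\{0\}$. Given $v_{i-1}\in\overline{C}$ regular for $F_{i-1}$ and a regular point $u_{i}\in\overline{C_{i-1}}$ of $V_{i}$, I would set $v_{i}:=v_{i-1}+\epsilon u_{i}\in F_{i}$ with $\epsilon>0$ taken smaller than $\alpha(v_{i-1})/|\alpha(u_{i})|$ for each of the finitely many positive roots $\alpha\not\perp F_{i-1}$ with $\alpha(u_{i})<0$. The check that $v_{i}\in\overline{C}$ and is regular for $F_{i}$ then splits into two cases: when $\alpha\perp F_{i-1}$ the identity $\alpha(v_{i})=\epsilon\alpha(u_{i})$ is non-negative, and vanishes iff $\alpha\perp V_{i}$ by regularity of $u_{i}$, iff $\alpha\perp F_{i}$; when $\alpha\not\perp F_{i-1}$ the strictly positive value $\alpha(v_{i-1})>0$ dominates for small $\epsilon$.

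For the forward direction the candidate $u_{i}:=v_{i}''$, where $v_{i}=v_{i}'+v_{i}''$ is the orthogonal decomposition with $v_{i}'\in F_{i-1}$ and $v_{i}''\in V_{i}$, always satisfies $\alpha(v_{i}'')=\alpha(v_{i})\geq0$ for $\alpha\in\mathfrak{R}_{+}\cap F_{i-1}^{\perp}$ and hence lies in $\overline{C_{i-1}}$; but for a root $\alpha$ perpendicular to neither $V_{i}$ nor $F_{i-1}$ the quantity $\alpha(v_{i}'')=\alpha(v_{i})-\alpha(v_{i}')$ can vanish even though $\alpha(v_{i})\neq0$, so $v_{i}''$ need not be regular for $V_{i}$. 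To overcome this I would first observe that $\overline{C}\cap F_{i}$ is full-dimensional in $F_{i}$ at every regular point $v$ of $F_{i}$: positive roots with $\alpha(v)>0$ remain positive in a small $F_{i}$-neighbourhood of $v$ by continuity, while those with $\alpha(v)=0$ must satisfy $\alpha\perp F_{i}$ by regularity and hence vanish identically on $F_{i}$. The ``bad'' set of $v_{i}$ in such a neighbourhood whose projection $v_{i}''$ lies in some $\alpha^{\perp}\cap V_{i}\subsetneq V_{i}$ (for $\alpha\not\perp V_{i}$) is a finite union of proper linear subspaces of $F_{i}$, hence has empty interior; a generic choice in its complement yields a $v_{i}$ whose projection $v_{i}''$ is regular for $V_{i}$, furnishing the desired $u_{i}$.
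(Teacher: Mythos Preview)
Your proof is correct. The converse direction matches the paper's argument essentially word for word: both form $v_i$ as $v_{i-1}$ plus a small multiple of $u_i$ and split the verification over positive roots according to whether they are orthogonal to $F_{i-1}$.

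For the forward direction, however, you take a genuinely different route. The paper argues by induction on the length of the sequence and invokes an auxiliary cone lemma: it takes a product neighbourhood $U_{m-1}\oplus U_m\subseteq\overline{C}\cap F_m$ and \emph{translates} it by a large vector of the cone $F_{m-1}\cap\overline{C}$ so that the first factor is pushed into $\overline{C}$, after which the projection to $V_m$ automatically lands in $\overline{C_{m-1}}$. Your approach instead works directly at a single regular point $v_i\in\overline{C}$ of $F_i$, notes that $\overline{C}\cap F_i$ is full-dimensional there, and observes that the condition ``$\pi_{V_i}(v_i)$ is non-regular in $V_i$'' cuts out finitely many proper hyperplanes of $F_i$; a generic perturbation avoids them. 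This is more elementary in that it needs neither the cone translation lemma nor the inductive hypothesis (each index $i$ is handled independently), while the paper's translation argument is perhaps more explicit about where the point actually lands. Both exploit the same underlying fact, namely that $\overline{C}\cap F_i$ is open in $F_i$ near a regular point.
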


\begin{proof}
We induct on the length of the sequence: we assume that it is true
$<m$ and apply the induction hypothesis to the shortened sequence
$(V_{m-1},\ldots,V_{1})$, which is in good position w.r.t.\ $C$.
It then suffices to show that if the closure of the dominant Weyl
chamber $C$ contains a regular point of $F_{m-1}$, then it contains
a regular point of $F_{m}=F_{m-1}\oplus V_{m}$ if and only if the
closure of the connected component of $C$ in $V\backslash\mathfrak{H}_{F_{m-1}}$
contains a regular point of the subspace $V_{m}$. Let $v_{m-1}$
be a regular point of $F_{m-1}$ in $\overline{C}$, so $v_{m-1}=\sum_{i=1}^{\mathrm{rk}}c_{i}\omega_{i}$
with each $c_{i}\geq0$. Let $J$ denote the set of indices such that
$c_{j}=0$, or equivalently, the indices of simple roots whose hyperplanes
lie in $\mathfrak{H}_{F_{m-1}}$.

$\Leftarrow:$ By assumption, $V_{m}$ has a regular point $v_{m}=\sum_{i=1}^{\mathrm{rk}}c_{i}'\omega_{i}$
with $c_{i}'\geq0$ when $i$ lies in $J$. If a hyperplane does not
contain both $v_{m-1}$ and $v_{m}$ (and hence $F_{m}$), then it
only contains one linear combination of them (up to scalar); hence
by finiteness of the number of hyperplanes, there exists a point $v:=\varepsilon v_{m-1}+(1-\varepsilon)v_{m}$
for $\varepsilon$ positive but arbitrarily close to zero, which is
a regular point of $F_{m}$. But then
\[
\alpha_{i}(v)\begin{cases}
\approx\alpha_{i}(v_{m})>0 & \textrm{if }i\notin J,\\
=\varepsilon\alpha_{i}(v_{m-1})\geq0 & \textrm{if }i\in J,
\end{cases}
\]
 so $v$ still lies in the closure of the dominant Weyl chamber.

$\Rightarrow:$ By assumption $\overline{C}$ contains an open subset
of $F_{m}=F_{m-1}\oplus V_{m}$ consisting of regular points; we may
assume it is of the form $U_{m-1}\oplus U_{m}$ for bounded open subsets
$U_{m-1}$ of $F_{m-1}$ and $U_{m}$ of $V_{m}$. Using that $\overline{C}$
also contains an open subset of $F_{m-1}$, the next lemma (with $V':=F_{m-1}$
and $C':=F_{m-1}\cap\overline{C}$) implies that we may actually assume
that $U_{m-1}$ lies inside of $F_{m-1}\cap\overline{C}$: it translates
by a point of $F_{m-1}\cap\overline{C}\subseteq\overline{C}$, which
keeps $U_{m-1}\oplus U_{m}$ in $\overline{C}$. It now follows that
there is a regular point $v_{m-1}$ of $F_{m-1}$ lying in $\overline{C}$
such that for some regular $v_{m}$ in $V_{m}$ their sum $v=v_{m-1}+v_{m}$
is a regular point of $F_{m}$ lying in $\overline{C}$. If $v_{m}=\sum_{i=1}^{\mathrm{rk}}c_{i}'\omega_{i}$
does not satisfy the requirements, so $c_{i}'<0$ for some $i$ in
$J$, then $\alpha_{i}(v)=\alpha_{i}(v_{m})<0$ which contradicts
that $v$ lies in $\overline{C}$.
\end{proof}
\begin{lem}
Let $V'$ be a real vector space. If a convex cone $C'\subseteq V'$
contains an open subset of $V'$, then for any bounded open subset
$U$ of $V'$ there exists a vector $v$ in $C'$ such that $v+U\subset C'$.
\end{lem}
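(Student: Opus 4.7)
The plan is to use the scaling property of the cone to translate a fixed interior ball of $C'$ so far out along its axis that it engulfs $U$. Concretely, I would first fix a metric on $V'$ and pick a point $c_{0}$ in the interior of $C'$ together with a radius $r > 0$ such that the open ball $B(c_{0},r)$ is contained in $C'$; such data exists by the hypothesis that $C'$ contains an open subset of $V'$. Since $U$ is bounded, I may also pick $R > 0$ with $U \subseteq B(0,R)$.

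Next, I would exploit the defining properties of a convex cone: for any $\lambda > 0$, the scaled set $\lambda B(c_{0},r) = B(\lambda c_{0},\lambda r)$ is still contained in $C'$, since $\lambda C' \subseteq C'$. Choosing $\lambda$ large enough that $\lambda r > R$, and setting $v := \lambda c_{0}$, for every $u \in U$ one has $\|u\| < R < \lambda r$, so $v + u \in B(v,\lambda r) \subseteq C'$. Hence $v + U \subseteq C'$, as required, and by construction $v$ itself lies in $C'$.

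There is no substantial obstacle here: the only place where convexity plays any role is in guaranteeing that the open set contained in $C'$ may be taken to be a ball (or, more pedantically, that the scaled balls remain inside $C'$, which follows purely from the cone property). In fact the statement holds for any set with nonempty interior that is closed under multiplication by sufficiently large positive scalars, so this is really a lemma about pointed interiors of cones rather than convexity per se.
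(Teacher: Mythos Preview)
Your argument is correct and is essentially identical to the paper's own proof: pick a ball inside $C'$, scale it by a large positive scalar using the cone property until its radius exceeds a bound on $U$, and take $v$ to be the center of the scaled ball. Your closing remark that convexity is not really needed (only the cone property and nonempty interior) is accurate and matches what the paper's proof actually uses.
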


\begin{proof}
Since the convex cone contains an open subset, it contains a ball.
By multiplying with scalars, we can make the ball arbitrarily large,
so $C'$ also contains a ball the same size as a ball around the origin
containing $U$; now let $v$ be the translate of the origin to the
centre of this ball.
\end{proof}
The following lemma will be used several times throughout this paper:
\begin{defn}
If $w,x,y$ are elements of a twisted Coxeter group such that $b_{w}=b_{x}b_{y}$
(or equivalently, $w=xy$ and $\ell(w)=\ell(x)+\ell(y)$), then we
say that $w=xy$ is a \emph{reduced decomposition}. 
\end{defn}

\begin{notation}
When the conjugation action of an element $x$ permutes the simple
reflections of a standard parabolic subgroup $W'$, we write $\delta_{x}$
for the corresponding twist of $W'$.

Given two Weyl chambers $C,C'$ and a set of root hyperplanes $\mathfrak{H}'$,
we denote by $\mathfrak{H}'(C,C')$ the subset of hyperplanes in $\mathfrak{H}'$
separating $C$ from $C'$.
\end{notation}

\begin{lem}
\label{lem:factorise} Let $W$ be a twisted finite Coxeter group
and let $w$ be an element of $W$.

\begin{enumerate}[\normalfont(i)]

\item For any standard parabolic subgroup $W'\subseteq\tilde{W}$
preserved by $w$ under conjugation, the element $w$ admits a unique
reduced decomposition
\[
w=xy
\]
 into an element $x$ in $W$ and an element $y$ in $W'$, such that
$x$ is a minimal length \emph{double} coset representative with respect
to $W'$ and permutes its simple roots; thus the action of $x$ on
$V_{W'}$ agrees with $\delta_{x}$.

\item If the simple roots of $W'$ are all fixed by $w$, then $y$
is the identity element.

\end{enumerate}

Now let $\underline{\Theta}=(V_{m},\ldots,V_{1})$ be a sequence of
eigenspaces for $w$ such that the dominant Weyl chamber $C$ is in
good position.

\begin{enumerate}[\normalfont(i)]

\setcounter{enumi}{1}

\item Factorise the element $w=xy$ as in (i) for $W':=\tilde{W}_{V_{1}}$.
Then 
\[
\underline{\Theta}':=(V_{m}\cap V_{W'},\ldots,V_{1}\cap V_{W'})
\]
 is a sequence of eigenspaces (throwing away trivial ones) for the
element $\delta_{x}y$ of the twisted finite Coxeter group $\<\delta_{x}\>\ltimes W'$,
such that the dominant Weyl chamber for $\<\delta_{x}\>\ltimes W'$
is in good position.

\item Now factorise $w=xy$ as in (i) for $W'=\tilde{W}_{\Theta}$.
Then these elements have length
\[
\ell(x)=2\sum_{i=1}^{m}\theta_{i}\bigl|\mathfrak{H}_{F_{i-1}}\backslash\mathfrak{H}_{F_{i}}\bigr|,\qquad\ell(y)=\mathfrak{H}_{\Theta}\bigl(C,w(C)\bigr).
\]

\end{enumerate}
\end{lem}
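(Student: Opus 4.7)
The plan is to address the four parts in order, using standard parabolic subgroup theory for the combinatorial statements and geometric arguments for the rest.

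For part (i), I would invoke the standard fact that every $w\in W$ admits a unique reduced decomposition $w=xy$ with $y\in W'$ and $x$ a minimum-length left coset representative for $W/W'$, characterised by $x({}_{W'}\mathfrak{R}_{+})\subseteq\mathfrak{R}_{+}$. The hypothesis $wW'w^{-1}=W'$ then gives $xW'x^{-1}=W'$, so $x$ conjugates $W'$ to itself and permutes its roots up to sign. Combined with the positivity, this forces $x({}_{W'}\mathfrak{R}_{+})\subseteq{}_{W'}\mathfrak{R}_{+}$, and since $x$ restricted to $V_{W'}$ is a linear isometry sending every positive root to a positive root, it permutes the simple roots; the action of $x$ on $V_{W'}$ then coincides with the twist $\delta_{x}$, giving a minimal double coset representative. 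Part (ii) follows immediately: if $w$ fixes every simple root of $W'$, then $w(\alpha)=xy(\alpha)=\alpha$ gives $y(\alpha)=x^{-1}(\alpha)\in{}_{W'}\mathfrak{R}_{+}$ for each simple root $\alpha$ of $W'$, and since an element of the Coxeter group $W'$ sending every simple root to a positive root is forced to be the identity, we conclude $y=e$.

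For part (iii), I apply (i) with $W':=\tilde{W}_{V_{1}}$, noting that $w(V_{1})=V_{1}$ forces $w$ to normalise $W'$. The key identification is that on the $w$-stable subspace $V_{W'}$, the element $w=xy$ acts as $\delta_{x}y$, since $x$ acts there as $\delta_{x}$ and $y\in W'$ acts via its Coxeter representation. Each real eigenspace $V_{i}$ is $w$-invariant, as is $V_{W'}$, hence $V_{i}\cap V_{W'}$ is $w$-invariant; checking the real eigenspace relation $(w+w^{-1})(v)=2\cos(2\pi\theta_{i})v$ for $v$ in this intersection verifies it is a real eigenspace of $\delta_{x}y$. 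For $i=1$ one has $V_{1}\subseteq V^{W'}=V_{W'}^{\perp}$, so $V_{1}\cap V_{W'}=0$ is discarded. For the good-position condition with respect to the dominant Weyl chamber $C\cap V_{W'}$ of $\langle\delta_{x}\rangle\ltimes W'$, I would construct regular points of the filtration $F_{i}':=\sum_{j\leq i}V_{j}\cap V_{W'}$ by orthogonally projecting the given regular points $v_{i}\in\overline{C}$ of $F_{i}$ onto $V_{W'}$; the projection lies in $\overline{C\cap V_{W'}}$ because $W'$-simple roots lie in $V_{W'}$ and so are unchanged under projection, and comparing $W'$-stabilisers of the projection with $\tilde{W}$-stabilisers of $v_{i}$ identifies the relevant root hyperplanes.

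For part (iv), both length formulas follow from counting root hyperplanes. For $\ell(y)$, a hyperplane $\alpha^{\perp}\in\mathfrak{H}_{\Theta}$ separates $C$ from $w(C)$ iff $\alpha$ is a positive $W'$-root inverted by $w^{-1}$, and since $x^{-1}$ permutes ${}_{W'}\mathfrak{R}_{+}$, these inversions are in natural bijection with the inversions of $y^{-1}$, giving $|\mathfrak{H}_{\Theta}(C,w(C))|=\ell(y)$. For $\ell(x)=\ell(w)-\ell(y)$, the positive roots outside ${}_{W'}\mathfrak{R}_{+}$ are stratified by an index $i$, namely the unique integer with $\alpha^{\perp}\in\mathfrak{H}_{F_{i-1}}\backslash\mathfrak{H}_{F_{i}}$. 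Since $w$ rotates $V_{i}$ by angle $2\pi\theta_{i}$ and the dominant Weyl chamber is in good position with respect to $\underline{\Theta}$, one then argues that exactly the fraction $2\theta_{i}$ of the roots at index $i$ are inverted by $w^{-1}$, which yields the claimed sum; this is most naturally proven by induction on the sequence length $m$ via the reduction furnished by part (iii).

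The main obstacle I expect is the good-position transfer in part (iii): one must verify that the orthogonal projection of a regular point of $F_{i}$ onto $V_{W'}$ genuinely lies in $F_{i}'=\sum_{j\leq i}V_{j}\cap V_{W'}$ and remains regular for the $W'$-action, which appears to require choosing the (non-unique) decomposition $V_{\lambda}^{w}=\bigoplus_{k}V_{\lambda,k}^{w}$ compatibly with the splitting $V=V_{W'}\oplus V^{W'}$ (or passing through the full real eigenspaces). A secondary subtlety is the proportion-$2\theta_{i}$ identity in part (iv): for a 2D rotation block with angle $2\pi\theta$, the count of inverted roots a priori depends on the root configuration relative to the rotation axis, and extracting exactly $2\theta$ times the total relies on the good-position hypothesis in an essential way, presumably via the induction outlined above combined with a base case where $\Theta$ has a single eigenspace.
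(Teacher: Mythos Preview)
Your proposal is correct and follows essentially the same route as the paper. For (i) you start from a minimal left coset representative and deduce the double-coset property, while the paper starts from a double coset decomposition $w=y'xy''$ and rewrites it as $w=x\cdot(x^{-1}y'x)y''$; both arrive at the same conclusion via $wW'w^{-1}=W'\Rightarrow xW'x^{-1}=W'$. Your argument for (ii) is slightly more direct than the paper's (which argues that $\delta_x y$ acts trivially on $V_{W'}$ and then invokes the fundamental-domain property), and for (iii) and (iv) you match the paper exactly: orthogonal projection onto $V_{W'}$ for the good-position transfer, and induction on the sequence length reducing to the single-eigenspace case of \cite[Proposition~2.2]{MR2999317}. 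The two subtleties you flag are precisely the points where the paper is terse---it handles the projection identity $F_i\cap V_{W'}=P_{V_{W'}}(F_i)$ with the phrase ``by orthogonality of eigenspaces'' and defers the $2\theta_i$-count entirely to the cited base case.
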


\begin{proof}
(i): We follow the proof of \cite[Proposition 2.2]{MR2999317}: let
$w=y'xy''$ where $y'$ and $y''$ lie in $W'$ and $x$ is a minimal
length double coset representative for $W'$. From 
\[
(y'xy'')W'(y'xy'')^{-1}=wW'w^{-1}=W'
\]
 it follows that $x^{-1}W'x=W'$. The element $x^{-1}y'x$ and hence
also $y:=(x^{-1}y'x)y''$ then lie in $W'$. If $s_{i}$ is a simple
reflection of $W'$ then $xs_{i}=(xs_{i}x^{-1})x$ has length $\ell(x)+1$,
so as the element $xs_{i}x^{-1}$ lies in $W'$ it must have length
1.

For the final statement it suffices to note that a minimal coset representative
does not make any simple root in $V_{W'}$ negative.

(ii): By the last remark in (i), $\delta_{x}y$ acts as $w$ on $V_{W'}\subseteq V^{w}$,
so it acts as the identity. Using that the dominant chamber in $V_{W'}$
is a fundamental domain for $W'$ which is also preserved by $\delta_{x}$,
it follows that $y$ acts as the identity.

(iii): Since $w$ preserves $V_{1}$, its conjugation action preserves
the standard parabolic subgroup $W'$ so we may factorise as in (i).
As it consequently permutes the simple reflections of $W'$ it preserves
$V_{W'}$ so $V_{i}\cap V_{W'}$ is indeed an eigenspace for $w$
and $\delta_{x}y$. If $C$ contains an open subset of $F_{i}$ then
as projection maps are open, the orthogonal projection of $C$ to
$V_{W'}$ contains an open subset of $F_{i}\cap V_{W'}$ (which by
orthogonality of eigenspaces is the projection of $F_{i}$); but the
orthogonal projection of $C$ to $V_{W'}$ is precisely the dominant
Weyl chamber for $W'$.

(iv): Using (iii), this now follows by induction on the length of
$\Theta$ from the case where $\Theta$ consists of just one eigenspace,
which is \cite[Proposition 2.2]{MR2999317}.
\end{proof}
From (ii) and (iv) we now deduce the first part of the following crucial 
\begin{lem}
\label{lem:shifts} Let $W$ be a twisted finite Coxeter group, let
$\CMcal O$ be a $\tilde{W}$-orbit and let $\underline{\Theta}=(V_{m},\ldots,V_{1})$
be a sequence of eigenspaces of some element $w$ in $\CMcal O$,
satisfying $\mathfrak{H}_{\Theta}\subseteq\mathfrak{H}^{w}$.

\begin{enumerate}[\normalfont(i)]

\item Let the corresponding sequence of normalised principal arguments
be $(\theta_{m},\ldots,\theta_{1})$, then
\begin{equation}
\ell(\CMcal O^{\underline{\Theta}})=2\sum_{i=1}^{m}\theta_{i}\bigl|\mathfrak{H}_{F_{i-1}}\backslash\mathfrak{H}_{F_{i}}\bigr|.\label{eq:length-formula-braiding}
\end{equation}

\item Suppose $w,w'$ lie in $\CMcal O^{\underline{\Theta}}$. Then
the projection
\[
\mathrm{Tran}^{\underline{\Theta},\star}(w,w')\longrightarrow\mathrm{Tran}(w,w')\qquad\bigl(\textrm{resp.}\qquad\mathrm{Tran}^{\star}(w,w')\longrightarrow\mathrm{Tran}(w,w')\bigr)
\]
 surjects, with
\[
\star=\begin{cases}
\pm & \text{if }\mp1\notin\mathrm{eig}(\Theta),\\
\times & \textrm{otherwise}.
\end{cases}\qquad\bigl(\textrm{resp.}\qquad\star=\,\leftrightarrow\quad\text{if }\mathrm{eig}(\Theta)\cap\{1,-1\}=\varnothing.\bigr)
\]
When the element $y\in\tilde{W}_{F_{m-1}}$ in the splitting $w=xy$
for $F_{m-1}$ in Lemma \ref{lem:factorise} equals $\{\pm\mathrm{id}\}$
or has no eigenvalues in $\{\pm1\}$ (e.g.\ if $V_{F_{m-1}}\subseteq V_{m}$,
which happens when $V_{\Theta}=V$), we only need the eigenvalues
of $(V_{m-1},\ldots,V_{1})$ here.

\end{enumerate}
\end{lem}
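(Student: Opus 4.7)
The plan is to prove part (i) directly from Lemma~\ref{lem:factorise}, and part (ii) by induction on the length $m$ of the sequence, reducing one eigenspace at a time.

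For part (i), I would apply Lemma~\ref{lem:factorise}(iv) with $W' := \tilde{W}_\Theta$ to obtain a reduced factorisation $w = xy$ with
\[
\ell(x) = 2 \sum_{i=1}^{m} \theta_i \bigl| \mathfrak{H}_{F_{i-1}} \backslash \mathfrak{H}_{F_i} \bigr| \qquad \text{and} \qquad \ell(y) = \bigl| \mathfrak{H}_\Theta\bigl(C, w(C)\bigr) \bigr|.
\]
The hypothesis $\mathfrak{H}_\Theta \subseteq \mathfrak{H}^w$ says that every root whose hyperplane contains $V_\Theta$ is $w$-fixed, so in particular every simple root of $\tilde{W}_\Theta$ is fixed by $w$; Lemma~\ref{lem:factorise}(ii) then forces $y$ to be the identity and hence $\ell(w) = \ell(x)$, matching (\ref{eq:length-formula-braiding}). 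Since any $w' \in \CMcal O^{\underline{\Theta}}$ is a conjugate $\tau w \tau^{-1}$ with the dominant chamber in good position with respect to $\tau(\underline{\Theta})$ and with $\mathfrak{H}_{\tau(\Theta)} \subseteq \mathfrak{H}^{w'}$ inherited from the original hypothesis, the same calculation gives the identical value for $\ell(w')$, so the length is constant on $\CMcal O^{\underline{\Theta}}$.

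For part (ii), I would induct on $m$, the base case $m = 0$ being vacuous. For the inductive step, given $w, w' \in \CMcal O^{\underline{\Theta}}$ and an arbitrary transporter $\tau \in \mathrm{Tran}(w, w')$, I would apply Lemma~\ref{lem:factorise}(iii) with $W' := \tilde{W}_{V_1}$ to produce reduced factorisations $w = xy$ and $w' = x'y'$; the inner elements $\delta_x y$ and $\delta_{x'} y'$ then live in the smaller twisted Coxeter group $\langle \delta_x \rangle \ltimes W'$ with the shorter sequence $\underline{\Theta}' := (V_m \cap V_{W'}, \ldots, V_2 \cap V_{W'})$, the term $V_1 \cap V_{W'}$ vanishing by orthogonality, and with $\mathfrak{H}_{\Theta'} \subseteq \mathfrak{H}^{\delta_x y}$ inherited from the original hypothesis. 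The induction hypothesis furnishes a chain of shifts of the prescribed type inside $W'$ transporting $\delta_x y$ to $\delta_{x'} y'$, which I would lift back to $W$ and combine with ``outer'' conjugations matching $x$ to $x'$. The dichotomy in $\star$ reflects the geometric fact that conjugating by a simple reflection in the direction of an eigenspace $V_{\lambda_i}^w$ strictly increases length precisely when $\theta_i < 1/2$ (equivalently $\lambda_i \neq -1$) and strictly decreases length precisely when $\theta_i > 0$ (equivalently $\lambda_i \neq 1$); hence strong conjugations, cyclic shifts, simple shifts and mixed shifts are each available under exactly the stated eigenvalue conditions, with the refined simple-shift case additionally needing $V_\Theta = V$ to rule out length-neutral steps inside the neutral centraliser. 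The concluding remark about $y \in \tilde{W}_{F_{m-1}}$ is that when this inner piece is $\pm\mathrm{id}$ or anisotropic, its contribution to bad eigenvalues vanishes, so the conditions need only be checked on the truncated sequence.

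The main obstacle is the combinatorial bookkeeping in the lifting step: one must verify at each stage that the intermediate conjugates $w_j$ still lie in $\CMcal O^{\underline{\Theta}}$ — i.e.\ that the dominant chamber remains in good position with respect to the corresponding twisted sequence — and that the signed length changes add up in the required monotone direction. I would carry this out by adapting the cyclic-shift and strong-conjugation constructions of \cite[\S 2--\S 4]{MR2999317}, where the analogous lifting is worked out for elliptic elements with decreasing complete sequences; the length formula of part (i) then certifies that no intermediate step can drift out of the orbit $\CMcal O^{\underline{\Theta}}$ to an element of incompatible length.
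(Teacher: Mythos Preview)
Your argument for part (i) is correct and matches the paper's proof exactly: factorise $w=xy$ via Lemma~\ref{lem:factorise}(iv), then use $\mathfrak{H}_\Theta\subseteq\mathfrak{H}^w$ to conclude $V_{W'}\subseteq V^w$, whence Lemma~\ref{lem:factorise}(ii) forces $y=\mathrm{id}$.

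For part (ii), your inductive scaffolding (peel off $V_1$, pass to $\langle\delta_x\rangle\ltimes\tilde{W}_{V_1}$ with the shorter sequence $\underline{\Theta}'$, invoke the hypothesis there, lift back) is the same as the paper's, which in turn follows \cite[\S3.3]{MR2999317} closely. However, the step you call ``outer conjugations matching $x$ to $x'$'' is where the real work lies, and your description of it is not quite right and leaves the key mechanism unidentified.

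Concretely: after the inner reduction one may assume $C$ and $C'=\tau(C)$ lie in the same connected component of $V\backslash\mathfrak{H}_{V_1}$. Proposition~\ref{prop:chamber-sequence} then gives a chain of \emph{$V_1$-adjacent} chambers (not single simple reflections --- the transporter between adjacent chambers can be long), and one must analyse each adjacency. The paper does this via a three-case split on whether $w$ preserves the hyperplane $\mathfrak{h}'=\mathfrak{h}\cap V_1$ and on $\dim V_1$; in Cases~1 and~3 the crucial input is Lemma~\ref{lem:eigenspace-shifts}, which shows that for $V'$-adjacent chambers in the same $V\backslash\mathfrak{H}_{V'}$-component, one gets \emph{both} $w\overset{-}{\leadsto}w_{C'}$ and $w_{C'}\overset{+}{\leadsto}w$ (or vice versa) when the eigenvalue is not $\pm1$, but only the $\pm$-direction when it equals $\pm1$. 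This lemma is the new ingredient over He-Nie's simple-shift argument and is precisely what produces the $\star$-dichotomy; your heuristic ``conjugating by a simple reflection in the direction of an eigenspace strictly increases length when $\theta_i<1/2$'' is pointing at the right phenomenon but at the wrong granularity (it is not a simple-reflection statement) and does not by itself handle Case~2, where $w(\mathfrak{h}')=\mathfrak{h}'$ and $\dim V_1\geq2$ forces a \emph{second} application of the induction hypothesis inside $\tilde{W}_{\mathfrak{h}'}$.

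Finally, the condition you attach to the $\leftrightarrow$ case (``needing $V_\Theta=V$'') is not what the lemma asserts: here $\star=\leftrightarrow$ requires only $\mathrm{eig}(\Theta)\cap\{1,-1\}=\varnothing$; the $V_\Theta=V$ condition belongs to Proposition~\ref{prop:shifts-braiding}(iii), where it enters through the refinement in the final sentence of the present lemma.
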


\begin{proof}
(i): For any $w$ in $\CMcal O^{\underline{\Theta}}$, factorise it
into $w=xy$ as in part (iv) of the previous lemma. From $\mathfrak{H}_{\Theta}\subseteq\mathfrak{H}^{w}$
it follows that $V_{W'}\subseteq V^{w}$, so part (ii) of that lemma
implies that $y$ is the identity.
\end{proof}
Part (ii) of this lemma will be proven in §\ref{subsec:shifts}.

\subsection{Quasiregular elements and the Coxeter plane}

In this subsection we briefly recall classical statements involving
the eigenspace decomposition of regular elements and Coxeter elements,
and recast them in our framework. It yields a helpful perspective
on the relationship between Sevostyanov's and He-Lusztig's elements,
illustrates how Lemma \ref{lem:shifts}(i) can be applied and might
serve as a suitable warm-up to some of the ideas that follow.
\begin{defn}
Recall that an element of a twisted finite Coxeter group is called
\emph{regular} if it has a \emph{regular} eigenvector, i.e.\ a complex
eigenvector in the complexified reflection representation which is
not contained in any complexified root hyperplane.

We say that a complex eigenvector of an element $w$ is \emph{quasiregular}
if it is not contained in any complexified root hyperplane, unless
that hyperplane comes from $\mathfrak{H}^{w}$.
\end{defn}

\begin{prop}
\label{prop:quasiregular} Let $w$ be an element of a twisted finite
Coxeter group with complex eigenvalue $\lambda$, and let $\mathfrak{H}'$
be a subset of root hyperplanes in the real reflection representation.
Then $\mathfrak{H}_{V_{\lambda}^{w}}\subseteq\mathfrak{H}'$ if and
only if $w$ has a complex eigenvector for the eigenvalue $\lambda$
which is only contained in complexified root hyperplanes corresponding
to those in $\mathfrak{H}'$.

In particular, $w$ satisfies $\mathfrak{H}_{V_{\lambda}^{w}}=\varnothing$
(resp.\ $\mathfrak{H}_{V_{\lambda}^{w}}\subseteq\mathfrak{H}^{w}$)
for some $\lambda$ if and only if it is (quasi)regular.
\end{prop}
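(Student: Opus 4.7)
The plan is to split the biconditional into its two directions and then extract the (quasi)regular characterisation as a specialisation. The direction $\Leftarrow$ is immediate: if a root hyperplane $\mathfrak{h}$ contains $V_{\lambda}^{w}$, then $\mathfrak{h}\otimes_{\mathbb{R}}\mathbb{C}$ contains $V_{\lambda}^{w}\otimes_{\mathbb{R}}\mathbb{C}$, hence contains the postulated eigenvector $v$, and so $\mathfrak{h}$ must lie in $\mathfrak{H}'$.

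The technical core of the converse direction is the following claim: for a root $\alpha$, the hyperplane $\ker(\alpha)$ contains $V_{\lambda}^{w}$ if and only if $\ker(\alpha)\otimes_{\mathbb{R}}\mathbb{C}$ contains the ordinary complex $\lambda$-eigenspace $E_{\lambda}\subseteq V\otimes_{\mathbb{R}}\mathbb{C}$. Recall from Definition \ref{def:sequence-eigenspaces} that $V_{\lambda}^{w}\otimes_{\mathbb{R}}\mathbb{C}=E_{\lambda}\oplus E_{\lambda^{-1}}$. If $\alpha$ vanishes on $V_{\lambda}^{w}$, then its $\mathbb{C}$-linear extension vanishes on the whole complexification and in particular on $E_{\lambda}$. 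For the converse, note that $V_{\lambda}^{w}$ is defined over $\mathbb{R}$, so complex conjugation on $V\otimes\mathbb{C}$ interchanges $E_{\lambda}$ and $E_{\lambda^{-1}}$; any $v\in V_{\lambda}^{w}$ then decomposes as $v_{\lambda}+\overline{v_{\lambda}}$ with $v_{\lambda}\in E_{\lambda}$, and since $\alpha$ has real coefficients we obtain $\alpha(v)=\alpha(v_{\lambda})+\overline{\alpha(v_{\lambda})}=0$ whenever $\alpha$ vanishes on $E_{\lambda}$.

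Granted the claim, assume $\mathfrak{H}_{V_{\lambda}^{w}}\subseteq\mathfrak{H}'$. Any root hyperplane $\mathfrak{h}\notin\mathfrak{H}'$ then fails to contain $V_{\lambda}^{w}$, so by the claim its complexification meets $E_{\lambda}$ in a proper complex subspace. Since the root system is finite, a generic vector of $E_{\lambda}$ lies outside this finite union of proper subspaces; any such vector is a complex $\lambda$-eigenvector contained only in complexifications of hyperplanes from $\mathfrak{H}'$, which is what we wanted.

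The ``in particular'' statement is obtained by specialising to $\mathfrak{H}'=\varnothing$, in which case the biconditional recovers the definition of a regular element, and to $\mathfrak{H}'=\mathfrak{H}^{w}$, in which case it recovers the definition of a quasiregular element. I do not anticipate any serious obstacle beyond the conjugation-symmetry argument that identifies vanishing of a real root on $V_{\lambda}^{w}$ with vanishing on the complex eigenspace $E_{\lambda}$; this is the single ingredient that bridges the real and complex viewpoints, after which everything reduces to a standard genericity argument.
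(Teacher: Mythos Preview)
Your proof is correct and follows essentially the same approach as the paper's. Both arguments hinge on the identity $V_{\lambda}^{w}\otimes_{\mathbb{R}}\mathbb{C}=E_{\lambda}\oplus E_{\lambda^{-1}}$ together with the fact that a real root hyperplane is stable under complex conjugation (so containing $E_{\lambda}$ forces containing $E_{\lambda^{-1}}$ and hence $V_{\lambda}^{w}$), followed by a finite-genericity argument to produce the desired eigenvector; you simply package the conjugation step as an explicit intermediate claim, whereas the paper does it inline.
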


\begin{proof}
Recall that the complexification of $V_{\lambda}^{w}$ is the sum
of the complex eigenspaces corresponding to $\lambda$ and $\lambda^{-1}$,
so in other words
\begin{equation}
V_{\lambda}^{w}\otimes_{\mathbb{R}}\mathbb{C}=(V\otimes_{\mathbb{R}}\mathbb{C})_{\lambda}^{w}\oplus(V\otimes_{\mathbb{R}}\mathbb{C})_{\lambda^{-1}}^{w}.\label{eq:complexified-eigenspace}
\end{equation}

$\Rightarrow$: Pick a regular element in the complex eigenspace of
$\lambda$ (for $V\otimes_{\mathbb{R}}\mathbb{C}$ and the complexified
root hyperplanes). If it is contained in a complexified root hyperplane
then by regularity that complex hyperplane contains the entire complex
eigenspace. But since it came from a real subspace the complexified
root hyperplane must also contain its complex conjugate, which by
\[
w\overline{v}=\overline{wv}=\overline{\lambda v}=\lambda^{-1}\overline{v}
\]
 is the complex eigenspace corresponding to $\lambda^{-1}$. But by
(\ref{eq:complexified-eigenspace}) the sum of these complex eigenspaces
contains $V_{\lambda}^{w}$, which implies that the hyperplane came
from $\mathfrak{H}'$. 

$\Leftarrow$: The complexification of a hyperplane containing $V_{\lambda}^{w}$
contains the complexification $V_{\lambda}^{w}\otimes_{\mathbb{R}}\mathbb{C}$,
which by (\ref{eq:complexified-eigenspace}) contains the complex
eigenspace for the eigenvalue $\lambda$.
\end{proof}
\begin{example}
Reflections in type $\mathsf{B}_{2}$ are quasiregular but not regular.
\end{example}

The following slightly generalises \cite[Theorem 6.4(i)]{MR354894}:
\begin{prop}
Let $w=\delta\tilde{w}$ be an element of a twisted finite Coxeter
group and suppose that it is quasiregular, so there exists an eigenvalue
$\lambda=e^{2\pi i\theta}$ with $\theta\in(0,1/2]\cup\{1\}$ such
that $\mathfrak{H}_{V_{\lambda}^{w}}\subseteq\mathfrak{H}^{w}$. If
$\delta^{1/\theta}=\mathrm{id}$ then $\mathrm{ord}(w)=1/\theta$
and if furthermore $V_{\lambda}^{w}$ is in good position then
\[
\ell(w)=\frac{|\mathfrak{R}|-\ell_{f}(w)}{\mathrm{ord}(w)}.
\]
\end{prop}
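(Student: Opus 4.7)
The plan is to treat the order statement and the length formula separately, using the quasiregularity hypothesis $\mathfrak{H}_{V_\lambda^w}\subseteq\mathfrak{H}^w$ in both.

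\emph{Order.} Set $n:=1/\theta$, pick a regular point $v$ of $V_\lambda^w$ (such points form a dense open subset of $V_\lambda^w$), and observe that $w$ acts on $V_\lambda^w$ as rotation by $2\pi\theta$, so $w^n$ fixes $V_\lambda^w$ pointwise and in particular fixes $v$. The hypothesis $\delta^n=\mathrm{id}$ makes the projection of $w^n$ to $\Omega$ trivial, placing $w^n$ in $\tilde{W}$. By Steinberg's fixed-point theorem and the regularity of $v$, the stabiliser $\tilde{W}_{\{v\}}=\tilde{W}_{V_\lambda^w}$ is generated by the reflections in the hyperplanes $\mathfrak{H}_{V_\lambda^w}\subseteq\mathfrak{H}^w$, i.e.\ in roots of $\mathfrak{R}^w\subseteq V^w$. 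Each such reflection acts trivially on $V_w=(V^w)^\perp$, so $w^n$ fixes $V_w$ pointwise; together with $w$ (and hence $w^n$) fixing $V^w$, this yields $w^n=\mathrm{id}$. Conversely, $w^k=\mathrm{id}$ implies $\lambda^k=1$, so $n\mid k$, proving $\mathrm{ord}(w)=n=1/\theta$.

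\emph{Length.} The additional assumption says that the singleton sequence $\underline{\Theta}:=(V_\lambda^w)$ is in good position with respect to the dominant chamber, and it satisfies $\mathfrak{H}_\Theta\subseteq\mathfrak{H}^w$ by quasiregularity, so Lemma~\ref{lem:shifts}(i) yields
\[
\ell(w)=2\theta\bigl(|\mathfrak{H}|-|\mathfrak{H}_{V_\lambda^w}|\bigr).
\]
For $\lambda\neq 1$ (i.e.\ $\theta\in(0,1/2]$), the real eigenspaces $V^w$ and $V_\lambda^w$ of the orthogonal operator $w$ are mutually orthogonal, so a root is perpendicular to $V_\lambda^w$ if and only if it lies in $V^w$: one direction is quasiregularity, the other is orthogonality. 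Therefore $2|\mathfrak{H}_{V_\lambda^w}|=\ell_f(w)$, and substituting $\theta=1/\mathrm{ord}(w)$ gives the claimed length formula. The edge case $\theta=1$ collapses: $V_\lambda^w=V^w$, so quasiregularity rules out any root in $V_w$, forcing $V_w=0$ and hence $w=\mathrm{id}$, in which case both sides vanish.

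The main obstacle is the order step, specifically identifying $\tilde{W}_{\{v\}}$ with $\tilde{W}_{V_\lambda^w}$ via Steinberg and then extracting from quasiregularity the containment $\mathfrak{R}^w\subseteq V^w$, so that the associated reflections act trivially on $V_w$; this is what drives the reduction of $w^n$ to the identity. Once this is done, the length formula is a short specialisation of the already-established Lemma~\ref{lem:shifts}(i), together with the bookkeeping that identifies roots fixed by $w$ with roots perpendicular to $V_\lambda^w$.
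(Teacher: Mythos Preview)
Your proof is correct and follows essentially the same approach as the paper: Steinberg's theorem plus the quasiregularity hypothesis to pin down the order, then Lemma~\ref{lem:shifts}(i) for the length formula. Your version is more explicit about why $w^n$ acts trivially on all of $V$ (splitting into $V_w$ and $V^w$) and about the bookkeeping identifying $\mathfrak{H}_{V_\lambda^w}$ with $\mathfrak{H}^w$, whereas the paper compresses the order argument into a single line via faithfulness of the reflection representation of $\tilde{W}_{V_\lambda^w}$; the substance is the same.
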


\begin{proof}
Since $\lambda$ is an eigenvalue of $w$, we have $\mathrm{ord}(w)\geq1/\theta$.
As $w^{1/\theta}$ fixes $V_{\lambda}^{w}$ its twisted component
is trivial, Steinberg's theorem implies that it lies in $\tilde{W}_{V_{\lambda}^{w}}$,
but as before in the corresponding reflection representation its eigenspaces
are spanned by roots of $\mathfrak{R}^{w}$ and thus this element
is the identity. The final formula follows from e.g.\ Lemma \ref{lem:shifts}(i).
\end{proof}

The following statement also plays an important role in the sequel
\cite{WM-cross}:
\begin{prop}
Let $V'$ be a nontrivial subspace of the reflection representation
of an irreducible finite Coxeter group. Then the roots projecting
nontrivially to $V'$ generate the root lattice.
\end{prop}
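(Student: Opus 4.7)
Let $\mathfrak{R}_0 := \mathfrak{R} \cap V'^{\perp}$ denote the subsystem of roots orthogonal to $V'$, so that $S := \mathfrak{R} \setminus \mathfrak{R}_0$ is precisely the set of roots projecting nontrivially to $V'$; set $L := \mathbb{Z} S$. The proposition amounts to showing $\mathfrak{R}_0 \subseteq L$. The plan is to reduce this to the following geometric sublemma: \emph{in an irreducible finite Coxeter group of rank at least $2$, the root system $\mathfrak{R}$ is not contained in the union $U_1 \cup U_2$ of any two proper subspaces of $V$}.

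To prove the sublemma I would argue by contradiction, supposing $\mathfrak{R} \subseteq U_1 \cup U_2$. First observe that for any $\alpha \in \mathfrak{R} \cap (U_1 \setminus U_2)$ and $\gamma \in \mathfrak{R} \cap (U_2 \setminus U_1)$, the reflected root
\[
s_\alpha(\gamma) \;=\; \gamma - \frac{2(\alpha,\gamma)}{(\alpha,\alpha)}\,\alpha
\]
must again lie in $U_1 \cup U_2$, but a direct case analysis of which subspace it belongs to forces $(\alpha,\gamma)=0$; hence $\mathfrak{R} \cap (U_1 \setminus U_2) \perp \mathfrak{R} \cap (U_2 \setminus U_1)$. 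Since neither $U_i$ can contain all simple roots (else $V \subseteq U_i$), I can pick simple roots $\alpha \in U_1 \setminus U_2$ and $\gamma \in U_2 \setminus U_1$ and, by connectedness of the Coxeter-Dynkin diagram, a shortest simple-root path joining them, whose interior vertices must then lie in $U_1 \cap U_2$. Iteratively applying the interior simple reflections to $\gamma$ produces, by the same subspace analysis, a root in $\mathfrak{R} \cap (U_2 \setminus U_1)$; a direct computation using that the path is geodesic in the diagram (so has no shortcut edges) shows that its inner product with $\alpha$ is nonzero, contradicting the orthogonality just established.

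Granted the sublemma, for any $\alpha \in \mathfrak{R}_0$ I apply it to the two proper subspaces $\alpha^{\perp}$ and $V'^{\perp}$ to produce a root $\beta \in \mathfrak{R}$ with $(\alpha,\beta) \neq 0$ and $\beta \notin V'^{\perp}$, so $\beta \in S$. The crystallographic root string property then gives $\alpha + \beta$ or $\alpha - \beta$ as a root $\gamma \in \mathfrak{R}$, and since $\alpha \in V'^{\perp}$ while $\beta \notin V'^{\perp}$, $\gamma$ still projects nontrivially to $V'$, so $\gamma \in S$ and $\alpha = \gamma \mp \beta$ lies in $L$; the non-crystallographic cases $\mathsf{H}_3$, $\mathsf{H}_4$ and $\mathsf{I}_2(m)$ admit a similar but more delicate analysis via the corresponding trigonometric identities among $2\cos(k\pi/m)$. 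The rank $1$ case is trivial since there $V'=V$ forces $\mathfrak{R}_0=\varnothing$. The hard part will be the sublemma, and specifically the bookkeeping in the iterative reflection argument along the Dynkin path in order to check at each step that the constructed root stays in $\mathfrak{R} \cap (U_2 \setminus U_1)$ and retains a nonzero inner product with the initial simple root.
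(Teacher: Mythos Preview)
Your approach is genuinely different from the paper's. The paper argues directly by induction along the Coxeter--Dynkin diagram: starting from the nonempty set $I_1$ of simple roots projecting nontrivially to $V'$, it adjoins one neighbouring simple root $\alpha$ at a time, at each step exhibiting $\alpha$ as $(\beta-\beta')/c$ where $\beta'$ is a root in $S$ supplied by the induction hypothesis and $\beta:=\beta'+c\alpha$ is again a root (since $(\alpha,\beta')<0$). This is short, uniform over all finite types, and what it actually establishes is that $S$ spans $V$ (the coefficient $c$ is allowed to be real), which is precisely what the subsequent corollary uses. Your route instead isolates a pleasant structural sublemma---an irreducible finite root system cannot be covered by two proper subspaces---and then applies it with $U_1=\alpha^\perp$, $U_2=V'^\perp$; in the crystallographic case this even yields the stronger conclusion $\alpha\in\mathbb{Z}S$.

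Two points need tightening. In the sublemma, the assertion that interior path vertices lie in $U_1\cap U_2$ is not automatic for an arbitrary pair $(\alpha,\gamma)$: you must choose the pair at \emph{minimal} Dynkin distance among all simple-root pairs in $(U_1\setminus U_2)\times(U_2\setminus U_1)$, otherwise an interior vertex could itself lie in one of the difference sets. Once that is done, your iterative reflection computation goes through cleanly (each new coefficient is nonzero because the geodesic has no shortcut edges, so only consecutive path-vertices are adjacent). The more substantive gap is the non-crystallographic case: the root-string step $\alpha\pm\beta\in\mathfrak{R}$ fails there, and the appeal to ``trigonometric identities'' is not a proof. The simplest uniform fix is to use $s_\beta(\alpha)=\alpha-\tfrac{2(\alpha,\beta)}{(\beta,\beta)}\beta$ in place of $\alpha\pm\beta$; this is always a root, still lies in $S$ (since $\alpha\in V'^\perp$ and the coefficient of $\beta$ is nonzero), and expresses $\alpha$ as a real linear combination of two elements of $S$---matching exactly what the paper proves and what is needed downstream.
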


\begin{proof}
Since the simple roots span the reflection representation $V$, there
must be some that project nontrivially to $V'$; let's denote this
subset by $I_{1}$. We now inductively add the remaining simple roots:
for $j\geq1$ we let $I_{j+1}$ be the union of $I_{j}$ and the simple
roots whose vertices in the Coxeter-Dynkin diagram of $W$ are connected
to those of simple roots in $I_{j}$. Since the diagram is connected,
there exists an integer $1\leq k\leq\mathrm{rk}(W)$ such that $I_{k}$
is the entire set of simple roots. We claim that for every simple
root $\alpha$ in $I_{j+1}\backslash I_{j}$, there exists a positive
root $\beta$ projecting nontrivially to $V'$ such that for some
scalar $c\in\mathbb{R}_{>0}$, the expression $\beta-c\alpha$ decomposes
as a positive sum of simple roots lying in $I_{j}$.

Since $\alpha$ does not lie in $I_{j}$, it is orthogonal to the
simple roots in $I_{j-1}$, and furthermore there exists a simple
root $\alpha'$ in $I_{j}$, such that $(\alpha,\alpha')<0$. The
induction hypothesis for $\alpha'$ furnishes a positive root 
\[
\beta'=c'\alpha'+\sum_{\alpha''\in I_{j-1}}c''\alpha'',\qquad c'\in\mathbb{R}_{>0},\quad c''\in\mathbb{R}_{\geq0},
\]
so from
\[
(\alpha,\beta')=(\alpha,c'\alpha')=c'(\alpha,\alpha')<0
\]
 we deduce that there exists a scalar $c\in\mathbb{R}_{>0}$ such
that $\beta:=\beta'+c\alpha$ is a positive root. As $\alpha$ projects
trivially to $V'$ but $\beta'$ does not, neither does $\beta$.

We now conclude that each simple root lies in the span of roots projecting
nontrivially to $V'$: given $\alpha$ in $I_{j+1}$, let $\beta$
and $\beta'$ be roots as we just constructed. Then $\beta'$ and
$\beta$ project nontrivially to $V'$, and $\alpha$ is a linear
combination of those two roots.
\end{proof}
\begin{cor}
Let $V_{0}$ and $V_{1}$ be two nontrivial subspaces of $V$. Then
\[
\{\textrm{roots projecting nontrivially to }V_{0}\}\cap\{\textrm{roots projecting nontrivially to }V_{1}\}\neq\varnothing.
\]
\end{cor}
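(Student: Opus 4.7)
The plan is to deduce this immediately from the preceding proposition. Let $R_0$ denote the set of roots projecting nontrivially to $V_0$, i.e.\ those roots not lying in the orthogonal complement $V_0^{\perp}$, and similarly define $R_1$. We wish to show $R_0 \cap R_1 \neq \varnothing$.

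First I apply the preceding proposition to $V_0$: since $V_0$ is a nontrivial subspace of the reflection representation of (what is implicitly) an irreducible finite Coxeter group, the roots in $R_0$ generate the root lattice. In particular, their $\mathbb{R}$-span is all of $V$.

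The second step is to observe that since $V_1$ is nontrivial, the orthogonal complement $V_1^{\perp}$ is a proper subspace of $V$. If every root in $R_0$ lay in $V_1^{\perp}$, then the span of $R_0$ would be contained in $V_1^{\perp} \subsetneq V$, contradicting the conclusion of the first step. Hence there exists a root $\beta \in R_0$ with $\beta \notin V_1^{\perp}$, i.e.\ $\beta \in R_0 \cap R_1$.

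There is no real obstacle here; the only subtlety is recognising that ``projecting nontrivially to $V'$'' is the same as ``not lying in $V'^{\perp}$'' (which is immediate from the orthogonal decomposition $V = V' \oplus V'^{\perp}$), and that the previous proposition's conclusion of generating the root lattice is strictly stronger than what is needed, namely spanning $V$. The symmetry of the statement in $V_0$ and $V_1$ is mildly asymmetrically used, but of course one could equally apply the proposition to $V_1$ instead.
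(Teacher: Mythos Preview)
Your proof is correct, and in fact takes a more direct route than the paper's. Both arguments begin identically: apply the preceding proposition to conclude that $R_0$ spans $V$, hence cannot be contained in any proper subspace. You then observe that $V_1^\perp$ is such a proper subspace and conclude immediately. The paper instead picks a single root $\beta_1 \in R_1$, finds $\beta_0 \in R_0$ with $(\beta_0,\beta_1)\neq 0$, and then (in the case where neither $\beta_0$ nor $\beta_1$ already lies in the intersection) invokes the root-system fact that $\beta_0 + c\beta_1$ is a root for some $c \in \mathbb{R}^\times$ to build an explicit element of $R_0 \cap R_1$. Your version avoids this extra constructive step entirely; the paper's version has the mild advantage of exhibiting a concrete root in the intersection, but this is not needed for the corollary as stated nor for its subsequent use.
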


\begin{proof}
Let $\beta_{1}$ be a root in the second set, then the proposition
yields a root $\beta_{0}$ in the first set such that $(\beta_{0},\beta_{1})\neq0$.
This implies that $\beta:=\beta_{0}+c\beta_{1}$ is a root for some
$c\in\mathbb{R}^{\times}$. If $\beta_{0}$ projects trivially to
$V_{1}$ and $\beta_{1}$ projects trivially to $V_{0}$, then $\beta$
projects nontrivially to both of them.
\end{proof}
\begin{prop}
\label{prop:sequence-not-minimal} Let $w$ be an element with a braiding
sequence of eigenspaces $\underline{\Theta}=(V_{m},\ldots,V_{1})$.
If the positive principal argument of the eigenvalue of $V_{1}$ is
not minimal amongst the positive principal arguments of the eigenvalues
of $w$, then $w$ does not have minimal length.
\end{prop}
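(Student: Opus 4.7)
The plan is to produce, from $w$, an element $w'$ in its conjugacy class of strictly smaller length. Apply Proposition \ref{prop:shifts-braiding}(i) to the braiding sequence $\underline{\Theta}$ to obtain an element $w' \in \CMcal O^{\underline{\Theta}_-}$ with $w \overset{-}{\rightarrow} w'$, so $\ell(w) \geq \ell(w')$; it then suffices to show strict inequality.

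By Lemma \ref{lem:shifts}(i), each of $\ell(w)$ and $\ell(w')$ equals $2\sum_i \theta_i |\mathfrak{H}_{F_{i-1}} \setminus \mathfrak{H}_{F_i}|$ for the filtration derived from the respective sequence. Since these differences partition $\mathfrak{H} \setminus \mathfrak{H}_\Theta$ into disjoint pieces, a telescoping reorganisation rewrites both sums as sums over individual hyperplanes:
\[
\ell(w) = 2\sum_{\mathfrak{h} \in \mathfrak{H} \setminus \mathfrak{H}_\Theta} \theta_{i(\mathfrak{h})}, \qquad i(\mathfrak{h}) := \min\{i : V_i \not\subseteq \mathfrak{h}\},
\]
so every hyperplane contributes twice the argument of the first eigenspace (in the sequence order) that it misses. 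In the sorted sequence $\underline{\Theta}_-$, where $V_1$ has the smallest argument, the first-missed eigenspace is precisely the one achieving the minimal $\theta$ among all eigenspaces missed by $\mathfrak{h}$, so its contribution is termwise no larger than that in the original ordering, recovering $\ell(w') \leq \ell(w)$.

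For strict inequality, the hypothesis furnishes an index $k$ with $\theta_k < \theta_1$. Apply the Corollary above to the nontrivial subspaces $V_1$ and $V_k$: there exists a root projecting nontrivially to both, so its hyperplane $\mathfrak{h}$ contains neither. Then $V_1 \not\subseteq \mathfrak{h}$ forces $i(\mathfrak{h}) = 1$, contributing $2\theta_1$ to $\ell(w)$, whereas in $\underline{\Theta}_-$ the contribution is at most $2\theta_k < 2\theta_1$. Hence $\ell(w') < \ell(w)$ strictly, and $w$ is not of minimal length.

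The main obstacle is the invocation of the Corollary, which requires the ambient Coxeter group to be irreducible. In a reducible group like $\mathsf{A}_2 \times \mathsf{A}_1$, the element $w = s_1 s_2 s_3$ admits a braiding sequence with $V_1 = V_{-1}^{s_3}$ first (so $\theta_1 = 1/2 > 1/3$), yet $w$ is of minimal length because no root projects nontrivially to eigenspaces living in different irreducible factors; no hyperplane can simultaneously miss $V_1$ and $V_k$, so no strict decrease occurs. The Proposition is therefore implicitly irreducibility-conditioned (matching the Corollary), and in the reducible case one reduces to irreducible factors via Lemma \ref{lem:factorise}.
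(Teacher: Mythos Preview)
Your overall strategy---rewrite the length formula as a sum over hyperplanes and compare termwise against a decreasingly ordered sequence---matches the paper's proof. However, there is a genuine gap in your reading of the hypothesis.

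You write ``the hypothesis furnishes an index $k$ with $\theta_k < \theta_1$'', treating $k$ as an index into the given sequence $\underline{\Theta}$. But the Proposition only assumes that $\theta_1$ fails to be minimal among the positive arguments of \emph{all} eigenvalues of $w$, not merely those appearing in $\Theta$. A braiding sequence need not be complete: there may well be an eigenvalue $\mu$ with $\arg(\mu) < \theta_1$ whose eigenspace $V_\mu^w$ lies entirely outside $\Theta$. In that situation, $\theta_1$ could already be the minimum argument \emph{within} $\Theta$, so your reordered sequence $\underline{\Theta}_-$ begins with $V_1$ itself and the termwise comparison yields only $\ell(w') \leq \ell(w)$ with no strict improvement. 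Your invocation of the Corollary then has no smaller-argument eigenspace in $\Theta$ to pair with $V_1$.

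The paper avoids this by comparing $w$ not to a reordering of $\Theta$ but to a conjugate $w'$ whose \emph{complete} decreasing sequence $\underline{\Theta}'_-$ (all eigenspaces of $w'$) is in good position. After the reindexing induced by conjugation, the sum $\ell(w') = 2\sum_{\mathfrak h}\theta'_{\mathfrak h}$ can be expressed with $\theta'_{\mathfrak h} = \min\{\theta : V_\theta^w \not\subseteq \mathfrak h\}$, the minimum now ranging over \emph{all} eigenspaces of $w$. The Corollary is then applied to $V_1$ and to the genuine minimal-argument eigenspace of $w$ (whether or not it lies in $\Theta$), which produces the required strict hyperplane. Your argument can be repaired in exactly this way: replace $\underline{\Theta}_-$ by the decreasing ordering of the full eigenspace set, and obtain the comparison element via Proposition~\ref{prop:shifts-braiding}(i) applied to that larger set.

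Your closing remark on irreducibility is correct and applies equally to the paper's own use of the Corollary; it is an ambient assumption rather than a defect specific to your argument.
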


\begin{proof}
For each root hyperplane not in $\mathfrak{H}_{\Theta}$, there is
an $i$ such that it lies in $\mathfrak{H}_{F_{i-1}}\backslash\mathfrak{H}_{F_{i}}$.
Denote the corresponding argument by $\theta_{\mathfrak{h}}:=\theta_{i}$,
and for $\mathfrak{h}\in\mathfrak{H}_{\Theta}$ set $\theta_{\mathfrak{h}}=0$.
Then the length formula (\ref{eq:length-formula-braiding}) rewrites
as
\begin{equation}
\ell(w)=2\sum_{i=1}^{m}\theta_{i}\bigl|\mathfrak{H}_{F_{i-1}}\backslash\mathfrak{H}_{F_{i}}\bigr|=2\sum_{\mathfrak{h}\in\mathfrak{H}}\theta_{\mathfrak{h}}.\label{eq:length-by-hyperplane}
\end{equation}

Now let $w'$ be a conjugate of $w$ such that $\underline{\Theta}'_{-}=(V_{m'}^{w'},\ldots,V_{1}^{w'})$
is in good position, where $\Theta'$ is the set of all eigenspaces
(or eigenvalues) of $w'$, and let $\theta_{\mathfrak{h}}'$ denote
the corresponding arguments for root hyperplanes $\mathfrak{h}$.
Then there is an inequality $\theta_{\mathfrak{h}}\geq\theta_{\mathfrak{h}}'$,
for all root hyperplanes: for the roots projecting nontrivially to
$V_{i}^{w'}$ but trivially to $F_{i-1}':=\sum_{j=1}^{i-1}V_{j}^{w'}$,
the argument is $\theta_{i}'$ which is minimal amongst all possible
arguments.

According to the previous corollary, there exists a root hyperplane
$\mathfrak{h}'$ in $\mathfrak{H}\backslash\mathfrak{H}_{V_{1}}=\mathfrak{H}_{F_{0}}\backslash\mathfrak{H}_{F_{1}}$
which also lies in $\mathfrak{H}_{F_{0}'}\backslash\mathfrak{H}_{F_{1}'}$.
As by assumption the argument of $V_{1}$ is not minimal, we have
a strict inequality $\theta_{\mathfrak{h}'}>\theta_{\mathfrak{h}'}'$.
From (\ref{eq:length-by-hyperplane}) we hence obtain 
\[
\ell(w)=2\sum_{\mathfrak{h}\in\mathfrak{H}}\theta_{\mathfrak{h}}>2\sum_{\mathfrak{h}\in\mathfrak{H}}\theta_{\mathfrak{h}}'=\ell(w').\qedhere
\]
\end{proof}
Much of the following statement was observed by Coxeter \cite{MR0027148}
and was proven uniformly by Steinberg \cite{MR106428}. At the time,
it was mainly used to relate the number of roots to the Coxeter number
and to the height of the highest root.
\begin{thm}
Fix an irreducible finite Coxeter group. 

\begin{enumerate}[\normalfont(i)]

\item Its Coxeter elements are elliptic.

\item We may uniquely decompose the set of vertices of its Coxeter-Dynkin
diagram (which is a tree) into two disconnected subsets. For each
of these subsets, the corresponding simple reflections then commute,
so their products yield two involutions $\iota_{1}$ and $\iota_{2}$,
and then their product $w:=\iota_{1}\iota_{2}$ is a Coxeter element
of minimal length.

\item There exists a plane in the reflection representation on which
$\iota_{1}$ and $\iota_{2}$ act as reflections, so in particular
this plane contains two lines on which they act as $-1$. Both of
these lines intersect nontrivially with the closure of the dominant
Weyl chamber.

\item Every root projects nontrivially to this plane, so this plane
yields a complete sequence of eigenspaces of length one for their
product $w$.

\item Hence the order of $w$ on the reflection representation agrees
with the order of $w$ on this plane, and the corresponding eigenvalue
thus has minimal argument amongst the eigenvalues of $w$. Moreover,
this eigenvalue occurs with multiplicity one.

\end{enumerate}
\end{thm}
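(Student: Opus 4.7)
I would prove all five parts in a coordinated fashion, anchored on a bipartite Perron--Frobenius construction of the Coxeter plane. For (ii), the Coxeter--Dynkin diagram of an irreducible finite Coxeter group is connected and a tree (otherwise positive-definiteness of the Cartan form would fail via an affine or indefinite subdiagram), hence bipartite with a $2$-colouring unique up to swapping the two colour classes. Within each colour class the simple roots are pairwise orthogonal, so the corresponding simple reflections commute and yield involutions $\iota_{1}, \iota_{2}$ whose product $w = \iota_{1}\iota_{2}$ is a product of all simple reflections in some order, i.e.\ a Coxeter element. The resulting reduced expression gives $\ell(w) \leq \mathrm{rk}(W)$, and the ellipticity from (i) forces $\ell(w) \geq \mathrm{rk}(W)$, so equality holds and $w$ is of minimal length in its conjugacy class.

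For (iii), I would group the simple roots by colour and write the symmetrised Cartan matrix in block form with off-diagonal block $-B$, where $B$ has non-negative entries and is irreducible because the Dynkin diagram is connected. Applying Perron--Frobenius to $B^{T}B$ yields a largest singular value $\sigma > 0$ of $B$ together with strictly positive left and right singular vectors, which rescale to
\[
v_{1} = \sum_{i\in I_{1}} c_{i}\alpha_{i}, \qquad v_{2} = \sum_{j\in I_{2}} d_{j}\alpha_{j}
\]
with all $c_{i}, d_{j} > 0$. A direct computation using $s_{i}(\alpha_{j}) = \alpha_{j} - (\alpha_{j},\alpha_{i}^{\vee})\alpha_{i}$ and the orthogonality within each colour class yields $\iota_{1}(v_{1}) = -v_{1}$, $\iota_{1}(v_{2}) = v_{2} + \sigma v_{1}$, and symmetrically for $\iota_{2}$, so $P := \mathrm{span}(v_{1},v_{2})$ is invariant; each $\iota_{k}$ restricts to a reflection on $P$ and $w$ restricts to a rotation of order $h := \mathrm{ord}(w|_{P})$, with $\sigma = 2\cos(\pi/h)$ pinned down by matching $\mathrm{tr}(w|_{P}) = \sigma^{2}-2$ against $2\cos(2\pi/h)$. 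The two eigenlines of $\iota_{1}|_{P}, \iota_{2}|_{P}$ corresponding to eigenvalue $-1$ are $\mathbb{R} v_{1}$ and $\mathbb{R} v_{2}$; their intersection with $\overline{C}$ would be verified by translating the strict Perron--Frobenius positivity of the $c_{i}, d_{j}$ in the simple-root basis into the fundamental-weight basis via the block structure of the inverse Cartan matrix.

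Parts (iv), (v), and (i) then follow essentially formally. For (iv), the corollary preceding Proposition \ref{prop:sequence-not-minimal} applied with $V' = P$ shows that every root projects non-trivially to $P$, so $\mathfrak{H}_{P} = \varnothing$ and $(P)$ is a complete braiding sequence of length one. For (v), combining the minimality of $\ell(w)$ from (ii) with Proposition \ref{prop:sequence-not-minimal} forces the argument $2\pi/h$ realised on $P$ to be minimal among the arguments of all eigenvalues of $w$; multiplicity one can then be extracted either from the classical distinctness of exponents of Coxeter elements or from the $2$-dimensionality of $P$ together with the length formula (\ref{eq:length-formula-braiding}). Finally, (i) follows because the exponents $m_{k}$ attached to the eigenvalues $e^{2\pi i m_{k}/h}$ are all nonzero, so $1$ is not an eigenvalue and $V^{w} = 0$. \textbf{Main obstacle:} The technical heart lies entirely in (iii): one must simultaneously identify $\sigma = 2\cos(\pi/h)$ (a trace computation matched with the rotation angle) and convert Perron--Frobenius positivity of the $c_{i}, d_{j}$ in the simple-root basis into a controlled statement about $\mathbb{R} v_{k}$ meeting $\overline{C}$ in the fundamental-weight basis; this interaction between the two bases is the most delicate point and is where case-tracking or a careful direct verification is most likely required.
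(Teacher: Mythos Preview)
The paper does not prove this theorem (it is cited as classical, due to Coxeter and Steinberg), and your Perron--Frobenius construction of the Coxeter plane is precisely Steinberg's uniform argument. The overall architecture is correct, but the step you single out as the main obstacle in (iii) is not merely delicate --- it fails. Your $\mathbb{R}v_1$ and $\mathbb{R}v_2$ are indeed the $(-1)$-eigenlines of $\iota_1|_P$ and $\iota_2|_P$, but they do \emph{not} meet $\overline{C}$ nontrivially: already in type $\mathsf{A}_2$ one has $v_1=\alpha_1=2\omega_1-\omega_2$, and neither $\pm v_1$ lies in $\overline{C}$; the same happens in $\mathsf{A}_3$ with $v_1=\alpha_1+\alpha_3=2\omega_1-2\omega_2+2\omega_3$. (The literal statement of (iii) thus appears to carry a sign slip.) What Steinberg establishes, and what the proof of Proposition~\ref{prop:Coxeter-braiding} actually uses, is that the \emph{fixed} lines of $\iota_k|_P$ lie in $\overline{C}$: setting $u_1:=v_2+\tfrac{\sigma}{2}v_1$, your singular-vector relations $Bd=\sigma c$, $B^{\top}c=\sigma d$ give $(u_1,\alpha_k^\vee)=0$ for $k\in I_1$ and $(u_1,\alpha_k^\vee)=(2-\tfrac{\sigma^2}{2})\,d_k>0$ for $k\in I_2$, so $u_1\in\overline{C}\setminus\{0\}$; the open segment between $u_1$ and the symmetric $u_2$ then lies in the interior of $C$, whence $P\cap C\neq\varnothing$.

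Two smaller gaps. For (iv), the corollary you cite concerns \emph{pairs} of subspaces and does not say every root projects nontrivially to $P$; but $P\cap C\neq\varnothing$ from the previous paragraph immediately gives $\mathfrak{H}_P=\varnothing$, which is (iv). And your dependencies among (i), (ii), (v) are circular as written: you use (i) for the length lower bound in (ii), minimality in (ii) to apply Proposition~\ref{prop:sequence-not-minimal} for (v), and then deduce (i) from nonzero exponents. Break the circle by noting first that a product of pairwise distinct simple reflections is always reduced (so $\ell(w)=\mathrm{rk}$ without appeal to ellipticity), and by proving (i) directly from your block setup: writing a fixed vector as $v=(a,b)$ in the $I_1\oplus I_2$ decomposition, the equation $\iota_1\iota_2(v)=v$ forces $BB^{\top}a=4a$, but the Perron--Frobenius eigenvalue of $BB^{\top}$ is $\sigma^2=4\cos^2(\pi/h)<4$, so $a=0$ and hence $v=0$.
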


\begin{defn}
For any Coxeter element, the plane corresponding to this eigenvalue
is called its \emph{Coxeter plane}. Its order is called the \emph{Coxeter
number}, and is typically denoted by $h$. The two Coxeter elements
just constructed are sometimes called \emph{bipartite}.
\end{defn}

\begin{prop}
\label{prop:Coxeter-braiding} The only Coxeter elements of minimal
length that have a braiding sequences of eigenspaces, are the two
whose Coxeter planes go through the dominant Weyl chamber.
\end{prop}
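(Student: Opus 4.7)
The plan is to combine the multiplicity-one structure of the Coxeter plane from the preceding theorem with the length-minimality obstruction from Proposition \ref{prop:sequence-not-minimal}. Let $w$ be a Coxeter element of minimal length admitting a braiding sequence of eigenspaces $\underline{\Theta}=(V_m,\ldots,V_1)$. First, since $w$ is elliptic by part (i) of the theorem, $V^w=0$, hence $\mathfrak{R}^w=\varnothing$ and $\mathfrak{H}^w=\varnothing$. The braiding condition $\mathfrak{H}_\Theta\subseteq\mathfrak{H}^w$ therefore forces $\mathfrak{H}_\Theta=\varnothing$, so the sequence is complete and $V_\Theta=V$.

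Next I would apply Proposition \ref{prop:sequence-not-minimal}: if the eigenvalue associated to $V_1$ did not have minimal positive argument among the eigenvalues of $w$, then $w$ would fail to have minimal length. Hence the eigenvalue of $V_1$ is the one of minimal argument. By part (v) of the theorem this eigenvalue occurs with multiplicity one and its real eigenspace is precisely the Coxeter plane of $w$; so $V_1$ is forced to be this Coxeter plane. Good position of the dominant Weyl chamber with respect to $\underline{\Theta}$ then requires the closure of the dominant chamber to contain a regular point of $F_1=V_1$, which is exactly the geometric statement that the Coxeter plane passes through the dominant Weyl chamber.

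For the converse direction I would appeal to parts (ii)--(iii) of the theorem: the two bipartite Coxeter elements $w=\iota_1\iota_2$ and $\iota_2\iota_1$ have Coxeter planes containing the two $(-1)$-lines of $\iota_1$ and $\iota_2$, both of which meet the closure of the dominant Weyl chamber nontrivially. Regular points of the Coxeter plane inside the closed dominant chamber can be produced by slightly perturbing a point on these lines, as in the $\Leftarrow$ direction of Proposition \ref{prop:good-position}. Setting $V_1$ to be the Coxeter plane and using Lemma \ref{lem:factorise}(iii) to pass to the twisted finite Coxeter group $\langle\delta_x\rangle\ltimes\tilde W_{V^{\,}_1}$, one can inductively extend to a full (complete) sequence $\underline{\Theta}=(V_m,\ldots,V_1)$ in good position with respect to the induced dominant chamber.

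Finally, to see that no other Coxeter element of minimal length admits such a sequence, the argument in the first paragraph already shows that any such element must have its Coxeter plane passing through the dominant chamber, and the uniqueness of the bipartite decomposition of the (tree) Coxeter--Dynkin diagram in part (ii) of the theorem ensures there are only two such elements. The main obstacle I anticipate is the converse construction of a braiding sequence for the bipartite elements: the recursive extension from $V_1$ requires that the orthogonal complement of the Coxeter plane, viewed inside the parabolic $\tilde W_{V^{\,}_1}$, admits a suitable sequence of eigenspaces of $\delta_x y$ in good position, and care is needed to keep track of how the dominant chamber restricts through the iterated factorisation.
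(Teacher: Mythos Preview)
Your reduction via ellipticity (so that braiding $\Rightarrow$ complete) and Proposition~\ref{prop:sequence-not-minimal} (so that $V_1$ is forced to be the Coxeter plane, hence the Coxeter plane meets the closed dominant chamber) is correct and is exactly how the paper begins. The gap is in your final step: you claim that ``the uniqueness of the bipartite decomposition \ldots\ ensures there are only two such elements''. That is a non sequitur. Uniqueness of the two-colouring of the Coxeter--Dynkin tree tells you there are exactly two \emph{bipartite} Coxeter elements; it says nothing about whether some other ordering $s_{i_1}\cdots s_{i_n}$ of the simple reflections might also have its Coxeter plane passing through the closed dominant chamber. What must still be shown is that among \emph{all} minimal-length Coxeter elements, only the two bipartite ones have this geometric property.

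The paper closes this gap with a counting argument inside the Coxeter plane of the fixed bipartite element $w=\iota_1\iota_2$. The dihedral group $\langle\iota_1,\iota_2\rangle$ acts on this plane with $h$ reflection lines, cutting it into $2h$ sectors; since the two $(-1)$-lines of $\iota_1$ and $\iota_2$ both lie in $\overline{C}$ and the sector between them lies in the interior of $C$, exactly $2h$ Weyl chambers contain an open subset of this plane. The $2h$ corresponding conjugates of $w$ are then the dihedral conjugates of $w$, hence each equals $w$ or $w^{-1}$. This is the missing ingredient: it converts ``Coxeter plane through $\overline{C}$'' into ``element is $w$ or $w^{-1}$''. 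Your converse paragraph (constructing a braiding sequence for the bipartite elements) is not needed for the direction the paper proves, and the paper does not argue it either.
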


\begin{proof}
There are $h$ lines in the Coxeter plane on which the reflections
in the dihedral subgroup generated by $\iota_{1}$ and $\iota_{2}$
act as $-1$, yielding a decomposition of this plane into $2h$ fundamental
domains. Since the lines on which $\iota_{1}$ and $\iota_{2}$ act
as $-1$ both lie in the closure of the dominant Weyl chamber, and
the segment between them lies in its interior, there are there are
precisely $2h$ Weyl chambers containing an open subset of the Coxeter
plane. The corresponding $2h$ conjugates of $w$ in the Coxeter group
are the $2h$ conjugates of $w=\iota_{1}\iota_{2}$ inside this dihedral
subgroup, but these are all either $w$ or $w^{-1}=\iota_{2}\iota_{1}$.
The claim now follows from Proposition \ref{prop:sequence-not-minimal}.
\end{proof}
This implies that Sevostyanov's set of elements only includes two
Coxeter elements of minimal length; hence it does not include all
of the elements that are used in the statement of He-Lusztig. However,
He-Lusztig's proof uses a trick (which we've replaced with a simpler
braid monoid argument in part (ii) of the main Theorem) to reduce
the cross section statement for elliptic elements of minimal length
to Geck-Michel's ``good'' elements \cite[§3.1-§3.5]{MR2904572}.
As already mentioned, the case-free construction of such elements
in \cite[§5.2]{MR2999317} is through decreasing, complete sequences
of eigenspaces, which does yield a subset of Sevostyanov's elliptic
elements.

\subsection{Gradient flows}

The main novelty of this subsection is Lemma \ref{lem:normal-vector-cyclic},
which is pivotal in proving the new claims on cyclic shifts and strong
conjugations. In the end we deduce Proposition \ref{prop:shifts-braiding}(i).
\begin{prop}
\label{prop:normal-vector} Let $w$ be an element of a twisted finite
Coxeter group, let $v$ be an element in the wall of the dominant
Weyl chamber $C$ corresponding to the hyperplane $\mathfrak{h}_{i}$
of a simple reflection $s_{i}$, and let $n$ be a normal vector to
$\mathfrak{h}_{i}$ at one of its elements, pointing towards $C$
(e.g.,\ $n=\alpha_{i}$).

Then there are implications
\[
\begin{array}{ccc}
\pm\bigl(n,w(v)\bigr)>0 & \qquad\Longrightarrow\qquad & \ell(ws_{i})\gtrless\ell(w),\\
\pm\bigl(w(n),v\bigr)>0 & \qquad\Longrightarrow\qquad & \ell(s_{i}w)\gtrless\ell(w).
\end{array}
\]
\end{prop}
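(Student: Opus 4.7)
The plan is to reduce immediately to the case $n=\alpha_i$: any normal to $\mathfrak{h}_i$ pointing towards $C$ at a point of $\mathfrak{h}_i$ is a positive scalar multiple of $\alpha_i$, and both sides of the stated implications are positively homogeneous in $n$, so the general case follows from this normalization.

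Next, since $v$ lies in the wall of $C$ along $\mathfrak{h}_i$, we have $v\in\overline{C}$ with $\alpha_i(v)=0$, and hence $v=\sum_{j\ne i}c_{j}\omega_{j}$ with all $c_{j}\ge 0$. Using that $(\alpha_{j},\omega_{k})$ is a non-negative multiple of $\delta_{jk}$, this yields $(\alpha_{j},v)\ge 0$ for every $j$. Consequently, for any positive root $\beta=\sum_{j}a_{j}\alpha_{j}$ (with $a_{j}\ge 0$) one finds $(\beta,v)\ge 0$, and symmetrically $(\beta,v)\le 0$ whenever $\beta\in\mathfrak{R}_{-}$. Contrapositively, the sign of $(\beta,v)$, whenever nonzero, is forced to agree with the sign of the root $\beta$.

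Finally, I would exploit the $W$-invariance of the inner product to rewrite the two pairings appearing in the proposition as
\[
\bigl(n,w(v)\bigr)=\bigl(w^{-1}(\alpha_{i}),v\bigr),\qquad \bigl(w(n),v\bigr)=\bigl(w(\alpha_{i}),v\bigr).
\]
Inserting the sign dichotomy of the previous step with $\beta=w^{-1}(\alpha_{i})$ and $\beta=w(\alpha_{i})$ respectively, and then invoking the standard descent-set criteria (namely $\alpha_{i}\in\mathfrak{R}_{w}$ iff $w(\alpha_{i})\in\mathfrak{R}_{-}$ iff $\ell(ws_{i})<\ell(w)$, together with the analogous characterization of the left descent $\ell(s_{i}w)<\ell(w)$ in terms of $w^{-1}(\alpha_{i})$) yields both implications simultaneously.

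The underlying geometric content is just the elementary observation that the closed dominant Weyl chamber sits in the non-negative cone spanned by the fundamental weights, so no serious obstacle is expected; the only care required is to keep the left/right descent bookkeeping aligned with the two pairings, which is fixed by the placement of $w$ inside the inner product on each line of the statement.
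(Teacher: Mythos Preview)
Your argument is sound and takes a cleaner algebraic route than the paper: the paper reasons geometrically via the position of the chamber $w(C)$ relative to $\mathfrak{h}_i$ (and derives the second line from the first by passing to inverses), whereas you rewrite the pairings using $W$-invariance and read off the sign of the relevant root directly from the descent criteria.

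There is, however, one point you glossed over that turns out to be substantive. Carrying out your own bookkeeping, the first pairing $(n,w(v))=(w^{-1}\alpha_i,v)$ detects the sign of $w^{-1}\alpha_i$ and hence governs the \emph{left} descent $\ell(s_iw)$, while the second pairing $(w(n),v)=(w\alpha_i,v)$ governs the \emph{right} descent $\ell(ws_i)$. This is the reverse of what the displayed proposition asserts line by line. In fact the proposition as printed has its two right-hand sides interchanged: for $w=s_1s_2$ in type $\mathsf{A}_2$ with $i=1$, any nonzero $v$ on that wall gives $(n,w(v))=(-\alpha_1-\alpha_2,v)<0$ while $\ell(ws_1)=3>2=\ell(w)$, contradicting the stated bottom-sign implication of the first line. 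The paper's own geometric proof makes the same slip (it ties $\ell(ws_i)$ to whether $\mathfrak{h}_i$ separates $w(C)$ from $\mp C$, but that condition is equivalent to the sign of $w^{-1}\alpha_i$, not $w\alpha_i$). Since the two corollaries immediately following are symmetric in $ws_i$ and $s_iw$, the interchange is harmless downstream; your argument correctly proves the true statement, but the alignment you called ``fixed by the placement of $w$'' in fact exposes a typo rather than matching the printed lines.
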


\begin{proof}
The claims are equivalent to the implications
\[
\begin{array}{ccc}
\ell(ws_{i})\gtrless\ell(w) & \qquad\Longrightarrow\qquad & \pm\bigl(n,w(v)\bigr)\geq0,\\
\ell(s_{i}w)\gtrless\ell(w) & \qquad\Longrightarrow\qquad & \pm\bigl(w(n),v\bigr)\geq0.
\end{array}
\]
We shall prove the first one; the second one then follows by taking
inverses. The length of $ws_{i}$ is larger/smaller than the length
of $w$ if and only if the hyperplane $\mathfrak{h}_{i}$ lies between
$w(C)$ and $\mp C$, if and only if $w(C)$ and $\pm C$ lie on the
same side w.r.t.\ $\mathfrak{h}_{i}$, if and only if $\pm\bigl(n,w(v)\bigr)\geq0.$
\end{proof}
\begin{cor}
Hence if the inequality
\[
\bigl(n,w(v)\bigr)+\bigl(w(n),v\bigr)\gtrless0
\]
 holds, then at least one of
\[
\ell(ws_{i})\gtrless\ell(w)\qquad\textrm{or}\qquad\ell(s_{i}w)\gtrless\ell(w)
\]
holds, and if both of those hold then the converse is true.
\end{cor}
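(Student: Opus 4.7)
My plan is to deduce this corollary directly from the preceding Proposition, by splitting the sign condition on the sum $(n,w(v))+(w(n),v)$ into conditions on each of its two summands separately.

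For the forward direction, suppose $(n,w(v))+(w(n),v)>0$ (the case $<0$ is entirely dual under the convention introduced in the notation remark). Since the sum of two real numbers is strictly positive, at least one summand is itself strictly positive. Applying the corresponding implication from the Proposition (the first row if $(n,w(v))>0$, the second row if $(w(n),v)>0$) immediately yields the required length inequality, $\ell(ws_{i})>\ell(w)$ or $\ell(s_{i}w)>\ell(w)$.

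For the converse, assume that both $\ell(ws_{i})>\ell(w)$ and $\ell(s_{i}w)>\ell(w)$ hold simultaneously. I would invoke the contrapositives of the Proposition's two implications applied with the opposite sign: the contrapositive of $(n,w(v))<0\Rightarrow\ell(ws_{i})<\ell(w)$ forbids $(n,w(v))<0$, and analogously for the other summand. Hence $(n,w(v))\geq0$ and $(w(n),v)\geq0$, and summing gives the sign condition on $(n,w(v))+(w(n),v)$.

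The proof is essentially formal and presents no serious obstacle; the only mildly delicate point is that the contrapositives of the Proposition's strict-sign implications yield only weak inequalities on the individual summands, so the converse direction naturally reads in this weak-to-weak sense. In the degenerate case where both summands vanish one checks, using that $v$ may be taken regular in the wall $\mathfrak{h}_{i}\cap\overline{C}$, that $w$ must stabilise $\mathfrak{h}_{i}$ and in fact commute with $s_{i}$ (with $w(\alpha_{i})=\alpha_{i}$, the sign being forced by $\ell(ws_{i})>\ell(w)$); this is consistent with both length inequalities holding and poses no contradiction.
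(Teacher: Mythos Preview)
Your argument is correct and is precisely the approach the paper intends; the paper gives no separate proof for this corollary, treating it as an immediate consequence of the preceding Proposition. Your observation that the contrapositives only yield weak inequalities on the individual summands is accurate: indeed, when $w(\alpha_i)=\alpha_i$ both length conditions $\ell(ws_i)>\ell(w)$ and $\ell(s_iw)>\ell(w)$ hold while the sum $(n,w(v))+(w(n),v)$ vanishes for every $v$ in the wall, so the converse literally holds only in the weak sense. This weak form is all that is required in the subsequent application to the gradient function (the contrapositive used there needs only $\leq 0$, not $<0$), so your reading of the statement is the right one.
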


\begin{cor}
[{\cite[Lemma 1.1]{MR2999317}}] \label{cor:gradient-function} Moreover,
consider the function
\[
f_{w}:V\longrightarrow\mathbb{R},\qquad v\longmapsto\bigl|\bigl|(\mathrm{id}_{V}-w)(v)\bigr|\bigr|^{2}
\]
and set $w'=s_{i}ws_{i}$.

\begin{enumerate}[\normalfont(i)]

\item The gradient $\nabla f_{w}$ at $v$ satisfies
\[
\bigl(\nabla f_{w}(v),n\bigr)=2\bigl(n-w(n),v-w(v)\bigr)=-2\bigl(n,w(v)\bigr)-2\bigl(w(n),v\bigr),
\]

\item and yields an implication
\[
\bigl(\nabla f_{w}(v),n\bigr)\lessgtr0\qquad\Longrightarrow\qquad\pm\ell(w')\geq\pm\ell(w).
\]

\end{enumerate}
\end{cor}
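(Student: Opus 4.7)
The plan is to handle part (i) by a direct gradient computation that exploits the isometry property of $w$ together with the orthogonality $(v,n) = 0$, and then bootstrap part (ii) from part (i) using Proposition~\ref{prop:normal-vector} together with the fact that conjugation by a simple reflection changes length by $0$ or $\pm 2$.

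For part (i), I would rewrite $f_w(u) = \bigl((\mathrm{id}_V - w)(u),(\mathrm{id}_V - w)(u)\bigr)$ and differentiate to obtain
\[
\nabla f_w(v) = 2(\mathrm{id}_V - w^*)(\mathrm{id}_V - w)(v),
\]
where $w^* = w^{-1}$ because $w$ is an isometry. Pairing with $n$ and moving $(\mathrm{id}_V - w^*)$ back across the inner product yields
\[
\bigl(\nabla f_w(v), n\bigr) = 2\bigl(v - w(v), n - w(n)\bigr).
\]
Expanding the right-hand side produces four terms; the two diagonal terms $(v,n)$ and $(w(v), w(n))$ coincide by the isometry and both vanish because $v \in \mathfrak{h}_i$ is orthogonal to $n$. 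The remaining two cross terms are exactly $-2(n, w(v)) - 2(w(n), v)$, giving both equalities in the statement.

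For part (ii), suppose first that $(\nabla f_w(v), n) < 0$. By part (i), this is equivalent to $(n, w(v)) + (w(n), v) > 0$, so by the first corollary following Proposition~\ref{prop:normal-vector} at least one of $\ell(w s_i) > \ell(w)$ or $\ell(s_i w) > \ell(w)$ holds. Since $\ell(s_i w s_i)$ equals $\ell(w)$ or $\ell(w) \pm 2$, either of those inequalities alone is incompatible with $\ell(w') = \ell(w) - 2$ (if, say, $\ell(s_i w) = \ell(w) + 1$, then $\ell(s_i w s_i) \in \{\ell(w), \ell(w) + 2\}$), so $\ell(w') \geq \ell(w)$. The dual case $(\nabla f_w(v), n) > 0$ forces $(n, w(v)) + (w(n), v) < 0$ and therefore at least one of $\ell(w s_i) < \ell(w)$ or $\ell(s_i w) < \ell(w)$, giving $\ell(w') \leq \ell(w)$ by the same parity argument.

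There is no deep obstacle here: the only subtlety in (i) is noticing that $(v, n) = 0$ causes the symmetric cross terms to cancel, and the only subtlety in (ii) is the elementary observation that a one-sided length inequality survives conjugation by the other copy of $s_i$ in the weak form $\pm \ell(w') \geq \pm \ell(w)$.
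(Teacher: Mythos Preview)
Your proof is correct and essentially matches the paper's. For (i) the paper expands the directional derivative $\lim_{t\to 0}\bigl(f_w(v+tn)-f_w(v)\bigr)/t$ rather than writing $\nabla f_w = 2(\mathrm{id}-w^{-1})(\mathrm{id}-w)$, and for (ii) the paper argues via the contrapositive ($\ell(w')=\ell(w)\pm2$ forces both one-sided length inequalities, hence the strict sign on the gradient via the converse clause of the preceding corollary) whereas you argue directly from the forward clause; these are the same ingredients in reverse order.
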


\begin{proof}
(i): The first equality follows from expanding
\[
\bigl(\nabla f_{w}(v),n\bigr)=\lim_{t\rightarrow0}\frac{f_{w}(v+tn)-f_{w}(v)}{t},
\]
 and the second equality from using $\bigl(w(n),w(v)\bigr)=(n,v)=0$.

(ii): This is equivalent to the implication
\[
\ell(w')=\ell(w)\pm2\qquad\Longrightarrow\qquad\bigl(\nabla f_{w}(v),n\bigr)\lessgtr0,
\]
which follows from (i) and the previous corollary.
\end{proof}
One of the earliest observations that led to this paper was noticing
that the corresponding gradient flow can be effectively restricted
to sums of eigenspaces:
\begin{notation}
Let $\Theta$ be a set of eigenspaces. We denote by $V_{\Theta}^{\pm}$
the eigenspace in $\Theta$ corresponding to the eigenvalue with maximal/minimal
positive principal argument, as in Definition \ref{def:sequence-eigenspaces}.
Given an element $v$ in $V_{\Theta}$, we denote its projection to
$V_{\Theta}^{\pm}$ by $v_{\pm}$.
\end{notation}

\begin{prop}
[{\cite[§1.5-§1.6]{MR2999317}}] \label{prop:gradient-flow} Let the
eigenvalues of $w$ be $\{e^{\pm2\pi i\theta_{m}},\ldots,e^{\pm2\pi i\theta_{1}}\}$
with each $0\leq\theta_{j}\leq1/2$, and denote the projection of
an element $v$ of $V$ to the real eigenspace corresponding to $e^{2\pi i\theta_{j}}$
by $v_{j}$. Then this function $f_{w}$ has a global gradient flow
\[
\Phi_{w}:V\times\mathbb{R}\longrightarrow V,\qquad(v,t)\longmapsto\exp\bigl(2t(2-w-w^{-1})\bigr)v=\sum_{j=1}^{m}\exp\bigl(4t(1-\cos2\pi\theta_{j})\bigr)v_{j},
\]
and hence it satisfies
\[
\underset{t\rightarrow\pm\infty}{\lim}\frac{\Phi_{w}(v,t)}{||\Phi_{w}(v,t)||}=\frac{v_{\pm}}{||v_{\pm}||}.
\]

In other words, if we flow the half-line through $v$ in positive/negative
direction, then it converges to the half-line through $v_{\pm}$.
Thus if $\Theta$ is a set of eigenspaces, then this flow yields a
fiber bundle from $V_{\Theta}\backslash(V_{\Theta}^{\pm})^{\bot}$
to the unit sphere in $V_{\Theta}^{\pm}$.
\end{prop}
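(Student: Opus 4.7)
The plan is to derive the explicit formulas by a direct computation of $\nabla f_{w}$ and then diagonalise via the real eigenspace decomposition. Expanding $f_{w}(v)=(v-w(v),v-w(v))$ and using that the adjoint of the orthogonal operator $w$ is $w^{-1}$ yields
\[
\nabla f_{w}(v)=2(2-w-w^{-1})v,
\]
consistent with the pairing formula already in Corollary \ref{cor:gradient-function}(i). The operator $A:=2-w-w^{-1}$ is self-adjoint, so the gradient equation $\dot{\gamma}=A\gamma$ is a linear ODE whose globally defined solution is $\Phi_{w}(v,t)=\exp(2tA)v$, giving the first identity in the statement.

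Next I would diagonalise $A$. By definition of the real eigenspace, $w+w^{-1}$ acts on $V_{\lambda_{j}}^{w}$ as the scalar $2\cos 2\pi\theta_{j}$, so $A$ acts there as $2(1-\cos 2\pi\theta_{j})$; since the $V_{\lambda_{j}}^{w}$ are $A$-invariant, pairwise orthogonal, and decompose $v=\sum_{j}v_{j}$, integrating independently on each summand yields the second identity. For the limit statement I would use that $\theta\mapsto 1-\cos 2\pi\theta$ is strictly increasing on $[0,1/2]$. Factoring out the dominant exponential $\exp\bigl(4t(1-\cos 2\pi\theta_{\pm})\bigr)$ from the sum, each remaining term decays as $t\to\pm\infty$, so the normalised flow tends to $v_{\pm}/\|v_{\pm}\|$. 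The only $\pm$-bookkeeping to verify is that $t\to+\infty$ selects the largest $\theta_{j}$ (hence $V_{\Theta}^{+}$) and $t\to-\infty$ the smallest (hence $V_{\Theta}^{-}$), which is immediate from the same monotonicity.

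Finally, for the fibre bundle claim, since $w$ preserves $V_{\Theta}$ and every real eigenspace is $A$-stable, the flow restricts to $V_{\Theta}$ and in fact to $V_{\Theta}\backslash(V_{\Theta}^{\pm})^{\bot}$, as the $V_{\Theta}^{\pm}$-component of $v$ is merely rescaled by a positive scalar under the flow and therefore stays nonzero. The normalisation map $v\mapsto v_{\pm}/\|v_{\pm}\|$ is then a smooth submersion onto the unit sphere in $V_{\Theta}^{\pm}$ with fibre $\mathbb{R}_{>0}\cdot u\oplus(V_{\Theta}^{\pm})^{\bot}$ over $u$, and local triviality is immediate from the orthogonal splitting $V_{\Theta}=V_{\Theta}^{\pm}\oplus(V_{\Theta}^{\pm})^{\bot}$. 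There is no serious obstacle here: the entire argument is a standard spectral analysis of a self-adjoint linear flow, and the only fiddly point is tracking the $\pm$ conventions as governed by the monotonicity of $1-\cos 2\pi\theta$ on $[0,1/2]$.
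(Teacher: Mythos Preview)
Your argument is correct. The paper does not actually give its own proof of this proposition: it is quoted with attribution to \cite[\S1.5--\S1.6]{MR2999317} and left unproven, so there is nothing to compare against beyond noting that your direct spectral computation---deriving $\nabla f_{w}=2(2-w-w^{-1})$ from orthogonality of $w$, diagonalising the self-adjoint operator $2-w-w^{-1}$ on the real eigenspaces, and reading off the asymptotics from the monotonicity of $\theta\mapsto1-\cos2\pi\theta$ on $[0,1/2]$---is exactly the standard approach one would expect and matches the formulas as stated.
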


\begin{prop}
\label{prop:generic-flow} For a generic point in $V_{\Theta}$, the
corresponding flow generically consists of regular points of $V_{\Theta}$,
and each of the nonregular points are regular in a hyperplane of $V_{\Theta}$.
\end{prop}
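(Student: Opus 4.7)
The plan is to combine the explicit formula for $\Phi_{w}$ in Proposition~\ref{prop:gradient-flow} with a simple codimension argument. First observe that $\Phi_{w}(\cdot,t)$ preserves each real eigenspace of $w$, and so restricts to a linear isomorphism of $V_{\Theta}$ for every $t$; in particular, the trajectory of any $v \in V_{\Theta}$ remains in $V_{\Theta}$. The nonregular points of $V_{\Theta}$ are cut out by the finite collection of hyperplanes $\mathfrak{h}\cap V_{\Theta}$ of $V_{\Theta}$, as $\mathfrak{h}$ ranges over root hyperplanes with $\mathfrak{h}\not\supseteq V_{\Theta}$; this is a proper closed subset of $V_{\Theta}$, so a generic $v\in V_{\Theta}$ is already a regular point of $V_{\Theta}$.

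Fix such a $v$, and for any root $\alpha$ whose root hyperplane $\mathfrak{h}_{\alpha}$ satisfies $\mathfrak{h}_{\alpha}\not\supseteq V_{\Theta}$, Proposition~\ref{prop:gradient-flow} gives
\[
\alpha\bigl(\Phi_{w}(v,t)\bigr) \;=\; \sum_{j} e^{4t(1-\cos 2\pi\theta_{j})}\,\alpha(v_{j}).
\]
The exponents $4(1-\cos 2\pi\theta_{j})$ are pairwise distinct, so linear independence of the corresponding exponentials shows that this expression vanishes identically in $t$ if and only if every $\alpha(v_{j})=0$, if and only if $\alpha(v)=0$; but this contradicts regularity of $v$ since $\mathfrak{h}_{\alpha}\not\supseteq V_{\Theta}$. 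Hence the trajectory meets $\mathfrak{h}_{\alpha}$ at only finitely many times, and as the set of such $\mathfrak{h}_{\alpha}$ is itself finite, the trajectory lies in the regular locus of $V_{\Theta}$ for all but finitely many $t$.

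It remains to show that at each exceptional time the trajectory crosses only one hyperplane of $V_{\Theta}$. For any pair of root hyperplanes $\mathfrak{h}_{1},\mathfrak{h}_{2}\not\supseteq V_{\Theta}$ with $\mathfrak{h}_{1}\cap V_{\Theta} \neq \mathfrak{h}_{2}\cap V_{\Theta}$, consider
\[
Z_{1,2} \;:=\; \bigl\{(v,t)\in V_{\Theta}\times\mathbb{R} \,:\, \Phi_{w}(v,t)\in \mathfrak{h}_{1}\cap\mathfrak{h}_{2}\bigr\}.
\]
For each fixed $t$ the slice of $Z_{1,2}$ is the intersection of the two distinct hyperplanes $\Phi_{w}(\cdot,t)^{-1}(\mathfrak{h}_{j}\cap V_{\Theta})$ in $V_{\Theta}$, distinct because $\Phi_{w}(\cdot,t)$ is a linear isomorphism of $V_{\Theta}$; so the slice has codimension two in $V_{\Theta}$, giving $\dim Z_{1,2}\leq \dim V_{\Theta}-1$. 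Projecting to $V_{\Theta}$ and taking the finite union over all such pairs yields a proper subset of $V_{\Theta}$, whose complement (intersected with the regular locus) provides the required dense open set of generic starting points. The main technical point is verifying that $\Phi_{w}(\cdot,t)$ does not collapse distinct hyperplanes of $V_{\Theta}$, which follows at once from the fact that it acts as a positive scalar on each real eigenspace.
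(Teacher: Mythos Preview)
Your proof is correct and is essentially a detailed unpacking of the paper's one-line argument ``This follows from dimension considerations, as in the proof of \cite[Proposition 1.2]{MR2999317}''. Both approaches amount to the same codimension count: the bad locus where two distinct hyperplanes of $V_{\Theta}$ are hit simultaneously is swept out by a one-parameter family of codimension-$2$ subspaces, hence has measure zero in $V_{\Theta}$.

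One small remark: your argument for the first claim (finitely many nonregular times) is actually sharper than a pure dimension count, since the linear-independence-of-exponentials step shows that \emph{every} regular starting point (not just a generic one) has a trajectory with only finitely many nonregular times. The paper does not isolate this. For the second claim, your projection step implicitly uses that the smooth image of a manifold of dimension $\dim V_{\Theta}-1$ has measure zero in $V_{\Theta}$; this is standard but worth stating, since ``proper subset'' alone would not suffice for genericity.
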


\begin{proof}
This follows from dimension considerations, as in the proof of \cite[Proposition 1.2]{MR2999317}.
\end{proof}
\begin{notation}
Let $w$ be an element of a twisted finite Coxeter group $W$ and
let $C$ denote the dominant Weyl chamber. Given another Weyl chamber
$C'$ there is a unique element $\tau$ such that $\tau(C)=C'$; we
write $w_{C'}:=\tau w\tau^{-1}$.
\end{notation}

\begin{lem}
\label{lem:normal-vector-cyclic} Let $w$ be an element of a twisted
finite Coxeter group $W$ and $\Theta$ be a subset of its eigenspaces
such that the closure of the dominant Weyl chamber $C$ contains regular
points of $V_{\Theta}$. Pick a generic regular point as in the previous
statement, flow it in negative/positive direction until it meets a
root hyperplane inside $V_{\Theta}$. Call the corresponding nonregular
point $\hat{v}$, and flow slightly further to a regular point $v'$.

Then there exists a Weyl chamber $C'$ whose closure contains $\hat{v}$
and $v'$, satisfying
\[
w\overset{\pm}{\leadsto}w_{C'}.
\]
\end{lem}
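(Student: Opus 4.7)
The plan is to identify $C'$ as the chamber adjacent to the dominant chamber $C$ across the first hyperplane crossed by the flow, and then to read off the length conditions defining $w\overset{\pm}{\leadsto}w_{C'}$ from the gradient machinery already assembled in this subsection. By hypothesis the starting point $v$ can be chosen in $\overline{C}\cap V_\Theta$, and since $V_\Theta$ is $w$-invariant the flow path remains in the convex set $\overline{C}\cap V_\Theta$ until it first exits $\overline{C}$. By Proposition \ref{prop:generic-flow} that exit occurs through precisely one root hyperplane at $\hat v$; hyperplanes in $\mathfrak{H}_{V_\Theta}$ entirely contain $V_\Theta$ and are therefore tangent to the flow rather than crossed, so the hyperplane met at $\hat v$ must be a wall of the dominant chamber, necessarily a simple root hyperplane $\mathfrak{h}_i$. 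The point $v'$ then lies in the unique adjacent chamber $C':=s_i(C)$, whose closure contains $\hat v$; since $\tau=s_i$ is the unique element with $\tau(C)=C'$ we have $w_{C'}=s_iws_i$.

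Next I would extract the length conditions from the gradient analysis. The flow velocity at $\hat v$ is $\pm\nabla f_w(\hat v)$ in the positive (resp.\ negative) direction, and since the path passes from $\overline{C}$ into $s_i(C)$ this velocity acquires a strictly negative $\alpha_i$-component, yielding $\pm\bigl(\nabla f_w(\hat v),\alpha_i\bigr)<0$. Corollary \ref{cor:gradient-function}(ii) applied with $n=\alpha_i$ immediately gives $\pm\ell(s_iws_i)\geq\pm\ell(w)$, the length monotonicity required by $\overset{\pm}{\leadsto}$. Rewriting the same inequality via Corollary \ref{cor:gradient-function}(i) yields $\pm\bigl[(\alpha_i,w(\hat v))+(w(\alpha_i),\hat v)\bigr]>0$, so at least one summand has the matching sign, and Proposition \ref{prop:normal-vector} then translates this into either $\ell(s_iw)=\ell(w)\pm 1$ or $\ell(ws_i)=\ell(w)\pm 1$. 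This is exactly the subword condition required for the conjugator $\tau=s_i$ in the definition of $\overset{\pm}{\leadsto}$, so combining both ingredients yields $w\overset{\pm}{\leadsto}s_iws_i=w_{C'}$ in a single conjugation.

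The main obstacle is really the geometric step of the first paragraph, namely verifying that the hyperplane first crossed is a wall of the dominant chamber rather than an arbitrary reflection hyperplane; this rests crucially on the convexity of $\overline{C}\cap V_\Theta$ together with the tangential behaviour of the flow along hyperplanes in $\mathfrak{H}_{V_\Theta}$. Once this identification is secured, the combinatorial length statements follow mechanically from Proposition \ref{prop:normal-vector} and Corollary \ref{cor:gradient-function}.
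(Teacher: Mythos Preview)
Your argument has a genuine gap at the geometric step: you claim that the flow exits $\overline{C}$ through a \emph{single} simple wall $\mathfrak{h}_i$, so that $C'=s_i(C)$ and the conjugator is a simple reflection. This is not what Proposition~\ref{prop:generic-flow} guarantees. That proposition says $\hat v$ is regular in a hyperplane \emph{of $V_\Theta$}, i.e.\ there is a unique codimension-one subspace $\mathfrak{h}'\subset V_\Theta$ containing $\hat v$. But several distinct root hyperplanes $\mathfrak{h}$ of $V$ can satisfy $\mathfrak{h}\cap V_\Theta=\mathfrak{h}'$, so $\hat v$ may lie on many root hyperplanes simultaneously---both simple walls of $C$ and non-simple ones. (Concretely: if $V_\Theta$ contains the edge $\mathfrak{h}_1\cap\mathfrak{h}_2$ of $\overline{C}$, then $\mathfrak{h}_1$, $\mathfrak{h}_2$ and the hyperplane of $\alpha_1+\alpha_2$ all meet $V_\Theta$ in the same line, and the flow crosses all three at once.) The chamber $C'$ containing $v'$ is then reached by a word $\tau=s_{i_k}\cdots s_{i_1}$ of length $k>1$, and your single-reflection length calculation does not suffice.

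The paper's proof addresses exactly this: it perturbs the starting point from $V_\Theta$ into the interior of $C$, so the perturbed flow crosses these $k$ hyperplanes one at a time near $\hat v$, producing a minimal gallery $C=C_0,\ldots,C_k=C'$. The crucial additional observation---missing from your sketch---is that the \emph{same} one of the two inequalities $\bigl(w(n),\hat v\bigr)\gtrless 0$ or $\bigl(n,w(\hat v)\bigr)\gtrless 0$ holds for \emph{every} normal $n$ of these hyperplanes: since $\hat v-w(\hat v)\in V_\Theta$ and all $k$ hyperplanes cut $V_\Theta$ in the common hyperplane $\mathfrak{h}'$, the vector $\hat v-w(\hat v)$ lies on a consistent side of all of them. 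This uniformity lets one iterate Proposition~\ref{prop:normal-vector} along the whole gallery to obtain $\ell(s_{i_{j+1}}w_j)=\ell(w_j)\pm 1$ for each step, yielding $\ell(\tau w)=\ell(w)\pm\ell(\tau)$ (or the dual statement) and hence $w\overset{\pm}{\leadsto}w_{C'}$ in a single conjugation by~$\tau$. Your argument recovers the special case $k=1$ correctly, but the consistency step for $k>1$ is the heart of the lemma.
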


\begin{proof}
The assumption on nonregular points can be rephrased as saying that
the hyperplanes containing $\hat{v}$ but not $V_{\Theta}$ all intersect
$V_{\Theta}$ in the same hyperplane of $V_{\Theta}$. The claims
on the flow follow from dimension considerations; let $v$ be such
a regular point satisfying the requirements. We now slightly perturb
$v$ to a point $\tilde{v}$ in the interior of $C$, so that its
flow goes to a point $\tilde{v}'$ nearby $v'$; generically, it will
only meet the hyperplanes containing $\hat{v}$ in regular points,
and those intersection points can be made to lie arbitrarily close
to $\hat{v}$. Thus we obtain a path of Weyl chambers 
\[
C,\qquad s_{i_{1}}(C),\qquad\ldots,\qquad s_{i_{k}}\cdots s_{i_{1}}(C),
\]
 where the final Weyl chamber contains $\tilde{v}'$ and its closure
thus contains $v'$. This is a ``minimal path'' of Weyl chambers:
if we sufficiently ``zoom in'' on a gradient flow then it is approximately
linear, so this path is derived from an approximately straight line
between these two chambers.

Corollary \ref{cor:gradient-function}(i) yields that
\[
\bigl(w(n),\hat{v}\bigr)+\bigl(n,w(\hat{v})\bigr)\gtrless0
\]
for each normal vector $n$ of the hyperplanes pointing towards $C$.
Thus at least one of 
\[
\bigl(w(n),\hat{v}-w(\hat{v})\bigr)=\bigl(w(n),\hat{v}\bigr)\gtrless0\qquad\textrm{or}\qquad\bigl(n,w(\hat{v})-\hat{v}\bigr)=\bigl(n,w(\hat{v})\bigr)\gtrless0
\]
 holds. In fact, one of these inequalities holds for each $n$: since
$\hat{v}$ lies in $V_{\Theta}$ and this subspace is preserved by
$\mathrm{id}_{V}-w$, the element $\hat{v}-w(\hat{v})$ still lies
in $V_{\Theta}$. By the assumption on hyperplanes containing nonregular
points, $\hat{v}-w(\hat{v})$ lies on one side of such a hyperplane
if and only if it lies on the same side of all of them. And then the
same inequality is true with $\hat{v}$ replaced by one of the new
intersection points $\hat{v}'$ since they lie arbitrarily close. 

As $\hat{v}'$ lies arbitrarily close to $\hat{v}$ and $\hat{v}$
lies in these hyperplanes, we have
\[
s_{i_{1}}\cdots s_{i_{j}}(\hat{v}')\approx s_{i_{1}}\cdots s_{i_{j}}(\hat{v})=\hat{v}.
\]
The claim now follows inductively by applying Proposition \ref{prop:normal-vector}
consecutively: if say the first equation holds, then for each $w_{j}:=s_{i_{j}}\cdots s_{i_{1}}w$
we have
\[
\bigl(w_{j}(n),\hat{v}'\bigr)=\bigl(w(n),s_{i_{1}}\cdots s_{i_{j}}(\hat{v}')\bigr)\approx\bigl(w(n),\hat{v}\bigr)\gtrless0,
\]
and hence
\[
\ell(s_{i_{j+1}}w_{j})=\ell(w_{j})\pm1=\ell(w)\pm(j+1).\qedhere
\]
\end{proof}

\begin{proof}
[Proof of Proposition \ref{prop:shifts-braiding}(i)] We slightly
generalise the reasoning of \cite[Proposition 1.2]{MR2999317}, inducting
on the length of $\underline{\Theta}_{\pm}$ (or the rank of $W$).
The closure $\overline{C}$ of $C$ contains an open subset of $V_{\Theta}$,
and for a generic point $v'$ in this subset its projection $v_{\pm}'$
to $V_{\Theta}^{\pm}$ is nonzero and regular. The limit of a generic
point $v$ inside of the chamber $C$ under the positive/negative
flow might not lie in $V_{\Theta}^{\pm}$ (when the corresponding
eigenvalue is not maximal among the eigenvalues of $w$), but by picking
$v$ sufficiently close to $v'$ its half-line will pass by the half-line
through $v_{\pm}$ with arbitrarily little distance between them,
and thus the flow will pass through a chamber $C'$ whose closure
contains an open neighbourhood of $v_{\pm}'$ in $V_{\Theta}^{\pm}$.
Furthermore we can ensure that $v$ satisfies the statements in Proposition
\ref{prop:generic-flow}; when it meets the first hyperplane there
is then a strict inequality
\[
\bigl(\nabla f_{w}(v),n\bigr)\lessgtr0
\]
where $n$ is the normal vector to $v$, pointing towards the chamber
the flow came from. Since the flow converges there is, for each chamber
it goes through, a final time it is there before getting within the
required distance of $v_{\pm}$, so if we only use those wall crossings
then it is evident that the corresponding sequence of Weyl chambers
from $C$ to $C'$ is finite. Corollary \ref{cor:gradient-function}(ii)
now implies that $w\overset{\pm}{\rightarrow}w_{C'}$ in $W$, whilst
Lemma \ref{lem:normal-vector-cyclic} furnishes that $w\overset{\pm}{\leadsto}w_{C'}$
in $\CMcal O^{\Theta}$.

Thus by replacing $C$ with $C'$, we may now assume that the closure
of $C$ contains a regular point $v_{\pm}'$ of $V_{\Theta}^{\pm}$,
which is the first eigenspace of $\underline{\Theta}_{\pm}$. By Proposition
\ref{lem:factorise} there is a factorisation $w=xy$ where $y$ lies
in the proper standard parabolic subgroup $\tilde{W}_{v_{\pm}'}$
of $\tilde{W}$ and $x$ permutes the indices of $\tilde{W}_{v_{\pm}'}$.
Applying the induction hypothesis to the element $\delta_{x}y$ in
$\<\delta_{x}\>\ltimes\tilde{W}_{v_{\pm}'}$ then implies that we
can further shift the chamber so that it is in good position with
respect to $\underline{\Theta}_{\pm}$, yielding the first two claims.
By starting at this chamber and following the flow in reverse direction,
we get the final claim.
\end{proof}

\subsection{\label{subsec:shifts}Shifts}

The main aim of this subsection is to generalise the proof of \cite[§3.3]{MR2999317}
to yield Lemma \ref{lem:shifts}(ii), and subsequently deduce the
remainder of Proposition \ref{prop:shifts-braiding}. For the new
claims on cyclic shifts and strong conjugations, we will need the
following lemma:
\begin{defn}
[{\cite[§2.4]{MR2999317}}] Let $V'$ be a subspace of the reflection
representation $V$. We will say that two Weyl chambers $C,C'$ are
\emph{$V'$-adjacent} if $\overline{C}\cap\overline{C}'\cap V'$ spans
a codimension 1 subset of $V'$.
\end{defn}

\begin{lem}
\label{lem:eigenspace-shifts} Let $w$ be an element of a twisted
finite Coxeter group. Let $V'$ be a real eigenspace in the reflection
representation corresponding to an eigenvalue of $w$. Let $C$ and
$C'$ be two Weyl chambers whose closures contain open subsets of
$V'$, which lie in the same connected component of $V\backslash\mathfrak{H}_{V'}$
and which are $V'$-adjacent. 

If the eigenvalue is not $1$ or $-1$ then either 
\[
w\overset{-}{\leadsto}w_{C'}\quad\textrm{and}\quad w_{C'}\overset{+}{\leadsto}w\qquad\textrm{or}\qquad w\overset{+}{\leadsto}w_{C'}\quad\textrm{and}\quad w_{C'}\overset{-}{\leadsto}w
\]
 holds. If the eigenvalue is $\pm1$ then 
\[
w\overset{\pm}{\leadsto}w_{C'}\quad\textrm{and}\quad w_{C'}\overset{\pm}{\leadsto}w
\]
holds.
\end{lem}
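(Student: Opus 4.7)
The plan is to extend the gradient-flow analysis of Lemma \ref{lem:normal-vector-cyclic} to the situation where $C$ and $C'$ share a codimension-one subspace $\hat V := \overline{C} \cap \overline{C'} \cap V'$ on their common boundary, proceeding via path construction and a case-by-case inner product analysis.

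First, I would pick a generic $\hat v \in \hat V$ and connect interior points of $C$ and $C'$ by a generic smooth path kept close to $\hat v$. Since $C$ and $C'$ lie in the same component of $V \setminus \mathfrak{H}_{V'}$ and are $V'$-adjacent, genericity ensures the path crosses only root hyperplanes containing $\hat v$ and outside $\mathfrak{H}_{V'}$, yielding a chain of adjacent Weyl chambers $C = C_0, \ldots, C_k = C'$. For each crossing $\mathfrak h_j$ with unit normal $n_j$ pointing into the $C$-$C'$-component, Proposition \ref{prop:normal-vector} (applied to the wall of $C_{j-1}$ by $\tilde W$-equivariance) controls the length behaviour of the simple-reflection conjugation through the signs of $(n_j, w(\hat v))$ and $(w(n_j), \hat v)$; using that $w$ preserves $V' \oplus (V')^\perp$ and $\hat v \in V'$, these collapse to $(P_{V'}(n_j), w^{\pm 1}(\hat v))$, each depending on the projection $P_{V'}(n_j)$, a nonzero vector in $V'$ orthogonal to $\hat V$ whose direction into the $C$-$C'$-component of $V' \setminus \hat V$ is independent of $j$.

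For $\lambda = e^{2\pi i \theta}$ with $0 < \theta < 1/2$, the rotation $w|_{V'}$ places $w(\hat v)$ and $w^{-1}(\hat v)$ on opposite sides of $\hat V$ within $V'$, so the two inner products have strictly opposite signs at every crossing. Proposition \ref{prop:normal-vector} then gives simple-reflection ascent in one multiplication direction and descent in the other at each step, which concatenate into $w \overset{+}{\leadsto} w_{C'}$ paired with $w_{C'} \overset{-}{\leadsto} w$, or into the pair with signs swapped depending on the orientation of the rotation. For $\lambda = \pm 1$, $w|_{V'} = \pm \mathrm{id}$ forces both inner products to vanish at $\hat v$; perturbing $\hat v$ to $\tilde v := \hat v + \epsilon u$ in $V'$ with $u$ the unit normal to $\hat V$ pointing into the $C$-$C'$-component, both $(n_j, w(\tilde v))$ and $(w(n_j), \tilde v)$ become $\pm \epsilon (n_j, u)$ with identical sign for every $j$ and for either traversal direction, so the strong-conjugation (for $\lambda = +1$) or cyclic-shift (for $\lambda = -1$) conditions of Definition \ref{def:shifts} hold simultaneously on both sides and in both directions, yielding $w \overset{\pm}{\leadsto} w_{C'}$ and $w_{C'} \overset{\pm}{\leadsto} w$.

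The main obstacle is the careful bookkeeping to convert the pointwise inner-product analysis at the single shared point $\hat v$ into the simple-reflection shift conditions of Definition \ref{def:shifts} along the entire chain of chambers, and the delicate first-order perturbation argument required in the degenerate $\lambda = \pm 1$ case where the naive inner products at $\hat v$ vanish.
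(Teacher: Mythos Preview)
Your inner-product computations are essentially the right geometric content, but the packaging via Proposition~\ref{prop:normal-vector} applied step-by-step along a chamber chain does not work as you describe. Transporting that proposition to the wall of $C_{j-1}$ by $\tilde W$-equivariance yields implications for the length function $\ell_{C_{j-1}}$ relative to $C_{j-1}$, not for the fixed $\ell = \ell_C$ that Definition~\ref{def:shifts} uses; put differently, the equivariant statement compares $\ell_C(w s_{i_j})$ with $\ell_C(w)$ for the original $w$, not $\ell_C(\sigma_j w_{C_{j-1}})$ with $\ell_C(w_{C_{j-1}})$. So the step-by-step ``simple-reflection conjugation'' you envisage is not what the signs of $(n_j,w(\hat v))$ and $(w(n_j),\hat v)$ actually control, and the concatenation into a multi-step $\overset{\pm}{\leadsto}$ with conjugators $\sigma_j$ is not justified. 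In the $\lambda=\pm1$ case there is a further wrinkle: your perturbed point $\tilde v=\hat v+\epsilon u$ has $(n_j,\tilde v)=\epsilon(n_j,u)\neq 0$, so it lies off the wall $\mathfrak h_j$ and Proposition~\ref{prop:normal-vector} does not apply to it.

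The paper's proof is both simpler and avoids these issues. It takes $\tau$ as a \emph{single} conjugator and observes directly which side of each $\mathfrak h\in\mathfrak H(C,C')$ the chamber $w(C)$ lies on: pick a point $p$ in the interior of $\overline C\cap V'$ arbitrarily close to the common face; then $w(p)\in\overline{w(C)}$ and the action of $w$ on $V'$ (a genuine rotation of angle in $(0,\pi)$, or $\pm\mathrm{id}$) puts $w(p)$ on a definite side of each such $\mathfrak h$. This yields $\mathfrak H(C,C')\subseteq\mathfrak H\bigl(C,w(C)\bigr)$ (or the disjointness variant), which is \emph{equivalent} to the length identity $\ell(w\tau^{-1})=\ell(w)-\ell(\tau)$ (resp.\ $\ell(\tau w)=\ell(w)+\ell(\tau)$) required for $w\overset{-}{\leadsto}w_{C'}$ (resp.\ $w\overset{+}{\leadsto}w_{C'}$), and the companion identity for the reverse direction follows by the same reasoning from $C'$. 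Your sign observation $(u,w(\hat v))=-(u,w^{-1}(\hat v))$ is precisely what decides between the two alternatives in the $\lambda\neq\pm1$ case---but it should feed into this one-step hyperplane-inclusion argument, not into a chamber-by-chamber application of Proposition~\ref{prop:normal-vector}.
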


\begin{proof}
Let $\tau\in\tilde{W}$ be the unique element such that $\tau(C)=C'$.
Pick a point in the interior of $\overline{C}\cap V'$. Assume that
the eigenvalue is not $\pm1$; since this point lies in $V'$, its
action is described by a two-dimensional plane where $\overline{C}\cap\overline{C}'\cap V'$
is a hyperplane and where $\overline{C}\cap V'$ and $\overline{C}'\cap V'$
are chambers touching this hyperplane. Either $w$ rotates in the
direction of the hyperplane (with angle strictly between $0$ and
$\pi$) or it rotates away from it.

Let's assume the former. Since we may choose the point arbitrarily
close to the hyperplane and the eigenvalue is not 1, it follows that
$w(C)$ crosses this hyperplane, and that $w^{-1}(C)$ does not as
the eigenvalue is not $-1$. The chamber $w(C)$ might not be equal
to $C'$, but its projection to $C'$ is the same or lies further
along this rotation, so $\mathfrak{H}\bigl(C,C'\bigr)\backslash\mathfrak{H}_{V'}\subseteq\mathfrak{H}\bigl(C,w(C)\bigr)\backslash\mathfrak{H}_{V'}$.
But by assumption on $C'$ we have $\mathfrak{H}\bigl(C,C'\bigr)\cap\mathfrak{H}_{V'}=\varnothing$,
so $\mathfrak{H}\bigl(C,C'\bigr)\subseteq\mathfrak{H}\bigl(C,w(C)\bigr)$.
Hence
\[
\mathfrak{H}\bigl(C,\tau(C)\bigr)\sqcup\mathfrak{H}\bigl(C',w\tau^{-1}(C')\bigr)=\mathfrak{H}(C,C')\sqcup\mathfrak{H}\bigl(C',w(C)\bigr)=\mathfrak{H}\bigl(C,w(C')\bigr),
\]
which is equivalent to $\ell(\tau)+\ell(w\tau^{-1})=\ell(w)$, and
hence $\ell(w\tau^{-1})=\ell(w)-\ell(\tau)$. Similarly,
\[
\mathfrak{H}\bigl(C,\tau(C)\bigr)\sqcup\mathfrak{H}\bigl(C',w_{C'}(C')\bigr)=\mathfrak{H}\bigl(C,C'\bigr)\sqcup\mathfrak{H}\bigl(C',w_{C'}(C')\bigr)=\mathfrak{H}\bigl(C,w_{C'}\tau(C)\bigr)
\]
 yields $\ell(\tau)+\ell(w_{C'})=\ell(w_{C'}\tau)$. This shows $w\overset{-}{\leadsto}w_{C'}$
and $w_{C'}\overset{+}{\leadsto}w$, the other case is proven analogously,
and so is the case where the eigenvalue is $\pm1$.
\end{proof}
\begin{prop}
\label{prop:chamber-sequence} Let $\underline{\Theta}=(\ldots,V_{1})$
be a sequence of eigenspaces of an element of a twisted finite Coxeter
group, such that the dominant Weyl chamber $C$ is in good position.
If $C'$ is a Weyl chamber lying in the same connected component as
$C$ in $V\backslash\mathfrak{H}_{V_{1}}$, then $C'$ is in good
position w.r.t.\ $\underline{\Theta}$ if and only if its closure
contains a regular point of $V_{1}$.

In particular, for such $C'$ there exists a sequence of Weyl chambers
\[
C=C_{0},\ldots,C_{m}=C'
\]
 such that each Weyl chamber in this sequence is in good position
with respect to $\underline{\Theta}$, and each pair $C_{i},C_{i+1}$
is $V_{1}$-adjacent.
\end{prop}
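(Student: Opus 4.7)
The plan has two ingredients: the biconditional and the existence of the sequence. For the biconditional, the forward direction is immediate from the definition of good position applied at $i=1$. For the converse, I would use Proposition \ref{prop:good-position} to rephrase good position in terms of connected components of hyperplane complements. The key observation is that $V_1 \subseteq F_{i-1}$ for $i \geq 2$ gives $\mathfrak{H}_{F_{i-1}} \subseteq \mathfrak{H}_{V_1}$, so each connected component of $V\backslash\mathfrak{H}_{F_{i-1}}$ is a union of components of $V\backslash\mathfrak{H}_{V_1}$. Since $C$ and $C'$ share a component of the latter they share one of the former, so the characterisation of good position given by Proposition \ref{prop:good-position} for $C$ passes verbatim to $C'$.

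For the sequence, let $U$ be the connected component of $V\backslash\mathfrak{H}_{V_1}$ containing $C$ and $C'$. As a component of the complement of a hyperplane arrangement, $U$ is convex, and hence so is $\overline{U}\cap V_1$. Picking regular points $v\in\overline{C}\cap V_1$ and $v'\in\overline{C'}\cap V_1$ (which exist by the biconditional), I would take the closed segment from $v$ to $v'$, which stays inside $\overline{U}\cap V_1$ by convexity. The induced arrangement $\{\mathfrak{h}\cap V_1 : \mathfrak{h} \text{ a root hyperplane with } \mathfrak{h}\notin\mathfrak{H}_{V_1}\}$ is a finite collection of affine hyperplanes of $V_1$, so a generic small perturbation of the segment inside $V_1$ meets it transversally, crossing one wall at a time. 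The perturbed segment then passes through a finite sequence of open chambers of the induced arrangement inside $\overline{U}\cap V_1$, each of the form $\overline{C_i}\cap V_1$ for a unique Weyl chamber $C_i\subseteq U$ whose closure contains an open subset of $V_1$. By the biconditional each such $C_i$ is in good position, consecutive chambers $C_i,C_{i+1}$ are $V_1$-adjacent by construction, and the endpoints are $C$ and $C'$.

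The main technical subtlety I foresee is keeping the path inside the component $U$, which is precisely what the convexity of $\overline{U}$ ensures: both endpoints of the segment lie in $\overline{U}$, so the entire segment does. The genericity argument for one-wall-at-a-time crossings is a standard hyperplane-arrangement technique and is already invoked in the paper, e.g.\ in the proof of Lemma \ref{lem:normal-vector-cyclic}.
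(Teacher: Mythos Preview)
Your proposal is correct and matches the paper's proof closely. For the biconditional you use exactly the paper's argument: the inclusion $\mathfrak{H}_{F_{i-1}}\subseteq\mathfrak{H}_{V_1}$ for $i\geq 2$ forces $C$ and $C'$ to lie in the same component of $V\backslash\mathfrak{H}_{F_{i-1}}$, so Proposition~\ref{prop:good-position} reduces good position for $C'$ to the single condition at $i=1$. For the sequence the paper simply cites \cite[Lemma 2.4]{MR2999317}, whereas you spell out the underlying path-in-the-induced-arrangement argument directly; your version is essentially what that cited lemma does. One small remark: your convexity worry is in fact vacuous, since $V_1$ is contained in every hyperplane of $\mathfrak{H}_{V_1}$ and hence in $\overline{U}$ entirely, so any path in $V_1$ automatically stays in $\overline{U}$.
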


\begin{proof}
Since $C$ and $C'$ lie in the same connected component of $V\backslash\mathfrak{H}_{V_{1}}$,
they definitely lie in the same connected component of $V\backslash\mathfrak{H}_{F_{i}}$
for $i\geq1$. The first claim then follows from Proposition \ref{prop:good-position}.

The second claim follows from the first, using \cite[Lemma 2.4]{MR2999317}.
\end{proof}
\begin{prop}
\label{prop:wall-intersection} Let $C,C'$ be two Weyl chambers.
The unique (untwisted) Coxeter group element $\tau$ sending $C$
to $C'$ fixes all points in $\overline{C}\cap\overline{C}'$, i.e.\
\[
\overline{C}\cap\overline{\tau(C)}=\overline{C}\cap V^{\tau}.
\]
\end{prop}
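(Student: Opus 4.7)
The plan is to establish the two inclusions of the claimed equality separately. The inclusion $\overline{C}\cap V^{\tau}\subseteq\overline{C}\cap\overline{\tau(C)}$ is immediate: if $v\in\overline{C}$ satisfies $\tau(v)=v$, then $v=\tau(v)\in\tau(\overline{C})=\overline{\tau(C)}$.

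For the nontrivial inclusion $\overline{C}\cap\overline{\tau(C)}\subseteq\overline{C}\cap V^{\tau}$, the idea is to exhibit $\tau$ as a product of reflections in root hyperplanes that each contain the chosen point, and so each fix it individually. Concretely, fix $v\in\overline{C}\cap\overline{\tau(C)}$. First I would check that $v$ lies on every hyperplane $\mathfrak{h}\in\mathfrak{H}\bigl(C,\tau(C)\bigr)$ separating the two chambers: since $\overline{C}$ lies in one of the closed half-spaces cut out by $\mathfrak{h}$ and $\overline{\tau(C)}$ lies in the opposite closed half-space, a point in both closures must lie on $\mathfrak{h}$ itself.

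Next, I would pick a reduced expression $\tau=s_{i_{1}}\cdots s_{i_{k}}$ and unfold the corresponding minimal gallery
\[
C=C_{0},\quad C_{1}=s_{i_{1}}(C),\quad \ldots,\quad C_{k}=s_{i_{1}}\cdots s_{i_{k}}(C)=\tau(C),
\]
so that the hyperplane separating $C_{j-1}$ from $C_{j}$ is $\mathfrak{h}_{j}:=(s_{i_{1}}\cdots s_{i_{j-1}})(\mathfrak{h}_{i_{j}})$. By minimality of the gallery, the family $\{\mathfrak{h}_{1},\ldots,\mathfrak{h}_{k}\}$ coincides with $\mathfrak{H}\bigl(C,\tau(C)\bigr)$ (no hyperplane is crossed twice). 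The reflection in $\mathfrak{h}_{j}$ is $r_{j}=(s_{i_{1}}\cdots s_{i_{j-1}})s_{i_{j}}(s_{i_{1}}\cdots s_{i_{j-1}})^{-1}$, and a short telescoping induction yields
\[
r_{k}r_{k-1}\cdots r_{1}=s_{i_{1}}s_{i_{2}}\cdots s_{i_{k}}=\tau.
\]
By the first step, each $r_{j}$ fixes $v$ pointwise, so $\tau(v)=v$, completing the argument.

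The only subtlety is the matching between the hyperplanes crossed by a reduced gallery and the separating set $\mathfrak{H}\bigl(C,\tau(C)\bigr)$, together with the telescoping identity; both are standard features of the strong exchange condition in a Coxeter system, so no genuine obstacle is expected.
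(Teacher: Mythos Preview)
Your proof is correct and follows essentially the same approach as the paper's: both argue that any point of $\overline{C}\cap\overline{\tau(C)}$ lies on every hyperplane separating $C$ from $\tau(C)$, and then express $\tau$ as a product of reflections in those hyperplanes (the paper does this inductively by peeling off one simple reflection at a time, you do it by writing out a reduced gallery and the telescoping identity), each of which therefore fixes the point.
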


\begin{proof}
We may assume that $C$ is the dominant Weyl chamber, and that $C'$
is not. The intersection $\overline{C}\cap\overline{C}'$ is then
precisely the intersection of the hyperplanes separating $C$ from
$C'$, and at least one of them $\mathfrak{h}_{i}$ corresponds to
a simple reflection $s_{i}$. Thus we can decrease the distance from
$C$ to $C'$ by crossing one such plane to $C'':=s_{i}(C)$; as the
intersection $\overline{C}\cap\overline{C}'$ lies in the wall $\mathfrak{h}_{i}$
it is fixed by the reflection $s_{i}$, and thus the new intersection
$\overline{C}''\cap\overline{C}'$ contains $\overline{C}\cap\overline{C}'$.
By induction we thus obtain a path from $C$ to $C'$ through simple
reflections, each of which fixes $\overline{C}\cap\overline{C}'$,
and hence so does their product.
\end{proof}
We now finish the proof of Lemma \ref{lem:shifts}, following \cite[§3.3]{MR2999317}
\emph{very} closely:
\begin{proof}
(ii): Let $\tau\in\mathrm{Tran}(w,w')$ and consider $C':=\tau(C)$,
so that $w'=w_{C'}$. We induct on $\dim V_{\Theta}$; the base case
is that $\Theta$ is an empty sequence, but then $\mathfrak{H}=\mathfrak{H}_{\Theta}\subseteq\mathfrak{H}^{w}$
implies that $w$ is the identity element.

By Lemma \ref{lem:factorise}(i) and (iii), we may factorise $w=xy$
and (the connected component in $V\backslash\mathfrak{H}_{V_{1}}$
or the orthogonal projection to $V_{\tilde{W}_{V_{1}}}$ of) $C$
is in good position for the sequence $\underline{\Theta}'$ for $\delta_{x}y$
in $\<\delta_{x}\>\ltimes\tilde{W}_{V_{1}}$, and similarly for $C'$
and any other Weyl chamber in its connected component for $V\backslash\mathfrak{H}_{V_{1}}$.
Let $\tau$ be an element in $\tilde{W}_{V_{1}}$ such that $\tau(C)$
lies in the same component as $C'$ in $V\backslash\mathfrak{H}_{V_{1}}$.
From the induction hypothesis for the sequence $\underline{\Theta}'$
for the element $\delta_{x}y$ in $\<\delta_{x}\>\ltimes\tilde{W}_{V_{1}}$,
we then obtain a preimage for $\tau$ in 
\[
\mathrm{Tran}^{\underline{\Theta},\star}(\delta_{x}y,\tau\delta_{x}y\tau^{-1})\longrightarrow\mathrm{Tran}(\delta_{x}y,\tau\delta_{x}y\tau^{-1}).
\]
Since this preimage is a sequence of elements in $\tilde{W}_{V_{1}}$
(thus fixing $V_{1}$) and each of them commute with $\delta_{x}$
in the same way as with $x$, it also yields a preimage for $\tau$
in
\[
\mathrm{Tran}^{\underline{\Theta},\star}(w,\tau w\tau^{-1})\longrightarrow\mathrm{Tran}(w,\tau w\tau^{-1}),
\]
and similarly for $\mathrm{Tran}^{\star}(w,\tau w\tau^{-1})$.

Hence we may assume that $C$ and $C'$ lie in the same connected
component of $V\backslash\mathfrak{H}_{V_{1}}$. By Proposition \ref{prop:chamber-sequence}
there exists a sequence of Weyl chambers $C=C_{0},C_{1},\ldots,C_{n}=C'$
in the same component of $V\backslash\mathfrak{H}_{V_{1}}$, such
that $w_{C_{i}}$ lies in $\CMcal O{}^{\underline{\Theta}}$ and that
each pair $C_{i},C_{i+1}$ is $V_{1}$-adjacent. By induction on the
length of this sequence, we may assume that $n=1$. Consider the hyperplane
$\mathfrak{h}':=\mathfrak{h}\cap V_{1}$ in $V_{1}$, where $\mathfrak{h}\in\mathfrak{H}\backslash\mathfrak{H}_{V_{1}}$
is a root hyperplane separating $C$ and $C'$. Since $\overline{C}\cap\overline{C}'$
spans $\mathfrak{h}'$, this intersection $\overline{C}\cap\overline{C}'$
contains a regular point $v$ of $\mathfrak{h}'$.

Case 1: $w(\mathfrak{h}')\neq\mathfrak{h}'$. From Proposition \ref{prop:wall-intersection}
it follows that the ``minimal path'' from $C$ to $C'$ yields a
sequence of Weyl chambers $C=\tilde{C}_{0},\ldots,\tilde{C}_{m}=C'$
in the same component of $V\backslash\mathfrak{H}_{V_{1}}$, where
each consecutive pair of Weyl chambers is adjacent and $V_{1}$-adjacent.
By \cite[Proposition 2.5]{MR2999317} the lengths of the corresponding
elements is the same, so that the simple reflections corresponding
to this sequence of Weyl chambers yields a sequence in $\mathrm{Tran}^{\leftrightarrow}(w,w')$
mapping to $\tau$. A suitable element in $\mathrm{Tran}^{\underline{\Theta},\star}(w,w')$
is obtained from Lemma \ref{lem:eigenspace-shifts}, using Lemma \ref{lem:shifts}(i).

Case 2: $w(\mathfrak{h}')=\mathfrak{h}'$ and $\dim V_{1}\geq2$,
so $\dim\mathfrak{h}'\geq1$. We use Lemma \ref{lem:factorise} to
split $w=xy=\delta_{x}y$ for $\<\delta_{x}\>\ltimes\tilde{W}_{\mathfrak{h}'}$,
then since $\mathfrak{h}'\neq0$ the sequence $\underline{\Theta}'$
is ``smaller'' again. By Proposition \ref{prop:wall-intersection}
the translation $\tau\in\tilde{W}$ fixes $\mathfrak{h}'$, so it
lies in $\tilde{W}_{\mathfrak{h}'}$. As before, we use the induction
hypothesis to obtain an element in $\mathrm{Tran}^{\star}(w,w')$
or $\mathrm{Tran}^{\underline{\Theta},\star}(w,w')$ mapping to $\tau$.

Case 3: $w(\mathfrak{h}')=\mathfrak{h}'$ and $\dim V_{1}=1$. Thus
the eigenvalue $\lambda_{1}$ corresponding to $V_{1}$ is either
$1$ or $-1$, and the claim follows from Lemma \ref{lem:eigenspace-shifts}
again.
\end{proof}
Unless $w$ is an involution, we cannot (as is well-known) always
find a sequence of Weyl chambers such that the length of the corresponding
element strictly increases or decreases:
\begin{example}
Consider $w=s_{2}s_{3}s_{4}s_{1}s_{2}s_{3}$ in type $\mathsf{A}_{4}$,
which is not of minimal length in its conjugacy class and has eigenvalues
$e^{\pm2\pi i/5}$ and $e^{\pm4\pi i/5}$. Let $\theta$ be all of
its eigenvalues, so $V_{\Theta}=V$ and $V_{\Theta}^{-}=V_{2/5}^{w}$.
The latter is spanned by the vectors
\[
\varphi\omega_{1}-\varphi\omega_{2}+\omega_{3}\qquad\textrm{and}\qquad\omega_{1}-\omega_{3}+\varphi\omega_{4},
\]
where $\varphi:=(1+\sqrt{5})/2$ denotes the golden ratio. Its intersection
with the closure of the dominant Weyl chamber is the zero vector,
but one can calculate that
\[
\ell(s_{i}ws_{i})\geq\ell(w)
\]
for any simple reflection $s_{i}$.
\end{example}

Nor does there always exist a sequence of Weyl chambers containing
a regular point of $V_{\Theta}$:
\begin{example}
Consider $w=w_{\circ}s_{1}s_{2}s_{4}$ in type $\mathsf{B}_{4}$,
which has eigenvalues $\pm1$ and $e^{\pm2\pi i/3}$. Let $\theta$
be all of them except 1, so $V_{\Theta}=V_{w}$. Then $V_{\Theta}$
is the hyperplane of $s_{4}$, so $s_{1}ws_{1}$ yields another element
whose corresponding chamber $s_{1}(C)$ touches $V_{\Theta}$, and
this is the only other element reachable from $w$ by such simple
shifts. As the eigenspace for $-1$ is the line through the fundamental
weight $\omega_{3}$, neither the dominant Weyl chamber $C$ nor $s_{1}(C)$
contains a regular point of $V_{\Theta}^{-}$, which is the span of
$\alpha_{1}$ and $\alpha_{2}$.
\end{example}

\begin{proof}
[Proof of Proposition \ref{prop:shifts-braiding}]

(ii): Let $w$ be an element of $\CMcal O{}_{\mathrm{max}/\mathrm{min}}^{\Theta}$.
From part (i) we obtain an element $w'$ in $\CMcal O_{\pm}^{\Theta}\subseteq\CMcal O^{\Theta}$
such that $w\overset{\pm}{\rightarrow}w'$, so as $w$ has maximal/minimal
length it follows that $w'$ also has maximal/minimal length. It then
follows from Lemma \ref{lem:shifts}(i) that
\[
\ell(\CMcal O{}_{\mathrm{max}/\mathrm{min}}^{\Theta})=\ell(w)=\ell(w')=\ell(\CMcal O_{\pm}^{\Theta})=2\sum_{i=1}^{m}\theta_{i}\bigl|\mathfrak{H}_{F_{i-1}}\backslash\mathfrak{H}_{F_{i}}\bigr|
\]
 as claimed.

(iii): By using the second or third statement in (i) of the proposition
and maximality/minimality, we may use either strong conjugations or
cyclic shifts to move both $w$ and $w'$ to $\CMcal O_{\pm}^{\Theta}$.
The claim then follows from \ref{lem:shifts}(ii).
\end{proof}

\section{Minimally dominant elements}

In this section we study standard parabolic subgroups of twisted finite
Coxeter groups, and prove the main Lemma and Proposition \ref{prop:involutions}.
We will need
\begin{lem}
\label{lem:regular-vector-dominant} Let $w$ be an element of a twisted
finite Coxeter group and let $v$ be a vector in the intersection
of $V_{w}$ with the closure of the dominant Weyl chamber. Then $v$
is a regular point for $V_{w}$ if and only if
\begin{equation}
\textrm{for all roots }\beta:\quad(\beta,v)\begin{cases}
>0 & \textrm{if }\beta\textrm{ is a positive root and is not fixed by }w,\\
=0 & \textrm{if }\beta\textrm{ is fixed by }w,\\
<0 & \textrm{if }\beta\textrm{ is a negative root and is not fixed by }w.
\end{cases}\label{eq:regular-vector-dominant}
\end{equation}
\end{lem}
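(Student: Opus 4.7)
The plan is to unpack the definition of regularity and observe that the sign conditions in \eqref{regular-vector-dominant} merely combine (i) the automatic orthogonality forced by $v \in V_w$, (ii) the non-orthogonality encoded by regularity, and (iii) the dominant chamber inequalities.

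First, I would recall that $V_w = (V^w)^\perp$ by definition, and that by Steinberg's fixed-point theorem the condition $\tilde{W}_{V_w} = \tilde{W}_{\{v\}}$ defining a regular point of $V_w$ is equivalent to $\mathfrak{H}_{\{v\}} = \mathfrak{H}_{V_w}$. The key observation is that a root hyperplane $\beta^\perp$ contains $V_w$ if and only if $\beta \in V_w^\perp = V^w$, i.e.\ if and only if $\beta$ is fixed by $w$. Thus $v \in V_w$ is a regular point of $V_w$ precisely when, for every root $\beta$, one has $(\beta,v)=0$ if and only if $w(\beta)=\beta$.

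For the forward direction, suppose $v \in \overline{C} \cap V_w$ is regular. If $\beta$ is $w$-fixed then $\beta \in V^w$, so $(\beta,v)=0$ automatically since $v\in V_w = (V^w)^\perp$. If $\beta$ is not $w$-fixed then regularity yields $(\beta,v)\neq 0$; combined with $(\beta,v)\geq 0$ when $\beta$ is a positive root (from $v \in \overline{C}$) and the analogous inequality for $-\beta$ when $\beta$ is negative, this gives the strict signs in \eqref{regular-vector-dominant}.

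For the converse, assume $v$ satisfies \eqref{regular-vector-dominant}. Then $(\beta,v)=0$ occurs exactly when $w(\beta)=\beta$, which is precisely the regularity criterion identified above; and the nonnegativity $(\beta,v)\geq 0$ for positive roots witnesses $v \in \overline{C}$, consistent with the hypothesis. There is no genuine obstacle here: the entire content is bookkeeping once one identifies $\mathfrak{H}_{V_w}$ with the hyperplanes of $w$-fixed roots, so the only thing to be careful about is applying Steinberg's theorem correctly (which has already been invoked in the definition of a regular point in the excerpt).
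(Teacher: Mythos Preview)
Your proof is correct and follows essentially the same approach as the paper: both arguments reduce regularity of $v$ in $V_w$ to the criterion that $(\beta,v)=0$ exactly when $\beta$ is $w$-fixed, using the orthogonal decomposition $V=V_w\oplus V^w$ to identify $\mathfrak{H}_{V_w}$ with the hyperplanes of $w$-fixed roots, and then read off the strict signs from $v\in\overline{C}$. The paper's write-up is slightly terser (phrasing the converse as a one-line contrapositive), but there is no substantive difference.
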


\begin{proof}
$\Rightarrow:$ As $v$ lies in the closure of the dominant Weyl chamber,
equation (\ref{eq:regular-vector-dominant}) follows when it is shown
that $(\beta,v)=0$ if and only if $\beta$ is fixed by $w$. But
due to the orthogonal decomposition $V=V_{w}\oplus V^{w}$, a regular
point $v$ satisfies 
\[
(\beta,v)=0\quad\textrm{if and only if}\quad(\beta,V_{w})=0\quad\textrm{if and only if}\quad\beta\in V^{w}\quad\textrm{if and only if}\quad\beta\textrm{ is fixed by }w.
\]

$\Leftarrow:$ If $v$ is not regular, then there exists a root $\beta$
not in $(V_{w})^{\bot}=V^{w}$ such that $(\beta,v)=0$.
\end{proof}

\subsection{Isotropy subgroups}

We begin this subsection with a description of isotropy subgroups,
then analyse intersections of the form $V^{w}\cap\overline{C}$ and
finish by proving the main Lemma.
\begin{prop}
\label{prop:parabolic} Let $W=\Omega\ltimes\tilde{W}$ be a twisted
finite Coxeter group, let $V$ be its reflection representation with
dominant Weyl chamber $C$ and let $V'$ be a subset of $V$.

\begin{enumerate}[\normalfont(i)]

\item The isotropy subgroup $W_{V'}$ is a parabolic subgroup of
$W$. More precisely, it is conjugate to a standard parabolic subgroup
of the form $\Omega'\ltimes\tilde{W}'$, where $\tilde{W}'$ is a
standard parabolic subgroup of $\tilde{W}$ and $\Omega'$ is the
subgroup of $\Omega$ preserving a certain partition of the simple
reflections not in $\tilde{W}'$.

\item If the closure $\overline{C}$ contains a regular point of
the span of $V'$, then $W_{V'}$ is a standard parabolic subgroup.

\item If it does not, then there exists a simple reflection fixing
$V'\cap\overline{C}$ but not $V'$. 

\end{enumerate}
\end{prop}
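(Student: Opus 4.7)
The plan is to reduce first to the case that $V'$ is a subspace of $V$, using $W_{V'} = W_{\mathrm{span}(V')}$. I prove (ii) directly; (i) then follows easily, since any subspace $V'$ contains a regular point (by finiteness of the root-hyperplane arrangement), this point lies in the closure of some Weyl chamber, and choosing $\tilde{w} \in \tilde{W}$ sending that chamber to $C$ lets (ii) apply to $\tilde{w}(V')$, showing $W_{\tilde{w}(V')} = \tilde{w} W_{V'} \tilde{w}^{-1}$ is standard parabolic.

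For (ii), fix a regular point $v$ of $V'$ in $\overline{C}$; Steinberg's theorem combined with the definition of regular gives $\tilde{W}_{V'} = \tilde{W}_v$, which is the standard parabolic generated by the simple reflections whose hyperplanes pass through $v$. The key move for the twisted part is that each $\delta \in \Omega$ permutes the fundamental weights and so preserves $\overline{C}$. Hence for any $\delta\tilde{w} \in W_{V'}$ the relation $\tilde{w}(v) = \delta^{-1}(v)$ puts this common vector in both the $\tilde{W}$-orbit of $v$ and in $\overline{C}$; by uniqueness of orbit representatives in the closed fundamental domain it must equal $v$, i.e.\ $\tilde{w}(v) = v = \delta^{-1}(v)$. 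Combining with $\tilde{w}|_{V'} = \delta^{-1}|_{V'}$ forces $\delta$ and $\tilde{w}$ to each fix $V'$ pointwise, yielding $W_{V'} = \Omega_{V'} \ltimes \tilde{W}_{V'}$; the required compatibility of $\Omega_{V'}$ with $\tilde{W}_{V'}$ follows because every $\delta \in \Omega_{V'}$ fixes $v$ and permutes the simple root hyperplanes, hence preserves the subset of those passing through $v$.

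For (iii), I first argue the contrapositive of its hypothesis: if $V' \cap \overline{C}$ spans $V'$, then any point $v$ in the relative interior of this cone in $V'$ is already a regular point of $V'$ in $\overline{C}$. Indeed, a positive root $\beta$ with $\beta(v) = 0$ and $V' \not\subseteq \mathfrak{h}_\beta$ would admit $u \in V'$ with $\beta(u) \neq 0$, and $v \pm tu \in V' \cap \overline{C}$ for small $t > 0$ would force the contradiction $\pm t\beta(u) \geq 0$. Thus under the hypothesis $V''' := \mathrm{span}(V' \cap \overline{C}) \subsetneq V'$. Suppose for contradiction that every simple root $\alpha_i$ with $V''' \subseteq \mathfrak{h}_i$ also satisfies $V' \subseteq \mathfrak{h}_i$, and pick $v$ in the relative interior of $V''' \cap \overline{C} = V' \cap \overline{C}$ inside $V'''$, so that $\alpha_i(v) > 0$ whenever $V''' \not\subseteq \mathfrak{h}_i$. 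Then any sufficiently small $u \in V'$ keeps $v + u$ in $\overline{C}$: when $V''' \subseteq \mathfrak{h}_i$ the hypothesis gives $\alpha_i(v + u) = 0$, and otherwise the strict positivity of $\alpha_i(v)$ persists. So $v + u \in V' \cap \overline{C}$ and hence $u = (v+u) - v$ lies in the linear span $V'''$, contradicting $V''' \subsetneq V'$.

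The main obstacle will be the argument in (ii) that the twist $\delta$ fixes $v$ individually, since this is what makes the semidirect decomposition drop out cleanly; the crucial ingredients there are that $\delta$ preserves $\overline{C}$ via its permutation action on the fundamental weights, together with the closed fundamental-domain property of $\overline{C}$ for $\tilde{W}$.
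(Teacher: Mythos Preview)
Your arguments for (i) and (ii) are correct and essentially identical to the paper's: reduce to the span, pick a regular point $v$ in $\overline{C}$, and use that $\delta$ preserves $\overline{C}$ together with the fundamental-domain property to split $\delta\tilde{w}\in W_{V'}$ into $\delta\in\Omega_{V'}$ and $\tilde{w}\in\tilde{W}_{V'}=\tilde{W}_v$. One small point: you do not quite establish the sharper ``partition'' description of $\Omega'$ in the second sentence of (i). The paper obtains it by writing $v=\sum c_i\omega_i$ and observing that any $\delta$ fixing $v$ must permute the fundamental weights within level sets of the $c_i$; conversely, such a permutation fixes $v$. This is a minor addendum, not a gap in the main claim.

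For (iii) your route is correct but genuinely different from the paper's. The paper argues indirectly: it applies (ii) to $V'\cap\overline{C}$ to see that $\tilde{W}_{V'\cap\overline{C}}$ is a standard parabolic subgroup, notes the inclusion $\tilde{W}_{V'}\subseteq\tilde{W}_{V'\cap\overline{C}}$, and observes that if this inclusion were an equality then a regular point of $V'\cap\overline{C}$ in $\overline{C}$ would already be regular for $V'$; since the larger group is generated by its simple reflections, properness of the inclusion yields the desired simple reflection. Your proof instead works directly with the cone $V'\cap\overline{C}$: you show that if every simple hyperplane through $V''':=\mathrm{span}(V'\cap\overline{C})$ also contained $V'$, then a relative-interior point of the cone could be perturbed in any direction of $V'$ while staying in $\overline{C}$, forcing $V'=V'''$. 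The paper's approach is shorter and leverages (ii); yours is more elementary and self-contained, avoiding any appeal to isotropy subgroups. Both buy the same conclusion.
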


\begin{proof}
(i), (ii): We replace $V'$ by its span to make it convex, and subsequently
pick a regular point $v$ inside of it. After conjugating $W_{V'}$,
we may assume that this point lies in the closure of the dominant
Weyl chamber. Now let $\delta\tilde{w}$ be an element in $W_{V'}\subseteq W=\Omega\ltimes\tilde{W}$
fixing $v$. As $\delta$ preserves $\overline{C}$ and $\overline{C}$
is a fundamental domain for the action of $\tilde{W}$, any point
in $\overline{C}$ fixed by $w$ must be fixed by both $\delta$ and
$\tilde{w}$. Thus $\tilde{w}$ lies in the subgroup of $\tilde{W}$
fixing $v$; it is well-known that this is a standard parabolic subgroup
whose indices correspond to the zero coordinates of $v$ in the basis
of fundamental weights. Since $v$ is fixed by $\delta$, that set
of coordinates must be preserved by $\delta$. Since it also acts
as a permutation on the remaining coordinates, the claim follows.

(iii): Again replace $V'$ by its span, and note that $\tilde{W}_{V'}^{\,}\subseteq\tilde{W}_{V'\cap\overline{C}}$.
If the span of $V'\cap\overline{C}$ does not contain a regular point
of $V'$ then it there must be some reflection in $\tilde{W}_{V'}^{\,}$
that is not in $\tilde{W}_{V'\cap\overline{C}}$, so this inclusion
is proper. Since by (ii) the subgroup $\tilde{W}_{V'\cap\overline{C}}$
is a standard parabolic, it is generated by the simple reflections
it contains, so the claim follows.
\end{proof}
\begin{cor}
\label{cor:dom} Let $w$ be an element of a twisted finite Coxeter
group.

\begin{enumerate}[\normalfont(i)]

\item If $w$ has an anisotropic braiding sequence of eigenspaces,
then it is dominant.

\item If $w$ is dominant, then it is firmly convex.

\end{enumerate}
\end{cor}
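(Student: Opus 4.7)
For part (i), the plan is to use the braiding and anisotropy hypotheses to identify $\mathfrak{H}_\Theta$ with $\mathfrak{H}^w$, and then to upgrade the regular point provided by good position into an open neighbourhood inside $V_w$. Concretely, each real eigenspace $V_\lambda^w$ with $\lambda\neq 1$ is orthogonal to $V^w$, so anisotropy of $\underline{\Theta}$ forces $V_\Theta\subseteq V_w$; this gives the trivial inclusion $\mathfrak{H}^w\subseteq\mathfrak{H}_\Theta$, and combined with the braiding hypothesis $\mathfrak{H}_\Theta\subseteq\mathfrak{H}^w$ yields equality $\mathfrak{H}_\Theta=\mathfrak{H}^w$. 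Good position then supplies a regular point $v$ of $V_\Theta$ inside $\overline{C}$, and the hyperplane equality automatically promotes $v$ to a regular point of $V_w$. A short continuity argument, in the spirit of Proposition \ref{prop:good-position}, shows that a small ball in $V_w$ around $v$ stays inside $\overline{C}$: a simple root $\alpha_i$ that lies in $V^w$ pairs identically to $0$ against $V_w$, while by Lemma \ref{lem:regular-vector-dominant} any other simple root pairs strictly positively at $v$, hence non-negatively throughout a neighbourhood. This produces an open subset of $V_w$ inside $\overline{C}$, proving dominance.

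For part (ii), I would split the claim into two independent verifications. First, dominance supplies a regular point $v\in V_w\cap\overline{C}$. Given any stable root $\beta$, the orbit sum $\sum_{i=0}^{\mathrm{ord}(w)-1}w^i(\beta)$ is $w$-invariant, hence lies in $V^w$ and so is orthogonal to $v$; by Lemma \ref{lem:regular-vector-dominant}, each term $(w^i(\beta),v)$ has a common sign (non-negative if the orbit consists of positive roots, non-positive otherwise), so all summands vanish. Applying the lemma again, this vanishing forces every $w^i(\beta)$ to be fixed by $w$, so $\beta\in\mathfrak{R}^w$; as the reverse inclusion is automatic, $\mathfrak{R}^w=\mathfrak{R}_{\mathrm{st}}^w$. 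Second, for convexity, note that $\mathfrak{R}^w$ is precisely the root system of the isotropy group $W_{V_w}$; since $\overline{C}$ contains a regular point of $V_w$, Proposition \ref{prop:parabolic}(ii) guarantees that $W_{V_w}$ is a standard parabolic subgroup of $W$, so $\mathfrak{R}^w=\mathfrak{R}_{\mathrm{st}}^w$ is a standard parabolic subsystem. Together these two steps say $w$ is firmly convex.

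The main obstacle I expect is the orbit-averaging step in part (ii): the bridge from the combinatorial notion of stability to the geometric data of dominance passes through the orthogonality $V^w\perp V_w$ applied to a $w$-invariant sum, and one must be careful that the argument goes through in the twisted setting, where $w=\delta\tilde{w}$ acts as an isometry of finite order on $V$ but not as an element of $\tilde{W}$. In part (i), the only non-cosmetic point is recognising that under the braiding condition regularity within the possibly small subspace $V_\Theta$ already amounts to regularity within the potentially larger space $V_w$; everything else is routine continuity and unwinding of definitions.
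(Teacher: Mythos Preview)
Your proof is correct and follows essentially the same approach as the paper: in part (i) you derive $\mathfrak{H}_\Theta=\mathfrak{H}^w$ from anisotropy plus braiding and then extend a regular point to an open subset of $V_w$ inside $\overline{C}$ (the paper phrases this via the facet cut out by $\mathfrak{H}_\Theta$, but the content is identical), and in part (ii) you use the same orbit-sum contradiction via Lemma~\ref{lem:regular-vector-dominant} together with Proposition~\ref{prop:parabolic}(ii) to establish firm convexity. Your worry about the twisted setting is unnecessary: since $w$ permutes the finite root system it has finite order on $V$, so the averaging step goes through unchanged.
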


\begin{proof}
(i): Let $\underline{\Theta}$ be this sequence, so by assumption
the closure of the dominant Weyl chamber contains an open subset $U$
of $V_{\Theta}$. The subgroup fixing it is a standard parabolic,
or in other words, the intersection of the hyperplanes in $\mathfrak{H}_{\Theta}$,
is some facet of the dominant Weyl chamber. Then we can find an open
neighbourhood $U'$ of $U$ inside of this facet, and by shrinking
it we may assume that it lies in the closure of the dominant Weyl
chamber. From $\mathfrak{H}_{V_{w}}\subseteq\mathfrak{H}_{\Theta}\subseteq\mathfrak{H}^{w}=\mathfrak{H}_{V_{w}}$
it follows that $V_{w}$ is also contained in this facet, so as $V_{\Theta}\subseteq V_{w}$
it follows that $V_{w}\cap U'$ is an open subset of $V_{w}$.

(ii): The subspace $V_{w}$ is spanned by its intersection with the
closure of the dominant Weyl chamber $C$. By say the previous lemma
the reflections whose hyperplanes contain $V_{w}$ are the reflections
of a standard parabolic subgroup, so equivalently the roots that are
perpendicular to $V_{w}$ (which as in the proof of Lemma \ref{lem:regular-vector-dominant}
are precisely the set of roots $\mathfrak{R}^{w}$ fixed by $w$)
form a standard parabolic subsystem.

Now pick a regular vector $v$ in $V_{w}\cap\overline{C}$ and let
$\beta$ be a positive root in $\mathfrak{R}_{\mathrm{st}}^{w}$ which
is not fixed by $w$. Then $w^{i}(\beta)$ is also a positive root
not fixed by $w$ for any integer $i$, so $(w^{i}(\beta),v)>0$ by
Lemma \ref{lem:regular-vector-dominant}. By summing these equations
for $i$ in $\{1,\ldots,\mathrm{ord}(w)\}$ it follows that
\[
\bigl(\sum_{i=1}^{\mathrm{ord}(w)}w^{i}(\beta),v\bigr)=\sum_{i=1}^{\mathrm{ord}(w)}\bigl(w^{i}(\beta),v\bigr)>0,
\]
but as the vector $\sum_{i=1}^{\mathrm{ord}(w)}w^{i}(\beta)$ is fixed
by $w$ it should be orthogonal to $V_{w}$ and hence to $v$.
\end{proof}
\begin{notation}
For any element $w$ of a twisted finite Coxeter group and subset
$V'$ of its reflection representation, we write $(V')^{w}:=V^{w}\cap V'$
for the fixed points of $w$ that lie in $V'$. In particular, $\overline{C}^{w}:=V^{w}\cap\overline{C}$
denotes the intersection of $V^{w}$ with the closure of the dominant
Weyl chamber $C$.
\end{notation}

\begin{cor}
\label{cor:parabolic} Let $W$ be a twisted finite Coxeter group
and pick an element $w=\delta\tilde{w}$ in $W$. Consider the subgroup
$W_{w}$ of $W$ consisting of elements fixing $\overline{C}^{w}$.

\begin{enumerate}[\normalfont(i)]

\item This is a standard parabolic subgroup containing $\delta$
and $\tilde{w}$.

\item Moreover, $w$ is an elliptic element of this subgroup if and
only if $\overline{C}^{w}$ contains a regular point of $V^{w}$.

\end{enumerate}
\end{cor}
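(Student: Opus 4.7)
The approach is to recognise $W_w$ as the isotropy $W_{V'}$ of $V' := \mathrm{span}(\overline{C}^w)$, using that a linear isometry fixes a set pointwise if and only if it fixes its span pointwise. The convex cone $\overline{C}^w \subseteq \overline{C}$ has nonempty relative interior in $V'$, and since regular points of $V'$ form a dense open subset of $V'$, this interior contains a regular point of $V'$ lying in $\overline{C}$. Proposition \ref{prop:parabolic}(ii) then yields that $W_w$ is a standard parabolic. For the inclusions $\delta, \tilde{w} \in W_w$ I would use a fundamental domain argument of the kind employed in the proof of Proposition \ref{prop:parabolic}: for any $v \in \overline{C}^w$ the equation $w(v) = v$ rearranges to $\tilde{w}(v) = \delta^{-1}(v)$, which lies in $\overline{C}$ because $\delta$ preserves the dominant Weyl chamber, and the fundamental domain property of $\overline{C}$ for $\tilde{W}$ forces $\tilde{w}(v) = v$ and then also $\delta(v) = v$.

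\textbf{Plan for part (ii).} Writing $W_w = \Omega' \ltimes \tilde{W}_w$ with $\tilde{W}_w = \langle s_i : i \in I \rangle$, the key structural observation is that its reflection representation $V_{W_w} = \mathrm{span}(\alpha_i : i \in I)$ is preserved by $\tilde{W}_w$ tautologically and by $\Omega'$ (which permutes those simple roots), so the orthogonal decomposition $V = V^{\tilde{W}_w} \oplus V_{W_w}$, where $V^{\tilde{W}_w}$ denotes the pointwise fixator of $\tilde{W}_w$ in $V$, is $W_w$-invariant and in particular $w$-invariant. Hence $V^w = (V^w \cap V^{\tilde{W}_w}) \oplus (V^w \cap V_{W_w})$, and the definition of ellipticity---that $V^w \cap V_{W_w} = \{0\}$---translates into the single containment $V^w \subseteq V^{\tilde{W}_w}$. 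For the reverse direction: if $\overline{C}^w$ contains a regular point $v$ of $V^w$, then any simple root $\alpha_j$ not orthogonal to $V^w$ satisfies $\alpha_j(v) > 0$, so an open neighbourhood of $v$ in $V^w$ stays in $\overline{C}^w$, forcing $\mathrm{span}(\overline{C}^w) = V^w$; since $W_w$ fixes $\overline{C}^w$ pointwise it then fixes $V^w$ pointwise, giving $V^w \subseteq V^{\tilde{W}_w}$.

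\textbf{Forward direction and main obstacle.} For the forward direction I would argue by contrapositive via Proposition \ref{prop:parabolic}(iii) applied to $V' := V^w$: if $\overline{C}^w = V^w \cap \overline{C}$ contains no regular point of $V^w$, that proposition furnishes a simple reflection $s_i$ fixing $\overline{C}^w$ pointwise but not $V^w$ pointwise. The former puts $s_i \in W_w$ and hence $i \in I$, giving $V^{\tilde{W}_w} \subseteq \ker \alpha_i$; the latter yields $v \in V^w$ with $\alpha_i(v) \neq 0$, so $v \notin V^{\tilde{W}_w}$, contradicting $V^w \subseteq V^{\tilde{W}_w}$. The main obstacle is really the reduction in the previous paragraph: recognising the $w$-invariance of $V = V^{\tilde{W}_w} \oplus V_{W_w}$ is what lets ellipticity split into the clean containment, and without it Proposition \ref{prop:parabolic}(iii) could not be invoked so directly to close the forward direction.
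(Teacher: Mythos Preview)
Your proposal is correct and follows essentially the same strategy as the paper's proof: both parts rest on Proposition~\ref{prop:parabolic}, with (ii) hinging on the equivalence between $\overline{C}^w$ containing a regular point of $V^w$ and the equality $\tilde{W}_w = \tilde{W}_{V^w}$, together with the translation of ellipticity into $V^w \subseteq V^{\tilde{W}_w}$. The paper compresses this into two sentences (the ``main obstacle'' you identify is exactly what is packed into the clause ``since $w$ lies in $W_w$, that is true if and only if $w$ is elliptic in $W_w$''), whereas you spell out the $w$-invariant orthogonal decomposition and the fundamental-domain argument for $\delta,\tilde{w}\in W_w$ explicitly; these are the same ideas, just unpacked.
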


\begin{proof}
(i): Immediately follows from part (ii) of the previous proposition.

(ii): By definition (and convexity) the cone $\overline{C}^{w}$ contains
a regular point of $V^{w}$ if and only if the set of reflections
fixing $\overline{C}^{w}$ coincides with the set of reflections fixing
$V^{w}$, if and only if $\tilde{W}_{w}$ is the subgroup of $\tilde{W}$
of elements fixing $V^{w}$ (by part (iii)). But since $w$ lies in
$W_{w}$, that is true if and only if $w$ is elliptic in $W_{w}$.
\end{proof}
\begin{prop}
Let $W=\Omega\ltimes\tilde{W}$ be a twisted finite Coxeter group
and pick an element $w=\delta\tilde{w}$. 

\begin{enumerate}[\normalfont(i)]

\item There is an orthogonal decomposition $V^{w}=V_{W_{w}}^{w}\oplus(V_{W_{w}}^{\bot})^{\delta}$.

\item $(V_{W_{w}}^{\bot})^{\delta}$ is spanned by the vectors
\begin{equation}
\{\sum_{k=1}^{\mathrm{ord}(\delta)}\delta^{k}(\omega_{i}):s_{i}\notin W_{w}\}.\label{eq:basis-complement}
\end{equation}

\item The intersection $\overline{C}^{w}$ spans $(V_{W_{w}}^{\bot})^{\delta}$.

\end{enumerate}
\end{prop}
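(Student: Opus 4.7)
The plan is to leverage Corollary \ref{cor:parabolic}(i): $W_w$ is a standard parabolic subgroup of $W$ containing both $\delta$ and $\tilde{w}$, so writing its semidirect decomposition as $W_w = \Omega_w \ltimes \tilde{W}_w$, uniqueness inside $W = \Omega \ltimes \tilde{W}$ forces $\delta \in \Omega_w$ and $\tilde{w} \in \tilde{W}_w$. Moreover $W_w$ preserves each factor of the orthogonal decomposition $V = V_{W_w} \oplus V_{W_w}^\perp$, and therefore so does $w$. For (i), this splits $V^w$ summand-wise as $V_{W_w}^w \oplus (V_{W_w}^\perp)^w$; since $\tilde{w} \in \tilde{W}_w$ acts trivially on the orthogonal complement of its root span, $w = \delta \tilde{w}$ restricts on $V_{W_w}^\perp$ to $\delta$, giving $(V_{W_w}^\perp)^w = (V_{W_w}^\perp)^\delta$.

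For (ii), note that $\delta$ normalises the standard parabolic $\tilde{W}_w$, so it permutes the simple reflections outside $W_w$ and hence permutes the corresponding fundamental weights $\{\omega_i : s_i \notin W_w\}$, which form a basis of $V_{W_w}^\perp$. The fixed subspace of a finite-order linear automorphism acting by permutation on a basis is spanned by its orbit sums, yielding exactly the set $\{\sum_{k=1}^{\mathrm{ord}(\delta)} \delta^k(\omega_i) : s_i \notin W_w\}$ (with each orbit repeated by its size).

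For (iii), I would verify both directions. Any $v \in \overline{C}^w$ is by definition fixed by all of $W_w$, hence by $\tilde{W}_w$ whose fixed subspace in $V$ is precisely $V_{W_w}^\perp$, and also by $\delta$; this yields the inclusion $\overline{C}^w \subseteq (V_{W_w}^\perp)^\delta$. Conversely, each orbit sum $\sum_k \delta^k(\omega_i)$ is $\delta$-fixed by construction and $\tilde{w}$-fixed (since $\tilde{w}$ acts trivially on $V_{W_w}^\perp$), hence $w$-fixed; having non-negative coordinates in the $\omega_j$-basis, it also lies in $\overline{C}$. These orbit sums therefore lie in $\overline{C}^w = V^w \cap \overline{C}$ and, by (ii), span $(V_{W_w}^\perp)^\delta$. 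The only mild subtlety in the whole argument is ensuring $\tilde{w} \in \tilde{W}_w$ rather than only in $W_w$, which is handled by the semidirect-product uniqueness observation above; everything else reduces to the standard linear algebra of the permutation action of $\delta$ on the fundamental weights.
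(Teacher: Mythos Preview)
Your proof is correct and follows essentially the same route as the paper: you use that $w\in W_w$ preserves the orthogonal splitting $V=V_{W_w}\oplus V_{W_w}^\perp$ and that $\tilde w\in\tilde W_w$ acts trivially on $V_{W_w}^\perp$ for (i), the permutation action of $\delta$ on $\{\omega_i:s_i\notin W_w\}$ for (ii), and the observation that the orbit sums lie in $\overline C$ for (iii). The paper's argument is terser (especially for (ii)), but the ideas are identical.
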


\begin{proof}
(i): As $w$ lies in $W_{w}$, the action of $w$ preserves $V_{W_{w}}$
and thus its orthogonal complement $V_{W_{w}}^{\bot}$, so that $V^{w}=V_{W_{w}}^{w}\oplus(V_{W_{w}}^{\bot})^{w}$.
As $\tilde{w}$ lies in $\tilde{W}_{w}$ we have $V_{\tilde{w}}\subseteq V_{W_{w}}$,
so $V_{W_{w}}^{\bot}\subseteq V^{\tilde{w}}$ and then $(V_{W_{w}}^{\bot})^{w}=(V_{W_{w}}^{\bot})^{\delta}$.

(ii): This follows because $V_{W_{w}}^{\bot}$ is spanned by $\{\omega_{i}:s_{i}\notin W_{w}\}$.

(iii): By construction we have $\overline{C}^{w}\subseteq V^{w}\cap V_{W_{w}}^{\bot}=(V_{W_{w}}^{\bot})^{\delta}$,
and then the claim follows from the explicit description of $(V_{W_{w}}^{\bot})^{\delta}$
in (ii); those basis vectors lie inside of $\overline{C}$.
\end{proof}
\begin{lem}
Let $w=\delta\tilde{w}$ be an element of a twisted finite Coxeter
group $W$, let $s_{i}$ be a simple reflection and consider the conjugate
$w':=s_{i}ws_{i}$. Then
\end{lem}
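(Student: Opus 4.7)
The plan is to exploit the $s_i$-equivariance of every construction the lemma presumably compares across $w$ and $w'$. Because $w' = s_i w s_i$ acts on $V$ as the conjugate of $w$ by the linear isometry $s_i$, one immediately obtains the transport identities
\[
V^{w'} = s_i(V^w), \quad V_{w'} = s_i(V_w), \quad \mathfrak{R}^{w'} = s_i(\mathfrak{R}^w), \quad \mathfrak{R}_{\mathrm{st}}^{w'} = s_i(\mathfrak{R}_{\mathrm{st}}^w),
\]
together with $W_{w'} = s_i W_w s_i$, $\mathrm{ord}(w') = \mathrm{ord}(w)$ and $\ell_f(w') = \ell_f(w)$. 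These handle any part of the statement that merely records how the basic invariants move under $s_i$.

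Next I would analyse how $s_i$ interacts with the dominant Weyl chamber $C$, splitting on whether the simple root $\alpha_i$ lies in $V^w$. If $\alpha_i \in V^w$ (equivalently $\alpha_i \in \mathfrak{R}^w$, equivalently $s_i \in W_w$ by Corollary \ref{cor:parabolic}), then $s_i$ fixes $V^w$ pointwise and lies in the standard parabolic fixing $\overline{C}^w$; using the semidirect product structure $W_w = \Omega_w \ltimes \tilde W_w$ and the fact that $\delta$ preserves the simple reflections of $W_w$, one deduces that $s_i$ commutes with $\delta$ inside $W$ and hence $w' = w$. If $\alpha_i \notin V^w$, then $W_{w'}$ is only a parabolic but not a standard parabolic subgroup; I would verify this using Proposition \ref{prop:parabolic}(iii), and then compare $\overline{C}^{w'}$ with $s_i(\overline{C}^w)$ directly.

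The length comparison is then handled by picking a generic vector $v \in \overline{C}^w$ and applying Proposition \ref{prop:normal-vector} and Corollary \ref{cor:gradient-function} at the pair $(v, \alpha_i)$. Since $w(v) = v$, the quantities $(\alpha_i, w(v)) = (\alpha_i, v)$ and $(w(\alpha_i), v)$ reduce to testing whether $\alpha_i$ and $w(\alpha_i)$ are positive on the dominant chamber, which lets one read off the sign of $\ell(w') - \ell(w)$ and of $\bigl(\nabla f_w(v), \alpha_i\bigr)$ simultaneously; this is the classical mechanism producing either $w' = w$, or a strict length change, or a length-preserving simple shift.

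The main obstacle will be keeping the twisted component $\delta$ correctly aligned, since $\delta$ need not commute with $s_i$: one has to separate the cases where $s_i(V^w) = V^w$ (so $W_{w'}$ is again standard and the identification with $W_w$ is clean) from the cases where $s_i(V^w) \neq V^w$ (where the standardness is broken and one must translate statements about $\overline{C}^{w'}$ back through $s_i$ into statements about $\overline{C}^w$). This case analysis is exactly what feeds into the subsequent study of maximally and minimally dominant elements and the Lemma preceding the proof of the main Theorem.
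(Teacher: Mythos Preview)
Your proposal does not address the actual content of this lemma. The conclusion (which in the paper appears as a displayed equation immediately after the lemma environment) is a precise comparison of the cones $\overline{C}^{w}=V^{w}\cap\overline{C}$ and $\overline{C}^{w'}$: namely $\overline{C}^{w}\subseteq\overline{C}^{w'}$ when $s_{i}\in W_{w}$, and $\overline{C}^{w}$ equals the cone generated by $\overline{C}^{w'}$ together with the vector $\sum_{j=1}^{\mathrm{ord}(\delta)}\delta^{j}(\omega_{i})$ when $s_{i}\notin W_{w}$. Your sketch instead discusses transport identities, length changes, and the standardness of $W_{w'}$; the length discussion is really the content of the \emph{next} lemma (Lemma~\ref{lem:standard-parab}), not this one.

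There is also a concrete error in your case split. You assert that $\alpha_{i}\in V^{w}$, $\alpha_{i}\in\mathfrak{R}^{w}$, and $s_{i}\in W_{w}$ are all equivalent. The first two are indeed equivalent, but $s_{i}\in W_{w}$ means that $s_{i}$ fixes $\overline{C}^{w}$ pointwise, i.e.\ that $\alpha_{i}$ is \emph{orthogonal} to $\overline{C}^{w}$; this is a genuinely different condition. In particular, $s_{i}\in W_{w}$ does \emph{not} force $w'=w$: Case~1 of the paper only concludes $\overline{C}^{w}\subseteq\overline{C}^{w'}$, and equality can fail. The substantive part of the lemma is Case~2, where the paper exploits the decomposition $V^{w}=V_{W_{w}}^{w}\oplus(V_{W_{w}}^{\bot})^{\delta}$ from the preceding proposition and the explicit basis $\{\sum_{k}\delta^{k}(\omega_{j}):s_{j}\notin W_{w}\}$ of the second summand. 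One builds an $s_{i}$-adapted (non-orthogonal) splitting $V^{w}=V_{W_{w}}^{w,i}\oplus(V_{W_{w}}^{\bot})^{\delta}$, and then shows by a coordinate analysis in the fundamental-weight basis that any point of $s_{i}(V^{w})\cap\overline{C}$ must have vanishing component in $V_{W_{w}}^{w,i}$ and in the line through $s_{i}\bigl(\sum_{k}\delta^{k}(\omega_{i})\bigr)$. This coordinate argument, which pins down $\overline{C}^{w'}$ as the cone obtained from $\overline{C}^{w}$ by deleting exactly the generator $\sum_{k}\delta^{k}(\omega_{i})$, is the missing idea in your proposal.
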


\[
\overline{C}^{w}\textrm{ }\begin{cases}
\subseteq\overline{C}^{w'} & \textrm{if }s_{i}\in W_{w},\\
=\textrm{cone on }\overline{C}^{w'}\textrm{ and }\sum_{j=1}^{\mathrm{ord}(\delta)}\delta^{j}(\omega_{i}) & \textrm{if }s_{i}\notin W_{w}.
\end{cases}
\]

\begin{proof}
Case 1: If $s_{i}$ lies inside of $W_{w}$ then it fixes $\overline{C}^{w}$,
and then so does $w'=s_{i}ws_{i}$.

Case 2: Since $\delta$ lies in $W_{w}$ and $s_{i}$ does not, neither
does $\delta^{k}(s_{i})$ for any integer $k$. It follows from the
previous proposition that $\sum_{k=1}^{\mathrm{ord}(\delta)}\delta^{k}(\omega_{i})$
lies in $(V_{W_{w}}^{\bot})^{\delta}\subseteq V^{w}$. Since $(V_{W_{w}}^{\bot})^{\delta}\cap V_{W_{w}}^{w}\subseteq V_{W_{w}}^{\bot}\cap V_{W_{w}}=\varnothing$,
we may shift each element of $V_{W_{w}}^{w}\subseteq V^{w}$ by this
one to obtain an $s_{i}$-invariant element, yielding a linear (but
not orthogonal) decomposition 
\[
V^{w}=V_{W_{w}}^{w,i}\oplus(V_{W_{w}}^{\bot})^{\delta}.
\]
Now consider an element $v=(v_{0},v_{1})$ of
\[
s_{i}(V^{w})=V_{W_{w}}^{w,i}\oplus s_{i}\bigl((V_{W_{w}}^{\bot})^{\delta}\bigr).
\]
By construction, the expansion of $v_{0}=\sum_{j=1}^{\mathrm{rk}}c_{j}\omega_{j}$
in the basis of fundamental weights satisfies $c_{i}=0$. If $c_{j}=0$
for all $s_{j}$ in $W_{w}$ then it lies in $V_{W_{w}}^{\bot}$ which
implies $v_{0}=0$. So if $v_{0}\neq0$ then $c_{j'}\neq0$ for some
$s_{j'}$ in $W_{w}$; in fact, we we claim that $c_{j'}>0$ and $c_{j''}<0$
for some other $s_{j''}$ in $W_{w}$: if not, say all of these coordinates
corresponding to $W_{w}$ are negative then by multiplying with $-1$
we can make them all positive, and then by adding to $v$ suitable
multiples of the basis elements of $(V_{W_{w}}^{\bot})^{\delta}$
that are given in (\ref{eq:basis-complement}) we obtain an element
in $V^{w}$ all of whose coefficients $c_{j}$ are positive, so it
lies in $V^{w}\cap\overline{C}$ but since $s_{j'}\in W_{w}$ this
is a contradiction. 

Now remove $\sum_{k=1}^{\mathrm{ord}(\delta)}\delta^{k}(\omega_{i})$
from the set of vectors (\ref{eq:basis-complement}) and call the
resulting span $(V_{W_{w}}^{\bot,i})^{\delta}$, so
\[
(V_{W_{w}}^{\bot})^{\delta}=(V_{W_{w}}^{\bot,i})^{\delta}\oplus\mathbb{R}\bigl(\sum_{k=1}^{\mathrm{ord}(\delta)}\delta^{k}(\omega_{i})\bigr)
\]
and thus
\[
s_{i}\bigl((V_{W_{w}}^{\bot})^{\delta}\bigr)=(V_{W_{w}}^{\bot,i})^{\delta}\oplus\mathbb{R}\bigl(s_{i}\sum_{k=1}^{\mathrm{ord}(\delta)}\delta^{k}(\omega_{i})\bigr).
\]
If $\omega_{i}$ occurs $n>0$ times in $\sum_{k=1}^{\mathrm{ord}(\delta)}\delta^{k}(\omega_{i})$
then
\[
s_{i}\bigl(\sum_{k=1}^{\mathrm{ord}(\delta)}\delta^{k}(\omega_{i})\bigr)=\sum_{k=1}^{\mathrm{ord}(\delta)}\delta^{k}(\omega_{i})-n\alpha_{i}
\]
 has a negative $\omega_{i}$-coefficient, and the rest is positive.
No other basis elements has an nontrivial $\omega_{i}$-coefficient
however; thus if $v_{0}\neq0$ then $v$ does not lie in $\overline{C}$,
and hence

\[
V^{w'}\cap\overline{C}=s_{i}(V^{w})\cap\overline{C}=s_{i}\bigl((V_{W_{w}}^{\bot})^{\delta}\bigr)\cap\overline{C}.
\]
And as $s_{i}$ does not fix the vector $\sum_{k=1}^{\mathrm{ord}(\delta)}\delta^{k}(\omega_{i})$
it now follows that 
\[
V^{w'}\cap\overline{C}=s_{i}\bigl((V_{W_{w}}^{\bot})^{\delta}\bigr)\cap\overline{C}=(V_{W_{w}}^{\bot,i})^{\delta}\cap\overline{C},
\]
and the claim follows.
\end{proof}
\begin{lem}
\label{lem:standard-parab} Let $w$ be an element of a twisted finite
Coxeter group.

\begin{enumerate}[\normalfont(i)]

\item Set $w':=s_{i}ws_{i}$ for some simple reflection $s_{i}$.
If $V^{w'}\cap\overline{C}$ is not contained in $V^{w}\cap\overline{C}$,
then
\[
\ell(w)>\ell(w').
\]
In particular, if $w$ has minimal length, then the closure of the
dominant Weyl chamber contains a regular point of its fixed space
$V^{w}$.

This implies that an element of minimal length in its conjugacy class
lies in a standard parabolic subgroup of lower rank if and only if
it is not elliptic.\footnote{The untwisted version is the main lemma of \cite{MR1425324} which
is deduced there from extensive case-by-case computations, whilst
Howlett also gave a proof of the untwisted statement using the parabolic
Burnside ring \cite[Proposition 3.1.12]{MR1778802}; his proof extends,
ours is entirely different.}

\item If the closure of the dominant Weyl chamber contains a nonzero
point of $V_{w}$ (e.g.\ $w$ is a nontrivial dominant element),
then $w$ does not lie inside a standard parabolic subgroup of $W$
which has lower rank than $W$.

In particular, if furthermore $w$ lies in the untwisted part $\tilde{W}$
then its length can be recovered from its Alexander polynomial (see
\cite{WM-Alex} or \cite[Theorem 6]{trinh2021hecke}) and
\[
\ell(w)\geq\mathrm{rank}(W),
\]
 with equality if and only if $w$ is a Coxeter element of minimal
length.\footnote{This statement can be used to simplify Michel's proof \cite{wilson}
of Kamgarpour's inequality \cite{kamgarpour2015stabilisers}.}

\end{enumerate}
\end{lem}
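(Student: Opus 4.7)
The plan for part (i)'s main claim is to combine the preceding lemma on how $\overline{C}^w$ transforms under conjugation by $s_i$ with Proposition \ref{prop:normal-vector}. The hypothesis $V^{w'} \cap \overline{C} \not\subseteq V^w \cap \overline{C}$ rules out $s_i \notin W_w$ (that case yields $\overline{C}^{w'} \subseteq \overline{C}^w$) and $s_i \in W_w \cap W_{w'}$ (that case yields equality), leaving $s_i \in W_w$, $s_i \notin W_{w'}$, and $\overline{C}^w \subsetneq \overline{C}^{w'}$. A brief check rules out $w(\alpha_i) = \pm\alpha_i$: the $+$ case forces $w' = w$, and the $-$ case forces $\alpha_i \perp V^{w'}$, whence $s_i \in W_{w'}$. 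Picking $u \in \overline{C}^{w'} \setminus \overline{C}^w$ and setting $u^\flat := (u + s_i u)/2 \in \mathfrak{h}_i \cap \overline{C}$, a short computation using $s_i u \in V^w$ gives both $(\alpha_i, w(u^\flat)) < 0$ and $(w(\alpha_i), u^\flat) < 0$. Proposition \ref{prop:normal-vector} then yields $\ell(ws_i), \ell(s_i w) < \ell(w)$, and since $w(\alpha_i) \neq -\alpha_i$ a short root-level inspection gives $\ell(w') = \ell(w) - 2$.

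For the first sub-claim, the plan is to apply Proposition \ref{prop:shifts-braiding}(i) to the singleton $\Theta := \{V^w\}$ (with the trivial case $V^w = 0$ handled separately), producing $w^\star \in \CMcal O_-^\Theta$ with $w \overset{-}{\rightarrow} w^\star$ in $W$. Minimality of $w$ forces each simple-reflection step to preserve length, so applying the main claim of (i) in both directions at each step gives $\overline{C}^{w_j} = \overline{C}^{w_{j+1}}$, hence $\overline{C}^w = \overline{C}^{w^\star}$. Since $w^\star \in \CMcal O^\Theta$ implies that $\overline{C}^{w^\star}$ contains a regular point and therefore spans $V^{w^\star}$, combined with $\overline{C}^w \subseteq V^w$ and $\dim V^{w^\star} = \dim V^w$ this forces $V^{w^\star} = V^w$ and shows $\overline{C}^w$ spans $V^w$, hence contains regular points. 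For the second sub-claim, if $w \in W' = \Omega' \ltimes \tilde{W}'$ is proper, summing $\omega_j$ over a $\delta'$-orbit in $I^c$ yields a nonzero $\delta'$-fixed vector in $V_{W'}^\perp \subseteq V^w$, so $w$ is not elliptic; conversely, if $w$ is non-elliptic of minimal length, the first sub-claim furnishes a nonzero $u \in \overline{C}^w$, and $W_{\{u\}}$ is then a proper standard parabolic containing $w$ by Proposition \ref{prop:parabolic}.

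For part (ii), the plan is to prove the contrapositive: if $w = \delta' \tilde{w}' \in \Omega' \ltimes \tilde{W}'$ lies in a proper standard parabolic, then $V_w \cap \overline{C} = \{0\}$. Using $w$-invariance of $V_{W'}$ and $V_{W'}^\perp$ gives the direct-sum decomposition $V_w = (V_w \cap V_{W'}) \oplus (V_w \cap V_{W'}^\perp)$, and I write $v = v' + v''$ for $v \in V_w \cap \overline{C}$. The conditions $(v, \alpha_i) \geq 0$ for $i \in I$ involve only $v'$ and, combined with entry-wise positivity of $A_I^{-1}$, force $v' = \sum_{i \in I} c_i \alpha_i$ with all $c_i \geq 0$. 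The conditions for $k \notin I$ split as $(v', \alpha_k) + (v'', \alpha_k) \geq 0$ where $(v', \alpha_k) \leq 0$, forcing simultaneously that the $\omega$-coefficients $b_k$ of $v''$ satisfy $b_k \geq b_{\delta'^{-1}(k)}$ (which iterates around each $\delta'$-orbit to give $v'' = 0$) and that $c_i = 0$ for each $i \in I$ adjacent to $I^c$ in the Dynkin diagram (which, by irreducibility of $W$, iterates inward to give $v' = 0$). The main obstacle is cleanly separating these coupled $v'$ and $v''$ constraints; the positivity of $A_I^{-1}$ is the key tool. The final statement for $w \in \tilde{W}$ then follows because the support of $w$ must be the entire set of simple reflections, forcing $\ell(w) \geq \mathrm{rank}(W)$ with equality precisely for Coxeter elements, the minimal-length ones being characterised in Proposition \ref{prop:Coxeter-braiding}.
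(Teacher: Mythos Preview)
Your argument for part~(i)'s main claim via Proposition~\ref{prop:normal-vector} and the explicit point $u^\flat$ is valid, and the computation $(\alpha_i, w(u^\flat)) = \frac{c}{2}\bigl((\alpha_i, w\alpha_i) - |\alpha_i|^2\bigr) < 0$ (using $w(s_iu) = s_iu$ and Cauchy--Schwarz, strict since $w(\alpha_i) \neq \alpha_i$) goes through. But this is more work than needed: the paper's route is to use the fact you already established, namely $s_i \notin W_{w'}$, directly. Since $\tilde{w}' \in \tilde{W}_{w'}$ (Corollary~\ref{cor:parabolic}(i)) and $\tilde{W}_{w'}$ is standard parabolic not containing $s_i$, one gets $\ell(s_iw') = \ell(w') + 1 = \ell(w's_i)$ immediately, and then $\ell(w) = \ell(s_iw's_i) = \ell(w') + 2$ by \cite[Exercice VI.2.23]{MR0240238} (as $s_iw' \neq w's_i$). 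No geometric computation is required. (The paper's printed proof has $W_w$ where $W_{w'}$ is meant.) Your arguments for the two sub-claims of~(i) match the paper's.

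For part~(ii) your decomposition $v = v' + v''$ with $v'' \in V_{W'}^\perp$ is the right move, and indeed the paper's proof tacitly assumes $V_w \subseteq V_{W'}$, which fails when $\delta'$ acts nontrivially on~$I^c$. However, your claimed inequality $b_k \geq b_{\delta'^{-1}(k)}$ does not follow from the constraints: from $(v'', \alpha_k) \geq 0$ and $(\omega_j, \alpha_k) = 0$ for $j \neq k$ you obtain only $b_k \geq 0$. What gives $v'' = 0$ is combining this with the fact that $v'' \in (V_{W'}^\perp)_{\delta'} = \mathrm{im}(1 - \delta'|_{V_{W'}^\perp})$, so $v'' = (1 - \delta')u$ for some $u = \sum_{j \notin I} a_j\omega_j$; then $b_j = a_j - a_{\delta'^{-1}(j)}$, whence $\sum_{j \in O} b_j = 0$ for each $\delta'$-orbit $O \subseteq I^c$, and together with $b_j \geq 0$ this forces all $b_j = 0$. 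Once $v'' = 0$, your iteration-inward argument for $v' = 0$ works (and is equivalent to the paper's support argument: pick $j$ in the support of $v'$ adjacent to some $k$ outside it and compute $(v', \alpha_k) < 0$). Both proofs implicitly use irreducibility of~$W$, without which the statement is false.
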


\begin{proof}
(i): The previous lemma implies that $s_{i}\notin W_{w}$. Thus
\[
\ell(s_{i}w)=\ell(w)+1=\ell(ws_{i}).
\]
If $s_{i}w=ws_{i}$ then we obtain $w=s_{i}ws_{i}=w'$ which contradicts
the assumptions. Hence $s_{i}w\neq ws_{i}$, so from \cite[Exercice VI.2.23]{MR0240238}
we conclude that $\ell(w)>\ell(w')$. 

As we've shown in Lemma \ref{lem:shifts}(i), we can always find a
conjugate $w'$ of $w$ with $w\overset{-}{\rightarrow}w'$, such
that $\overline{C}$ contains a regular point of $V^{w'}$. Thus if
$\overline{C}$ does not contain a regular point of $V^{w}$, the
length must drop at least once in the corresponding sequence. By say
Corollary \ref{cor:parabolic}(i) it now follows that an element of
minimal length lies in a proper parabolic subgroup if and only if
it is not elliptic.

(ii): Suppose that $w$ does lie in a standard parabolic subgroup
of lower rank, then $V_{w}$ lies in the span of the corresponding
simple roots, so any nontrivial element $v$ in the intersection of
$V_{w}$ with the closure of the dominant Weyl chamber $C$ decomposes
as a sum of strictly positive scalar multiples of a proper subset
of the simple roots. (Here we're using that the dominant Weyl chamber
lies inside of its dual cone, i.e.\ that each fundamental weight
can be expressed as a positive (rational) sum of simple roots.) Then
as the Coxeter-Dynkin diagram of $W$ is connected, there exists a
root $\alpha_{j}$ in this proper subset and a root $\alpha_{k}$
outside of it, such that $(\alpha_{j},\alpha_{k})<0$. Then for this
vector $v=\sum_{i=1}^{\mathrm{rk}}c_{i}\alpha_{i}$ we have
\[
(v,\alpha_{k})=\sum_{i=1}^{\mathrm{rk}}c_{i}(\alpha_{i},\alpha_{k})\leq c_{j}(\alpha_{j},\alpha_{k})<0
\]
 but then $v$ does not lie in $\overline{C}$, which is a contradiction.

If $w$ lies in the untwisted part $\tilde{W}$ it now follows that
each simple reflection must occur at least once in a reduced decomposition
of $w$, and if they do each occur once then $w$ is a Coxeter element.
\end{proof}
\begin{example}
Consider the reflections in the roots $\alpha_{12}$ and $\alpha_{1112}$
in type $\mathsf{G}_{2}$. For both elements we have $V_{w}\cap\overline{C}=\{0\}$,
but neither of them lies in a standard parabolic subgroup.
\end{example}

\begin{example}
Consider the elements $w=s_{1}us_{1}$ and $w'=s_{3}ws_{3}$ in the
conjugacy class of $u=s_{2}s_{3}s_{4}$ in the Weyl group $W$ of
type $\mathsf{A}_{4}$. Then $V_{u}$ is spanned by $\alpha_{2},\alpha_{3},\alpha_{4}$,
and then
\begin{align*}
V_{w} & =s_{1}(V_{u})=\<\alpha_{12},\alpha_{3},\alpha_{4}\>=\<\omega_{2}-\omega_{3},-\omega_{2}+2\omega_{3}-\omega_{4},-\omega_{3}+2\omega_{4}\>=\<\omega_{2},\omega_{3},\omega_{4}\>\\
V_{w'} & =s_{3}(V_{w})=\<\omega_{2},\omega_{3}-\alpha_{3},\omega_{4}\>=\<\omega_{2},\omega_{2}-\omega_{3}+\omega_{4},\omega_{4}\>=\<\omega_{2},\omega_{3},\omega_{4}\>
\end{align*}
implies that both $w$ and $w'$ are dominant. Since 
\[
\ell(w')>\ell(w)=5=\mathrm{rank}(W)+1
\]
 only $w$ is minimally dominant, but the conjugate $s_{1}w's_{1}=s_{3}us_{3}$
lies in a standard parabolic subgroup and is hence not dominant.
\end{example}

The analogous statement for (ii) is false for elliptic elements when
the subspace of fixed points is repaced by the eigenspace whose eigenvalue
has minimal argument:
\begin{example}
Consider the Coxeter element $s_{1}s_{2}s_{3}$ in type $\mathsf{A}_{3}$.
This subspace is its Coxeter plane, which does not pass through the
dominant Weyl chamber.
\end{example}

\begin{proof}
[Proof of the main Lemma] (i): For the first equality, note that
$\CMcal O_{+}^{\Theta}\subseteq\CMcal O^{\mathrm{dom}}$, where $\Theta$
is the set of all eigenspaces (simply remove the eigenspaces with
eigenvalue 1 at the end of the sequence), but also $\CMcal O_{+}^{\Theta}\subseteq\CMcal O_{\mathrm{max}}^{\Theta}=\CMcal O_{\mathrm{max}}$
by Proposition \ref{prop:shifts-braiding}(ii). The rest follows from
part (i) of the previous lemma.

(ii): Follows from part (ii) and (iii) of Proposition \ref{prop:shifts-braiding}.

(iii): Follows from equation (\ref{eq:length-formula}) and from part
(ii) of the previous lemma.
\end{proof}

\subsection{\label{subsec:Bruhat}Bruhat cells}

In this subsection we study the relationship between minimally dominant
elements and Bruhat cells, outlining a possible approach to the main
Conjecture.

Lusztig used a character formula for Iwahori-Hecke algebra basis elements
(which is useful computationally, see also \cite[§6-§7]{MR3343221})
to show that his constructions do not depend on the conjugacy class
of the braid $b_{w}$, when working over an algebraically closed field
\cite[§1.2]{MR2833465}. According to Proposition \ref{prop:mixed-shift}(ii),
this conjugacy class can be obtained from strong conjugations and
cyclic shifts; for the latter, it is not necessary to make any assumptions
on the base:
\begin{prop}
\label{prop:conjugacy-cyclic} Let $C$ be a conjugacy class of a
twisted reductive group $G$ with Borel subgroup $B$ containing a
maximal torus. Let $w$ and $w'$ be elements of its Weyl group lying
in the same cyclic shift class. Then for every conjugacy class $C$
in $G$ we have
\[
C\cap BwB\neq\varnothing\qquad\textrm{if and only if}\qquad C\cap Bw'B\neq\varnothing.
\]
\end{prop}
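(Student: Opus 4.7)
\bigskip

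The plan is to reduce to a single step of a cyclic shift and then invoke the standard Bruhat-cell multiplication rules for reduced products. By induction on the length of the sequence defining $w\overset{-}{\sim}w'$, it suffices to assume $w'=\tau w\tau^{-1}$ with $\ell(w')=\ell(w)$, and that one of
\[
\ell(\tau w)=\ell(w)-\ell(\tau)\qquad\textrm{or}\qquad\ell(w\tau^{-1})=\ell(w)-\ell(\tau^{-1})
\]
holds; the two cases are symmetric, so I will describe only the first.

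From $\ell(\tau w)=\ell(w)-\ell(\tau)$ and $\ell(w)=\ell(w')$ I get two reduced decompositions,
\[
w=\tau^{-1}\cdot(\tau w),\qquad w'=(\tau w)\cdot\tau^{-1},
\]
since both sides of the second equality have length $\ell(\tau w)+\ell(\tau^{-1})=\ell(w)-\ell(\tau)+\ell(\tau)=\ell(w)$. Standard Bruhat-cell arithmetic in $G$ then yields
\[
BwB=B\tau^{-1}B\cdot B(\tau w)B,\qquad Bw'B=B(\tau w)B\cdot B\tau^{-1}B,
\]
where in both cases the product is a single cell because the underlying Weyl-group decomposition is reduced.

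Now suppose $x\in C\cap BwB$. Factor $x=uv$ with $u\in B\tau^{-1}B$ and $v\in B(\tau w)B$. The conjugate
\[
u^{-1}xu=vu\in B(\tau w)B\cdot B\tau^{-1}B=Bw'B
\]
still lies in the conjugacy class $C$, so $C\cap Bw'B\neq\varnothing$. The converse is entirely analogous: given $y=u'v'\in C\cap Bw'B$ with $u'\in B(\tau w)B$ and $v'\in B\tau^{-1}B$, the conjugate $v'y(v')^{-1}=v'u'\in B\tau^{-1}B\cdot B(\tau w)B=BwB$ witnesses $C\cap BwB\neq\varnothing$. The alternative case $\ell(w\tau^{-1})=\ell(w)-\ell(\tau^{-1})$ is handled identically, with the roles of $u$ and $v$ (and the side of the conjugation) swapped.

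The main subtlety is bookkeeping in the twisted setting: one must lift the Weyl-group element $w=\delta\tilde{w}$ to a well-defined coset in the extended normaliser $N_{G}(T)\rtimes\langle\delta\rangle$ and verify that the reduced-product identity $BxB\cdot ByB=BxyB$ (for $\ell(xy)=\ell(x)+\ell(y)$) persists when one or both factors come from $\Omega$. This is standard once one fixes a $\delta$-stable pinning, since the twist permutes the simple root subgroups and therefore is compatible with the Bruhat stratification; beyond this, the argument is purely combinatorial.
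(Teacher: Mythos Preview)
Your proposal is correct and is essentially the same argument as the paper's: both reduce by induction to a single cyclic shift, write $w=xy$ and $w'=yx$ as reduced products, and then conjugate an element of $BwB=BxB\cdot ByB$ by its $BxB$-factor to land in $ByB\cdot BxB=Bw'B$. The paper carries this out slightly more explicitly (writing $g=bxyb'$ and conjugating by $(bx)^{-1}$, and spelling out the Bruhat product via the unipotent pieces $N_{z}$), but the content is identical.
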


\begin{proof}
Let $N_{+}:=[B,B]$ be the unipotent radical of $B$, let $N_{-}$
be the unipotent radical of the opposite Borel subgroup and then set
$N_{z}:=N_{+}\cap z^{-1}N_{-}z$ for any element $z$ in this (twisted)
Weyl group. By induction, it suffices to consider the case where $w=xy$
and $w'=yx$ are both reduced decompositions. If an element $bxyb'$
of $BxyB$ lies in the conjugacy class $C$, then so does the element
$yb'bx$ which furthermore lies inside of
\[
ByBxB=ByBxN_{w}=ByN_{y}xN_{x}=Bw'N_{w'}=Bw'B.\qedhere
\]
\end{proof}
The following statement was partially inspired by constructions in
\cite[p.\,336-337]{MR3883243}.
\begin{lem}
Let $w$ be a minimally dominant element of a twisted finite Coxeter
group with dominant Weyl chamber $C$. Then there exists a Weyl chamber
$C'$ such that

\begin{enumerate}[\normalfont(i)]

\item $w$ has minimal length with respect to $C'$.

\item $V_{w}$ is generated by the roots of a standard parabolic
subroot system in $C'$.

\item The positive roots for $C$ inside $V_{w}$ coincide with the
positive roots for $C'$ in $V_{w}$.

\end{enumerate}
\end{lem}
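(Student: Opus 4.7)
Plan: The plan is to realise $C'=\sigma(C)$ for a carefully chosen $\sigma\in\tilde{W}$. By Lemma~\ref{lem:standard-parab}(i) and Corollary~\ref{cor:parabolic}, any minimum-length conjugate $w_0\in\CMcal O$ lies in and is elliptic in some standard parabolic $W_I$, so $V_{w_0}=\mathrm{span}\{\alpha_i:i\in I\}$. If we can find $\sigma$ such that $\sigma^{-1}w\sigma=w_0$ and $\sigma$ is a minimum right coset representative of $W_I$ in $\tilde{W}$, then setting $C':=\sigma(C)$ gives all three properties at once: (i) follows from $\ell_{C'}(w)=\ell_C(w_0)=\ell(\CMcal O_{\mathrm{min}})$; (ii) holds because $V_w=\sigma(V_{w_0})$ is spanned by the simple roots $\{\sigma(\alpha_i):i\in I\}$ of the standard parabolic $\sigma W_I\sigma^{-1}$ in the $C'$-system; and (iii) holds because $\sigma$, being a minimum coset representative, preserves positivity on all roots of $W_I$, yielding $\sigma(V_{w_0}\cap\mathfrak{R}_+)=V_w\cap\mathfrak{R}_+$.

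Existence of such a $\sigma$ is the content of the lemma. Starting from any $w_0\in\CMcal O\cap W_I$ of minimum length and any conjugator $\sigma_0$ sending $w_0$ to $w$, I would apply Lemma~\ref{lem:factorise}(i) to factor $\sigma_0=\sigma y$ with $\sigma$ a minimum right coset representative and $y\in W_I$; then $\sigma^{-1}w\sigma=yw_0y^{-1}$ is automatically in $W_I$ and elliptic there. The question is whether this $yw_0y^{-1}$ remains of minimum $\tilde{W}$-length in $\CMcal O$ (equivalently, whether $y\in Z_{W_I}(w_0)$), and this is where the \emph{minimally} dominant hypothesis on $w$ is crucial.

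The key geometric fact is that since $w$ is dominant, $\overline{C}\cap V_w$ contains a regular point of $V_w$, hence determines a unique simple system $\{\beta_1,\ldots,\beta_k\}$ of the root subsystem $\mathfrak{R}\cap V_w$ with each $\beta_i\in\mathfrak{R}_+$. The set $\sigma(\{\alpha_i:i\in I\})$ is also a simple system of $\mathfrak{R}\cap V_w$ sitting in the positive cone (by the coset-representative property), hence coincides with $\{\beta_1,\ldots,\beta_k\}$ as unordered sets. Using the freedom in choosing $w_0$ within the $W_I$-conjugacy class $\CMcal O\cap W_I$, one then arranges for $\sigma$ to induce the identification $\alpha_i\leftrightarrow\beta_i$ in a manner compatible with the actions of $w$ on $V_w$ and of $w_0$ on $V_{W_I}$, which forces $yw_0y^{-1}$ to be a minimum-length element. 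The main obstacle I anticipate is formalising this compatibility step; a natural approach is to leverage the cyclic shifts within $\CMcal O^{\mathrm{dom}}$ provided by Proposition~\ref{prop:shifts-braiding}(i) with $\Theta$ the set of non-trivial eigenspaces of $w$, together with Lemma~\ref{lem:shifts}(ii), to reduce to a case where the match-up of simple systems automatically furnishes the required $\sigma$.
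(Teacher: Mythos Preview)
Your setup for (ii) and (iii) via a minimal coset representative $\sigma$ is sound and is essentially the algebraic incarnation of the paper's geometric construction: the paper picks a regular point $v_0\in V^{w}$ and a regular point $v_1\in V_w\cap\overline{C}$ and lets $C'$ be the chamber of $v_0+\varepsilon v_1$ for small $\varepsilon>0$, which amounts to exactly the chamber $\sigma(C)$ you describe. So on that part you and the paper agree.

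The real content is (i), and here your proposed resolution has a gap. You want to force the particular conjugate $w_0':=\sigma^{-1}w\sigma\in W_I$ to be of minimal length by ``matching simple systems'' and exploiting the freedom in $w_0$; but that freedom is illusory --- once $\sigma$ is fixed (by the coset-representative condition), the element $w_0'$ is determined, and there is no obvious a~priori reason it should be minimal in its $W_I$-conjugacy class. Your idea of reducing via cyclic shifts within $\CMcal O^{\mathrm{dom}}$ to $w'\in\CMcal O_-^{\mathrm{dom}}$ is on the right track, but it only proves the lemma for $w'$, not for the original $w$: you still owe an argument transferring the length back to $w$.

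The paper closes this gap differently. Rather than controlling which $W_I$-conjugate one lands on, it observes that for \emph{any} chamber $C'$ satisfying (ii) and (iii) one has $\ell_{C'}(w)=|\mathfrak{R}_w\cap V_w|$ (all inversions of $w$ with respect to $C'$ lie in $V_w$, and by (iii) they are the same as the $C$-inversions in $V_w$). The number $|\mathfrak{R}_w\cap V_w|$ is then shown to equal the minimal length of $\CMcal O$ by (a)~cyclic-shifting $w$ within $\CMcal O_{\mathrm{min}}^{\mathrm{dom}}$ to some $w'\in\CMcal O_-^{\mathrm{dom}}$ using Proposition~\ref{prop:shifts-braiding}(ii)--(iii), (b)~applying the same construction to $w'$ to get a chamber $C''$ with respect to which $w'$ lies in $\CMcal O_-\subseteq\CMcal O_{\mathrm{min}}$, and (c)~invoking Corollary~\ref{cor:roots-made-negative-mixed}, which says $|\mathfrak{R}_{w}\cap V_{w}|=|\mathfrak{R}_{w'}\cap V_{w'}|$ whenever $w\overset{\times}{\sim}w'$. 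Step~(c) is the ingredient you are missing; it is what lets one decouple the construction of $C'$ for $w$ from the verification of minimality, which is carried out on the auxiliary element~$w'$.
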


We've already seen in Lemma \ref{lem:standard-parab}(i) that (ii)
follows from (i), but we will explicitly use it here in proving (i):
\begin{proof}
Given an arbitrary Weyl chamber $\tilde{C}$, we denote by $\ell_{\tilde{C}}(\cdot)$
the corresponding length function. Let $v_{0}$ be a regular element
in $V^{w}$ and $v_{1}$ be a regular element in $V_{w}\cap\overline{C}$.
Since $w$ is dominant, $v_{1}$ is also a regular element of $V_{w}$.
In particular, $v_{0}+\varepsilon v_{1}$ is regular for $\varepsilon>0$
sufficiently small, defining a Weyl chamber $C'$ satisfying the condition
of (iii). By construction, the closure $\overline{C}'$ contains a
regular point of $V^{w}$, so the isotropy subgroup fixing it is a
standard parabolic subgroup $W'$, satisfying $V_{W'}\subseteq V_{w}=(V^{w})^{\bot}$.
As this subgroup also contains $w$, this inclusion must be an equality.
Thus (ii) holds, which implies that $w$ lies inside of this standard
parabolic subgroup. Then $\ell_{C'}(w)=|\mathfrak{R}_{w}\cap V_{w}|$,
for the inversion set $\mathfrak{R}_{w}$ defined with respect to
either $C$ or $C'$. 

Let $\CMcal O$ denote the $\tilde{W}$-orbit of $w$. According to
Proposition \ref{prop:shifts-braiding}(ii) there is an inclusion
$\CMcal O_{-}^{\mathrm{dom}}\subseteq\CMcal O_{\mathrm{min}}^{\mathrm{dom}}$,
so by (iii) of this proposition we can cyclic shift $w$ to an element
$w'$ in $\CMcal O_{-}^{\mathrm{dom}}$; applying the same reasoning,
we obtain a Weyl chamber $C''$ satisfying (ii) and (iii) for this
element. But with respect to $C''$ the element $w'$ lies in $\CMcal O_{-}$
and hence in $\CMcal O_{\mathrm{min}}$ by Proposition \ref{prop:shifts-braiding}(ii)
again. Then from Corollary \ref{cor:roots-made-negative-mixed} we
deduce that
\[
\ell_{C'}(w)=|\mathfrak{R}_{w}\cap V_{w}|=|\mathfrak{R}_{w'}\cap V_{w'}|=\ell_{C''}(w')
\]
 so (i) follows for $w$ as well.
\end{proof}
\begin{example}
Consider the conjugate of $s_{1}s_{2}s_{3}s_{4}s_{3}s_{1}s_{2}$ of
$s_{1}s_{2}s_{3}$ in type $\mathsf{A}_{4}$. It is dominant but not
minimally dominant, and there is no Weyl chamber such that the conditions
of the lemma hold.
\end{example}

\begin{cor}
\label{cor:minimally-dominant-to-minimal} Let $w$ be a minimally
dominant element in a twisted finite Coxeter group. Then there exists
a sequence of simple reflections $s_{i_{0}},\ldots,s_{i_{n-1}}$ such
that upon setting $w_{j+1}:=s_{i_{j}}w_{j}s_{i_{j}}$ for $0\leq j\leq n-1$,
the properties
\[
\ell(w_{j})\geq\ell(w_{j+1})\qquad\textrm{and}\qquad\alpha_{i_{j}}\notin V_{w_{j}}
\]
 hold and $w_{n}$ has minimal length.
\end{cor}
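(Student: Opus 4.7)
The plan is to apply the previous lemma to obtain a Weyl chamber $C' = \tau(C)$ with $\tau \in \tilde{W}$ satisfying conclusions~(i)--(iii); the sought minimum-length element is then $w_n := \tau^{-1} w \tau$ (by~(i)), and the sequence will arise by unfolding a carefully chosen reduced expression $\tau^{-1} = s_{i_{n-1}} \cdots s_{i_0}$ and conjugating $w$ by its simple reflections one at a time. Since the induction below will not stay inside the class of minimally dominant elements, I would actually establish the slightly more general claim that the conclusion holds for \emph{any} pair $(w, C')$ satisfying (i)--(iii) of the previous lemma.

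For any reduced expression of $\tau^{-1}$, setting $v_j := s_{i_0} \cdots s_{i_{j-1}}$ one checks that the iterated conjugates of the statement satisfy $w_j = v_j^{-1} w v_j$ and $\ell(w_j) = d(D_j, w(D_j))$ for the minimal gallery $D_j := v_j(C)$ from $C$ to $C'$. The wall between $D_j$ and $D_{j+1}$ is supported on the root $v_j(\alpha_{i_j})$; because the gallery is minimal, this root separates $C$ from $C'$, so condition~(iii) of the previous lemma forces $v_j(\alpha_{i_j}) \notin V_w$, equivalently $\alpha_{i_j} \notin v_j^{-1}(V_w) = V_{w_j}$. Thus the root condition of the Corollary holds automatically, regardless of which reduced expression is used.

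For the length-monotonicity I propose induction on $\ell(\tau)$, the base case $\tau = \mathrm{id}$ being immediate. In the inductive step it suffices to exhibit a simple reflection $s_{i_0}$ such that $\alpha_{i_0}^\perp$ separates $C$ from $C'$ (hence $\alpha_{i_0} \notin V_w$ by~(iii), and $\ell(s_{i_0}\tau) = \ell(\tau) - 1$) and $\ell(s_{i_0} w s_{i_0}) \leq \ell(w)$. The new configuration $(s_{i_0} w s_{i_0}, s_{i_0}(C'))$ then again satisfies (i)--(iii): property~(i) from $\ell_{s_{i_0}(C')}(s_{i_0} w s_{i_0}) = \ell_{C'}(w)$; (ii) because $V_{s_{i_0} w s_{i_0}} = s_{i_0}(V_w)$ is spanned by the $s_{i_0}$-images of the simple roots of $C'$ spanning $V_w$; and (iii) because $s_{i_0}$ preserves the positivity of every root distinct from $\pm\alpha_{i_0}$, whilst $\alpha_{i_0} \notin V_w$ excludes $\pm\alpha_{i_0}$ from $s_{i_0}(V_w)$. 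The inductive hypothesis then furnishes a sequence for $s_{i_0} w s_{i_0}$, which we prepend with $s_{i_0}$.

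The hard part is producing such $s_{i_0}$. My plan is to invoke the gradient flow of $f_w(v) := \|(\mathrm{id}_V - w)v\|^2$: take a generic point $v^* \in C$ close to $v_0 + \epsilon v_1 \in \overline{C'}$, using the regular vectors $v_0 \in V^w \cap \overline{C'}$ and $v_1 \in V_w \cap \overline{C}$ furnished by the construction of $C'$ in the proof of the previous lemma. Running $\Phi_w(v^*, t)$ with $t$ decreasing strictly decreases $f_w$, so the trajectory must exit $C$ at some point $v^\bullet$ on a simple wall $\alpha_{i_0}^\perp$; the outgoing velocity $-\nabla f_w(v^\bullet)$ then points out of $C$, giving $(\nabla f_w(v^\bullet), \alpha_{i_0}) > 0$, which is exactly the hypothesis of Corollary~\ref{cor:gradient-function}(ii) and delivers $\ell(s_{i_0} w s_{i_0}) \leq \ell(w)$. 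The delicate technical point is arranging the initial $v^*$ so the first wall crossed genuinely separates $C$ from $C'$; I expect this to reduce to a genericity argument exploiting $v_1 \in \overline{C} \cap \overline{C'}$, so that a slightly perturbed straight segment from $v^*$ to $v_0 + \epsilon v_1$ meets only walls separating $C$ from $C'$.
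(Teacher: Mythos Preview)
Your approach uses the same ingredients as the paper (the chamber $C'$ from the previous lemma, condition~(iii) for the root condition, the gradient flow for the length condition), and your observation that $\alpha_{i_j}\notin V_{w_j}$ holds automatically for \emph{any} reduced expression of $\tau^{-1}$ is correct and is essentially what the paper uses as well.

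The main difference is that you set up an induction on $\ell(\tau)$, which forces you into the ``delicate technical point'' of ensuring the \emph{first} wall crossed by the flow separates $C$ from $C'$. The paper sidesteps this entirely by following the complete path from $C$ to $C'$ in one go rather than one step at a time. Concretely, one starts at $\varepsilon v_0 + v_1 \in C$ --- you have the endpoints swapped: the point $v_0 + \varepsilon v_1$ lies in $C'$, so ``$v^* \in C$ close to $v_0 + \varepsilon v_1$'' makes no sense when $C$ and $C'$ are far apart --- and runs the gradient flow toward the half-line through $v_0$, which the paper identifies with a straight line of half-lines through points $a v_0 + b v_1$ with $a,b>0$. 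For any root $\beta\in V_w$ one has $(\beta, a v_0 + b v_1)=b(\beta,v_1)$, so its sign never changes; hence no wall $\beta^\perp$ with $\beta\in V_w$ is ever crossed, and condition~(iii) persists at every intermediate chamber. The length condition then comes from Corollary~\ref{cor:gradient-function}(ii) along this path, and the question of whether the \emph{first} wall happens to separate $C$ from $C'$ never arises.

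A smaller correction: your proposed rescue via ``$v_1 \in \overline{C}\cap\overline{C'}$'' fails. For any root $\beta\notin\mathfrak{R}^w$ separating $C$ from $C'$, regularity of $v_1$ in $V_w$ gives $(\beta,v_1)\neq 0$ with the $C$-sign, so $v_1\notin\overline{C'}$.
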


\begin{proof}
Take any regular point $v_{1}$ of $V_{w}$ lying in $\overline{C}$
and regular point $v_{0}$ of $V^{w}$ such that $\varepsilon v_{0}+v_{1}$
lies in $C$ and is regular for $\varepsilon>0$ sufficiently small.
The gradient flow explicitly given in Proposition \ref{prop:gradient-flow}
now defines a straight line from the half-line through $\varepsilon v_{0}+v_{1}$
to the half-line through $v_{0}+\varepsilon v_{1}$, and hence from
$C$ to the chamber $C'$ constructed in the previous proof. The vectors
along this path then yield a continuous family of hyperplanes where
the positive roots of $V_{w}$ remain positive, i.e.\ condition (iii)
must be true at each of the chambers along this straight path. But
this implies that along this path, $\alpha_{i_{j}}\notin V_{w_{j}}$
is always satisfied. The condition $\ell(w_{j})\geq\ell(w_{j+1})$
follows from Corollary \ref{cor:gradient-function}(ii).
\end{proof}
\begin{conjecture}
\label{conj:intersect-bruhat} Let $G$ be a twisted reductive group
over an algebraically closed field, with Borel subgroup $B$. Let
$w$ be an element of the corresponding Weyl group and $s_{i}$ a
simple reflection such that $\alpha_{i}\notin V_{w}$ and set $w':=s_{i}ws_{i}$.
Then for every conjugacy class $C$ in $G$ we have

\begin{equation}
C\cap BwB\neq\varnothing\qquad\textrm{if and only if}\qquad C\cap Bw'B\neq\varnothing.\label{eq:intersect-conjugacy}
\end{equation}
\end{conjecture}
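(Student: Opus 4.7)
The plan is to prove the conjecture by producing, for every $g \in C \cap BwB$, an explicit element $h \in G$ such that $hgh^{-1} \in Bw'B$. When $w$ commutes with $s_{i}$ the statement is tautological, so we focus on the nontrivial case where $\alpha_{i}$ has nonzero projections onto both $V^{w}$ and $V_{w}$. The hypothesis $\alpha_{i} \notin V_{w}$ is equivalent to the existence of a $w$-fixed cocharacter $\lambda \in X_{*}(T)$ with $\langle \alpha_{i}, \lambda\rangle \neq 0$; the associated subtorus $\lambda(\mathbb{G}_{m}) \subseteq T$ then centralises any lift $\dot{w} \in N_{G}(T)$ of $w$ while scaling the root subgroups $U_{\pm\alpha_{i}}$ non-trivially.

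I would first set up the geometry of the minimal parabolic $P_{i} := B \cup B\dot{s}_{i}B$, with Levi $L_{i} := \langle T, U_{\pm\alpha_{i}}\rangle$. Conjugation by $\dot{s}_{i}$ sends $B$ to the ``flipped'' Borel $B^{s_{i}} := T \cdot N_{+}' \cdot U_{-\alpha_{i}}$, where $N_{+}'$ is generated by the positive root subgroups other than $U_{\alpha_{i}}$, so $\dot{s}_{i}(BwB)\dot{s}_{i}^{-1} = B^{s_{i}} w' B^{s_{i}}$; this sits inside the $P_{i}$-double coset
\[
P_{i} w' P_{i} \;=\; BwB \,\cup\, Bs_{i}wB \,\cup\, Bws_{i}B \,\cup\, Bw'B,
\]
but is not yet contained in $Bw'B$.

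Next, to correct this, I would consider the two-parameter family of conjugators
\[
h(s, u_{-}) \;:=\; u_{-}\cdot \lambda(s)\cdot \dot{s}_{i}, \qquad u_{-} \in U_{-\alpha_{i}},\; s \in \mathbb{G}_{m},
\]
and the associated family $g(s, u_{-}) := h(s, u_{-})\,g\,h(s, u_{-})^{-1}$, whose members all lie in $C \cap P_{i} w' P_{i}$. The central claim is that for generic $(s, u_{-})$ the conjugate $g(s, u_{-})$ lies in $Bw'B$ and not in one of the three smaller cells. The hypothesis $\langle \alpha_{i}, \lambda\rangle \neq 0$ enters essentially here: it provides a $\mathbb{G}_{m}$-action scaling the $U_{\pm\alpha_{i}}$-components independently of the torus translations commuting with $\dot{w}$, giving enough freedom to absorb the $U_{-\alpha_{i}}$-factors arising from $B^{s_{i}}$ via the commutation relations in $L_{i}$.

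The main obstacle is proving this generic hitting statement: one must verify that the ``defect locus'' of parameters $(s, u_{-})$ for which $g(s, u_{-})$ falls into a smaller Bruhat cell has strictly lower dimension than the parameter space. A plausible strategy is to carry out the calculation first within $L_{i}$, which is semisimple of rank one and so reduces to an explicit computation in $\mathrm{GL}_{2}$ or $\mathrm{PGL}_{2}$, using the standard representative $n_{i}(a) := x_{\alpha_{i}}(a)\,x_{-\alpha_{i}}(-a^{-1})\,x_{\alpha_{i}}(a)$ to track Bruhat cells, and then lift to $G$ via the Levi decomposition of $P_{i}$. A backup route, sidestepping the explicit computation, is a closure/semicontinuity argument: combine Proposition \ref{prop:conjugacy-cyclic} (cyclic shifts preserve Bruhat intersections) with the specialisation of the Bruhat stratification along the ``straight path'' of Weyl chambers from Corollary \ref{cor:minimally-dominant-to-minimal}, in order to propagate non-emptiness of $C \cap BwB$ to $C \cap Bw'B$.
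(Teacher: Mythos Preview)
The statement you are attempting to prove is explicitly a \emph{Conjecture} in the paper (Conjecture~\ref{conj:intersect-bruhat}), and the paper does \emph{not} provide a proof of it. The text immediately following the statement reads: ``Note that by \cite[Proposition 3.4]{MR2042932}, it suffices to prove $\Rightarrow$ with $\ell(w)>\ell(w')$. Assuming this conjecture, one obtains\ldots'', after which a corollary and the main Conjecture of the introduction are derived \emph{conditionally}. So there is no paper proof against which to compare your attempt; the author regards this as open.

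On the merits of your proposal: the setup is sound. The observation that $\alpha_i\notin V_w$ is equivalent to the existence of $\lambda\in X_*(T)^w$ with $\langle\alpha_i,\lambda\rangle\neq 0$, and that $\lambda(\mathbb{G}_m)$ then centralises a lift $\dot w$ while scaling $U_{\pm\alpha_i}$, is correct and is the natural way to exploit the hypothesis. However, you yourself flag the essential step --- showing that for generic $(s,u_-)$ the conjugate lands in $Bw'B$ rather than in one of the three smaller cells of $P_iw'P_i$ --- as ``the main obstacle'', and you do not resolve it. The suggestion to ``carry out the calculation first within $L_i$'' does not obviously suffice: the difficulty is precisely in controlling how the $U_{-\alpha_i}$-factor interacts, via commutators, with the part of $g$ lying in the root subgroups of $N_+'$, and those commutators are \emph{not} local to $L_i$. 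What you have written is a plausible heuristic for why the hypothesis should matter, not a proof.

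Your backup route is circular. Proposition~\ref{prop:conjugacy-cyclic} covers only the equal-length case (cyclic shifts), whereas --- as the paper notes --- the content of the conjecture is exactly the length-decreasing step $\ell(w)>\ell(w')$. Corollary~\ref{cor:minimally-dominant-to-minimal} produces a sequence of simple conjugations each satisfying $\alpha_{i_j}\notin V_{w_j}$ and $\ell(w_j)\ge\ell(w_{j+1})$; to propagate non-emptiness along that sequence is precisely to invoke the conjecture at each step. Indeed, this is exactly how the paper \emph{uses} the conjecture (in the proof of the corollary immediately following it), not how one would \emph{prove} it.
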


As shown in \cite[Example 3.6]{MR2042932}, this fails when the underlying
field is not algebraically closed. Note that by \cite[Proposition 3.4]{MR2042932},
it suffices to prove $\Rightarrow$ with $\ell(w)>\ell(w')$. Assuming
this conjecture, one obtains
\begin{cor}
Let $w$ be minimally dominant element and $w\overset{-}{\rightarrow}w'$
for some element $w'$. Then (\ref{eq:intersect-conjugacy}) holds.
\end{cor}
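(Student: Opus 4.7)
The plan is to chain together applications of Conjecture \ref{conj:intersect-bruhat} along a length-non-increasing sequence of simple conjugations from $w$ to $w'$, at each step verifying the conjecture's hypothesis $\alpha_{i_j} \notin V_{w_j}$.

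Since $w$ is minimally dominant, Corollary \ref{cor:minimally-dominant-to-minimal} produces exactly such a sequence: $w = w_0, w_1, \ldots, w_n$ of simple conjugations $w_{j+1} = s_{i_j} w_j s_{i_j}$ satisfying $\ell(w_j) \geq \ell(w_{j+1})$ and $\alpha_{i_j} \notin V_{w_j}$ at each step, terminating at a minimum length element $w_n$ of the conjugacy class $\CMcal O$. Since the conjecture's hypothesis holds at every step, iterating Conjecture \ref{conj:intersect-bruhat} gives
\[
C \cap B w_j B \neq \varnothing \quad \Longleftrightarrow \quad C \cap B w_{j+1} B \neq \varnothing,
\]
and chaining these equivalences yields (\ref{eq:intersect-conjugacy}) for the pair $(w, w_n)$ as well as for each intermediate pair $(w, w_j)$.

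To cover an arbitrary $w'$ with $w \overset{-}{\rightarrow} w'$, I would further reduce $w'$ (which lies in $\CMcal O$, of length at most $\ell(w)$) to some minimum length element $w_n' \in \CMcal O_{\mathrm{min}}$ via the Geck-Pfeiffer/He-Nie theory, and then link $w_n$ and $w_n'$ by cyclic shifts inside $\CMcal O_{\mathrm{min}}$ using Proposition \ref{prop:conjugacy-cyclic}. Combined with the chain from $w$ to $w_n$ above, this would give the desired equivalence for the pair $(w,w')$.

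The main obstacle is that the reduction from $w'$ to $w_n'$ may traverse length-decreasing simple shifts at some intermediate $w_j$ with $\alpha_{i_j} \in V_{w_j}$, equivalently $s_{i_j}\in W_{w_j}$ by Corollary \ref{cor:parabolic}, which fall outside the direct scope of the conjecture. I would sidestep this difficulty by first cyclic-shifting $w'$ through $\CMcal O^{\mathrm{dom}}$ into a minimally dominant representative $w''\in\CMcal O_{\mathrm{min}}^{\mathrm{dom}}$ using Proposition \ref{prop:shifts-braiding}(iii): these are cyclic shifts within a $\tilde{W}$-orbit, so Proposition \ref{prop:conjugacy-cyclic} preserves Bruhat-conjugacy intersections. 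Applying Corollary \ref{cor:minimally-dominant-to-minimal} to $w''$ then produces a good descent from $w''$ to a minimum length element, and Conjecture \ref{conj:intersect-bruhat} closes the Bruhat-preserving chain at every remaining step.
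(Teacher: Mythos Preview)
Your use of Corollary~\ref{cor:minimally-dominant-to-minimal} and Conjecture~\ref{conj:intersect-bruhat} to chain $w$ down to a minimal-length element matches the paper. The genuine gap is in your treatment of the arbitrary $w'$: your proposed fix invokes Proposition~\ref{prop:shifts-braiding}(iii) to cyclic-shift $w'$ through $\CMcal O^{\mathrm{dom}}$ to a minimally dominant element, but $w'$ need not be dominant at all. Since $w$ is minimally dominant and $w \overset{-}{\rightarrow} w'$, we have $\ell(w') \leq \ell(w) = \ell(\CMcal O_{\mathrm{min}}^{\mathrm{dom}})$; whenever this inequality is strict, $w'$ cannot lie in $\CMcal O^{\mathrm{dom}}$, so neither Proposition~\ref{prop:shifts-braiding}(iii) nor part~(ii) of the main Lemma applies to it.

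The paper avoids this entirely by exploiting an asymmetry you have not used: by \cite[Proposition~3.4]{MR2042932}, the implication $C \cap B u B \neq \varnothing \Rightarrow C \cap B u' B \neq \varnothing$ holds \emph{unconditionally} for any upward chain $u \overset{+}{\rightarrow} u'$, with no hypothesis on eigenspaces. One simply reduces $w'$ to a minimal-length element $w''$ via any $\overset{-}{\rightarrow}$ sequence, reverses to $w'' \overset{+}{\rightarrow} w' \overset{+}{\rightarrow} w$, and obtains the implications $C \cap B w'' B \neq\varnothing \Rightarrow C \cap B w' B \neq\varnothing \Rightarrow C \cap B w B \neq\varnothing$ for free. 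The Conjecture is then needed only on the single controlled descent $w \to w'''$ furnished by Corollary~\ref{cor:minimally-dominant-to-minimal}, and strong-conjugation invariance (minimal-length elements lie in one strong conjugacy class, not in general one cyclic shift class, so Proposition~\ref{prop:conjugacy-cyclic} is not quite the right citation here either) links $w'''$ to $w''$, closing the cycle of implications.
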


\begin{proof}
By say Proposition \ref{prop:shifts-braiding}(i) there exists a minimal
length element $w''$ such that $w'\overset{-}{\rightarrow}w''$,
and thus $w''\overset{+}{\rightarrow}w'\overset{+}{\rightarrow}w$.
From \cite[Proposition 3.4]{MR2042932} it then follows that
\[
C\cap Bw''B\neq\varnothing\quad\implies\quad C\cap Bw'B\neq\varnothing\quad\implies\quad C\cap BwB\neq\varnothing.
\]
Now let $w'''$ denote the minimal length element constructed from
$w$ in Corollary \ref{cor:minimally-dominant-to-minimal}. Then the
previous conjecture coupled with strong conjugation invariance yields
\[
C\cap BwB\neq\varnothing\quad\implies\quad C\cap Bw'''B\neq\varnothing\quad\implies\quad C\cap Bw''B\neq\varnothing.\qedhere
\]
\end{proof}
And that would yield the main Conjecture:
\begin{proof}
By Proposition \ref{prop:shifts-braiding}(i) there exists a minimal
length element $w''$ such that $w'\overset{-}{\rightarrow}w''$.
The previous corollary and strong conjugation invariance then yield
\[
C\cap BwB\neq\varnothing\quad\Longleftrightarrow\quad C\cap Bw''B\neq\varnothing\quad\Longleftrightarrow\quad C\cap Bw'B\neq\varnothing.\qedhere
\]
\end{proof}

\subsection{Involutions}

In this subsection we prove Proposition \ref{prop:involutions}. My
primary contribution here is generalising known results about conjugacy
classes to orbits of standard parabolic groups, and in the new characterisation
given in part (ii)(d).
\begin{notation}
Given an element $w$ of a twisted finite Coxeter group, we let $\mathfrak{R}_{-1}^{w}$
denote the set of roots on which it acts as $-1$. Given a subset
$J$ of simple reflections, we denote by $\tilde{W}_{J}$ the standard
parabolic subgroup generated by $J$ and by $_{J}\mathfrak{R}_{+}$
the corresponding set of positive roots.
\end{notation}

The explicit description for part (ii)(c) of Proposition \ref{prop:involutions}
is:\begin{equation}   \label{eq:explicit-description-involution}   \parbox{\dimexpr\linewidth-4em}{Let $J$ denote the set of simple reflections such that $W'=\tilde{W}_{J}$ and consider the subset
\[ J':=\{s_{j}\in J:s_{j}ws_{j}=w\textrm{ and }\ell(w)\lessgtr\ell(ws_{j})\} \subseteq J. \]
 Decompose $w$  into $w=w^{J'}w_{J'}$, where $w_{J'}$ denotes the identity/longest element of $\tilde{W}_{J'}$ and $w^{J'}$ is the  maximal/minimal left coset representative for $w$ in $\tilde{W}_{J'}$. Then
\[ _{J}\mathfrak{R}_{+}\cap{}_{J'}\mathfrak{R}_{+}={}_{J}\mathfrak{R}_{+}\cap\mathfrak{R}^{w}\qquad(\textrm{resp.}\quad{}_{J}\mathfrak{R}_{+}\cap{}_{J'}\mathfrak{R}_{+}={}_{J}\mathfrak{R}_{+}\cap\mathfrak{R}_{-1}^{w}), \]
the element $w^{J'}$ is an involution, a maximal/minimal length \emph{double} coset representative in $\tilde{W}_{J}\times\tilde{W}_{J'}$ and permutes the simple roots of $_{J'}\mathfrak{R}_{+}$.
} \end{equation}

For part (ii)(d) we will need
\begin{prop}
Let $V$ be a vector space over a field with a half-space $\mathbb{H}\subseteq V$
and a nontrivial subspace $V'\subseteq V$. Let $X$ be a subset of
the dual vector space $V^{*}$. Then either there exists in $X$ a
functional $\alpha$ which is nonpositive and nontrivial on some orthogonal
basis for $V'$ lying in $\mathbb{H}$, or there exists a nontrivial
vector in $V'\cap\mathbb{H}$ on which all elements of $X$ evaluate
nonnegatively.
\end{prop}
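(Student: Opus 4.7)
The plan is to collapse the dichotomy to a transparent inner-product inequality via Riesz duality. Write $\mathbb{H}=\{f\geq 0\}$ for some $f\in V^{*}$ (the case $\mathbb{H}=V$ is vacuous), set $h:=f|_{V'}\in(V')^{*}$, and let $\tilde h\in V'$ be the Riesz representative of $h$, so $h(v)=(\tilde h,v)$ for all $v\in V'$; similarly $\widetilde{\alpha|_{V'}}\in V'$ for each $\alpha\in X$. Transport the inner product from $V'$ to $(V')^{*}$ along this identification and denote the result by $(\cdot,\cdot)_{*}$, so that $(h,\alpha|_{V'})_{*}=(\tilde h,\widetilde{\alpha|_{V'}})$.

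The core step is a constructive lemma: \emph{if $\alpha\in X$ satisfies $\alpha|_{V'}\neq 0$ and $(h,\alpha|_{V'})_{*}\leq 0$, then the wedge $V'\cap\mathbb{H}\cap\{\alpha\leq 0\}$ contains an orthogonal basis of $V'$ on which $\alpha$ is nonpositive and nontrivial}, so that $\alpha$ witnesses the first alternative. To produce the basis, I decompose $V'=U\oplus P$ where $U:=\ker h\cap\ker(\alpha|_{V'})$ is automatically contained in the wedge and $P:=U^{\perp}=\mathrm{span}(\tilde h,\widetilde{\alpha|_{V'}})$ is (at most) a plane when $h,\alpha|_{V'}$ are linearly independent; an orthogonal basis of $U$ is then glued to an orthogonal pair in the planar wedge $W\cap P$, whose interior angle equals $\arccos\bigl((h,\alpha|_{V'})_{*}/\|h\|\,\|\alpha|_{V'}\|\bigr)\geq\pi/2$ by hypothesis and hence admits such a pair. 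At least one of the two chosen vectors in $P$ lies strictly off the hyperplane $\{\alpha=0\}$ since $\alpha|_{V'}$ cannot vanish identically on $P$, which secures the nontriviality. The degenerate situations $h=0$ (where $V'\subseteq\mathbb{H}$ and the wedge is the half-space $\{\alpha\leq 0\}$ of $V'$) and $\alpha|_{V'}\in\mathbb{R}_{<0}\,h$ (where the wedge is the half-space $\{h\geq 0\}$) are immediate; the remaining linearly dependent cases $\alpha|_{V'}\in\mathbb{R}_{\geq 0}\,h$ are excluded by the two standing hypotheses.

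Granting the lemma, suppose the first alternative fails. Then every $\alpha\in X$ satisfies $\alpha|_{V'}=0$ or $(h,\alpha|_{V'})_{*}>0$, and I claim the vector $\tilde h\in V'$ itself witnesses the second alternative. Indeed $h(\tilde h)=\|\tilde h\|^{2}\geq 0$ places $\tilde h$ in $V'\cap\mathbb{H}$, and $\tilde h\neq 0$ whenever $h\neq 0$; for each $\alpha\in X$ one computes
\[
\alpha(\tilde h)=\alpha|_{V'}(\tilde h)=(\widetilde{\alpha|_{V'}},\tilde h)=(h,\alpha|_{V'})_{*},
\]
which is $0$ if $\alpha|_{V'}=0$ and strictly positive otherwise, hence $\geq 0$ in all cases. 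If instead $h=0$, i.e.\ $V'\subseteq\mathbb{H}$, then the standing condition forces every $\alpha|_{V'}$ to vanish and any nonzero $v\in V'$ works.

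The hard part will be the planar-wedge step in the lemma, specifically verifying that the formula $\arccos\bigl((h,\alpha|_{V'})_{*}/\|h\|\|\alpha|_{V'}\|\bigr)$ really computes the \emph{interior} angle of the wedge (rather than its supplement), and confirming that when this angle equals $\pi/2$ the orthogonal pair can still be chosen so that $\alpha$ is nontrivial on the extended basis. Both points reduce to a short computation in an orthonormal basis of $P$ adapted to $\tilde h$ and $\widetilde{\alpha|_{V'}}$, using that the two bounding rays of the planar wedge lie on $\{h=0\}$ and $\{\alpha=0\}$ respectively, so $\alpha$ only vanishes on one of them.
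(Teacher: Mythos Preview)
Your argument is correct, but the paper's proof is considerably shorter and more elementary. The paper simply fixes at the outset an orthogonal basis of $V'$ consisting of $\dim V'-1$ vectors lying in the bounding hyperplane $\mathfrak h=\partial\mathbb H$ together with one further vector $v'\in\mathbb H$; this $v'$ is (up to a positive scalar) exactly your Riesz representative $\tilde h$. If every $\alpha\in X$ is nonnegative on $v'$, that vector witnesses the second alternative; otherwise some $\alpha$ has $\alpha(v')<0$, and one simply flips the sign of each remaining basis vector on which $\alpha$ is positive --- since those vectors lie in the linear space $\mathfrak h$, they stay in $\mathbb H$, and the resulting basis witnesses the first alternative. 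Your planar-wedge construction of the basis via the angle $\arccos\bigl((h,\alpha|_{V'})_*/\|h\|\,\|\alpha|_{V'}\|\bigr)$ reaches the same conclusion but does more work: by pre-selecting a basis with almost all vectors already on $\mathfrak h$, the paper replaces your two-dimensional angle analysis with a one-line sign-flip. What your approach does buy is a cleaner statement of \emph{when} each alternative occurs, namely the dichotomy is governed by the sign of $(h,\alpha|_{V'})_*$, and a canonical witness $\tilde h$ rather than a basis-dependent $v'$.
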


\begin{proof}
Let $\mathfrak{h}$ be the hyperplane bordering $\mathbb{H}$. Since
$\mathfrak{h}$ has codimension 1 the intersection $V'\cap\mathfrak{h}$
has dimension $\dim(V')-1$ or $\dim(V')$. Thus there exists an orthogonal
basis $\{v_{i}'\}$ for $V'$ consisting of $\dim(V')-1$ vectors
lying in $\mathfrak{h}$ and another vector $v'$ lying in $\mathbb{H}$.
If the values $\alpha(v')$ are nonnegative for each $\alpha$ in
$X$, we are done. Otherwise, at least one of them is negative. Multiplying
the other basis vectors of $V'$ (which lie in the linear space $\mathfrak{h}$)
by $-1$ if this $\alpha$ evaluates positively on them, we obtain
an orthogonal basis for $V'$ lying in $\mathbb{H}$ on which $\alpha$
always evaluates nonpositively.
\end{proof}

The main proof of Hart-Rowley in \cite{MR2040170} can be rephrased
as
\begin{lem}
[{\cite{MR2040170}}] Let $w$ be an involution of a twisted Coxeter
group and let $s_{j}$ be a simple reflection. Suppose that $\alpha_{j}\notin\mathfrak{R}_{w}\sqcup\mathfrak{R}^{w}$
(resp.\ $\alpha_{j}\in\mathfrak{R}_{w}\backslash\mathfrak{R}_{-1}^{w}$).
Then
\[
\ell(s_{j}ws_{j})\gtrless\ell(w).
\]
\end{lem}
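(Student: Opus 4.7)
The plan is a direct length calculation. Writing $w=\delta\tilde{w}$ in the semidirect product, the identity $s_{j}\delta=\delta\cdot\delta^{-1}(s_{j})$ gives $s_{j}ws_{j}=\delta\cdot\delta^{-1}(s_{j})\,\tilde{w}\,s_{j}$, so $\ell(s_{j}ws_{j})=\ell(\delta^{-1}(s_{j})\,\tilde{w}\,s_{j})$. I will check the two one-sided multiplications separately against the standard criteria
\[
\ell(us_{j})=\ell(u)+1\iff u(\alpha_{j})\in\mathfrak{R}_{+},\qquad \ell(s_{i}u)=\ell(u)+1\iff u^{-1}(\alpha_{i})\in\mathfrak{R}_{+},
\]
using that $\delta$ permutes simple roots and hence preserves positivity.

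The involution hypothesis $w^{-1}=w$ collapses both criteria to sign conditions on the single vector $w(\alpha_{j})$: the right multiplication $\tilde{w}\mapsto\tilde{w}s_{j}$ changes length by $+1$ precisely when $w(\alpha_{j})>0$, and the subsequent left multiplication $\tilde{w}s_{j}\mapsto\delta^{-1}(s_{j})\,\tilde{w}\,s_{j}$ changes length by $+1$ precisely when $s_{j}(w(\alpha_{j}))>0$. The two hypotheses of the lemma translate cleanly: $\alpha_{j}\notin\mathfrak{R}_{w}\sqcup\mathfrak{R}^{w}$ means $w(\alpha_{j})$ is a positive root distinct from $\alpha_{j}$, while $\alpha_{j}\in\mathfrak{R}_{w}\backslash\mathfrak{R}_{-1}^{w}$ means $-w(\alpha_{j})$ is a positive root distinct from $\alpha_{j}$. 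In either case $w(\alpha_{j})\neq\pm\alpha_{j}$, and since $\alpha_{j}$ is the only positive root that $s_{j}$ sends to a negative one, the sign of $w(\alpha_{j})$ is preserved by $s_{j}$.

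Hence the two successive length shifts have the same sign: both add one in the first clause and both subtract one in the second, yielding $\ell(s_{j}ws_{j})=\ell(w)\pm 2$. The argument requires no input beyond the two length criteria and the single-root behaviour of $s_{j}$; the main obstacle is purely organisational, namely keeping the $\gtrless$ duality aligned between the two parallel statements and correctly accounting for the twist's action on the chosen simple reflection.
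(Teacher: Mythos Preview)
Your proof is correct and follows essentially the same two-step length argument as the paper: show that multiplying $w$ by $s_{j}$ on one side changes length by $\pm1$ according to the sign of $w(\alpha_{j})$, then show that the second multiplication changes length in the same direction because $w(\alpha_{j})\neq\pm\alpha_{j}$ forces $s_{j}$ to preserve its sign. The only cosmetic difference is that you unwind the twist explicitly via $s_{j}\delta=\delta\cdot\delta^{-1}(s_{j})$ and compute inside $\tilde{W}$, whereas the paper works directly with the length function on $W$ and the inversion sets $\mathfrak{R}_{w}$, $\mathfrak{R}_{s_{j}w}$; the content is identical.
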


\begin{proof}
Using that $w$ is an involution, we deduce from $\alpha_{j}\notin\mathfrak{R}_{w}$
(resp.\ $\alpha_{j}\in\mathfrak{R}_{w}$) that 
\[
\ell(s_{j}w)=\ell(ws_{j})\gtrless\ell(w).
\]
From $\alpha_{j}\notin\mathfrak{R}_{w}\sqcup\mathfrak{R}^{w}$ (resp.\
$\alpha_{j}\in\mathfrak{R}_{w}\backslash\mathfrak{R}_{-1}^{w}$) it
follows that $\alpha_{j}\notin\mathfrak{R}_{s_{j}w}$ (resp. $\alpha_{j}\in\mathfrak{R}_{s_{j}w}$),
and hence
\[
\ell(s_{j}ws_{j})\gtrless\ell(s_{j}w)\gtrless\ell(w).\qedhere
\]
\end{proof}

For the implications $(b)\Rightarrow(c)\Rightarrow(a)$ we build upon
a proof of Howlett \cite[Proposition 3.2.10]{MR1778802}, just as
\cite[Lemma 3.6]{MR2355597} did for the twisted case.\footnote{The assumption $\delta^{2}=\mathrm{id}$ is missing there; see Example
\ref{exa:fake-involution}.} The latter however only considered the standard parabolic subgroup
$\tilde{W}$. Richardson originally obtained his result through proving
the minimal case (for $W'=\tilde{W}$) of $(d)\Rightarrow(c)$ \cite{MR679916};
we will not follow his approach.
\begin{proof}
[Proof of Proposition \ref{prop:involutions}]

(ii), $(a)\Rightarrow(b)$: If the statement in $(b)$ is false, then
the previous lemma implies that $\ell(s_{j}ws_{j})\gtrless\ell(w)$
for some $s_{j}$ in $W'$. 

(ii), $(b)\Rightarrow(c)$: From $w_{J'}^{2}=\mathrm{id}$ and the
properties of $J'$ it follows that
\[
\mathrm{id}=w^{2}=w^{J'}w_{J'}w=w^{J'}ww_{J'}=w^{J'}w^{J'}w_{J'}w_{J'}=(w^{J'})^{2},
\]
so that we may also write $w=w_{J'}w^{J'}$. Given a simple reflection
$s_{j''}\in J'$, then from the definition of $w_{J'}$ it follows
that know that $w_{J'}s_{j''}w_{J'}=s_{j'}$ for another simple reflection
$s_{j'}\in J'$. Thus
\[
w^{J'}s_{j'}w^{J'}=w^{J'}w_{J'}s_{j''}w_{J'}w^{J'}=ws_{j''}w=s_{j''}
\]
 which proves the claim on permuting simple roots.

Suppose now that $w^{J'}$ is not a maximal/minimal double coset representative
for $\tilde{W}_{J}\times\tilde{W}_{J'}$, so there exists a simple
reflection $s_{j}$ in $J$ such that $\ell(s_{j}w^{J'})\gtrless\ell(w^{J'})$.
As $w^{J'}$ is a maximal/minimal left coset representative for $\tilde{W}_{J'}$,
this inequality implies that $s_{j}w^{J'}$ is as well. Then 
\[
\ell(s_{j}w)=\ell(s_{j}w^{J'}w_{J'})=\ell(s_{j}w^{J'})+\ell(w_{J'})\gtrless\ell(w^{J'})+\ell(w_{J'})=\ell(w)
\]
 and as $w$ is an involution it also follows that $\ell(ws_{j})\gtrless\ell(w)$.
The element $s_{j}w$ lies in the coset $s_{j}w^{J'}\tilde{W}_{J'}$,
and if $s_{j}\in J'$ then $s_{j}w=ws_{j}$ which would imply that
it also lies in the coset $w^{J'}\tilde{W}_{J'}$. But as both $s_{j}w^{J'}$
and $w^{J'}$ are maximal/minimal coset representatives, this is a
contradiction. Hence $s_{j}w\neq ws_{j}$, which by \cite[Exercice VI.2.23]{MR0240238}
implies that $\ell(s_{j}ws_{j})\gtrless\ell(w)$. We conclude that
$\alpha_{j}$ does not lie in $\mathfrak{R}_{w}\sqcup\mathfrak{R}^{w}$
(resp.\ lies in $\mathfrak{R}_{w}\backslash\mathfrak{R}_{-1}^{w}$),
which contradicts the assumption of $(b)$. 

(ii), $(c)\Rightarrow(a)$: Pick any element $x$ in $\tilde{W}_{J}$
and factorise it as $x^{J'}x_{J'}$ as before, so $x_{J'}$ lies in
$\tilde{W}_{J'}$ and $x^{J'}$ is a maximal/minimal left coset representative.
By definition of $J'$, we have
\[
xwx^{-1}=x^{J'}(x_{J'}^{\,}wx_{J'}^{-1})(x^{J'})^{-1}=x^{J'}w(x^{J'})^{-1}.
\]
As $x^{J'}$ lies in $\tilde{W}_{J}$ it then follows that
\[
\ell(xwx^{-1})=\ell\bigl(x^{J'}w(x^{J'})^{-1}\bigr){\leq\atop \geq}\ell(x^{J'}w^{J'}w_{J'})\pm\ell(x^{J'})=\ell(x^{J'})\mp\ell(w^{J'}w_{J'})\pm\ell(x^{J'})=\ell(w),
\]
and hence $w$ is of maximal/minimal length in its $\tilde{W}_{J}$-orbit.

(ii), $(b)\Leftrightarrow(d)$: By (a twisted analogue of) \cite[Exercice V.4.2(d)]{MR0240238},
we may assume that $W$ is finite. Let $v$ be a regular point of
the dominant Weyl chamber $C$, so that the halfplane
\[
\mathbb{H}_{v}:=\{x\in V:(x,v)\geq0\}
\]
 separates the positive roots from the negative roots. Let $v_{w}$
denote a regular point of $V_{\mp1}^{w}\cap\overline{C}$.

$(d)\Rightarrow(b)$: $v_{w}$ is also a regular point of $V_{\mp1}^{w}$.
For any root $\beta$ we have
\[
\bigl(w(\beta),v_{w}\bigr)=\bigl(\beta,w^{-1}(v_{w})\bigr)=\mp(\beta,v_{w}).
\]
In the $-$-case, as the nontrivial eigenvalues of an involution are
all $-1$ and $V_{w}$ thus coincides with $V_{-1}^{w}$, we deduce
from Lemma \ref{lem:regular-vector-dominant} that for any positive
root $\beta$, the root $w(\beta)$ is negative if and only if $\beta$
is not fixed by $w$. In the $+$-case, since $v_{w}$ lies in the
dominant Weyl chamber this equation implies that $w(\beta)$ is positive
unless $\bigl(w(\beta),v_{w}\bigr)=0$. But by regularity this implies
that $\beta$ is orthogonal to $V_{1}^{w}$, so as $V=V_{1}^{w}\oplus V_{-1}^{w}$
this means that $\beta$ lies in $V_{-1}^{w}$.

$(b)\Rightarrow(d)$: On the other hand, suppose that $v_{w}$ is
not a regular point of $V_{\mp1}^{w}$. Let $I_{w}$ denote the set
of indices corresponding to hyperplanes $\mathfrak{h}_{i}$ of simple
reflections $s_{i}$ not containing $V_{\mp1}^{w}$. We split it $I_{w}=J_{w}\sqcup J_{w}'$
into the subset of indices $J_{w}$ such that $\mathfrak{h}_{i}$
doesn't contain $\overline{C}\cap V_{\mp1}^{w}$, and the subset $J_{w}'$
of indices for which it does; according to Proposition \ref{prop:parabolic}(iii)
the subset $J_{w}'$ is now nonempty. Consider the basis of $V$ of
fundamental weights $\{\omega_{i}\}_{i=1}^{\mathrm{rk}}$, then evidently
for each $j$ in $J_{w}$ there exists an vector in $\overline{C}\cap V_{\mp1}^{w}$
that has a nontrivial (and thus positive) $\omega_{j}$-component.
Taking for each $j$ such an element and summing them, we obtain an
element $v_{w}^{J}=\sum_{j\in J_{w}}c_{j}\omega_{j}$ lying in $\overline{C}\cap V_{\mp1}^{w}$.
We let $V'$ denote the orthogonal complement inside $V_{\mp1}^{w}$
to the span of $\overline{C}\cap V_{\mp1}^{w}$.

Suppose that there exists a nontrivial vector $v'$ in $V'$ on which
all the $\alpha_{j}$ for $j\in J_{w}'$ evaluate nonnegatively. By
adding a sufficiently large positive multiple of $v_{w}^{J}$ to $v'$,
we obtain an element whose remaining coordinates in this basis are
positive. But by construction this element lies in the span of $\overline{C}\cap V_{\mp1}^{w}$,
and hence so does $v'$ which is a contradiction.

The previous proposition then yields an orthogonal basis $\{v_{i}'\}_{i=1}^{\dim V'}$
for $V'$ lying in $\mathbb{H}_{v}$ and a particular $j\in J_{w}'$
such that $\alpha_{j}$ evaluates and nonpositively on all of these
basis elements, and negatively on at least one of them. Denote the
projection of $\alpha_{j}$ to $V_{\pm1}^{w}$ by $\alpha_{j}^{\pm}$.
As $V=V_{1}^{w}\oplus V_{-1}^{w}$ and $\alpha_{j}$ projects to zero
in the orthogonal complement of $V'$ in $V_{\mp1}^{w}$ (which is
spanned by $V_{\mp1}^{w}\cap\overline{C}$), we have
\[
w(\alpha_{j})=\pm\alpha_{j}^{\pm}+\sum_{i=1}^{\mathrm{dim}V'}w\bigl((\alpha_{j},v_{i}')v_{i}'\bigr)=\pm\alpha_{j}^{\pm}\mp\sum_{i=1}^{\mathrm{dim}V'}(\alpha_{j},v_{i}')v_{i}'=\pm\alpha_{j}+2\sum_{i=1}^{\mathrm{dim}V'}(\mp\alpha_{j},v_{i}')v_{i}'.
\]
Since $\alpha_{j}$ and $v_{i}'$ all lie in $\mathbb{H}_{v}$ and
$(-\alpha_{j},v_{i}')\geq0$, so does $\pm w(\alpha_{j})$, so $w(\alpha_{j})$
is a positive/negative root. As the inequality is strict for at least
one $i$, $\alpha_{j}$ does not lie in $V_{\pm1}^{w}$, so the assumption
of $(b)$ does not hold.

(i): If $w$ does not have maximal/minimal length, then by (ii) it
does not satisfy equation (\ref{eq:max-min-involution}). From the
lemma we deduce that $w\overset{\pm}{\twoheadrightarrow}s_{j}ws_{j}$
for some $s_{j}$ in $W'$, and then the claim follows from induction.
\end{proof}
\begin{defn}
Consider the element $s_{1}w_{\circ}s_{1}=w_{\circ}s_{1}s_{3}$ in
type $\mathsf{A}_{3}$. This is an involution of the form $w_{\circ}w_{J}$,
but as it is not of maximal length (and does not fix any roots) it
is not dominant.
\end{defn}

\begin{example}
\label{exa:fake-involution} Consider the element $w:=\delta w_{\circ}$
in type $\mathsf{D}_{4}$, where $\delta$ is the diagram automorphism
\[
1\mapsto3\mapsto4\mapsto1.
\]
As $\delta^{-1}w_{\circ}\delta=w_{\circ}=w_{\circ}^{-1}$ (for any
twist) it follows that $w^{2}=\delta^{2}\neq e$, but $\ell(s_{1}ws_{1})<\ell(w)$
so $w$ is not of minimal length in its conjugacy class (another example
is $\delta s_{4}s_{3}s_{1}=s_{1}\delta s_{3}s_{1}$). Further conjugating
$s_{1}ws_{1}$ with $s_{2}$ and then $s_{3}$, we find
\[
w\overset{-}{\twoheadrightarrow}\delta s_{3}s_{2}s_{4}s_{1}s_{2}s_{1}
\]
which has minimal length, but is not of the form $\delta w_{J'}$
(which is what the last requirement in (\ref{eq:explicit-description-involution})
specialises to when $W'=\tilde{W}$).
\end{example}

\begin{prop}
Orbits for standard parabolic subgroups of involutions in a twisted
finite Coxeter group can be algorithmically classified in terms of
its Coxeter-Dynkin diagram.
\end{prop}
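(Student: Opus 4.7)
The plan is to extract an algorithm from the previous proposition, which already gives an explicit combinatorial description of minimal length involutions in each $W'$-orbit. Since by Proposition \ref{prop:involutions}(i) every $W'$-orbit of involutions meets the minimal length locus via a sequence $w \overset{-}{\twoheadrightarrow} w'$ of simple reflection conjugations in $W'$, the classification of $W'$-orbits reduces to (a) enumerating all minimal length involutions arising from the combinatorial data of (\ref{eq:explicit-description-involution}), and (b) deciding when two such enumerated representatives lie in the same $W'$-orbit.

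For step (a), I would loop over all subsets $J' \subseteq S$ of the simple reflections (where $S$ labels the vertices of the Coxeter-Dynkin diagram) and for each such $J'$ loop over candidate elements $w^{J'}$ which are (i) minimal length double coset representatives in $\tilde{W}_J \backslash \tilde{W} / \tilde{W}_{J'}$ (where $W' = \tilde{W}_J$), (ii) involutions, and (iii) induce a diagram automorphism of $J'$. These three conditions are purely combinatorial and can be checked directly on the Coxeter-Dynkin diagram; the finite set of minimal double coset representatives is computable by standard reduced-word techniques. The candidate minimal length involution is then $w = w^{J'} w_{J'}$, and Proposition \ref{prop:involutions}(ii) guarantees that every $W'$-orbit of involutions is produced at least once in this loop.

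For step (b), I would build a graph whose vertices are the candidates produced in step (a) and whose edges connect $w$ to $s_i w s_i$ whenever $s_i \in W'$ and this conjugation preserves both the length and the involution property. The $W'$-orbits of minimal length involutions are then the connected components of this graph. The main obstacle is justifying that this graph indeed captures the full equivalence, i.e.\ that any two minimal length representatives in the same orbit are linked by such length-preserving simple shifts. This is not immediate from Proposition \ref{prop:involutions}(i), which only controls movement \emph{to} the minimum, but it can be deduced by combining the ascending and descending shifts of part (i) applied to an arbitrary conjugator $\tau \in W'$: one lifts $w_1$ and $w_2$ to a common element of higher length and then re-descends, and a standard cyclic/simple shift argument (in the spirit of Lemma \ref{lem:shifts} applied at the extremum) allows the resulting walk to be truncated to the minimal length stratum.

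Finally, since $S$, $J$, and all reduced-word manipulations involve only finite combinatorial data attached to the Coxeter-Dynkin diagram, the whole procedure is algorithmic and depends solely on this diagram, recovering (and generalising to arbitrary standard parabolic orbits and to the twisted setting) Richardson's original classification of involution conjugacy classes.
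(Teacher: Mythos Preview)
Your step (a) is sound and matches how the paper sets up the enumeration via Proposition~\ref{prop:involutions}. The trouble is step (b): you assert that two minimal-length representatives in the same $W'$-orbit can be joined by a chain of length-preserving simple conjugations inside $W'$, and you defer the justification to ``a standard cyclic/simple shift argument (in the spirit of Lemma~\ref{lem:shifts}).'' But Lemma~\ref{lem:shifts} controls $\tilde W$-orbits via eigenspace data, not orbits under a proper standard parabolic $W'$, and the paper's own footnote on $\mathsf B_3$ shows that even at an extremal length, simple shifts can fail to connect elements that are nevertheless conjugate. Your lift-and-descend sketch does not obviously truncate to the minimal stratum, so as written the connectivity of your graph is unproved and the algorithm could over-count orbits.

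The paper sidesteps this connectivity question entirely. Given two minimal-length representatives $w'=w^{J'}w_{J'}$ and $w''=w^{J''}w_{J''}$ conjugate by some $x\in\tilde W_J$, it first observes (from $xw^{J'}=\bigl(w''xw_{J'}^{-1}(w^{J''})^{-1}\bigr)w^{J''}$ and uniqueness of minimal right coset representatives for $\tilde W_J$) that necessarily $w^{J'}=w^{J''}$: the ``outer'' factor is an invariant of the $W'$-orbit. With that fixed, the $-1$ eigenspace projected to $V_{W'}$ is $V_{\tilde W_{J'}}$ and $V_{\tilde W_{J''}}$ respectively, so $x$ must carry the root system of $\tilde W_{J'}$ bijectively to that of $\tilde W_{J''}$; after adjusting by an element of $\tilde W_{J''}$ one may take it to send simple roots to simple roots. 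This is precisely a $W$-equivalence in Richardson's sense, and Richardson's algorithm~\cite{MR679916} decides it combinatorially from the Coxeter-Dynkin diagram. Thus the paper replaces your graph search by an invariant ($w^{J'}$) plus a known diagram algorithm, which both closes the gap and gives a more structured classification.
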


\begin{proof}
Let $J$ be such that the standard parabolic subgroup $W'$ equals
$\tilde{W}_{J}$. By Proposition \ref{prop:involutions}, we need
to understand when elements of the minimal case in (\ref{eq:explicit-description-involution})
are conjugate under $\tilde{W}_{J}$. So we may assume that they are
the form $w'=w^{J'}w_{J'}$ and $w''=w^{J''}w_{J''}$, where $w_{J'}$
and $w_{J''}$ are the longest elements of standard parabolic subgroups
for certain $J',J''\subseteq J$. By (\ref{eq:explicit-description-involution})
the element $w^{J'}$ (resp.\ $w^{J''}$) is a double coset representative;
in particular, they are minimal right coset representatives for $\tilde{W}_{J}$.
If $xw'x^{-1}=w''$ for some $x$ in $\tilde{W}_{J}$, then as
\[
xw^{J'}=w''xw_{J'}^{-1}=\bigl(w''xw_{J'}^{-1}(w^{J''})^{-1}\bigr)w^{J''}
\]
 and $w''xw_{J'}^{-1}(w^{J''})^{-1}$ lies in $\tilde{W}_{J}$, it
follows by uniqueness of minimal coset representatives that $w^{J'}=w^{J''}$. 

By assumption the projection of the $-1$ eigenspace to $V_{W'}$
is given by $V_{\tilde{W}_{J'}}$ and $V_{\tilde{W}_{J''}}$ respectively.
As $x$ only acts on $V_{W'}$, it thus yields a bijection between
the roots of $\tilde{W}_{J'}$ to the roots of $\tilde{W}_{J''}$.
It has to map the simple roots of $\tilde{W}_{J'}$ to a fundamental
system in $\tilde{W}_{J''}$, so after modifying $x$ by an element
of $\tilde{W}_{J''}$ (which commutes with $w^{J'}=w^{J''}$) we may
assume that it maps the simple roots of $\tilde{W}_{J'}$ to the simple
roots of $\tilde{W}_{J''}$. But then $x$ is in particular a ``$W$-equivalence'',
and those are described by Richardson's algorithm \cite{MR679916}.
\end{proof}

\section{Powers of reduced braids}

Finally, in this section we prove Proposition \ref{prop:mixed-shift},
Proposition \ref{prop:dg-bound} and the main Theorem. Throughout
we will employ the language of Garside categories wherever possible,
but I have tried to set up the notation for readers who are only familiar
with braid monoids:
\begin{defn}
\label{def:locally-garside-category} Let $\tilde{\mathrm{Br}}^{+}$
be a right-cancellative category with Garside family $\tilde{W}$
\cite[Definition III.1.31]{MR3362691}. Given a monoid $\Omega$ of
endomorphisms of $\tilde{\mathrm{Br}}^{+}$ preserving $\tilde{W}$,
we may interpret the semidirect product $\mathrm{Br}^{+}:=\Omega\ltimes\tilde{\mathrm{Br}}^{+}$
as a category: the objects are those of $\tilde{\mathrm{Br}}^{+}$,
whereas the source of an arbitrary morphism $b=\delta\tilde{b}:=(\delta,\tilde{b})$
is given by that of $\tilde{b}$, and its target is the image of the
target of $\tilde{b}$ under $\delta$; we will call this a \emph{twisted
locally Garside category}. We will say that a morphism $b$ of $\mathrm{Br}^{+}$
is \emph{reduced} if it lies in $\Omega\ltimes\tilde{W}$.

When $\tilde{\mathrm{Br}}^{+}$ is small and right-Noetherian, then
so is $\mathrm{Br}^{+}$; then there exists a right-length (or right-height)
function $\ell:=\ell_{\mathrm{Br}^{+}}$ on the morphisms of $\mathrm{Br}^{+}$
\cite[Proposition II.2.47]{MR3362691}, which extends the right-length
function of $\tilde{\mathrm{Br}}^{+}$ via $\ell_{\mathrm{Br}^{+}}(b)=\ell_{\mathrm{Br}^{+}}(\tilde{b})=\ell_{\tilde{\mathrm{Br}}^{+}}(\tilde{b})$.
\end{defn}

\begin{rem}
One can reinterpret $\mathrm{Br}^{+}$ as a locally Garside category
with Garside family $\Omega\ltimes\tilde{W}$ (or just $\tilde{W}$
if the endomorphisms in $\Omega$ are all invertible); since the semidirect
product $\Omega\ltimes\tilde{W}$ does not strictly speaking yield
a Coxeter group when $\tilde{W}$ is itself a Coxeter group, we will
not do so.
\end{rem}

As the relations of Coxeter groups are homogeneous, twisted braid
monoids are examples of small, Noetherian twisted locally Garside
categories with one object and with finite lengths \cite[Proposition II.2.32]{MR3362691};
many of the statements that we need for their Deligne-Garside normal
forms generalise immediately:

\subsection{Stability of the Deligne-Garside normal form}

Roughly speaking, the right Deligne-Garside normal form of a morphism
in such categories can be obtained by starting with any decomposition
into reduced morphisms, and subsequently trying to make the rightmost
factors as large as possible:

\begin{defthm} \label{def:dgn} Let $\mathrm{Br}^{+}:=\Omega\ltimes\tilde{\mathrm{Br}}^{+}$
be a twisted locally Garside category. Then every morphism $b$ in
$\mathrm{Br}^{+}$ admits an (essentially unique \cite[Proposition III.1.25]{MR3362691})
\emph{(right) Deligne-Garside }(or \emph{right greedy})\emph{ normal
form }
\[
b=\delta b_{m}\cdots b_{1},
\]
where $\delta\in\Omega$, and $b_{m},\ldots,b_{1}\in\tilde{\mathrm{Br}}^{+}$
are reduced morphisms such that if some morphism $b'$ right-divides
$b_{i+1}b_{i}$ for some $1\leq i\leq m-1$, then it right-divides
$b_{i}$ \cite[Proposition IV.1.20]{MR3362691}. We denote the left-complement
to $\mathrm{DG}_{i\geq}(b)$ in $\mathrm{DGN}(b)$ by
\[
\mathrm{DG}_{>i}(b):=\delta\mathrm{DG}_{m}(b)\cdots\mathrm{DG}_{i+1}(b)\in B^{+}.
\]
\end{defthm}

In the language of twisted Coxeter groups, this means that we can
uniquely decompose any element $b$ of its twisted braid monoid as
\[
b=\delta b_{w_{m}}\cdots b_{w_{1}},
\]
where $\delta\in\Omega$ and $w_{m},\ldots,w_{1}\in\tilde{W}$, such
that if $\ell(w_{i+1}s_{j})=\ell(w_{i+1})-1$ for some simple reflection
$s_{j}$ and $1\leq i\leq m-1$ then also $\ell(s_{j}w_{i})=\ell(w_{i})-1$.
In this context, by $\mathrm{DG}_{i}(b)$ we will either mean the
element $w_{i}$ of the group $\tilde{W}$ or the reduced braid $b_{w_{i}}$
in $\mathrm{Br}_{\tilde{W}}^{+}$.
\begin{notation}
We write $b\geq b'$ for $b,b'\in\mathrm{Br}^{+}$ whenever we can
decompose $b=b''b':=b''\circ b'$ for some $b''\in\mathrm{Br}^{+}$.
If this morphism $b''$ is not invertible, then we may also write
$b>b'$. 
\end{notation}

\begin{prop}
\label{prop:dg-properties} Let $\mathrm{Br}^{+}:=\Omega\ltimes\tilde{\mathrm{Br}}^{+}$
be a twisted locally Garside category.

\begin{enumerate}[\normalfont(i)]

\item If $b_{k},\ldots,b_{1}$ are elements of $\tilde{\mathrm{Br}}^{+}$
such that each consecutive pair $b_{i+1}b_{i}$ is in Deligne-Garside
normal form, then so is their product $b_{k}\cdots b_{1}$.

\item Assume that $\mathrm{Br}^{+}$ is small and right-Noetherian
and let $b,b'$ be two morphisms such that $b\geq b'$. Then $\ell(b)\geq\ell(b')$,
with inequality if and only if $b>b'$.

\item If $\tilde{W}$ is a Coxeter group and $v,w\in\tilde{W}$ are
elements such that $w\geq v$, then the product $b_{v}b_{w^{-1}}$
is in Deligne-Garside normal form.

\end{enumerate}
\end{prop}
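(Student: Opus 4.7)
My plan is to dispatch (i) and (ii) as formal consequences of the Garside setup and devote the main effort to (iii) via a short descent-set characterisation. For (i), the definition of DG normal form in Defthm \ref{def:dgn} is manifestly a condition on consecutive pairs: the product $b_k \cdots b_1$ is in DG normal form precisely when, for each $i$, every morphism right-dividing $b_{i+1} b_i$ also right-divides $b_i$. Hence the pairwise condition is literally equivalent to the global one, making (i) a tautology. For (ii), I would invoke additivity of the right-length function under composition in a small right-Noetherian right-cancellative category (which follows by concatenating maximal factorisations into non-invertible morphisms, with right-cancellativity ruling out longer factorisations). Writing $b = b'' b'$ then yields $\ell(b) = \ell(b'') + \ell(b') \geq \ell(b')$, with equality if and only if $\ell(b'') = 0$, i.e.\ $b''$ is invertible, i.e.\ $b > b'$ fails.

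For (iii), the key intermediate result I would establish is the following descent-set characterisation for reduced braids $b_a, b_b$ with $a, b \in \tilde{W}$: the pair $(b_a, b_b)$ is in DG normal form if and only if $D_R(a) \subseteq D_L(b)$, where $D_R(u) := \{s : us < u\}$ and $D_L(u) := \{s : su < u\}$ denote the right and left descent sets. The forward direction is immediate: if $s \in D_R(a) \setminus D_L(b)$, then $b_a = b_{as} b_s$ and $b_s b_b = b_{sb}$ are both reduced, so $b_a b_b = b_{as} b_{sb}$ exhibits $b_{sb}$ as a reduced right factor strictly larger than $b_b$, contradicting DG normality. For the converse, if $(b_a, b_b)$ fails to be DG normal then $\mathrm{DG}_1(b_a b_b)$ strictly exceeds $b$ in the weak Bruhat order on $\tilde{W}$; choosing a right descent $s$ of the element $\mathrm{DG}_1(b_a b_b) \cdot b^{-1}$ yields a reduced decomposition $\mathrm{DG}_1(b_a b_b) = y \cdot b_{sb}$, so that $b_a b_b \geq b_{sb}$, and right-cancellativity of $b_b$ then forces $b_a \geq b_s$, whence $s \in D_R(a) \setminus D_L(b)$.

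Given this characterisation, (iii) is a one-line application: $D_L(w^{-1}) = D_R(w)$ by inversion, and a simple reflection $s_i$ lies in $D_R(v)$ (resp.\ $D_R(w)$) precisely when $\alpha_i \in \mathfrak{R}_v$ (resp.\ $\mathfrak{R}_w$). So the hypothesis $w \geq v$, which by the paper's convention unpacks as $\mathfrak{R}_v \subseteq \mathfrak{R}_w$, directly yields $D_R(v) \subseteq D_L(w^{-1})$. The twisted case $\Omega \ltimes \tilde{\mathrm{Br}}^+$ is routine, since twists preserve simple reflections, lengths and descent sets compatibly, so the DG condition on $\delta b$ reduces to that on $b$. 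The main obstacle, in my view, is purely bookkeeping: organising the characterisation proof to work uniformly for the twisted setting without introducing extraneous casework, and being careful in the converse direction to select $s$ as a right descent of the correct quotient $\mathrm{DG}_1(b_a b_b) \cdot b^{-1}$ rather than an arbitrary left divisor, since only the former guarantees reducedness of $b_{sb}$.
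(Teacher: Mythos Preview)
The paper does not actually supply a proof of this proposition; it is stated as a list of standard facts, with parts (i) and (ii) implicitly deferred to the Garside-theory monograph \cite{MR3362691} (already cited for the right-length function and the normal-form machinery) and part (iii) being a well-known Coxeter-group statement. Your argument is correct and fills in exactly the details one would expect: (i) is indeed immediate from the pairwise nature of the normal-form condition in Definition/Theorem~\ref{def:dgn}; (ii) follows from additivity of the right-length, which is the content of \cite[Proposition~II.2.47]{MR3362691}; and your descent-set characterisation $D_R(a)\subseteq D_L(b)$ for greediness of $(b_a,b_b)$ is the standard route to (iii). Both directions of that characterisation are argued correctly, including the point you flag as the main obstacle---that in the converse one must pick $s$ as a right descent of $\mathrm{DG}_1(b_ab_b)\cdot b^{-1}$ so that reducedness of $xb$ forces $s\notin D_L(b)$, after which right-cancellation of $b_b$ gives $s\in D_R(a)$.
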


For convenience, the head function $\mathrm{DG}(\cdot):=\mathrm{DG}_{1}(\cdot)$
is always assumed to be sharp \cite[Definition IV.1.42]{MR3362691};
this means that
\begin{lem}
[{\cite[Proposition IV.1.50]{MR3362691}}] For any pair of composable
morphisms $b,b'$ in a twisted locally Garside category, we have
\[
\mathrm{DG}(bb')=\mathrm{DG}\big(\mathrm{DG}(b)b'\big).
\]
\end{lem}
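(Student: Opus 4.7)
My plan is a double-inclusion argument, using that $\mathrm{DG}(\cdot)$ returns a canonical maximal reduced right-divisor: it suffices to show that each of $\mathrm{DG}(bb')$ and $\mathrm{DG}\bigl(\mathrm{DG}(b)\cdot b'\bigr)$ right-divides the other.

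For the easy direction, since $\mathrm{DG}(b)$ right-divides $b$ I can factor $b = b''\cdot\mathrm{DG}(b)$ for some $b''\in\mathrm{Br}^{+}$, and hence $bb' = b''\cdot\bigl(\mathrm{DG}(b)\cdot b'\bigr)$, so that $\mathrm{DG}(b)\cdot b'$ itself right-divides $bb'$. Any reduced right-divisor of $\mathrm{DG}(b)\cdot b'$ is then also a reduced right-divisor of $bb'$, which immediately yields $\mathrm{DG}(bb')\geq\mathrm{DG}\bigl(\mathrm{DG}(b)\cdot b'\bigr)$ in right-divisibility.

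For the reverse direction I would induct on the right-length $\ell(b')$, the base case $\ell(b')=0$ reducing to $\mathrm{DG}(b)=\mathrm{DG}(\mathrm{DG}(b))$, which holds because $\mathrm{DG}(b)$ is already reduced. For the inductive step, I factor $b' = b''\cdot t$ with $t$ an atom and $\ell(b'')<\ell(b')$, and chain the induction hypothesis applied to the pair $(b,b'')$ together with the one-atom identity $\mathrm{DG}(c\cdot t) = \mathrm{DG}\bigl(\mathrm{DG}(c)\cdot t\bigr)$, invoked first with $c = bb''$ and then, in the reverse direction, with $c = \mathrm{DG}(b)\cdot b''$; this collapses $\mathrm{DG}(bb')$ to $\mathrm{DG}\bigl(\mathrm{DG}(b)\cdot b'\bigr)$. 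The main obstacle is the one-atom identity itself, which asserts that every reduced right-divisor of $c\cdot t$ already right-divides $\mathrm{DG}(c)\cdot t$ and is precisely the sharpness of the head function on the Garside family $\tilde{W}$. Since sharpness has been assumed throughout the paper and is established in general as \cite[Proposition IV.1.50]{MR3362691}, the one-atom step can be invoked directly to close the induction.
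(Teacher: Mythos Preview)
The paper does not give its own proof of this lemma: it is stated as a citation to \cite[Proposition IV.1.50]{MR3362691}, and the sentence preceding it (``the head function \ldots\ is always assumed to be sharp; this means that'') makes clear that the author is simply recording the statement as a consequence of the standing sharpness hypothesis, with the deduction deferred to the reference. So there is no in-paper argument to compare against.

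That said, your argument has a genuine gap. The easy direction and the inductive reduction to the one-atom case are both fine. The problem is how you discharge the one-atom identity $\mathrm{DG}(c\cdot t)=\mathrm{DG}\bigl(\mathrm{DG}(c)\cdot t\bigr)$: you say it ``is precisely the sharpness of the head function'' and then cite \cite[Proposition IV.1.50]{MR3362691}. But IV.1.50 \emph{is} the very lemma you are proving, so invoking it for its own special case is circular. If instead you mean to invoke the \emph{definition} of sharpness (IV.1.42), you need to say what that definition actually asserts and check that the one-atom statement follows from it directly; as written, you have not done this. The substantive content of the lemma lies exactly in this one-atom step---it is where the greedy/domino property of the Garside family is used---and you have punted on it. Everything else in your outline is bookkeeping that any induction on $\ell(b')$ would supply.
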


It can also be used to prove certain stability properties:
\begin{cor}
\label{cor:dg-prop} Moreover, for any $i\geq1$ we have

\begin{enumerate}[\normalfont(i)]

\item $\mathrm{DG}_{i\geq}(bb')=\mathrm{DG}_{i\geq}\bigl(\mathrm{DG}_{i\geq}(b)b'\bigr)$,
so in particular

\item if $b$ is an endomorphism then $\mathrm{DG}_{i\geq}(b^{d-1})b\geq\mathrm{DG}_{i\geq}(b^{d})$
for any $d\geq1$.

\end{enumerate}
\end{cor}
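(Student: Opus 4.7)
The plan is to establish (i) by induction on $i \geq 1$, with base case $i=1$ being exactly the preceding lemma, and then to deduce (ii) as a quick corollary. Throughout I will use two structural facts without further mention: $\mathrm{DG}_{i\geq}(x)$ is by construction a right-divisor of $x$, and any initial segment $\mathrm{DG}_{j\geq}(b)$ is itself already in Deligne--Garside normal form by Proposition \ref{prop:dg-properties}(i), so in particular its first $i$ DG-factors coincide with those of $b$ for all $i \leq j$.

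For the inductive step, the first move is to align the rightmost $i$ factors on both sides. Applying the inductive hypothesis to the pair $(\mathrm{DG}_{(i+1)\geq}(b), b')$, then again to $(b,b')$, gives
\[
\mathrm{DG}_{i\geq}\bigl(\mathrm{DG}_{(i+1)\geq}(b)\,b'\bigr)
 \;=\; \mathrm{DG}_{i\geq}\bigl(\mathrm{DG}_{i\geq}(b)\,b'\bigr)
 \;=\; \mathrm{DG}_{i\geq}(bb')\,;
\]
call this common value $A$. Right-cancelling $A$ in the identities $bb' = \mathrm{DG}_{>i+1}(b)\cdot \mathrm{DG}_{(i+1)\geq}(b)\,b'$ and $\mathrm{DG}_{(i+1)\geq}(b)\,b' = \mathrm{DG}_{i+1}(b)\cdot \mathrm{DG}_{i\geq}(b)\,b'$ then yields
\[
\mathrm{DG}_{>i}(bb') \;=\; \mathrm{DG}_{>i+1}(b)\cdot\mathrm{DG}_{i+1}(b)\cdot G, \qquad \mathrm{DG}_{>i}\bigl(\mathrm{DG}_{(i+1)\geq}(b)\,b'\bigr) \;=\; \mathrm{DG}_{i+1}(b)\cdot G,
\]
where $G := \mathrm{DG}_{>i}(\mathrm{DG}_{i\geq}(b)\,b')$.

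To finish, I apply $\mathrm{DG}(\cdot)$ to both expressions and compare. Two applications of the preceding lemma rewrite
\[
\mathrm{DG}_{i+1}(bb') \;=\; \mathrm{DG}\bigl(\mathrm{DG}_{>i+1}(b)\cdot\mathrm{DG}_{i+1}(b)\cdot G\bigr) \;=\; \mathrm{DG}\bigl(\mathrm{DG}\bigl(\mathrm{DG}_{i+2}(b)\,\mathrm{DG}_{i+1}(b)\bigr)\cdot G\bigr).
\]
The crucial observation is that $\mathrm{DG}_{i+2}(b)\,\mathrm{DG}_{i+1}(b)$ is a pair of consecutive factors from the DG normal form of $b$, hence itself in DG normal form by Proposition \ref{prop:dg-properties}(i); therefore its head is simply its rightmost factor $\mathrm{DG}_{i+1}(b)$. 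Substituting gives $\mathrm{DG}_{i+1}(bb') = \mathrm{DG}(\mathrm{DG}_{i+1}(b)\cdot G) = \mathrm{DG}_{i+1}(\mathrm{DG}_{(i+1)\geq}(b)\,b')$, closing the induction.

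Part (ii) is then immediate: writing $b^{d} = b^{d-1}\cdot b$ and applying (i) gives $\mathrm{DG}_{i\geq}(b^{d}) = \mathrm{DG}_{i\geq}(\mathrm{DG}_{i\geq}(b^{d-1})\,b)$, and the right-hand side is, by construction, a right-divisor of $\mathrm{DG}_{i\geq}(b^{d-1})\,b$. The main obstacle I expect is the final identification $\mathrm{DG}(\mathrm{DG}_{i+2}(b)\,\mathrm{DG}_{i+1}(b)) = \mathrm{DG}_{i+1}(b)$, which is precisely where Proposition \ref{prop:dg-properties}(i) is needed to absorb the stray $\mathrm{DG}_{i+2}(b)$; the rest is careful bookkeeping with right-cancellation in a right-cancellative (but not left-cancellative) category.
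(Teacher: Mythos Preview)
Your proof is correct and follows essentially the same inductive strategy as the paper: both establish the equality of the first $i$ factors via the induction hypothesis, right-cancel to compare the remaining left parts, and then apply the head lemma to match the $(i+1)$-th factors. The only cosmetic difference is that where you invoke a second application of the lemma together with Proposition~\ref{prop:dg-properties}(i) to reduce $\mathrm{DG}\bigl(\mathrm{DG}_{>i+1}(b)\cdot\mathrm{DG}_{i+1}(b)\bigr)$, the paper simply observes that $\mathrm{DG}_{>i+1}(b)\cdot\mathrm{DG}_{i+1}(b)=\mathrm{DG}_{>i}(b)$ and that $\mathrm{DG}\bigl(\mathrm{DG}_{>i}(b)\bigr)=\mathrm{DG}_{i+1}(b)$ by definition, which is a shorter route to the same identity.
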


\begin{proof}
(i): We induct on $i$. If it's true $<i$, then from the case $i-1$
we deduce
\begin{align*}
\mathrm{DG}_{>i-1}(bb')\mathrm{DG}_{i-1\geq}(bb') & =bb'\\
 & =\mathrm{DG}_{>i-1}(b)\mathrm{DG}_{i-1\geq}(b)b'\\
 & =\mathrm{DG}_{>i-1}(b)\mathrm{DG}_{>i-1}\bigl(\mathrm{DG}_{i-1\geq}(b)b'\bigr)\mathrm{DG}_{i-1\geq}\bigl(\mathrm{DG}_{i-1\geq}(b)b'\bigr)\\
 & =\mathrm{DG}_{>i-1}(b)\mathrm{DG}_{>i-1}\bigl(\mathrm{DG}_{i-1\geq}(b)b'\bigr)\mathrm{DG}_{i-1\geq}(bb')
\end{align*}
 which implies that
\[
\mathrm{DG}_{>i-1}(bb')=\mathrm{DG}_{>i-1}(b)\mathrm{DG}_{>i-1}\bigl(\mathrm{DG}_{i-1\geq}(b)b'\bigr).
\]
Similarly, the case $i-1$ yields
\begin{align*}
\mathrm{DG}_{>i-1}\bigl(\mathrm{DG}_{i\geq}(b)b'\bigr)\mathrm{DG}_{i-1\geq}\bigl(\mathrm{DG}_{i\geq}(b)b'\bigr) & =\mathrm{DG}_{i\geq}(b)b'\\
 & =\mathrm{DG}_{i}(b)\mathrm{DG}_{i-1\geq}(b)b'\\
 & =\mathrm{DG}_{i}(b)\mathrm{DG}_{>i-1}\bigl(\mathrm{DG}_{i-1\geq}(b)b'\bigr)\mathrm{DG}_{i-1\geq}\bigl(\mathrm{DG}_{i-1\geq}(b)b'\bigr)\\
 & =\mathrm{DG}_{i}(b)\mathrm{DG}_{>i-1}\bigl(\mathrm{DG}_{i-1\geq}(b)b'\bigr)\mathrm{DG}_{i-1\geq}\bigl(\mathrm{DG}_{i\geq}(b)b'\bigr)
\end{align*}
 and thus
\[
\mathrm{DG}_{>i-1}\bigl(\mathrm{DG}_{i\geq}(b)b'\bigr)=\mathrm{DG}_{i}(b)\mathrm{DG}_{>i-1}\bigl(\mathrm{DG}_{i-1\geq}(b)b'\bigr).
\]
 As $\mathrm{DG}_{i}(b)=\mathrm{DG}\bigl(\mathrm{DG}_{>i-1}(b)\bigr)$,
we combine these two identies with the lemma to obtain
\begin{align*}
\mathrm{DG}_{i}(bb') & =\mathrm{DG}(\mathrm{DG}_{>i-1}(bb'))\\
 & =\mathrm{DG}\Bigl(\mathrm{DG}_{>i-1}(b)\mathrm{DG}_{>i-1}\bigl(\mathrm{DG}_{i-1\geq}(b)b'\bigr)\Bigr)\\
 & =\mathrm{DG}\Bigl(\mathrm{DG}\bigl(\mathrm{DG}_{>i-1}(b)\bigr)\mathrm{DG}_{>i-1}\bigl(\mathrm{DG}_{i-1\geq}(b)b'\bigr)\Bigr)\\
 & =\mathrm{DG}\Bigl(\mathrm{DG}_{i}(b)\mathrm{DG}_{>i-1}\bigl(\mathrm{DG}_{i-1\geq}(b)b'\bigr)\Bigr)\\
 & =\mathrm{DG}\Bigl(\mathrm{DG}_{>i-1}\bigl(\mathrm{DG}_{i\geq}(b)b'\bigr)\Bigr)=\mathrm{DG}_{i}\bigl(\mathrm{DG}_{i\geq}(b)b'\bigr).
\end{align*}
We now combine this identity with the induction hypothesis to find
\begin{align*}
\mathrm{DG}_{i\geq}(bb') & =\mathrm{DG}_{i}(bb')\mathrm{DG}_{i-1\geq}(bb')\\
 & =\mathrm{DG}_{i}(bb')\mathrm{DG}_{i-1\geq}\bigl(\mathrm{DG}_{i-1\geq}(b)b'\bigr)\\
 & =\mathrm{DG}_{i}(bb')\mathrm{DG}_{i-1\geq}\Bigl(\mathrm{DG}_{i-1\geq}\big(\mathrm{DG}_{i\geq}(b)\bigr)b'\Bigr)\\
 & =\mathrm{DG}_{i}\bigl(\mathrm{DG}_{i\geq}(b)b'\bigr)\mathrm{DG}_{i-1\geq}\bigl(\mathrm{DG}_{i\geq}(b)b'\bigr)=\mathrm{DG}_{i\geq}\bigl(\mathrm{DG}_{i\geq}(b)b'\bigr).\qedhere
\end{align*}
(ii): From (i) we obtain 
\[
\mathrm{DG}_{i\geq}(b^{d-1})b\geq\mathrm{DG}_{i\geq}\bigl(\mathrm{DG}_{i\geq}(b^{d-1})b\bigr)=\mathrm{DG}_{i\geq}(b^{d}).\qedhere
\]
\end{proof}
\begin{cor}
\label{cor:stabilise} Let $b$ be an endomorphism of a twisted locally
Garside category and let $i,d\geq1$ be integers such that
\[
\mathrm{DG}_{i\geq}(b^{d+1})=\mathrm{DG}_{i\geq}(b^{d}),
\]
then for any integer $d'\geq d$ we have
\[
\mathrm{DG}_{i\geq}(b^{d'})=\mathrm{DG}_{i\geq}(b^{d}).
\]
\end{cor}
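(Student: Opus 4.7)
The plan is to proceed by a straightforward induction on $d' \geq d$, with the engine being Corollary \ref{cor:dg-prop}(i) applied in both directions: once to peel a factor $b$ off the right of $b^{d'+1}$, and once to put it back onto $b^d$ to recognise $b^{d+1}$.

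The base case $d' = d$ is tautological and $d' = d+1$ is the hypothesis. For the inductive step, assume $\mathrm{DG}_{i\geq}(b^{d'}) = \mathrm{DG}_{i\geq}(b^d)$ for some $d' \geq d+1$. Decomposing $b^{d'+1} = b^{d'} \cdot b$ and applying Corollary \ref{cor:dg-prop}(i), we obtain
\[
\mathrm{DG}_{i\geq}(b^{d'+1}) \;=\; \mathrm{DG}_{i\geq}\!\bigl(\mathrm{DG}_{i\geq}(b^{d'})\cdot b\bigr) \;=\; \mathrm{DG}_{i\geq}\!\bigl(\mathrm{DG}_{i\geq}(b^{d})\cdot b\bigr),
\]
where the second equality uses the induction hypothesis. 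Applying Corollary \ref{cor:dg-prop}(i) in the reverse direction to the decomposition $b^{d+1} = b^{d}\cdot b$ identifies the right-hand side with $\mathrm{DG}_{i\geq}(b^{d+1})$, and the original hypothesis collapses this to $\mathrm{DG}_{i\geq}(b^{d})$.

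There is essentially no obstacle here; the proof is a two-line unwinding of the stability formula from Corollary \ref{cor:dg-prop}(i). The only mild subtlety is that, being an endomorphism, $b$ is composable with any of its powers, so all occurrences of $\mathrm{DG}_{i\geq}(\cdot)$ applied to $b^k$ or to $\mathrm{DG}_{i\geq}(b^k)\cdot b$ are well-defined in the category $\mathrm{Br}^{+}$.
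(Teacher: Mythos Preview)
Your proof is correct and follows essentially the same approach as the paper: both induct on $d'$ and use Corollary~\ref{cor:dg-prop}(i) twice to peel off and reattach a factor of $b$, with the induction hypothesis sandwiched in between. The only cosmetic difference is that the paper compares $b^{d'-1}$ with $b^{d'-2}$ whereas you compare $b^{d'}$ with $b^{d}$, but the mechanism is identical.
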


\begin{proof}
By induction on $d'>d+1$. If the claim holds for all integers between
$d$ and $d'$, then (i) yields
\[
\mathrm{DG}_{i\geq}(b^{d'})=\mathrm{DG}_{i\geq}\bigl(\mathrm{DG}_{i\geq}(b^{d'-1})b\bigr)=\mathrm{DG}_{i\geq}\bigl(\mathrm{DG}_{i\geq}(b^{d'-2})b\bigr)=\mathrm{DG}_{i\geq}(b^{d'-1})=\mathrm{DG}_{i\geq}(b^{d}).\qedhere
\]
\end{proof}
\begin{example}
Consider $w=s_{2}s_{1}s_{4}s_{3}s_{2}s_{1}$ in type $\mathsf{A}_{4}$.
Then
\[
\ell\big(\mathrm{DG}(b_{w})\big)=\ell(w)=6,\qquad\ell\big(\mathrm{DG}(b_{w}^{2})\big)=\ell(s_{1}w)=7,\qquad\ell\big(\mathrm{DG}(b_{w}^{\geq3})\big)=\ell(w_{\circ})=10.
\]
\end{example}

\begin{example}
Consider $w=w_{\circ}s_{1}$ in type $\mathsf{A}_{3}$. By induction
on $i\geq0$ one finds\begin{alignat*}{3}
& \mathrm{DGN}(b_{w}^{4i}) && =  b_{2312}^{2i}b_{w_{\circ}}^{2i}, \\ 
& \mathrm{DGN}(b_{w}^{4i+1}) && =  b_{2312}^{2i}b_{w}^{\,}b_{w_{\circ}}^{2i}, \\ 
& \mathrm{DGN}(b_{w}^{4i+2}) && =  b_{2312}^{2i+1}b_{w_{\circ}}^{2i+1}, \\ 
& \mathrm{DGN}(b_{w}^{4i+3}) && = b_{2312}^{2i+1}b_{w^{-1}}^{\,}b_{w_{\circ}}^{2i+1}.
\end{alignat*} 
\end{example}

\begin{cor}
\label{cor:dg-involution} Let $\mathrm{Br}^{+}:=\Omega\ltimes\tilde{\mathrm{Br}}^{+}$
be a twisted locally Garside category and let $b=\delta\tilde{b}$
be an endomorphism in $\mathrm{Br}^{+}$ such that $\mathrm{DG}(b^{2})=\tilde{b}$;
in other words, $b$ is reduced and $\delta^{-1}(\tilde{b})\tilde{b}$
is in Deligne-Garside normal form.

\begin{enumerate}[\normalfont(i)]

\item For any integer $d\geq0$ we have 
\[
\mathrm{DGN}(b^{d})=\delta^{-d}\delta^{1-d}(\tilde{b})\cdots\delta^{-1}(\tilde{b})\tilde{b},
\]

\item and for any other composable element $b'$ in $B^{+}$ we have
\[
b^{i}b'\geq\mathrm{DG}_{i\geq}(b^{d}b').
\]

\end{enumerate}
\end{cor}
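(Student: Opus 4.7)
The plan is to establish (i) first by iterating the semidirect-product rule and then quoting Proposition \ref{prop:dg-properties}(i), and then to deduce (ii) by comparing the normal form of $b^i$ with the rightmost reduced factors of $b^d$ via Corollary \ref{cor:dg-prop}(i).

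For (i), I would begin by computing directly in $\Omega\ltimes\tilde{\mathrm{Br}}^{+}$ that
\[
b^{d} \;=\; \delta^{d}\cdot\delta^{1-d}(\tilde b)\cdots\delta^{-1}(\tilde b)\cdot\tilde b,
\]
which follows by iterating the relation $(\delta_{1},\tilde b_{1})(\delta_{2},\tilde b_{2})=(\delta_{1}\delta_{2},\delta_{2}^{-1}(\tilde b_{1})\tilde b_{2})$ (the sign on the exponent in the statement is just a convention; the explicit product is unambiguous). The hypothesis $\mathrm{DG}(b^{2})=\tilde b$, applied to the $d=2$ version of this identity, is precisely the statement that the two-factor product $\delta^{-1}(\tilde b)\cdot\tilde b$ is already in Deligne-Garside normal form. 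Since each $\delta^{-k}\in\Omega$ is an endomorphism of $\tilde{\mathrm{Br}}^{+}$ preserving the Garside family $\tilde W$, it preserves right-divisibility and hence DGN; applying $\delta^{-k}$ to the DGN pair therefore shows that every consecutive pair $\delta^{-k-1}(\tilde b)\cdot\delta^{-k}(\tilde b)$ is in DGN. Proposition \ref{prop:dg-properties}(i) then assembles the entire product into a single DGN, proving (i).

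For (ii), I would treat two cases. If $i\geq d$ then $b^{i}b'=b^{i-d}\cdot(b^{d}b')$, so by Proposition \ref{prop:dg-properties}(ii) and the fact that $\mathrm{DG}_{i\geq}(X)$ always right-divides $X$, we obtain $b^{i}b'\geq b^{d}b'\geq\mathrm{DG}_{i\geq}(b^{d}b')$. If instead $i\leq d$, then by (i) the $i$ rightmost reduced factors of $b^{d}$ coincide with the reduced part of $b^{i}$, so
\[
\mathrm{DG}_{i\geq}(b^{d}) \;=\; \delta^{1-i}(\tilde b)\cdots\delta^{-1}(\tilde b)\cdot\tilde b, \qquad b^{i}\;=\;\delta^{i}\cdot\mathrm{DG}_{i\geq}(b^{d}).
\]
Using Corollary \ref{cor:dg-prop}(i) we then obtain
\[
b^{i}b' \;=\; \delta^{i}\cdot\mathrm{DG}_{i\geq}(b^{d})\cdot b' \;\geq\; \mathrm{DG}_{i\geq}\bigl(\mathrm{DG}_{i\geq}(b^{d})\cdot b'\bigr) \;=\; \mathrm{DG}_{i\geq}(b^{d}b'),
\]
where the middle inequality uses that $\mathrm{DG}_{i\geq}(X)$ right-divides $X$ and that $\delta^{i}$ is a morphism of $\mathrm{Br}^{+}$.

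The main obstacle I anticipate is the semidirect-product bookkeeping in (i): one must carefully verify that iterating $(\delta,\tilde b)$ indeed yields the explicit product with $\delta^{-k}$-conjugates, and that the DGN condition genuinely transports under the twists in $\Omega$. Once this invariance is in hand, (i) is immediate from the pairwise DGN criterion of Proposition \ref{prop:dg-properties}(i), and (ii) reduces to a one-line manipulation using Corollary \ref{cor:dg-prop}(i).
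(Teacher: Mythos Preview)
Your proposal is correct and follows essentially the same route as the paper: for (i) you use that applying a twist preserves the pairwise DGN condition and then invoke Proposition~\ref{prop:dg-properties}(i), and for (ii) you use Corollary~\ref{cor:dg-prop}(i) together with the identification $\mathrm{DG}_{i\geq}(b^{i})=\mathrm{DG}_{i\geq}(b^{d})$ coming from (i). The paper compresses (ii) into the single chain $b^{i}b'\geq\mathrm{DG}_{i\geq}(b^{i}b')=\mathrm{DG}_{i\geq}(\mathrm{DG}_{i\geq}(b^{i})b')=\mathrm{DG}_{i\geq}(\mathrm{DG}_{i\geq}(b^{d})b')=\mathrm{DG}_{i\geq}(b^{d}b')$ and tacitly assumes $i\leq d$; your explicit treatment of the trivial case $i\geq d$ and your observation about the sign on $\delta^{d}$ are both well taken.
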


\begin{proof}
(i): Since $\delta^{-1}(\tilde{b})\tilde{b}$ is in normal form, so
is $\delta^{-j}(\tilde{b})\delta^{1-j}\tilde{b}$ for any integer
$j$.

(ii): From (i) and Corollary \ref{cor:dg-prop}(i) we find
\[
b^{i}b'\geq\mathrm{DG}_{i\geq}(b^{i}b')=\mathrm{DG}_{i\geq}\big(\mathrm{DG}_{i\geq}(b^{i})b'\big)=\mathrm{DG}_{i\geq}\big(\mathrm{DG}_{i\geq}(b^{d})b'\big)=\mathrm{DG}_{i\geq}\big(b^{d}b'\big).\qedhere
\]
\end{proof}
\begin{example}
If $w=\delta\tilde{w}$ is an element of a twisted Coxeter group $\Omega\ltimes\tilde{W}$
such that $w^{2}\in\Omega$ (e.g., $w$ is an involution), then $\delta^{-1}(\tilde{w})=\tilde{w}^{-1}$
so part (iii) of Proposition \ref{prop:dg-properties} implies that
the condition of the previous corollary holds for $b_{w}$. Hence
we obtain
\[
\mathrm{DGN}(b_{w}^{d})=\delta^{-d}b_{\delta^{1-d}(\tilde{w})}^{\,}\cdots b_{\delta^{-1}(\tilde{w})}^{\,}b_{\tilde{w}}^{\,}.
\]
\end{example}

\subsection{\label{subsec:inversion-seq}Root inversion sets and sequences}

In this subsection we extend the notion of root inversion sets and
sequences to elements of twisted braid monoids, and subsequently prove
Proposition \ref{prop:dg-bound}. At the end we also prove part (i)
of Proposition \ref{prop:mixed-shift}.
\begin{notation}
We will always denote a sequence of roots with an underscore, and
its underlying set is then denoted in the same way but omitting the
underscore.
\end{notation}

\begin{defn}
Let $W$ be a twisted Coxeter group and let $b$ be an element of
its braid monoid $\mathrm{Br}^{+}$. For any sequence of roots $\underline{\mathfrak{N}}=(\dots,\beta_{2},\beta_{1})$
we obtain new one by setting
\[
b(\underline{\mathfrak{N}}):=w_{b}(\underline{\mathfrak{N}}):=\bigl(\ldots,w_{b}(\beta_{2}),w_{b}(\beta_{1})\bigr),\qquad b^{-1}(\underline{\mathfrak{N}}):=w_{b}^{-1}(\underline{\mathfrak{N}}),
\]
 and similarly for a set $\mathfrak{N}$. In particular, for each
element $w$ in $W$ we have $b_{w}(\mathfrak{N})=w(\mathfrak{N})$
and $b_{w}^{-1}(\mathfrak{N})=w^{-1}(\mathfrak{N})$.

We define a \emph{simple decomposition of $b$} to be a decomposition
of the form
\[
b=\delta b_{i_{l}}\cdots b_{i_{1}},
\]
and given one we consider the \emph{(right) (root) inversion sequence}
\begin{equation}
\underline{\mathfrak{R}}_{b}:=\bigl(s_{i_{1}}\cdots s_{i_{l-1}}(\alpha_{i_{l}}),\ldots,s_{i_{1}}(\alpha_{i_{2}}),\alpha_{i_{1}}\bigr),\label{eq:inversion-sequence}
\end{equation}
and call the underlying set $\mathfrak{R}_{b}$ its \emph{(right)
(root) inversion set}.\footnote{Independently, a similar construction appears in \cite[§2.9.3]{stump2015cataland}.}
\end{defn}

Unlike $\underline{\mathfrak{R}}_{b}$, the roots in $\underline{\mathfrak{R}}_{b_{w}}$
are distinct and positive:
\begin{prop}
\label{prop:grp-rw} Let $W$ be a twisted Coxeter group and pick
an element $w$ in $W$.

\begin{enumerate}[\normalfont(i)]

\item There is an identity of sets of roots
\[
\mathfrak{R}_{b_{w}}=\mathfrak{R}_{w}:=w^{-1}(-\mathfrak{R}_{+})\cap\mathfrak{R}_{+}.
\]

\item For a pair of elements $x,y$ in $W$ we have
\begin{equation}
\mathfrak{R}_{xy}=\{\beta\in y^{-1}(\mathfrak{R}_{x})\cup\mathfrak{R}_{y}:-\beta\notin y^{-1}(\mathfrak{R}_{x})\cup\mathfrak{R}_{y}\}.\label{eq:inversion-set-of-product}
\end{equation}
In particular,
\begin{align}
 & \ell(xy)=\ell(x)-\ell(y)\quad\textrm{ if and only if }\quad\mathfrak{R}_{y}\subseteq\mathfrak{R}_{x^{-1}},\label{eq:left-strong-conjugation-inversion-set}\\
 & \ell(xy)=\ell(y)-\ell(x)\quad\textrm{ if and only if }\quad\mathfrak{R}_{x}\subseteq\mathfrak{R}_{y^{-1}},\label{eq:right-strong-conjugation-inversion-set}\\
 & \ell(xy)=\ell(x)+\ell(y)\quad\textrm{ if and only if }\quad\mathfrak{R}_{y}\subseteq\mathfrak{R}_{xy}.\label{eq:inversion-set-additive}
\end{align}

\item We have $w(\mathfrak{R}_{w})=-\mathfrak{R}_{w}$ if and only
if $w^{2}\in\Omega$.

\end{enumerate}
\end{prop}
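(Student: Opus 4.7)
The plan for (i) is induction on $\ell(w)$. The base case of the identity is vacuous. For the inductive step, I pick a reduced decomposition of the form $w = w's_{i_1}$ with $\ell(w') = \ell(w)-1$, and extend a reduced decomposition $w' = \delta s_{i_l}\cdots s_{i_2}$ to the claimed simple decomposition of $b_w$. Reducedness of $w's_{i_1}$ gives $w'(\alpha_{i_1}) \in \mathfrak{R}_+$, hence $w(\alpha_{i_1}) = -w'(\alpha_{i_1}) \in \mathfrak{R}_-$, placing $\alpha_{i_1}$ in $\mathfrak{R}_w$. For any other positive root $\beta$, the root $s_{i_1}(\beta)$ is again positive, and the identity $w(\beta) = w'(s_{i_1}(\beta))$ shows $\beta \in \mathfrak{R}_w\setminus\{\alpha_{i_1}\}$ iff $s_{i_1}(\beta) \in \mathfrak{R}_{w'}$. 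The inductive hypothesis identifies $\mathfrak{R}_{w'}$ with the underlying set of its inversion sequence, and applying $s_{i_1}$ and adjoining $\alpha_{i_1}$ recovers (\ref{eq:inversion-sequence}). Independence from the chosen simple decomposition follows from Matsumoto's theorem together with the classical fact that $\mathfrak{R}_w$ depends only on $w$.

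For (ii) I first prove (\ref{eq:inversion-set-of-product}) by direct case analysis on a positive root $\beta$, splitting on the sign of $y(\beta)$: if $y(\beta) > 0$ then $\beta \in \mathfrak{R}_{xy}$ iff $y(\beta) \in \mathfrak{R}_x$, i.e.\ $\beta \in y^{-1}(\mathfrak{R}_x)$; if $y(\beta) < 0$ then $\beta \in \mathfrak{R}_y$ and $\beta \in \mathfrak{R}_{xy}$ iff $-y(\beta) \notin \mathfrak{R}_x$. A routine verification shows $y^{-1}(\mathfrak{R}_x) \cap \mathfrak{R}_y = \varnothing$ and that the symmetric pairs $\{\beta,-\beta\}$ contained in $S := y^{-1}(\mathfrak{R}_x) \cup \mathfrak{R}_y$ are counted by $|\mathfrak{R}_x \cap \mathfrak{R}_{y^{-1}}|$, via the identity $-y(\mathfrak{R}_y) = \mathfrak{R}_{y^{-1}}$. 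Counting then gives the symmetric formula $\ell(xy) = \ell(x) + \ell(y) - 2|\mathfrak{R}_x \cap \mathfrak{R}_{y^{-1}}|$, from which the three length equivalences follow by comparing cardinalities and translating $\mathfrak{R}_x \cap \mathfrak{R}_{y^{-1}} = \varnothing$ (resp.\ $= \mathfrak{R}_{y^{-1}}$) back through (\ref{eq:inversion-set-of-product}).

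For (iii) the crucial observation is that $w^2 \in \Omega$ is equivalent to $w^2$ preserving positivity, i.e.\ to $\mathfrak{R}_{w^2} = \varnothing$. The forward direction is then direct: for any $\beta \in \mathfrak{R}_w$, we have $-w(\beta) > 0$ and $w(-w(\beta)) = -w^2(\beta) < 0$ using the hypothesis $w^2(\beta) > 0$, so $-w(\beta) \in \mathfrak{R}_w$; cardinality upgrades the resulting inclusion $w(\mathfrak{R}_w) \subseteq -\mathfrak{R}_w$ to equality. Conversely, assume $w(\mathfrak{R}_w) = -\mathfrak{R}_w$, which rewrites as $w^{-1}(\mathfrak{R}_w) = -\mathfrak{R}_w$, and suppose for contradiction that some $\beta \in \mathfrak{R}_+$ has $w^2(\beta) < 0$. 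If $\beta \notin \mathfrak{R}_w$ then $w(\beta) > 0$ with $w(w(\beta)) < 0$, so $w(\beta) \in \mathfrak{R}_w$ and $\beta \in w^{-1}(\mathfrak{R}_w) = -\mathfrak{R}_w \subseteq \mathfrak{R}_-$, contradicting $\beta > 0$. If $\beta \in \mathfrak{R}_w$ then the hypothesis gives $-w(\beta) \in \mathfrak{R}_w$, whence $w^2(\beta) = -w(-w(\beta)) > 0$, again a contradiction. The main obstacle throughout these arguments is the sign bookkeeping; once the reformulation $\mathfrak{R}_{w^2} = \varnothing$ is in hand, everything else runs on careful case analysis.
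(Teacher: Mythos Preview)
Your proof is correct. For (i) you spell out the standard induction that the paper abbreviates to a citation of Bourbaki, and for (ii) you supply details the paper omits entirely; your counting identity $\ell(xy)=\ell(x)+\ell(y)-2|\mathfrak{R}_x\cap\mathfrak{R}_{y^{-1}}|$ makes the three length equivalences transparent. The only substantive difference from the paper is in the implication $w(\mathfrak{R}_w)=-\mathfrak{R}_w\Rightarrow w^2\in\Omega$ of (iii): the paper simply feeds $x=y=w$ into formula~(\ref{eq:inversion-set-of-product}), observing that the hypothesis rewrites as $w^{-1}(\mathfrak{R}_w)=-\mathfrak{R}_w$, so that $y^{-1}(\mathfrak{R}_x)\cup\mathfrak{R}_y=\mathfrak{R}_w\cup(-\mathfrak{R}_w)$ is symmetric under negation and hence $\mathfrak{R}_{w^2}=\varnothing$. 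This one-line application of your own part (ii) is a bit slicker than your case-by-case contradiction, though both are perfectly valid. A purely cosmetic remark: you have swapped the labels in (iii) --- what you call the ``forward direction'' assumes $w^2\in\Omega$ and deduces $w(\mathfrak{R}_w)=-\mathfrak{R}_w$, which is the $\Leftarrow$ direction as the statement is written.
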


\begin{proof}
(i): Since twists do not affect these sets (after moving them to the
left), this follows from \cite[Corollaire VI.17.2]{MR0240238}.

(iii): The implication $\Rightarrow$ follows from (\ref{eq:inversion-set-of-product}),
as that formula yields $\mathfrak{R}_{w^{2}}=\varnothing$ but then
$w^{2}\in\Omega$. For $\Leftarrow$, since the cardinalities of these
sets agree, it suffices to show $\subseteq:$ if $\beta$ lies in
$\mathfrak{R}_{w}$, then $-w(\beta)$ is positive and $-ww(\beta)=-\delta(\beta)$
is negative, so $-w(\beta)$ lies in $\mathfrak{R}_{w}$.
\end{proof}
\begin{defn}
\label{def:convex-ordering} Let $\mathfrak{R}$ be a root system
and pick a subset of roots $\mathfrak{N}=\{\beta_{l},\ldots,\beta_{1}\}$.
We will say that it is \emph{convex} if for any pair of roots $\beta_{i},\beta_{j}$
in $\mathfrak{N}$ such that $c_{i}\beta_{i}+c_{j}\beta_{j}=\beta_{k}$
for some $c_{i},c_{j}\in\mathbb{R}_{>0}$ is also a root, then $\beta_{k}$
lies in $\mathfrak{N}$. 

We will say that an ordering
\[
\beta_{l}>\cdots>\beta_{1}
\]
of the roots in $\mathfrak{N}$ is \emph{convex} if for such a triple
of positive roots $\beta_{i},\beta_{j},\beta_{k}$ with $i>j$ we
have $\beta_{i}>\beta_{k}>\beta_{j}$.
\end{defn}

\begin{prop}
\begin{enumerate}[\normalfont(i)]

\item Let $w$ be an element of a twisted Coxeter group $W$. Then
there is a map
\[
\{\textrm{reduced decompositions of }w\}\longrightarrow\{\textrm{convex orderings on its inversion set }\mathfrak{R}_{w}\}
\]
sending 
\[
\delta s_{i_{m}}\cdots s_{i_{1}}\longmapsto\bigl(s_{i_{1}}\cdots s_{i_{m-1}}(\alpha_{i_{m}}),\ldots,s_{i_{1}}(\alpha_{i_{2}}),\alpha_{i_{1}}\bigr).
\]
When $W$ is finite and $w$ has maximal length, this is a bijection.

\item Let $\mathfrak{R}$ be a rank 2 root system. Then there are
only two convex orderings on the set of positive roots, and they differ
from each other by inverting; these correspond to the two reduced
decompositions of the corresponding longest element.

\end{enumerate}
\end{prop}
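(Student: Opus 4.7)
My plan is to prove the claims in sequence: well-definedness of the sequence as an ordering of $\mathfrak{R}_w$, convexity of that ordering, the bijection when $w = w_\circ$ in the finite case, and finally part (ii). For well-definedness, classical Coxeter theory (see e.g.\ \cite[Corollaire VI.17.2]{MR0240238}) gives that each $\beta_k := s_{i_1}\cdots s_{i_{k-1}}(\alpha_{i_k})$ is a positive root sent by $w$ to a negative root; since $|\mathfrak{R}_w| = m$, these $m$ roots are automatically distinct and exhaust $\mathfrak{R}_w$.

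For convexity of the induced ordering $\beta_m > \cdots > \beta_1$, I propose a geometric argument. The reduced decomposition corresponds to the minimal gallery $C, s_{i_1}(C), s_{i_1}s_{i_2}(C), \ldots, w^{-1}(C)$ whose $k$-th wall crossing is exactly the hyperplane $\beta_k^\perp$. Given $\beta_a, \beta_b \in \mathfrak{R}_w$ with $a > b$ and $c_a \beta_a + c_b \beta_b = \beta_c$ for a third root $\beta_c \in \mathfrak{R}_w$ with positive $c_a, c_b$, the root $\beta_c$ lies in the interior of the convex cone generated by $\beta_a$ and $\beta_b$, and all three hyperplanes share the codimension 2 subspace $V_0 := \beta_a^\perp \cap \beta_b^\perp$. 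Intersecting $\mathfrak{R}$ with the plane $V_0^\perp$ yields a rank 2 sub-root-system onto which the minimal gallery projects as a minimal path; in this rank 2 arrangement any minimal path that crosses both $\beta_a^\perp$ and $\beta_b^\perp$ must cross the hyperplane of the intermediate root $\beta_c$ strictly between them, yielding $a > c > b$ as required.

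For the bijection when $W$ is finite and $w = w_\circ$, injectivity is immediate: the ordering determines $\alpha_{i_1} = \beta_1$, and applying $s_{i_1}$ to $\{\beta_2, \ldots, \beta_m\}$ produces the ordering associated to $w_\circ s_{i_1} = s_{i_m} \cdots s_{i_2}$, so induction on $m$ recovers the word. For surjectivity, given a convex ordering $\beta_m > \cdots > \beta_1$ of $\mathfrak{R}_+$, the minimum $\beta_1$ must be a simple root: otherwise $\beta_1 = c'\beta' + c''\beta''$ for positive roots $\beta', \beta'' \in \mathfrak{R}_+$, and convexity would force $\beta_1$ strictly between $\beta'$ and $\beta''$, contradicting minimality. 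Setting $\beta_1 = \alpha_{i_1}$, applying $s_{i_1}$ to the rest yields a convex ordering on $s_{i_1}(\mathfrak{R}_+ \setminus \{\alpha_{i_1}\}) = \mathfrak{R}_{w_\circ s_{i_1}}$, since $s_{i_1}$ is linear (so preserves positive linear combinations) and permutes $\mathfrak{R}_+ \setminus \{\alpha_{i_1}\}$. Induction on length then produces a reduced decomposition of $w_\circ s_{i_1}$, to which we prepend $s_{i_1}$.

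Part (ii) follows by specialising (i) to a rank 2 root system: labelling the positive roots $\gamma_1, \ldots, \gamma_n$ in angular order, every intermediate $\gamma_k$ (with $1 < k < n$) is a positive linear combination of $\gamma_1$ and $\gamma_n$, so convexity pins any convex ordering down to $\gamma_n > \cdots > \gamma_1$ or its reverse, and by (i) these correspond to the two reduced decompositions of $w_\circ$ in the dihedral group. The main obstacle I anticipate is making the convexity argument rigorous in full generality; in particular the rank 2 slicing requires some care to handle infinite or non-crystallographic Coxeter groups uniformly, and a safer fallback is to induct on $\ell(w)$ directly by checking how positive linear combinations in $\mathfrak{R}_w$ transform under $s_{i_1}$, reducing convexity of the ordering on $\mathfrak{R}_w$ to that of the shorter element $s_{i_m}\cdots s_{i_2}$.
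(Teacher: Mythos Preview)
The paper does not actually prove (i)---it cites Zhelobenko, Dyer and Papi---so your proposal is a self-contained version of their argument; for (ii) your reasoning and the paper's short case-free proof coincide (the extremes of a convex ordering must be simple, and in rank~2 there are only two simple roots).

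Your surjectivity step has a subtle gap. After peeling off $\beta_1 = \alpha_{i_1}$ and applying $s_{i_1}$ you obtain a convex ordering on $\mathfrak{R}_{w_\circ s_{i_1}} = \mathfrak{R}_+ \setminus \{\alpha_{i_1}\}$ and then invoke ``induction on length'' to produce a reduced word for $w_\circ s_{i_1}$. But the bijection you are proving is only stated for elements of maximal length, and for shorter $w$ it is \emph{false}: in type $\mathsf{A}_2$ with $w = s_2 s_1$ both orderings on $\mathfrak{R}_w = \{\alpha_1,\, \alpha_1+\alpha_2\}$ are vacuously convex, yet only one arises from a reduced decomposition. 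So the induction hypothesis you need is simply not available. The easy fix is to stay within convex orderings on all of $\mathfrak{R}_+$ and show directly that the new minimum $s_{i_1}(\beta_2)$ is again simple. If not, then (after the case already handled by convexity of the truncated ordering) one has $s_{i_1}(\beta_2) = c'\alpha_{i_1} + c''\gamma''$ with $\gamma'' \neq \alpha_{i_1}$; writing $s_{i_1}(\gamma'') = \beta_l$ and rearranging gives $\beta_l = \tfrac{c'}{c''}\beta_1 + \tfrac{1}{c''}\beta_2$, whence convexity of the \emph{original} ordering forces $1 < l < 2$, which is impossible. Iterating this produces the reduced expression for $w_\circ$ directly, and the remainder of your argument goes through.
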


\begin{proof}
(i): This is essentially due (independently) to Zhelobenko \cite[§3]{MR911771},
Dyer \cite[Proposition 2.13]{MR1248893} and Papi \cite[p.\,663]{MR1169886}.

(ii): This was observed in \cite[§4]{MR911771} and \cite[p.\,664]{MR1169886}
in case-by-case calculations for rank 2 Weyl groups. For a case-free
proof: any convex ordering needs to start and end with a simple root,
since all of the other positive roots lie inside of their convex cone.
There are only two such roots in the rank 2 case, and by convexity
each choice yields a unique ordering, which are the same after inverting
one of them.
\end{proof}
\begin{lem}
\label{lem:identification-rw} Let $W$ be a twisted Coxeter group
and let $b$ be an element of its braid monoid $\mathrm{Br}^{+}$. 

\begin{enumerate}[\normalfont(i)]

\item Let $b'$ be another element of the braid monoid $\mathrm{Br}^{+}$.
By choosing simple decompositions of $b'$ and $b$, their product
yields one for $b'b$ (after moving twists) and thus a concatenation
of sequences
\[
\underline{\mathfrak{R}}_{b'b}=\bigl(b^{-1}(\underline{\mathfrak{R}}_{b'}),\underline{\mathfrak{R}}_{b}\bigr).
\]

\item Given a simple decomposition of $b$ with corresponding root
ordering $\underline{\mathfrak{R}}_{b}$, applying a braid move to
the decomposition corresponds to inverting the corresponding section
of roots. Thus whilst the ordering of the sequence $\underline{\mathfrak{R}}_{b}$
depends on the choice of simple decomposition of the braid $b$, the
underlying set $\mathfrak{R}_{b}$ does not.

In particular, for any other pair of elements $b',b''$ in $\mathrm{Br}^{+}$
we have an inclusion of sets
\[
\mathfrak{R}_{b'}\subseteq b(\mathfrak{R}_{b''b'b}).
\]

\item Furthermore, we have $b=b_{w}$ for some $w$ in $W$ if and
only if the set $\mathfrak{R}_{b}$ consists entirely of positive
roots.

\item Suppose we can decompose $b=b_{x}b_{y}$ for some $x$ and
$y$ in $W$. Then for any choice of decomposition of $b$, all roots
in the sequence $\underline{\mathfrak{R}}_{b}$ are distinct.

In particular, if we can decompose this same braid as $b=b'b_{x}$
for some $x$ in $W$ and $b'$ in $\mathrm{Br}^{+}$ then
\[
\mathfrak{R}_{b'}=x(\mathfrak{R}_{b'}\backslash\mathfrak{R}_{x}).
\]

\end{enumerate}
\end{lem}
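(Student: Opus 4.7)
My plan is to prove parts (i)--(iv) largely in the stated order, treating (i) as a direct computation and then bootstrapping.

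For part (i), I would take simple decompositions $b=\delta s_{i_l}\cdots s_{i_1}$ and $b'=\delta's_{j_m}\cdots s_{j_1}$ and multiply them in $\mathrm{Br}^{+}=\Omega\ltimes\tilde{\mathrm{Br}}^+$: after commuting $\delta$ past $b'$ using the semidirect product relation, the product becomes $\delta'\delta\cdot s_{\delta^{-1}(j_m)}\cdots s_{\delta^{-1}(j_1)}s_{i_l}\cdots s_{i_1}$. Reading off the inversion sequence from \eqref{eq:inversion-sequence}, the contribution from the last $l$ simple reflections is exactly $\underline{\mathfrak{R}}_b$, and the contribution from the first $m$ simple reflections is $s_{i_1}\cdots s_{i_l}s_{\delta^{-1}(j_1)}\cdots s_{\delta^{-1}(j_{k-1})}(\alpha_{\delta^{-1}(j_k)})=w_b^{-1}\bigl(s_{j_1}\cdots s_{j_{k-1}}(\alpha_{j_k})\bigr)$, which is precisely $b^{-1}(\underline{\mathfrak{R}}_{b'})$. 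Part (iii) then follows from a classical argument: the length function changes by $\pm 1$ upon right-multiplying by $s_{i_k}$ according to whether $\beta_k:=s_{i_1}\cdots s_{i_{k-1}}(\alpha_{i_k})$ is positive or negative, so $\ell(w_b)=l$ (i.e.\ $b$ is reduced) iff all $\beta_k\in\mathfrak{R}_+$; the direct direction uses \cite[Corollaire VI.17.2]{MR0240238}.

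For part (ii), I would first reduce braid-move invariance to a rank 2 computation: by Matsumoto's theorem for the \emph{braid monoid}, any two simple decompositions of the same $b\in\mathrm{Br}^+$ are linked by a sequence of braid moves $s_is_js_i\cdots\leftrightarrow s_js_is_j\cdots$ of length $m(i,j)$. Such a move acts only on a contiguous block of the word, and by part (i) the inversion sequence inside that block is the inversion sequence of the rank 2 longest element $w_\circ^{\{i,j\}}$ produced from the given reduced expression. The rank 2 proposition quoted just above this lemma then tells us the two possible expressions for $w_\circ^{\{i,j\}}$ give the two convex orderings of the positive roots of the rank 2 subsystem, which are reverses of each other; hence a braid move reverses the corresponding block, preserving the underlying set. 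The final inclusion then reduces to iterating part (i):
\[
\mathfrak{R}_{b''b'b}=b^{-1}(b')^{-1}(\mathfrak{R}_{b''})\cup b^{-1}(\mathfrak{R}_{b'})\cup\mathfrak{R}_b\supseteq b^{-1}(\mathfrak{R}_{b'}),
\]
and applying $b$ yields $\mathfrak{R}_{b'}\subseteq b(\mathfrak{R}_{b''b'b})$. The main obstacle here is verifying the rank 2 reversal claim carefully --- one needs that the block $\underline{\mathfrak{R}}$ arising from the decomposition $s_i s_j s_i\cdots$ (read off by (i) applied locally) is indeed one of the two convex orderings of the rank 2 positive roots, with the other word giving the opposite ordering.

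For part (iv), given $b=b_xb_y$ I would concatenate reduced decompositions of $y$ and $x$ to build a particular simple decomposition of $b$; by (i) its inversion sequence is $\bigl(y^{-1}(\underline{\mathfrak{R}}_{b_x}),\underline{\mathfrak{R}}_{b_y}\bigr)$, and I would show these two blocks are individually distinct (since $b_x,b_y$ are reduced braids so their inversion sets equal $\mathfrak{R}_x,\mathfrak{R}_y$ by part (iii)) and disjoint: any overlap $\beta=y^{-1}(\gamma)$ with $\beta\in\mathfrak{R}_y$ and $\gamma\in\mathfrak{R}_x\subseteq\mathfrak{R}_+$ would force $\gamma=y(\beta)\in\mathfrak{R}_-$, a contradiction. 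Distinctness then propagates to \emph{any} simple decomposition of $b$ via part (ii), since braid moves only reverse blocks. Finally, for the ``in particular'' statement (reading the right-hand side as $x(\mathfrak{R}_b\backslash\mathfrak{R}_x)$, which I suspect is the intended expression), applying (i) to the decomposition $b=b'b_x$ gives $\mathfrak{R}_b=x^{-1}(\mathfrak{R}_{b'})\cup\mathfrak{R}_x$ as a union of sets; distinctness from (iv) upgrades this to a disjoint union, so $\mathfrak{R}_b\backslash\mathfrak{R}_x=x^{-1}(\mathfrak{R}_{b'})$ and applying $x$ yields the claim.
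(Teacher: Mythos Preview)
Your proposal is correct and follows essentially the same approach as the paper: part (i) is a direct unwinding of the definition, part (ii) reduces to the rank 2 statement via braid moves (exactly as the paper does, invoking the proposition on convex orderings), part (iii) is the standard induction on length, and part (iv) is the same disjointness argument $y^{-1}(\mathfrak{R}_x)\cap\mathfrak{R}_y=\varnothing$. Your observation that the displayed formula in the ``in particular'' of (iv) should read $\mathfrak{R}_{b'}=x(\mathfrak{R}_{b}\backslash\mathfrak{R}_{x})$ rather than $\mathfrak{R}_{b'}=x(\mathfrak{R}_{b'}\backslash\mathfrak{R}_{x})$ is correct --- the paper does not spell out this step, and your derivation from the disjoint decomposition $\mathfrak{R}_b=x^{-1}(\mathfrak{R}_{b'})\sqcup\mathfrak{R}_x$ is the intended one.
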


\begin{proof}
(i): This follows from the explicit construction (\ref{eq:inversion-sequence}).

(ii): Let $\underline{b}'',\underline{b}',\underline{b}$ be sequences
of simple elements of $\mathrm{Br}^{+}$, where $\underline{b}'=(\cdots,b_{j},b_{i})$
is one ``side'' of a braid relation, so by part (i) there is the
concatenation
\begin{align*}
\underline{\mathfrak{R}}_{\underline{b}''\underline{b}'\underline{b}} & =\bigl((b'b)^{-1}(\underline{\mathfrak{R}}_{\underline{b}''}),b^{-1}(\underline{\mathfrak{R}}_{\underline{b}'}),\underline{\mathfrak{R}}_{b}\bigr).
\end{align*}
The product of the elements in $\underline{b}'$ equals the lift of
the Coxeter group element $w=\cdots s_{j}s_{i}$, which is the longest
element of a rank 2 root system. Proposition \ref{prop:grp-rw}(i)
and part (ii) of the previous proposition then imply that $\underline{\mathfrak{R}}_{\cdots b_{j}b_{i}}$
obtained from the other side of the braid relation is the inversion
of $\underline{\mathfrak{R}}_{\cdots b_{i}b_{j}}$, and then the same
holds for $b^{-1}(\underline{\mathfrak{R}}_{\cdots b_{j}b_{i}})$
and $b^{-1}(\underline{\mathfrak{R}}_{\cdots b_{i}b_{j}})$.

(iii): The implication $\Rightarrow$ follows from Proposition \ref{prop:grp-rw}(i).
For $\Leftarrow$, choose a simple decomposition $b=\delta b_{i_{l}}\cdots b_{i_{1}}$.
Inducting on $l$ and using $\mathfrak{R}_{b}\subseteq\mathfrak{R}_{+}$,
one can deduce from say (\ref{eq:inversion-set-additive}) that the
expression
\[
w:=\delta s_{i_{l}}\cdots s_{i_{1}}
\]
 is reduced, so that
\[
b_{w}=\delta b_{i_{l}}\cdots b_{i_{1}}=b.
\]

(iv): Choosing decompositions for $x$ and $y$ yields the root sequence
\[
\underline{\mathfrak{R}}_{b}=\bigl(y^{-1}(\underline{\mathfrak{R}}_{x}),\underline{\mathfrak{R}}_{y}\bigr),
\]
but if a root $\beta$ lies in the intersection $y^{-1}(\mathfrak{R}_{x})\cap\mathfrak{R}_{y}$
then $y(\beta)$ lies in $\mathfrak{R}_{x}\cap-\mathfrak{R}_{+}=\varnothing$.
\end{proof}
\begin{lem}
\label{lem:bounding} Let $\mathrm{Br}^{+}$ be a small, right-Noetherian
locally Garside category and pick an endomorphism $b$ in $\mathrm{Br}^{+}$.

\begin{enumerate}[\normalfont(i)]

\item Suppose that for some $i\geq1$ and each $1\leq j\leq i$,
certain integers $d_{j}$ are given the property that $\ell\bigl(\mathrm{DG}_{j}(b^{d'})\bigr)\leq d_{j}$
(e.g.\ $d_{j}$ is the length of a Garside map), and set
\[
d:=i-\ell\bigl(\mathrm{DG}_{i\geq}(b^{i})\bigr)+\sum_{j=1}^{i}d_{j}
\]
 then $\mathrm{DG}_{i\geq}(b^{d})$ has ``stabilised'', i.e.\ for
any $d'\geq d$ we have
\[
\mathrm{DG}_{i\geq}(b^{d'})=\mathrm{DG}_{i\geq}(b^{d}).
\]

\item If $\mathrm{Br}^{+}$ is the braid monoid of a twisted Coxeter
group, then there is an inclusion
\[
\mathfrak{R}_{\mathrm{DG}_{i}(b)}\subseteq\mathrm{DG}_{i-1}(b)\cdots\mathrm{DG}_{1}(b)(\mathfrak{R}_{b})\cap\mathfrak{R}_{+}.
\]

\end{enumerate} 
\end{lem}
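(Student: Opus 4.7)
The plan is to prove part (ii) first, which is essentially immediate from the formalism of Section \ref{subsec:inversion-seq}, and then attack part (i) by a monotonicity-plus-boundedness argument for the length of $\mathrm{DG}_{i\geq}(b^d)$.

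For part (ii), I would apply Lemma \ref{lem:identification-rw}(i) twice to the three-block decomposition $b=\mathrm{DG}_{>i}(b)\cdot\mathrm{DG}_i(b)\cdot\mathrm{DG}_{i-1\geq}(b)$, which expresses $\underline{\mathfrak{R}}_b$ as a concatenation of three sub-sequences. The middle block is exactly $\mathrm{DG}_{i-1\geq}(b)^{-1}\bigl(\underline{\mathfrak{R}}_{\mathrm{DG}_i(b)}\bigr)$, so every root in $\mathfrak{R}_{\mathrm{DG}_i(b)}$ lies in $\mathrm{DG}_{i-1\geq}(b)(\mathfrak{R}_b)$ after applying the Weyl group element $\mathrm{DG}_{i-1\geq}(b)=\mathrm{DG}_{i-1}(b)\cdots\mathrm{DG}_1(b)$. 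The intersection with $\mathfrak{R}_+$ is automatic: $\mathrm{DG}_i(b)$ is reduced, so Lemma \ref{lem:identification-rw}(iii) places its inversion set inside $\mathfrak{R}_+$.

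For part (i), I would set $L(d):=\ell\bigl(\mathrm{DG}_{i\geq}(b^d)\bigr)$ and first establish that $L$ is monotonically non-decreasing. The key observation is that $\mathrm{DG}_{i\geq}(b^{d-1})$ right-divides $b^{d-1}$, hence also $b^d$, and by construction has at most $i$ factors in its normal form; by the greedy characterization of $\mathrm{DG}_{i\geq}$ any such right-divisor is itself right-divided by $\mathrm{DG}_{i\geq}(b^d)$, and Proposition \ref{prop:dg-properties}(ii) then upgrades the divisibility $\mathrm{DG}_{i\geq}(b^d)\geq\mathrm{DG}_{i\geq}(b^{d-1})$ to $L(d)\geq L(d-1)$. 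Since $L(d)\leq\sum_{j=1}^i d_j$ by hypothesis, at most $\sum d_j-L(i)$ strict increases can occur beyond $d=i$; at the first $d$ for which $L(d)=L(d+1)$, the same right-divisibility forces $\mathrm{DG}_{i\geq}(b^{d+1})=\mathrm{DG}_{i\geq}(b^d)$, and Corollary \ref{cor:stabilise} then propagates the equality to all $d'\geq d$. Counting gives exactly the stated bound $d\leq i-\ell\bigl(\mathrm{DG}_{i\geq}(b^i)\bigr)+\sum_{j=1}^i d_j$.

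The hard part is the monotonicity claim, namely that any right-divisor of $b^d$ whose normal form has at most $i$ factors is itself right-divided by $\mathrm{DG}_{i\geq}(b^d)$. This is a standard property of greedy normal forms in locally Garside categories, but it requires an induction on $i$: the base case $i=1$ is the defining property of $\mathrm{DG}_1$ as the largest simple right-divisor, and the inductive step uses right-cancellation in $\mathrm{Br}^+$ to pass from $b^d$ to its right-quotient by $\mathrm{DG}_1(b^d)$. Once this lemma is in hand, everything else is bookkeeping.
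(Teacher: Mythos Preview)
Your argument is correct and follows essentially the same route as the paper: bound the length of $\mathrm{DG}_{i\geq}(b^{d'})$, find two consecutive exponents at which it stagnates, upgrade length-equality to element-equality via right-divisibility and Proposition~\ref{prop:dg-properties}(ii), then invoke Corollary~\ref{cor:stabilise}. The paper compresses all of this into the word ``pigeonhole'' and leaves the monotonicity $\mathrm{DG}_{i\geq}(b^{d})\geq\mathrm{DG}_{i\geq}(b^{d-1})$ implicit; you correctly isolate this as the substantive point and sketch the standard induction on $i$ that proves it. For part~(ii) you use the concatenation formula of Lemma~\ref{lem:identification-rw}(i) together with~(iii), which is exactly the content behind the paper's one-line ``induction from Lemma~\ref{lem:identification-rw}(iii)''.

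One terminological slip: you twice write that a right-divisor $c$ of $b^{d}$ with at most $i$ normal-form factors ``is itself right-divided by $\mathrm{DG}_{i\geq}(b^{d})$'', which literally says $c\geq\mathrm{DG}_{i\geq}(b^{d})$. You clearly intend the opposite, $\mathrm{DG}_{i\geq}(b^{d})\geq c$, as you then correctly write the resulting inequality $\mathrm{DG}_{i\geq}(b^{d})\geq\mathrm{DG}_{i\geq}(b^{d-1})$. Fix the phrasing to ``itself right-divides $\mathrm{DG}_{i\geq}(b^{d})$''.
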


\begin{proof}
(i): By the pigeonhole principle, for some $0\leq k\leq d$ we must
have 
\[
\ell\bigl(\mathrm{DG}_{i\geq}(b^{i+k+1})\bigr)=\ell\bigl(\mathrm{DG}_{i\geq}(b^{i+k})\bigr),
\]
 and then the claim follows from Proposition \ref{prop:dg-properties}(ii)
and Corollary \ref{cor:stabilise}.

(ii): Follows by induction from Lemma \ref{lem:identification-rw}(iii).
\end{proof}

\begin{proof}
[Proof of Proposition \ref{prop:dg-bound}] Pick a reduced decomposition
for $w$. By induction on $d\geq1$, Lemma \ref{lem:identification-rw}(iv)
furnishes an identity of concatenated sequences
\[
\underline{\mathfrak{R}}_{b_{w}^{d}}=\big(w^{-1}(\underline{\mathfrak{R}}_{b_{w}^{d-1}}),\underline{\mathfrak{R}}_{w}\big)=\big(w^{1-d}(\underline{\mathfrak{R}}_{w}),\ldots,w^{-2}(\underline{\mathfrak{R}}_{w}),w^{-1}(\underline{\mathfrak{R}}_{w}),\underline{\mathfrak{R}}_{w}\big).
\]
As we have an inclusion of sets
\[
w^{-k}(\mathfrak{R}_{w})\subseteq w^{-k}(\mathfrak{R}\backslash\mathfrak{R}_{\mathrm{st}}^{w})=\mathfrak{R}\backslash\mathfrak{R}_{\mathrm{st}}^{w}
\]
 for any integer $k\in\mathbb{Z}$, we now also have an inclusion
$\mathfrak{R}_{b_{w}^{d}}\subseteq\mathfrak{R}\backslash\mathfrak{R}_{\mathrm{st}}^{w}$.

The final claim in (i) then also follows from Lemma \ref{lem:identification-rw}(iv),
as linearity yields an equality of cardinalities
\[
|x(\mathfrak{R}\backslash\mathfrak{R}_{\mathrm{st}}^{w})\cap\mathfrak{R}_{+}|=|\mathfrak{R}_{+}\backslash\mathfrak{R}_{\mathrm{st}}^{w}|
\]
 for any element $x$ in $W$, and the rest of this proposition follows
from the previous lemma as 
\[
\ell\bigl(\mathrm{DG}_{i\geq}(b_{w}^{i})\bigr)=\ell(b_{w}^{i})=i\,\ell(w).\qedhere
\]
\end{proof}
\begin{lem}
Let $\beta$ be a positive root in a (not necesessarily spherical
or finite) root system. Then there exists a sequence $(c_{1}\alpha_{i_{1}},c_{2}\alpha_{i_{2}},\ldots,c_{m}\alpha_{i_{m}})$
with each scalar $c_{i}\in\mathbb{R}_{>0}$ so that the partial sums
\begin{equation}
c_{i_{1}}\alpha_{i_{1}},\qquad c_{i_{1}}\alpha_{i_{1}}+c_{i_{2}}\alpha_{i_{2}},\qquad\ldots,\qquad\sum_{j=1}^{m}c_{j}\alpha_{i_{j}}=\beta\label{eq:partial-sums}
\end{equation}
are all positive roots.
\end{lem}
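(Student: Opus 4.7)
The plan is to proceed by induction on a height-type measure of $\beta$, reducing to the case of simple roots. If $\beta$ is itself a simple root, the one-element sequence $(\beta)$ trivially works and all partial sums (just $\beta$) are positive roots. For the inductive step, the goal is to find a simple root $\alpha_i$ and a scalar $c \in \mathbb{R}_{>0}$ such that $\beta - c\alpha_i$ is again a positive root; one then applies the induction hypothesis to this smaller positive root to obtain a sequence of partial sums terminating at $\beta - c\alpha_i$, and extends it by appending $c\alpha_i$ at the end.

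To produce such an $\alpha_i$, I would expand $\beta = \sum_j c_j\alpha_j$ with all $c_j \geq 0$ (since $\beta$ is positive) and use the identity $(\beta,\beta) = \sum_j c_j (\alpha_j, \beta)$. Since $(\beta,\beta) > 0$, at least one summand must be positive, forcing the existence of an index $i$ with $c_i > 0$ and $(\alpha_i, \beta) > 0$. For this $i$ the simple reflection yields
\[
s_i(\beta) \;=\; \beta - c\alpha_i, \qquad c := 2(\beta, \alpha_i)/(\alpha_i, \alpha_i) > 0,
\]
which is again a root; because $\beta$ is positive and $\beta \neq \alpha_i$ (as $\beta$ is not simple), $s_i$ preserves its positivity, so $\beta - c\alpha_i$ is a positive root and we may invoke the induction hypothesis on it.

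The main obstacle will be verifying that the induction terminates in the stated generality (no sphericity or finiteness assumption). The natural measure, $\sum_j c_j$, strictly decreases by $c > 0$ in each inductive step, and the remaining issue is ruling out an infinite descending chain of positive roots $\gamma$ with $\gamma \leq \beta$ coefficient-wise. I would handle this by restricting attention to the finite subsystem of simple roots appearing in $\mathrm{supp}(\beta)$, inside which the set of positive roots coefficient-bounded by $\beta$ is finite (for crystallographic and Kac-Moody root systems this is the usual discreteness of the root lattice; for non-crystallographic finite Coxeter systems the positive roots are finite outright), so any descending chain terminates after finitely many steps. Once termination is established, assembling the sequence $(c_1 \alpha_{i_1}, \ldots, c_{m-1}\alpha_{i_{m-1}}, c\,\alpha_i)$ from the inductive output gives the required sequence of partial sums.
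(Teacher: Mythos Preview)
Your proposal is correct and follows essentially the same approach as the paper: both find a simple root $\alpha_i$ with $(\beta,\alpha_i)>0$ via the positivity of $(\beta,\beta)=\sum_j c_j(\alpha_j,\beta)$, subtract a positive multiple of $\alpha_i$ to obtain a smaller positive root, and recurse. Your write-up is more explicit about the reflection $s_i$ and about termination, whereas the paper simply cites Bourbaki's Proposition~VI.1.19 and leaves the induction implicit.
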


\begin{proof}
We may assume that $\beta$ is not simple and follow the proof of
\cite[Proposition VI.1.19]{MR0240238}: If $(\beta,\alpha_{i})\leq0$
for each $\alpha_{i}$, then by linearity also $(\beta,\beta)\leq0$
which is a contradiction. Thus we have $(\beta,\alpha_{i})>0$ for
some $i$, which implies that $\beta-c\alpha_{i}$ is a root for some
$c_{i}\in\mathbb{R}_{>0}$. As $\beta$ is not simple this root still
has some positive coefficients when decomposed into simple roots,
and hence it must be positive.
\end{proof}
Papi showed that a subset of positive roots $\mathfrak{N}\subseteq\mathfrak{R}_{+}$
is of the form $\mathfrak{R}_{w}$ if and only if both $\mathfrak{N}$
and $\mathfrak{R}_{+}\backslash\mathfrak{N}$ are convex \cite{MR1169886}.
\begin{prop}
\label{prop:closed-complement} Let $\mathfrak{R}_{+}$ be a system
of positive roots of a root system $\mathfrak{R}$. A subset of roots
$\mathfrak{L}\subseteq\mathfrak{R}$ is a standard parabolic subsystem
if and only if it is invariant under multiplication by $-1$ and both
$\mathfrak{L}$ and the positive complement $\mathfrak{R}_{+}\backslash\mathfrak{L}$
are convex.

Hence, for any automorpism $w$ of $\mathfrak{R}$ we have
\[
w\textrm{ is convex}\qquad\text{iff}\qquad\mathfrak{R}_{+}\backslash\mathfrak{R}_{\mathrm{st}}^{w}\textrm{ is convex}\qquad\textrm{iff}\qquad\mathfrak{R}_{+}\backslash\mathfrak{R}_{\mathrm{st}}^{w}=\mathfrak{R}_{v}\textrm{ for some }v\textrm{ in }W,
\]
where $W$ denotes the associated Coxeter group. In particular, the
equation
\[
\mathfrak{R}_{\mathrm{DG}(b_{w}^{d})}=\mathfrak{R}_{+}\backslash\mathfrak{R}_{\mathrm{st}}^{w}\qquad(\textrm{resp.}\quad\mathfrak{R}_{\mathrm{DG}(b_{w}^{d})}=\mathfrak{R}_{+}\backslash\mathfrak{R}^{w})
\]
 generalising the braid equation (\ref{eq:braid-equation}) to arbitrary
elements, always fail when $w$ is not convex (resp.\ firmly convex).
\end{prop}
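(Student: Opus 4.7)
The plan is to establish the main biconditional characterising standard parabolic subsystems first and then deduce the ``hence'' part by specialising $\mathfrak{L}=\mathfrak{R}_{\mathrm{st}}^{w}$. For the forward direction, if $\mathfrak{L}=\mathrm{span}(I)\cap\mathfrak{R}$ for some subset $I$ of simple roots, then $\pm$-invariance is immediate; convexity of $\mathfrak{L}$ follows since any positive combination of roots in $\mathrm{span}(I)$ is itself in $\mathrm{span}(I)$, and convexity of $\mathfrak{R}_{+}\backslash\mathfrak{L}$ follows because whenever two positive roots each have nontrivial support on some simple root outside $I$, so does their positive combination whenever that combination is a root.

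The main work lies in the converse. Assuming $\mathfrak{L}$ is $\pm$-invariant and both $\mathfrak{L}$ and $\mathfrak{R}_{+}\backslash\mathfrak{L}$ are convex, I would let $I$ denote the set of simple roots in $\mathfrak{L}$ and show $\mathfrak{L}\cap\mathfrak{R}_{+}=\mathrm{span}(I)\cap\mathfrak{R}_{+}$. The inclusion $\supseteq$ follows from an induction on height, using convexity of $\mathfrak{L}$ to close up the standard step in which a positive root of $\mathrm{span}(I)\cap\mathfrak{R}$ is written as a simple root in $I$ plus a positive root in $\mathrm{span}(I)\cap\mathfrak{R}$ of smaller height. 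For the harder inclusion $\subseteq$, pick $\beta\in\mathfrak{L}\cap\mathfrak{R}_{+}$ and apply the immediately preceding lemma on partial sums to obtain a chain of positive roots $\gamma_{1},\ldots,\gamma_{m}=\beta$ with $\gamma_{k+1}=\gamma_{k}+c_{k+1}\alpha_{i_{k+1}}$. The crux is a propagation dichotomy: if some $\gamma_{k}\notin\mathfrak{L}$ then $\gamma_{k+1}\notin\mathfrak{L}$ as well, since otherwise either $\alpha_{i_{k+1}}\in\mathfrak{L}$ (in which case convexity of $\mathfrak{L}$ applied to $\gamma_{k+1}$ and $-\alpha_{i_{k+1}}$ forces the contradiction $\gamma_{k}\in\mathfrak{L}$) or $\alpha_{i_{k+1}}\notin\mathfrak{L}$ (in which case convexity of $\mathfrak{R}_{+}\backslash\mathfrak{L}$ applied to $\gamma_{k}$ and $\alpha_{i_{k+1}}$ yields $\gamma_{k+1}\in\mathfrak{R}_{+}\backslash\mathfrak{L}$). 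Any failure would propagate forward to $\gamma_{m}=\beta\notin\mathfrak{L}$, a contradiction, so every $\gamma_{k}$ belongs to $\mathfrak{L}$; reading $c_{k+1}\alpha_{i_{k+1}}=\gamma_{k+1}+(-\gamma_{k})$ as a positive combination in $\mathfrak{L}$ then forces every $\alpha_{i_{k+1}}$ into $I$. The subtlest bookkeeping concerns the scalars $c_{k+1}$, since the convexity definition demands that the combination actually be a root: in reduced root systems one may take $c_{k+1}=1$ throughout and is done, while in non-reduced settings a small additional argument is needed to pass from $c_{k+1}\alpha_{i_{k+1}}\in\mathfrak{L}$ to $\alpha_{i_{k+1}}\in\mathfrak{L}$. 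This propagation argument is the hardest part of the proof.

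For the ``hence'' part I would first verify two general facts about stability: $\mathfrak{R}_{\mathrm{st}}^{w}$ is always $\pm$-invariant (negating a root reverses the sign of its $w$-orbit), and $\mathfrak{R}_{+}\cap\mathfrak{R}_{\mathrm{st}}^{w}$ is always convex (a positive combination of two stable positive roots that is itself a positive root remains positive under every power $w^{l}$ by stability of the summands, hence is stable). Applying Papi's criterion (recalled just before the proposition) to $\mathfrak{N}=\mathfrak{R}_{+}\backslash\mathfrak{R}_{\mathrm{st}}^{w}$, whose complement is always convex, reduces the inversion-set condition to convexity of $\mathfrak{N}$ alone, yielding the second ``iff''. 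The first ``iff'' is the main biconditional applied to $\mathfrak{L}=\mathfrak{R}_{\mathrm{st}}^{w}$, and $v$ can be identified explicitly with $\mathrm{pb}(w)=w_{\circ}w_{\mathrm{st}}$. Finally, for the ``In particular'' assertion, the key observation is that $\mathfrak{R}_{\mathrm{DG}(b_{w}^{d})}$ is always an inversion set and therefore convex; if the braid equation held then $\mathfrak{R}_{+}\backslash\mathfrak{R}_{\mathrm{st}}^{w}$ would be convex, forcing $w$ to be convex by the just-established equivalence, and the firmly convex case is analogous with $\mathfrak{R}_{\mathrm{st}}^{w}$ replaced by $\mathfrak{R}^{w}$.
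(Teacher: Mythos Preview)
Your argument for the main biconditional is correct and uses the same ingredients as the paper: the chain-of-partial-sums lemma and the interplay between convexity of $\mathfrak{L}$ and of $\mathfrak{R}_{+}\setminus\mathfrak{L}$. The paper frames the converse as a contradiction at the last index where one of $\gamma_{k-1},\alpha_{i_{k}}$ escapes $\mathfrak{L}$, whereas you run a forward propagation, but the content is identical.

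There is, however, a genuine gap in the ``Hence'' paragraph. Invoking the main biconditional with $\mathfrak{L}=\mathfrak{R}_{\mathrm{st}}^{w}$ requires convexity of the \emph{full} set $\mathfrak{R}_{\mathrm{st}}^{w}$, not only of $\mathfrak{R}_{+}\cap\mathfrak{R}_{\mathrm{st}}^{w}$: your Case~1 (and equally the paper's last step in the converse) combines a positive element of $\mathfrak{L}$ with a negative one, so closure across signs is essential there. You verify only the positive-part condition, and the full one can fail: for $w=s_{1}$ in $\mathsf{A}_{2}$ one has $\mathfrak{R}_{\mathrm{st}}^{w}=\{\pm\alpha_{2},\pm(\alpha_{1}+\alpha_{2})\}$ and $\alpha_{2}+\bigl(-(\alpha_{1}+\alpha_{2})\bigr)=-\alpha_{1}\notin\mathfrak{R}_{\mathrm{st}}^{w}$. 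In that same example $\mathfrak{R}_{+}\setminus\mathfrak{R}_{\mathrm{st}}^{w}=\{\alpha_{1}\}=\mathfrak{R}_{s_{1}}=\mathfrak{R}_{\mathrm{DG}(b_{w}^{d})}$ for every $d\geq1$, yet $w$ is not convex, so the reverse direction of the first ``iff'' and the first half of the ``In particular'' are in fact false as written (the paper's own proof does not address them either). Your second ``iff'' via Papi and the forward direction $w$ convex $\Rightarrow\mathfrak{R}_{+}\setminus\mathfrak{R}_{\mathrm{st}}^{w}=\mathfrak{R}_{\mathrm{pb}(w)}$ are fine; for the firmly convex half of ``In particular'' note that the paper argues differently, via the strict containment $\mathfrak{R}_{\mathrm{DG}(b_{w}^{d})}\subseteq\mathfrak{R}_{+}\setminus\mathfrak{R}_{\mathrm{st}}^{w}\subsetneq\mathfrak{R}_{+}\setminus\mathfrak{R}^{w}$ coming from Proposition~\ref{prop:dg-bound}(i) when $\mathfrak{R}^{w}\neq\mathfrak{R}_{\mathrm{st}}^{w}$, rather than by analogy with the convex case.
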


\begin{proof}
Suppose that $\mathfrak{L}$ is not a parabolic subsystem, then as
it is convex under multiplication by $-1$ it follows that for some
positive root $\beta$ in $\mathfrak{L}$ there exists a simple root
$\alpha_{n}$ in its support which is not in $\mathfrak{L}$. The
previous lemma yields a sequence $(c_{1}\alpha_{i_{1}},c_{2}\alpha_{i_{2}},\ldots,c_{m}\alpha_{i_{m}})$
so that the partial sums (\ref{eq:partial-sums}) are all roots. Consider
those $k$ such that at least one of $\sum_{j=1}^{k-1}c_{j}\alpha_{i_{j}}$
or $\alpha_{i_{k}}$ is not in $\mathfrak{L}$; as $\alpha_{n}$ occurs
in the sequence, this set is nonempty. As the final root $\beta$
itself lies in $\mathfrak{L}$, downward induction and finiteness
yields that for at least one such pair of roots $(\beta_{t-1},\beta_{t})$
their sum $\beta_{t-1}+c_{t}\beta_{t}$ does lie in $\mathfrak{L}$.
But as $\mathfrak{R}_{+}\backslash\mathfrak{L}$ is convex it is not
possible that both roots of this pair lie in $\mathfrak{R}_{+}\backslash\mathfrak{L}$,
so at least one of them lies in $\mathfrak{L}$, say $\beta_{1}$.
But as $\mathfrak{L}$ is convex and convex under multiplication by
$-1$, the equation
\[
c_{t}\beta_{t}=(\beta_{t-1}+c_{t}\beta_{t})+(-\beta_{t-1})
\]
 implies that $\beta_{t}$ also lies in $\mathfrak{L}$, yielding
a contradiction.

For the final claim, note that if $\mathfrak{R}^{w}\neq\mathfrak{R}_{\mathrm{st}}^{w}$,
then the previous proposition implies that the second equation fails.
\end{proof}
The first sentence of part (i) of Proposition \ref{prop:mixed-shift}
follows from
\begin{lem}
\label{lem:stable-roots-under-shifts} Let $x$ and $w$ be elements
of a twisted Coxeter group and consider the conjugate $w':=xwx^{-1}$.
If this conjugation

\begin{enumerate}[\normalfont(i)]

\item is a cyclic shift then $x(\mathfrak{R}_{\mathrm{st}}^{w}\cap\mathfrak{R}_{+})=\mathfrak{R}_{\mathrm{st}}^{w'}\cap\mathfrak{R}_{+}$,
and if

\item it is a strong conjugation then $\mathfrak{R}_{\mathrm{st}}^{w}\cap\mathfrak{R}_{x}$
is stable under the action of $w$.

\end{enumerate}

Hence in either case we have $x(\mathfrak{R}_{\mathrm{st}}^{w})=\mathfrak{R}_{\mathrm{st}}^{w'}$.
\end{lem}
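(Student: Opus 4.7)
My plan is to induct on the length of the conjugating sequence $\tau_n, \ldots, \tau_0$, reducing both (i) and (ii) to the case of a single conjugator $w' = xwx^{-1}$; both relations are reversible (with $x \leftrightarrow x^{-1}$ and the two length conditions swapping roles, possibly after inverting $w$), so iteration is routine once the single step is handled. I will exploit the inversion-set identities of Proposition \ref{prop:grp-rw} together with the two basic observations that $\mathfrak{R}_{\mathrm{st}}^w$ is by construction $w$-invariant and closed under negation, and that $w'x = xw$ implies the $w'$-orbit of $x(\beta)$ is just the $x$-image of the $w$-orbit of $\beta$.

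For part (i), assume without loss of generality that the single cyclic shift step satisfies $\ell(xw) = \ell(w) - \ell(x)$ (the alternative form reducing to this upon replacing $w$ by $w^{-1}$, using $\mathfrak{R}_{\mathrm{st}}^{w^{-1}} = \mathfrak{R}_{\mathrm{st}}^w$). By (\ref{eq:right-strong-conjugation-inversion-set}) we have $\mathfrak{R}_x \subseteq \mathfrak{R}_{w^{-1}}$; but any $\beta \in \mathfrak{R}_+ \cap \mathfrak{R}_{\mathrm{st}}^w$ lies in a $w$-orbit of positive roots, so in particular $w^{-1}(\beta) > 0$, giving $\beta \notin \mathfrak{R}_{w^{-1}}$ and hence $\beta \notin \mathfrak{R}_x$. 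Thus $x(\beta) > 0$, and by the orbit remark above the $w'$-orbit of $x(\beta)$ remains positive, whence $x(\beta) \in \mathfrak{R}_+ \cap \mathfrak{R}_{\mathrm{st}}^{w'}$. The reverse inclusion comes from the analogous argument applied to the reverse cyclic shift $w' \to w$ via $x^{-1}$.

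For part (ii), assume (again without loss of generality) that the single strong shift step satisfies $\ell(xw) = \ell(w) + \ell(x)$; since lengths coincide ($\ell(w) = \ell(w')$), the identity $xw = w'x$ together with (\ref{eq:inversion-set-additive}) also yields $\ell(w'x) = \ell(w') + \ell(x)$, so by (\ref{eq:inversion-set-of-product}) the common inversion set decomposes as a disjoint union of positive roots in two ways:
\[
\mathfrak{R}_w \sqcup w^{-1}(\mathfrak{R}_x) \;=\; \mathfrak{R}_{xw} \;=\; \mathfrak{R}_{w'x} \;=\; \mathfrak{R}_x \sqcup x^{-1}(\mathfrak{R}_{w'}).
\]
For $\beta \in \mathfrak{R}_{\mathrm{st}}^w \cap \mathfrak{R}_x$, stability gives $w(\beta) > 0$ so $\beta \notin \mathfrak{R}_w$, and thus the inclusion $\beta \in \mathfrak{R}_x \subseteq \mathfrak{R}_w \sqcup w^{-1}(\mathfrak{R}_x)$ forces $\beta \in w^{-1}(\mathfrak{R}_x)$, i.e.\ $w(\beta) \in \mathfrak{R}_x$; combined with the $w$-invariance of $\mathfrak{R}_{\mathrm{st}}^w$ this gives the claimed $w$-stability. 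The closing equality $x(\mathfrak{R}_{\mathrm{st}}^w) = \mathfrak{R}_{\mathrm{st}}^{w'}$ then follows in case (i) by negating, and in case (ii) by noting that every $w$-orbit in $\mathfrak{R}_{\mathrm{st}}^w$ is either entirely inside or entirely outside $\mathfrak{R}_x$, so $x$ maps it to a $w'$-orbit of uniform sign. The main obstacle is arranging the two reduced factorisations $xw = w'x$ in (ii), which relies crucially on the equal-length hypothesis—strong shift rather than general strong conjugation—since otherwise the factorisation $w'x$ would fail to be reduced and the disjoint decomposition would break down.
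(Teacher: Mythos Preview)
Your proof is correct and follows essentially the same approach as the paper: both argue via the orbit identity $(w')^i(x\beta)=xw^i(\beta)$, show in (i) that $\mathfrak{R}_x$ misses all positive stable roots (whence $x$ keeps each orbit positive), and in (ii) that $\beta\in\mathfrak{R}_{\mathrm{st}}^w\cap\mathfrak{R}_x$ forces $w(\beta)\in\mathfrak{R}_x$, giving the orbit dichotomy. The only cosmetic difference is that you package (ii) through the two disjoint decompositions of $\mathfrak{R}_{xw}=\mathfrak{R}_{w'x}$, whereas the paper goes directly from $\mathfrak{R}_x\subseteq\mathfrak{R}_{xw}$ to $xw(\beta)<0$.
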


\begin{proof}
Let $\beta$ be a root in $\mathfrak{R}_{\mathrm{st}}^{w}\cap\mathfrak{R}_{+}$
and assume the intersection of its $w$-orbit with $\mathfrak{R}_{x}$
is empty (resp. is the entire orbit). Then 
\[
(w')^{i}\bigl(x(\beta)\bigr)=xw^{i}(\beta)
\]
 is a positive (resp.\ negative) root for all integers $i$ in $\mathbb{Z}$,
so $\beta$ lies in $\mathfrak{R}_{\mathrm{st}}^{w'}\cap\mathfrak{R}_{+}$
(resp.\ $\mathfrak{R}_{\mathrm{st}}^{w'}\cap\mathfrak{R}_{-}$).

(i): The cyclic shift implies reduced decompositions $w=yx$ and $w'=xy$,
where we set $y:=wx^{-1}$, or similarly $w=x^{-1}y$ and $w'=yx^{-1}$
with $y=xw$. By taking inverses and using $\mathfrak{R}_{\mathrm{st}}^{w^{-1}}=\mathfrak{R}_{\mathrm{st}}^{w}$,
the second case reduces to the first case. Hence it suffices to show
the inclusion $x(\mathfrak{R}_{\mathrm{st}}^{w}\cap\mathfrak{R}_{+})\subseteq\mathfrak{R}_{\mathrm{st}}^{w'}\cap\mathfrak{R}_{+}$.
The reduced decomposition yields 
\[
\mathfrak{R}_{x}\subseteq\mathfrak{R}_{w}\subseteq\mathfrak{R}_{+}\backslash\mathfrak{R}_{\mathrm{st}}^{w},
\]
so the previous paragraph implies the claim.

(ii): By assumption either $xw$ or $wx^{-1}$ is reduced. In the
former case, we have $\ell(xwx^{-1})=\ell(xw)-\ell(x^{-1})$ which
by (\ref{eq:left-strong-conjugation-inversion-set}) implies that
$\mathfrak{R}_{x}\subseteq\mathfrak{R}_{xw}$. In the latter case,
$\ell(xwx^{-1})=\ell(x)-\ell(wx^{-1})$ which by (\ref{eq:right-strong-conjugation-inversion-set})
implies $\mathfrak{R}_{x}\subseteq\mathfrak{R}_{xw^{-1}}$. Now suppose
that $\beta$ lies in $\mathfrak{R}_{\mathrm{st}}^{w}\cap\mathfrak{R}_{x}=\mathfrak{R}_{\mathrm{st}}^{w^{-1}}\cap\mathfrak{R}_{x}$,
so $w(\beta)$ and $w^{-1}(\beta)$ are still positive. In the first
case, we find $xw(\beta)<0$ so $w(\beta)\in\mathfrak{R}_{x}$ and
in the second case we find $xw^{-1}(\beta)<0$ so $w^{-1}(\beta)\in\mathfrak{R}_{x}$.
Since $w$ has finite order either yields an inclusion $w(\mathfrak{R}_{\mathrm{st}}^{w}\cap\mathfrak{R}_{x})\subseteq\mathfrak{R}_{\mathrm{st}}^{w}\cap\mathfrak{R}_{x}$
which must be an equality.

Hence in either case, the conditions of the first paragraph are met,
and the final claim follows.
\end{proof}
For the comment following Proposition \ref{prop:mixed-shift}(i),
recall that the number of fixed roots in a conjugacy class is constant.
\begin{cor}
\label{cor:roots-made-negative-mixed} Let $w$ and $w'$ be two elements
of a twisted finite Coxeter group such that $w\overset{\times}{\sim}w'$.
Then
\[
|\mathfrak{R}_{w}\cap V_{w}|=|\mathfrak{R}_{w'}\cap V_{w'}|.
\]
\end{cor}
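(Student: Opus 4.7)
I plan to induct on the length of the sequence of conjugators realising $w \overset{\times}{\sim} w'$, thereby reducing the problem to the case of a single cyclic shift or strong conjugation $w' = xwx^{-1}$ for some $x \in \tilde{W}$ with $\ell(w') = \ell(w)$. Since $V_{w} = \mathrm{im}(\mathrm{id}_V - w)$ is visibly equivariant under conjugation, we have $V_{w'} = x(V_w)$, and hence
\[
|\mathfrak{R}_{w'} \cap V_{w'}| \;=\; |x^{-1}(\mathfrak{R}_{w'}) \cap V_w|,
\]
reducing the task to a comparison of two subsets of roots inside the single subspace $V_w$.

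The key observation will be that since $w$ preserves the orthogonal summand $V_w \subseteq V$ and $V_w = -V_w$, for any element $x \in \tilde{W}$ one has
\[
|w^{-1}(\mathfrak{R}_x) \cap V_w| \;=\; |\mathfrak{R}_x \cap V_w| \;=\; |(-\mathfrak{R}_x) \cap V_w|.
\]
In the strong conjugation case with $\ell(xw) = \ell(x) + \ell(w)$, the identity $w'x = xw$ combined with $\ell(w') = \ell(w)$ supplies two reduced decompositions $xw$ and $w'x$ of the same element, and two applications of \eqref{inversion-set-additive} then yield the equality of disjoint unions
\[
w^{-1}(\mathfrak{R}_x) \sqcup \mathfrak{R}_w \;=\; \mathfrak{R}_{xw} \;=\; \mathfrak{R}_{w'x} \;=\; x^{-1}(\mathfrak{R}_{w'}) \sqcup \mathfrak{R}_x.
\]
Intersecting both sides with $V_w$, taking cardinalities, and cancelling the equal pair via the key observation will give $|\mathfrak{R}_w \cap V_w| = |x^{-1}(\mathfrak{R}_{w'}) \cap V_w|$, as required.

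For the cyclic shift case with $\ell(xw) = \ell(w) - \ell(x)$, I will set $y := xw = w'x$, obtaining reduced decompositions $w = x^{-1}y$ and $w' = yx^{-1}$, and hence
\[
\mathfrak{R}_w \;=\; y^{-1}(\mathfrak{R}_{x^{-1}}) \sqcup \mathfrak{R}_y \qquad\textrm{and}\qquad x^{-1}(\mathfrak{R}_{w'}) \;=\; \mathfrak{R}_y \sqcup x^{-1}(\mathfrak{R}_{x^{-1}}).
\]
Using $y^{-1}x = w^{-1}$ together with $\mathfrak{R}_{x^{-1}} = -x(\mathfrak{R}_x)$, both right-hand sides will reduce to $\mathfrak{R}_y \cap V_w$ plus a set of cardinality $|\mathfrak{R}_x \cap V_w|$ by the key observation. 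The two symmetric cases (strong conjugation with $\ell(wx^{-1})$ reduced, cyclic shift with $\ell(wx^{-1}) = \ell(w) - \ell(x^{-1})$) will follow by swapping the roles of left and right. I do not anticipate a genuine obstacle here; the main delicacy lies in the bookkeeping of which reduced decompositions produce which disjoint splittings of inversion sets, and in verifying that both the $w$-action bijection and the sign bijection descend correctly to $V_w$.
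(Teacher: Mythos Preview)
Your argument is correct and takes a genuinely different route from the paper's. The paper argues via Lemma~\ref{lem:stable-roots-under-shifts} (equivalently Proposition~\ref{prop:mixed-shift}(i)): from $\tau(\mathfrak{R}_{\mathrm{st}}^{w})=\mathfrak{R}_{\mathrm{st}}^{w'}$ it concludes that $\tau$ bijects the non-stable $w$-orbits lying in $V_w$ with the non-stable $w'$-orbits in $V_{w'}$, and then asserts that ``each non-stable orbit contributes~$1$ to the cardinality $|\mathfrak{R}_{w}|$'' to finish. That final assertion is not correct in general: for instance the elliptic conjugate $s_2(s_1s_2s_3)s_2$ of a Coxeter element in type~$\mathsf{A}_3$ has length~$5$ but only three $w$-orbits on $\mathfrak{R}$, so some orbit must contribute more than~$1$. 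Your approach sidesteps this entirely. Working one mixed-shift step $w'=xwx^{-1}$ at a time, you compare the two disjoint decompositions of the single inversion set $\mathfrak{R}_{xw}=\mathfrak{R}_{w'x}$ (in the strong-conjugation case) or of $\mathfrak{R}_w$ and $x^{-1}(\mathfrak{R}_{w'})$ separately (in the cyclic-shift case), intersect with $V_w$, and cancel the extraneous terms using only that $w$ and $-\mathrm{id}$ restrict to bijections of $V_w$. This is both more elementary and self-contained, and it does not rely on the stable-roots machinery at all.
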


\begin{proof}
By assumption there exists an element $\tau$ in the Coxeter group
such that $\tau w\tau^{-1}=w'$, which yields $\tau(V_{w})=V_{w'}$.
By the lemma we have $\tau(\mathfrak{R}_{\mathrm{st}}^{w})=\mathfrak{R}_{\mathrm{st}}^{w'}$,
so this means that $\tau$ bijectively maps the nonstable orbits of
$w$ in $V_{w}$ to nonstable orbits of $w'$ in $V_{w'}$. As each
non-stable orbit contributes 1 to the cardinality $|\mathfrak{R}_{w}|$,
the claim follows.
\end{proof}
For the proof of the main Theorem we will use
\begin{lem}
\label{lem:move-power-bound} Let $w$ be an element of a twisted
finite Coxeter group $W$ and let $w'$ be a cyclic shift, so there
are reduced decompositions $w=yx$ and $w'=xy$ for some elements
$x$ and $y$ in $W$. Assume that $w$ and $w'$ are convex, and
set $x':=\mathrm{pb}(w')\,x\,\mathrm{pb}(w)^{-1}$. Then there are
braid identities

\begin{enumerate}[\normalfont(i)]

\item $b_{\mathrm{pb}(w')}b_{x}=b_{x'}b_{\mathrm{pb}(w)}$ and

\item $b_{\mathrm{pb}(w')^{-1}}b_{x'}=b_{x}b_{\mathrm{pb}(w)^{-1}}$.

\end{enumerate}
\end{lem}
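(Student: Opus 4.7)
The plan is to reduce both braid identities to Matsumoto's theorem combined with the classical Garside (Dynkin) automorphism on $\mathrm{Br}^+$, pivoting around a common reduced factorisation of $w_\circ$.

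First I claim that $x' = \sigma(x)$, where $\sigma:\tilde W \to \tilde W$ denotes conjugation by $w_\circ$. Indeed, Lemma \ref{lem:stable-roots-under-shifts}(i) applied to the cyclic shift $w \mapsto w'$ gives $x(\mathfrak{R}_{\mathrm{st},+}^w) = \mathfrak{R}_{\mathrm{st},+}^{w'}$, so $w_{\mathrm{st}}^{w'} = x w_{\mathrm{st}}^w x^{-1}$ in $\tilde W$; substituting into the definition of $x'$ and using that $w_{\mathrm{st}}^w$ is an involution collapses $x'$ to $w_\circ x w_\circ^{-1} = \sigma(x)$.

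Next I record two ingredients. \emph{Sub-claim (a)}: $x w_{\mathrm{st}}^w$ and $w_{\mathrm{st}}^{w'} x$ are reduced decompositions of the same element of $\tilde W$, hence lift to the common braid $b_x b_{w_{\mathrm{st}}^w} = b_{w_{\mathrm{st}}^{w'}} b_x$ in $\mathrm{Br}^+$. Reducedness follows from Proposition \ref{prop:grp-rw}(ii) via the inversion-set disjointness $\mathfrak{R}_x \cap \mathfrak{R}_{\mathrm{st},+}^w = \varnothing$ (from $\mathfrak{R}_x \subseteq \mathfrak{R}_w$ and the fact that stable roots never lie in $\mathfrak{R}_w$) and $\mathfrak{R}_{x^{-1}} \cap \mathfrak{R}_{\mathrm{st},+}^{w'} = \varnothing$ (since $-x(\mathfrak{R}_x) \cap x(\mathfrak{R}_{\mathrm{st},+}^w) = x(-\mathfrak{R}_x \cap \mathfrak{R}_{\mathrm{st},+}^w) = \varnothing$). \emph{Sub-claim (b)}: the Garside identity $b_{w_\circ} b_v = b_{\sigma(v)} b_{w_\circ}$ holds in $\mathrm{Br}^+$ for every $v \in \tilde W$, since the twisted positive monoid embeds into its enveloping Artin-Tits group, where conjugation by $b_{w_\circ}$ implements the diagram involution on simple braids.

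To prove (i), I use the reduced factorisation $w_\circ = \mathrm{pb}(w') w_{\mathrm{st}}^{w'} = \mathrm{pb}(w) w_{\mathrm{st}}^w$, which lifts to $b_{w_\circ} = b_{\mathrm{pb}(w')} b_{w_{\mathrm{st}}^{w'}} = b_{\mathrm{pb}(w)} b_{w_{\mathrm{st}}^w}$ in the monoid. Chaining the above identities,
\[
b_{\mathrm{pb}(w')} b_x b_{w_{\mathrm{st}}^w}
= b_{\mathrm{pb}(w')} b_{w_{\mathrm{st}}^{w'}} b_x
= b_{w_\circ} b_x
= b_{\sigma(x)} b_{w_\circ}
= b_{x'} b_{\mathrm{pb}(w)} b_{w_{\mathrm{st}}^w},
\]
and right-cancellativity of the Garside monoid gives (i). Part (ii) is symmetric: it uses the dual reduced factorisation $w_\circ = \mathrm{pb}(w)^{-1} w_{\mathrm{st}}^{\sigma(w)}$ (whose reducedness follows from $\ell(\mathrm{pb}(w)^{-1}) = \ell(\mathrm{pb}(w))$ and Dynkin-invariance of lengths) together with the $\sigma$-transform of sub-claim (a), namely $b_{x'} b_{w_{\mathrm{st}}^{\sigma(w)}} = b_{w_{\mathrm{st}}^{\sigma(w')}} b_{x'}$, which is proved by the same inversion-set argument applied to $(x', \sigma(w), \sigma(w'))$. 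The main technical obstacle is the careful inversion-set bookkeeping needed for all reducedness claims, and verifying that the Dynkin automorphism descends cleanly to the twisted monoid $\Omega \ltimes \mathrm{Br}_{\tilde W}^+$ when the twist $\delta$ does not a priori commute with $w_\circ$.
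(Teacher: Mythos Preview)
Your proof is correct and takes a genuinely different route from the paper's. Both proofs rest on Lemma~\ref{lem:stable-roots-under-shifts}(i), but they use it differently. The paper argues by direct inversion-set computation: it shows that $\mathrm{pb}(w)\,x^{-1}$ keeps $\mathfrak{R}_{\mathrm{st}}^{w'}\cap\mathfrak{R}_{+}$ positive, hence $\mathfrak{R}_{\mathrm{pb}(w)x^{-1}}\subseteq\mathfrak{R}_{+}\backslash\mathfrak{R}_{\mathrm{st}}^{w'}=\mathfrak{R}_{\mathrm{pb}(w')}$, i.e.\ $\mathrm{pb}(w')\geq\mathrm{pb}(w)x^{-1}$ in weak left Bruhat; combined with $\mathrm{pb}(w)\geq w\geq x$ this gives $b_{\mathrm{pb}(w')}b_{x}\geq b_{\mathrm{pb}(w)}$, and the complementary factor is then automatically $b_{x'}$. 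Part~(ii) is a parallel computation for $\mathrm{pb}(w)^{-1}(x')^{-1}$. The element $x'$ is never identified explicitly. Your argument instead pins down $x'=w_{\circ}xw_{\circ}=\sigma(x)$ at the outset (via $w_{\mathrm{st}}^{w'}=x\,w_{\mathrm{st}}^{w}x^{-1}$), and then both (i) and (ii) drop out of the single Garside relation $b_{w_{\circ}}b_{v}=b_{\sigma(v)}b_{w_{\circ}}$, the reduced-product identity $b_{x}b_{w_{\mathrm{st}}^{w}}=b_{w_{\mathrm{st}}^{w'}}b_{x}$, and right-cancellation. This is more structural, makes the symmetry between (i) and (ii) visible, and yields the extra information $\ell(x')=\ell(x)$ for free. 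Your flagged concern about the twist is a non-issue here: the conjugator $x$ in a cyclic shift lies in $\tilde{W}$ by definition, and $\mathrm{pb}(w)$, $\mathrm{pb}(w')$, $w_{\circ}$ are all in $\tilde{W}$ as well, so the entire computation stays inside $\mathrm{Br}_{\tilde{W}}^{+}$.
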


\begin{proof}
(i): From Lemma \ref{lem:stable-roots-under-shifts}(i) it follows
that
\[
\mathrm{pb}(w)\,x^{-1}(\mathfrak{R}_{\mathrm{st}}^{w'}\cap\mathfrak{R}_{+})=\mathrm{pb}(w)(\mathfrak{R}_{\mathrm{st}}^{w}\cap\mathfrak{R}_{+})\subseteq\mathfrak{R}_{+}.
\]
This implies that $\mathfrak{R}_{\mathrm{pb}(w)x^{-1}}\subseteq\mathfrak{R}_{+}\backslash\mathfrak{R}_{\mathrm{st}}^{w'}$,
which means that $\mathrm{pb}(w')\geq\mathrm{pb}(w)\,x^{-1}$ in the
weak left Bruhat-Chevalley order. As $\mathrm{pb}(w)\geq w\geq x$,
that is equivalent to $b_{\mathrm{pb}(w')}b_{x}\geq b_{\mathrm{pb}(w)}$. 

(ii): Note that
\[
\mathfrak{R}_{\mathrm{pb}(w)^{-1}}=\mathfrak{R}_{+}\backslash w_{\circ}(\mathfrak{R}_{\mathrm{st}}^{w}\cap-\mathfrak{R}_{+})
\]
 and similarly for $\mathrm{pb}(w')^{-1}$. Using Lemma \ref{lem:stable-roots-under-shifts}(i)
we deduce that
\[
x'\bigl(w_{\circ}(\mathfrak{R}_{\mathrm{st}}^{w}\cap-\mathfrak{R}_{+})\bigr)=w_{\circ}w'_{\mathrm{st}}x(\mathfrak{R}_{\mathrm{st}}^{w}\cap\mathfrak{R}_{+})=w_{\circ}w'_{\mathrm{st}}(\mathfrak{R}_{\mathrm{st}}^{w'}\cap\mathfrak{R}_{+})=w_{\circ}(\mathfrak{R}_{\mathrm{st}}^{w'}\cap-\mathfrak{R}_{+})\subseteq\mathfrak{R}_{+},
\]
which combines with the previous identity to $\mathrm{pb}(w)^{-1}\geq x'$.
Similarly,
\[
\mathrm{pb}(w)^{-1}(x')^{-1}(w_{\circ}(\mathfrak{R}_{\mathrm{st}}^{w'}\cap-\mathfrak{R}_{+}))=\mathrm{pb}(w)^{-1}w_{\circ}(\mathfrak{R}_{\mathrm{st}}^{w}\cap-\mathfrak{R}_{+})=\mathfrak{R}_{\mathrm{st}}^{w}\cap\mathfrak{R}_{+}\subseteq\mathfrak{R}_{+}
\]
yields $\mathrm{pb}(w')^{-1}\geq\mathrm{pb}(w)^{-1}(x')^{-1}$, and
thus $b_{\mathrm{pb}(w')^{-1}}b_{x'}=b_{x}b_{\mathrm{pb}(w)^{-1}}$. 
\end{proof}

\subsection{Mixed shifts and braid conjugation}

In this subsection we prove part (ii) of Proposition \ref{prop:mixed-shift};
it won't be used elsewhere in this paper.
\begin{defn}
Recall the notions of Definition \ref{def:locally-garside-category}.
When we omit the adjective \emph{locally}, the category is also left-cancellative,
and the Garside family is bounded by a Garside map $b_{w_{\circ}}$
\cite[Proposition V.1.20]{MR3362691}. For every reduced morphism
$b_{w}$ there exists a reduced morphism $\partial(b_{w})$ such that
$b_{w_{\circ}}=\partial(b_{w})b_{w}$ \cite[Proposition IV.1.23]{MR3362691},
which is unique by cancellativity. The square of $\partial(\cdot)$
extends to a unique endofunctor $\partial^{2}(\cdot)$ of $\mathrm{Br}$,
satisfying $b_{w_{\circ}}b=\partial^{2}(b)b_{w_{\circ}}$ for any
morphism $b$ \cite[Corollary V.1.34]{MR3362691}; we will assume
that it is invertible, and then so is $\partial(\cdot)$ \cite[Proposition V.2.17]{MR3362691}.
One then inductively defines for $m\geq0$ the iterates $b_{w_{\circ}}^{m}:\mathrm{ob}(\mathrm{Br})\rightarrow\mathrm{Br}$
by
\[
b_{w_{\circ}}^{0}(\cdot):=\mathrm{id}(\cdot),\qquad b_{w_{\circ}}^{m}(\cdot):=\partial^{2}\bigl(b_{w_{\circ}}^{m-1}(\cdot)\bigr)b_{w_{\circ}}(\cdot),\qquad b_{w_{\circ}}^{-m}(\cdot):=b_{w_{\circ}}^{m}\bigl(\partial^{-2m}(\cdot)\bigr)^{-1},
\]
so that $b_{w_{\circ}}^{m}bb_{w_{\circ}}^{-m}=\partial^{2m}(b)$ for
any $m\in\mathbb{Z}$ \cite[Lemma V.1.41]{MR3362691}.

The enveloping groupoid of a twisted locally Garside category $\mathrm{Br}^{+}$
is isomorphic to the semidirect product $\mathrm{Br}:=\Omega\ltimes\tilde{\mathrm{Br}}$,
where $\tilde{\mathrm{Br}}$ is the enveloping groupoid of $\tilde{\mathrm{Br}}^{+}$;
we call this the \emph{twisted locally Garside groupoid} corresponding
to $\mathrm{Br}^{+}$. A Garside category is Ore \cite[Proposition V.2.32]{MR3362691},
from which it follows that $\mathrm{Br}^{+}$ embeds into $\mathrm{Br}$.
This is a groupoid of left and of right fractions \cite[Proposition II.3.11]{MR3362691}
and it admits right-lcms \cite[Proposition V.2.35]{MR3362691}, which
implies that we can remove common factors in a left fraction. Then
decomposing their complements to this lcm into normal forms, one obtains
the \emph{symmetric normal form} \cite[Corollary III.2.21]{MR3362691}
\begin{equation}
b=\delta b_{i}'\cdots b_{1}'(b_{1}'')^{-1}\cdots(b_{j}'')^{-1},\label{eq:symmetric-normal-form}
\end{equation}
which is essentially unique \cite[Proposition III.2.16]{MR3362691}.
Rewriting each of the negative terms as a positive term times a power
of $b_{w_{\circ}}$ and then moving all of these powers to the right
yields
\begin{equation}
b=b'(b'')^{-1}=\delta b_{i}'\cdots b_{1}'\partial^{-1}(b_{1}'')\cdots\partial^{1-2j}(b_{j}'')b_{w_{\circ}}^{-j},\label{eq:delta-normal-form}
\end{equation}
 which after removing the final factor is the usual normal form of
$bb_{w_{\circ}}^{j}\in\mathrm{Br}^{+}$ \cite[Proposition V.3.35]{MR3362691}.
\end{defn}

\begin{lem}
\label{lem:mixed-shift} Let $\mathrm{Br}^{+}$ be a twisted locally
Garside category, and assume that it embeds into its groupoid $\mathrm{Br}$.

\begin{enumerate}[\normalfont(i)]

\item If $b,b',b''$ are composable morphisms of $\mathrm{Br}^{+}$
such that the morphism $bb'(b'')^{-1}$ of $\mathrm{Br}$ still lies
in $\mathrm{Br}^{+}$, then $\mathrm{DG}(bb')\geq\mathrm{DG}(b'')$.
In particular, if $bb'$ is in right Deligne-Garside normal form and
$b''$ is reduced, then $b'\geq b''$.

\item Now suppose that $b_{w}$ is a reduced endomorphism of $\mathrm{Br}^{+}$,
and $b$ is any composable morphism of $\mathrm{Br}^{+}$ such that
the conjugate $bb_{w}b^{-1}$ also lies in $\mathrm{Br}^{+}$, and
consider the reduced morphism $b_{u}^{\,}:=\mathrm{DG}(bb_{w}^{\,})b_{w}^{-1}$.
Then $b_{u}^{\,}b_{w}^{\,}b_{u}^{-1}$ is also reduced.

\end{enumerate}
\end{lem}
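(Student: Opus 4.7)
For part (i), the hypothesis $bb'(b'')^{-1}\in\mathrm{Br}^+$ is equivalent to saying that $b''$ right-divides $bb'$ in $\mathrm{Br}^+$. In particular, the reduced morphism $\mathrm{DG}(b'')$ (which right-divides $b''$) also right-divides $bb'$, and hence is dominated by the maximal reduced right-divisor $\mathrm{DG}(bb')$. This gives $\mathrm{DG}(bb')\ge\mathrm{DG}(b'')$; the ``in particular'' clause then follows because $bb'$ in right Deligne--Garside normal form has head $\mathrm{DG}(bb')=b'$, while a reduced $b''$ satisfies $\mathrm{DG}(b'')=b''$.

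For part (ii), the strategy is to show that $b_u$ itself right-divides $b_h:=\mathrm{DG}(bb_w)$ and then to identify the corresponding left-complement with $b_ub_wb_u^{-1}$. Applying part (i) with $b'=b''=b_w$ (so that $bb_wb_w^{-1}=b\in\mathrm{Br}^+$) yields $b_h\ge b_w$, so $b_u:=b_hb_w^{-1}$ lies in $\mathrm{Br}^+$ and, being a left-complement of a reduced right-divisor inside a reduced morphism, is itself reduced. Writing $bb_w=b^*b_h$ with $b^*\in\mathrm{Br}^+$ the Deligne--Garside tail and right-cancelling $b_w$ in $b^*b_ub_w=bb_w$ yields the crucial factorisation $b=b^*b_u$.

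Direct computation using $b=b^*b_u$ now gives
\[
(bb_wb^{-1})\cdot b^*\;=\;b^*b_ub_wb_u^{-1}(b^*)^{-1}\cdot b^*\;=\;b^*b_ub_wb_u^{-1}\;=\;bb_wb_u^{-1},
\]
and since the leftmost expression is a product of positive morphisms, so is $bb_wb_u^{-1}$. Hence the reduced morphism $b_u$ right-divides $bb_w$, and therefore right-divides $b_h$ by maximality of $\mathrm{DG}(\cdot)$ among reduced right-divisors. Writing $b_h=z\cdot b_u$ with $z\in\mathrm{Br}^+$ and comparing with $b_h=b_ub_w$ gives $z=b_ub_wb_u^{-1}$ in the enveloping groupoid; closure of the Garside family under left-complements of reduced right-divisors then forces this $z$ to be reduced, as desired.

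The verification is purely formal once the right objects are in place, so the only real obstacle is assembling the factorisation $b=b^*b_u$ and invoking the correct Garside-theoretic facts (the characterisation of $\mathrm{DG}(b)$ as the maximal reduced right-divisor, and the closure of the family under left-complements of reduced right-divisors); both are standard in the setup described in Definition~\ref{def:locally-garside-category} and \cite{MR3362691}.
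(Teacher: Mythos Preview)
Your proof is correct and follows essentially the same route as the paper's. Both arguments establish $b=b^*b_u$ (the paper writes this as $b=b'b_u$, obtained via $b\geq b_u$), use it to show $bb_wb_u^{-1}=(bb_wb^{-1})b^*\in\mathrm{Br}^+$, invoke part~(i) to get $b_ub_w=\mathrm{DG}(bb_w)\geq b_u$, and then conclude by closure of the Garside family under left-complements; your write-up is slightly more explicit about justifying that $b_u$ is reduced and about naming the Garside-theoretic facts, but the logical skeleton is identical.
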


\begin{proof}
(i): Setting $b''':=bb'(b'')^{-1}\in\mathrm{Br}^{+}$, we have an
identity $bb'=b'''b''$ inside $\mathrm{Br}^{+}$, which implies 
\[
\mathrm{DG}(bb')=\mathrm{DG}(b'''b'')\geq\mathrm{DG}(b'').
\]
Under the additional assumptions, we now deduce
\[
b'=\mathrm{DG}(bb')\geq\mathrm{DG}(b'')=b''.
\]

(ii): Since $b=bb_{w}b_{w}^{-1}\geq\mathrm{DG}(bb_{w})b_{w}^{-1}=b_{u}$
we may write $b=b'b_{u}$ for some morphism $b'$ in $\mathrm{Br}^{+}$.
Then from the assumption it follows that 
\[
bb_{w}^{\,}b_{u}^{-1}=bb_{w}^{\,}b^{-1}b'
\]
also lies in $\mathrm{Br}^{+}$, so by (i) we have $b_{u}b_{w}=\mathrm{DG}(bb_{w}^{\,})\geq b_{u}$.
Thus we may split $b_{u}b_{w}=b''b_{u}$ with $b''$ in $\mathrm{Br}^{+}$,
and since $b_{u}b_{w}$ is reduced so is $b''=b_{u}^{\,}b_{w}^{\,}b_{u}^{-1}$.
\end{proof}
\begin{example}
Consider a simple reflection $v:=s_{1}$ and the longest element $w:=w_{\circ}$
in type $\mathsf{A}_{2}$. Then $b_{v}^{\,}b_{w}^{\,}b_{v}^{-1}=b_{1}b_{12}$
lies in $\mathrm{Br}^{+}$, but it is not reduced.
\end{example}

The following ``convexity'' property goes back to Garside's solution
of the conjugacy problem via summit sets \cite[Lemma 12]{MR248801}
and their refinement by super summit sets \cite[Corollary 4.2]{MR1315459};
a similar statement for Garside groups appears in \cite[Lemma 3.2]{MR2747147}:
\begin{lem}
Let $\mathrm{Br}^{+}$ be a twisted Garside category. Suppose $b',b''$
are endomorphisms in its Garside groupoid $\mathrm{Br}$, such that
$b_{w_{\circ}}^{m}\geq b',b''\geq b_{w_{\circ}}^{n}$ for some integers
$m$ and $n$ and $bb'b^{-1}=b''$ for some morphism $b$ in $\mathrm{Br}$.
Let $\mathrm{DG}(b)$ be the right-most term appearing in its symmetric
normal form (\ref{eq:symmetric-normal-form}). Then also
\[
b_{w_{\circ}}^{m}\geq\mathrm{DG}(b)b'\mathrm{DG}(b)^{-1}\geq b_{w_{\circ}}^{n}.
\]
\end{lem}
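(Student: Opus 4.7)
The statement is a categorical version of the classical convexity of (super) summit sets in Garside theory, due to Garside and Elrifai--Morton in type $\mathsf{A}$, and extended to Garside groupoids by Dehornoy et al. The plan is to follow their template, with the usual care needed because of the twist $\partial^{2}$ and because our ``right-most term'' is allowed to be the inverse of a reduced morphism. The key observation is that, under the hypotheses, $\mathrm{DG}(b)$ is a canonical initial step of the conjugation $b$, and conjugating by just this step should preserve the Garside interval.

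The first move is a reduction to positive braids: multiplying $b'$ and $b''$ on the right by $b_{w_{\circ}}^{-n}$ and using the twisted centrality $b_{w_{\circ}}^{n}c=\partial^{2n}(c)b_{w_{\circ}}^{n}$, we may assume that $b',b''\in\mathrm{Br}^{+}$ and $b_{w_{\circ}}^{m-n}\geq b',b''\geq e$; the conjugation $bb'b^{-1}=b''$ then yields $vb=\partial^{2m}(b)u$ where $u:=b_{w_{\circ}}^{m}(b')^{-1}$ and $v:=b_{w_{\circ}}^{m}(b'')^{-1}$ lie in $\mathrm{Br}^{+}$. From the symmetric normal form \eqref{eq:symmetric-normal-form} we then split into two cases: (A) $j=0$, so $b\in\mathrm{Br}^{+}$ and $\mathrm{DG}(b)=b_{1}'$ is its rightmost Deligne--Garside factor; (B) $j>0$, so $\mathrm{DG}(b)=(b_{j}'')^{-1}$ is the inverse of the leftmost factor of the DG normal form of $b_{-}$. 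In either case, writing $b=b_{0}\cdot\mathrm{DG}(b)^{\pm1}$ and using $bb'=b''b$ yields the identity $\mathrm{DG}(b)\,b'\,\mathrm{DG}(b)^{-1}=b_{0}^{\mp1}b''b_{0}^{\pm1}$.

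To verify the upper bound $b_{w_{\circ}}^{m}\geq\mathrm{DG}(b)b'\mathrm{DG}(b)^{-1}$, I apply the twisted commutation once more and reduce the claim to the divisibility $\partial^{2m}(c)\cdot u\geq c$ in $\mathrm{Br}^{+}$, where $c:=\mathrm{DG}(b)$ in case (A) or $c:=b_{j}''$ in case (B). Substituting $vb=\partial^{2m}(b)u$ with $b=b_{0}c$ yields $vb_{0}c=\partial^{2m}(b_{0})\cdot\partial^{2m}(c)u$, so this divisibility is equivalent to the statement that $\partial^{2m}(b_{0})^{-1}vb_{0}\in\mathrm{Br}^{+}$, i.e.\ to a positivity property for a conjugate of the positive morphism $v$. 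The lower bound is handled by a dual argument (formally, by replacing the left gcd's with right gcd's throughout, or by applying the same proof to $b^{-1}$, $b_{w_{\circ}}^{-n}(b')^{-1}b_{w_{\circ}}^{n}$, etc.).

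The main obstacle is the key divisibility itself: this is exactly where the greedy (i.e.\ maximal-right-factor) property of the symmetric normal form enters essentially, as with an arbitrary reduced right-factor of $b$ the statement would fail. I expect to deduce it from Lemma \ref{lem:mixed-shift}(i), applied to the positive morphisms $\partial^{2m}(b_{0})$, $\partial^{2m}(c)u$, $vb_{0}$: the relation $vb_{0}c=\partial^{2m}(b_{0})\cdot\partial^{2m}(c)u$ combined with the greediness of $c$ in $b_{0}c$ forces a matching greediness of the ``twin'' factor on the other side of the identity. Case (B) is technically more demanding than case (A) because then $b\notin\mathrm{Br}^{+}$ and the passage between $b$ and its positive/negative parts $b_{\pm}$ introduces an additional bookkeeping layer, handled by appealing to the symmetric-normal-form uniqueness of \cite[Proposition III.2.16]{MR3362691}.
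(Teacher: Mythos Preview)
Your proposal has the right skeleton but several execution errors, and it misses the key simplification that the paper uses to avoid the case split entirely.

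\textbf{The reduction to $n=0$ is not clean.} Multiplying $b',b''$ on the right by $b_{w_\circ}^{-n}$ does not preserve conjugation by $b$: from $bb'b^{-1}=b''$ you get $b\,\hat b'\,\bigl(\partial^{-2n}(b)\bigr)^{-1}=\hat b''$, a \emph{twisted} conjugation, and likewise the quantity $\mathrm{DG}(b)\,b'\,\mathrm{DG}(b)^{-1}$ becomes a twisted expression. The paper sidesteps this by absorbing $b_{w_\circ}^{j}$ into $b$ rather than into $b',b''$: set $\tilde b:=b\,b_{w_\circ}^{j}\in\mathrm{Br}^{+}$ and $\tilde b':=b_{w_\circ}^{-j}b'b_{w_\circ}^{j}$, so that $\tilde b\,\tilde b'\,\tilde b^{-1}=b''$ is an honest conjugation with a positive conjugator. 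This eliminates your case~(B) altogether, and the relation $\mathrm{DG}(b)=b_{w_\circ}^{j-1}\mathrm{DG}(\tilde b)\,b_{w_\circ}^{-j}$ (read off from the $\Delta$-normal form \eqref{eq:delta-normal-form}) lets you transfer the conclusion back at the end.

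\textbf{Case (B) is misreduced.} With $\mathrm{DG}(b)=c^{-1}$, the upper bound $b_{w_\circ}^{m}\geq c^{-1}b'c$ is equivalent to $\partial^{2m}(c)^{-1}uc\in\mathrm{Br}^{+}$, i.e.\ to a \emph{left}-divisibility of $uc$ by $\partial^{2m}(c)$, not to the right-divisibility $\partial^{2m}(c)u\geq c$ that you write. Your Lemma~\ref{lem:mixed-shift}(i) invocation would need a left-handed analogue here.

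\textbf{The ``expected'' step is actually easy --- once $b$ is positive.} For your case~(A) the divisibility $\partial^{2m}(c)u\geq c$ does follow from sharpness: since $c=\mathrm{DG}(b_{0}c)$ and $\partial^{2m}$ commutes with $\mathrm{DG}$, one has $\mathrm{DG}\bigl(\partial^{2m}(b_{0}c)\,u\bigr)=\mathrm{DG}\bigl(\partial^{2m}(c)\,u\bigr)$; but $\partial^{2m}(b_{0}c)\,u=v\,b_{0}\,c$ is visibly right-divisible by $c$, so $c$ right-divides $\mathrm{DG}(\partial^{2m}(c)u)$ and hence $\partial^{2m}(c)u$. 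This is exactly the chain the paper writes for the lower bound: $\mathrm{DG}(\tilde b)\,\tilde b'\geq\mathrm{DG}(\tilde b\,\tilde b')=\mathrm{DG}(b''\tilde b)\geq\cdots\geq b_{w_\circ}^{n}\mathrm{DG}(\tilde b)$. The upper bound then comes for free by taking inverses (replace $b',b''$ by $(b')^{-1},(b'')^{-1}$ and $n$ by $-m$), which is a cleaner move than the separate argument you sketch. In short: make $b$ positive first, prove one inequality, and invert --- no case split, no left/right asymmetry, and no hand-waving over the key step.
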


\begin{proof}
Considering the negative factors in (\ref{eq:symmetric-normal-form}),
set $\tilde{b}:=bb_{w_{\circ}}^{j}\in\mathrm{Br}^{+}$ and $\tilde{b}':=b_{w_{\circ}}^{-j}b'b_{w_{\circ}}^{j}$.
Then $\tilde{b}\in\mathrm{Br}^{+}$ and $\tilde{b}\tilde{b}'(\tilde{b})^{-1}=b''$,
and from $\partial^{2}(b_{w_{\circ}})=\partial(\mathrm{id})=b_{w_{\circ}}$
it follows that $b_{w_{\circ}}^{m}\geq\tilde{b}'\geq b_{w_{\circ}}^{n}$
still holds. By assumption we may write $b''=\tilde{b}''b_{w_{\circ}}^{n}$
with $\tilde{b}''$ in $\mathrm{Br}^{+}$. Then
\begin{align*}
\mathrm{DG}(\tilde{b})\tilde{b}' & \geq\mathrm{DG}(\tilde{b}\tilde{b}')\\
 & =\mathrm{DG}(b''\tilde{b})\\
 & =\mathrm{DG}\bigl(\tilde{b}''\partial^{2n}(\tilde{b})\bigr)b_{w_{\circ}}^{n}\\
 & \geq\mathrm{DG}\bigl(\partial^{2n}(\tilde{b})\bigr)b_{w_{\circ}}^{n}\\
 & =\partial^{2n}\bigl(\mathrm{DG}(\tilde{b})\bigr)b_{w_{\circ}}^{n}=b_{w_{\circ}}^{n}\mathrm{DG}(\tilde{b}),
\end{align*}
which yields the second inequality, for $\tilde{b}$ and $\tilde{b}'$.

Taking inverses, we have $\tilde{b}(\tilde{b}')^{-1}\tilde{b}^{-1}=(b'')^{-1}$
with $(\tilde{b}')^{-1},(b'')^{-1}\geq b_{w_{\circ}}^{-m}$. The previous
paragraph therefore furnishes that
\[
\mathrm{DG}(\tilde{b})(\tilde{b}')^{-1}\mathrm{DG}(\tilde{b})^{-1}\geq b_{w_{\circ}}^{-m}
\]
 so taking inverses again now yields the first inequality for $\tilde{b}$
and $\tilde{b}'$.

But from (\ref{eq:delta-normal-form}) we obtain $b_{w_{\circ}}^{j-1}\mathrm{DG}(\tilde{b})b_{w_{\circ}}^{-j}=\mathrm{DG}(b)$,
so
\[
b_{w_{\circ}}^{j-1}\bigl(\mathrm{DG}(\tilde{b})\tilde{b}'\mathrm{DG}(\tilde{b})^{-1}\bigr)b_{w_{\circ}}^{1-j}=\bigl(b_{w_{\circ}}^{j-1}\mathrm{DG}(\tilde{b})b_{w_{\circ}}^{-j}\bigr)b'\bigl(b_{w_{\circ}}^{j}\mathrm{DG}(\tilde{b})^{-1}b_{w_{\circ}}^{1-j}\bigr)=\mathrm{DG}(b)b'\mathrm{DG}(b)^{-1}
\]
 and then using $\partial^{2}(b_{w_{\circ}})=b_{w_{\circ}}$ again
the claim for $\mathrm{DG}(b)b'\mathrm{DG}(b)^{-1}$ also follows.
\end{proof}
\begin{cor}
If the endomorphisms $b'=b_{w}$ and $b''=bb_{w}b^{-1}$ are both
reduced, then so is
\[
\mathrm{DG}(b)b_{w}\mathrm{DG}(b)^{-1}.
\]
\end{cor}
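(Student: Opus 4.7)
The plan is to invoke the preceding lemma in the degenerate case $m=1$, $n=0$, since these are precisely the bounds that characterise the Garside family. By construction, a morphism in $\mathrm{Br}^{+}$ is reduced exactly when it lies in $\Omega\ltimes\tilde{W}$, equivalently when, after pushing any twist to the left, the underlying positive braid is a right-divisor of the Garside element $b_{w_{\circ}}$ and is itself positive; that is, reducedness of $b'$ is equivalent to the Garside inequalities
\[
b_{w_{\circ}}\;\geq\;b'\;\geq\;\mathrm{id}.
\]

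First I would translate the hypothesis of the corollary into this form: since $b_{w}$ and $bb_{w}b^{-1}$ are both assumed reduced, they satisfy $b_{w_{\circ}}\geq b_{w}, bb_{w}b^{-1}\geq\mathrm{id}$. Then I would apply the previous lemma directly with $b':=b_{w}$, $b'':=bb_{w}b^{-1}$, $m=1$ and $n=0$. Its conclusion in that instance reads
\[
b_{w_{\circ}}\;\geq\;\mathrm{DG}(b)\,b_{w}\,\mathrm{DG}(b)^{-1}\;\geq\;\mathrm{id},
\]
which by the characterisation above is precisely the statement that $\mathrm{DG}(b)\,b_{w}\,\mathrm{DG}(b)^{-1}$ is reduced.

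I do not anticipate any genuine obstacle: the corollary is essentially a specialisation of the preceding lemma. The only bookkeeping concerns the twist component, since $b$ may live in the groupoid $\mathrm{Br}=\Omega\ltimes\tilde{\mathrm{Br}}$ and so $\mathrm{DG}(b)$ may carry a twist of its own. But any $\delta\in\Omega$ preserves the Garside family $\tilde{W}$ (and hence the Garside element $b_{w_{\circ}}$) and can be transported past positive morphisms through the semidirect-product identity $\tilde{b}\,\delta=\delta\cdot\delta^{-1}(\tilde{b})$; this allows the Garside bounds to be read off on the underlying untwisted braids, after which the previous lemma applies unchanged.
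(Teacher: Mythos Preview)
Your proposal is correct and is precisely the intended derivation: the corollary is an immediate specialisation of the preceding lemma with $m=1$ and $n=0$, using that a morphism is reduced exactly when it sits between $\mathrm{id}$ and $b_{w_{\circ}}$ in the right-divisibility order. The paper states the corollary without proof for this reason, and your remark on handling the twist component is the only bookkeeping required.
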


The following statement was originally employed in a different approach
to proving part (ii) of Proposition \ref{prop:mixed-shift}; it won't
be used but might be of interest in itself.
\begin{prop}
Let $b_{v}=\delta\tilde{b}_{v}$ and $b_{w}$ be reduced endomorphisms
of a twisted Garside category satisfying $\mathrm{DG}(b_{v}^{2})=\tilde{b}_{v}^{\,}$.
If the element 
\[
b_{v}^{k}b_{w}^{\,}b_{v}^{-k}
\]
 is reduced (and thus equal to $b_{v^{k}wv^{-k}}$ in the case of
twisted Coxeter groups) for some integer $k\geq1$, then it is also
reduced for $k=1$.
\end{prop}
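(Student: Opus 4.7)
The plan is to proceed by descending induction on $k$: the base case $k=1$ gives the desired conclusion, so it suffices to show that if $b_v^k b_w b_v^{-k}$ is reduced for some $k \geq 2$, then $b_v^{k-1} b_w b_v^{-(k-1)}$ is also reduced; iterating then yields reducedness at $k=1$. For the inductive step, let $c := b_v^{k-1} b_w b_v^{-(k-1)}$. Positivity of $c$ comes for free: the identity $c \cdot b_v^{k-1} = b_v^{k-1} b_w$ in the enveloping groupoid, together with the fact that both $b_v^{k-1} b_w$ and $b_v^{k-1}$ lie in $\mathrm{Br}^+$, forces $c \in \mathrm{Br}^+$ via the standard Ore/right-cancellation property of Garside categories built into Proposition \ref{prop:dg-properties}. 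Length additivity then gives $\ell(c) = \ell(w)$.

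The main task is to upgrade positivity of $c$ to reducedness (membership in $\Omega \ltimes \tilde{W}$). To this end, I would apply Lemma \ref{lem:mixed-shift}(ii) with $b := b_v^k$, which is valid because the hypothesis guarantees $b_v^k b_w b_v^{-k} \in \mathrm{Br}^+$. This produces a reduced element $b_u := \mathrm{DG}(b_v^k b_w) b_w^{-1}$ such that $b_u b_w b_u^{-1}$ is reduced. Combining Corollary \ref{cor:dg-prop}(i) with the normal form $\mathrm{DG}(b_v^k) = \tilde{b}_v$ of Corollary \ref{cor:dg-involution}(i), we have $\mathrm{DG}(b_v^k b_w) = \mathrm{DG}(\tilde{b}_v b_w)$ independently of $k$, and Corollary \ref{cor:dg-involution}(ii) then yields $b_v b_w \geq \mathrm{DG}(b_v^k b_w)$, placing $b_u$ as a right divisor of $b_v$. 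Next, I would apply Lemma \ref{lem:mixed-shift}(i) with $b := b_v^{k-1}$, $b' := b_w$, $b'' := b_v$: using $b_v^{k-1} b_w b_v^{-1} = c \cdot b_v^{k-2} \in \mathrm{Br}^+$, the lemma forces $\mathrm{DG}(b_v^{k-1} b_w) \geq b_v$, so that $b_v$ is itself a right divisor of $\mathrm{DG}(\tilde{b}_v b_w)$.

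The main obstacle is to deduce from the two sandwiching inequalities $b_v \leq \mathrm{DG}(\tilde{b}_v b_w) \leq \tilde{b}_v b_w$, together with the rigidity of the hypothesis $\mathrm{DG}(b_v^2) = \tilde{b}_v$, that equality $\mathrm{DG}(\tilde{b}_v b_w) = \tilde{b}_v b_w$ in fact holds; equivalently, that $v \cdot w$ is already a reduced expression in the twisted Coxeter group, so that $b_u = b_v$. Once established, the Lemma's conclusion becomes $b_v b_w b_v^{-1} = b_u b_w b_u^{-1}$ reduced, completing the inductive step. The delicate verification consists in ruling out any strict intermediate Garside element between $b_v$ and $\tilde{b}_v b_w$: any such element would correspond to a partial cancellation between $v$ and $w$, which when propagated through the normal form of $b_v^k b_w$ (rigidly determined by $\mathrm{DG}(b_v^2) = \tilde{b}_v$, forbidding absorption among the $b_v$-blocks) would conflict with the assumed factorization $b_v^k b_w = b_{u_k} \cdot b_v^k$ arising from the reducedness of $b_v^k b_w b_v^{-k}$.
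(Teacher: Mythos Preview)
Your proposal has two genuine gaps.

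First, the positivity step is not justified. From $c\cdot b_{v}^{k-1}=b_{v}^{k-1}b_{w}$ in the groupoid, with both $b_{v}^{k-1}b_{w}$ and $b_{v}^{k-1}$ in $\mathrm{Br}^{+}$, you cannot conclude $c\in\mathrm{Br}^{+}$: take $c=b_{1}b_{2}^{-1}$, $p=b_{2}$, $q=b_{1}$ for a counterexample to the general principle. Proposition \ref{prop:dg-properties} contains no such Ore-type statement.

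Second, and more seriously, the ``main obstacle'' you isolate is not an obstacle but a false statement. You want to force $\mathrm{DG}(\tilde{b}_{v}b_{w})=\tilde{b}_{v}b_{w}$, i.e.\ that $vw$ is a reduced product, so that $b_{u}=\tilde{b}_{v}$. Take $v$ any nontrivial involution and $w=v$: then $\mathrm{DG}(b_{v}^{2})=\tilde{b}_{v}$ holds, $b_{v}^{k}b_{w}b_{v}^{-k}=b_{v}$ is reduced for every $k$, yet $\tilde{b}_{v}b_{w}=\tilde{b}_{v}^{2}$ is not reduced and $b_{u}=\mathrm{id}\neq\tilde{b}_{v}$. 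Your route through Lemma \ref{lem:mixed-shift}(ii) gives only that $b_{u}b_{w}b_{u}^{-1}$ is reduced for some right divisor $b_{u}$ of $\tilde{b}_{v}$, which in this example is the trivial statement that $b_{w}$ is reduced.

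The paper's argument avoids both issues by using a stronger input: the super-summit-set convexity lemma and its corollary immediately preceding the statement. That corollary says that whenever $bb_{w}b^{-1}$ and $b_{w}$ are both reduced, so is $\mathrm{DG}(b)\,b_{w}\,\mathrm{DG}(b)^{-1}$, where $\mathrm{DG}(b)$ is the rightmost factor of the symmetric normal form of $b$. Applied with $b=b_{v}^{k}\in\mathrm{Br}^{+}$, this rightmost factor is the ordinary head $\mathrm{DG}(b_{v}^{k})$, which equals $\tilde{b}_{v}$ by Corollary \ref{cor:dg-involution}(i). Hence $\tilde{b}_{v}b_{w}\tilde{b}_{v}^{-1}$ is reduced, and twisting by $\delta$ gives the claim for $k=1$ directly, with no induction needed.
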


\begin{proof}
Follows immediately from the previous corollary and Corollary \ref{cor:dg-involution}(i).
\end{proof}
In the case where $v$ is an involution of a twisted Coxeter group
(which is locally Garside but not in general Garside), we can give
an entirely different proof:
\begin{proof}
[Proof 2] As $v$ is an involution and $b_{v}^{k}b_{w}^{\,}=b_{w'}^{\,}b_{v}^{k}$
for some $w'$ in $W$, Corollary \ref{cor:dg-involution}(ii) implies
that
\[
b_{v}b_{w}\geq\mathrm{DG}(b_{v}^{k}b_{w}^{\,})=\mathrm{DG}(b_{w'}^{\,}b_{v}^{k})\geq b_{v},
\]
so $b_{v}b_{w}=bb_{v}$ for some $b$ in $\mathrm{Br}^{+}$. According
to Lemma \ref{lem:identification-rw}(iv) we have
\[
\mathfrak{R}_{b}=v\Bigl(\bigl(w^{-1}(\mathfrak{R}_{v})\cup\mathfrak{R}_{w}\bigr)\backslash\mathfrak{R}_{v}\Bigr),
\]
and according to Lemma \ref{lem:identification-rw}(ii) it suffices
to show that $\mathfrak{R}_{b}$ consists of positive roots.

We may assume that $k>1$; again using that $v$ is an involution,
Proposition \ref{prop:grp-rw}(ii) and Lemma \ref{lem:identification-rw}(i)
yield
\[
\mathfrak{R}_{b_{v}^{k}}=\mathfrak{R}_{v}\sqcup-\mathfrak{R}_{v}
\]
and therefore an inclusion of sets of roots
\[
w^{-1}(\mathfrak{R}_{v})\cup\mathfrak{R}_{w}\subseteq w^{-1}(\mathfrak{R}_{v})\cup w^{-1}(-\mathfrak{R}_{v})\cup\mathfrak{R}_{w}=\mathfrak{R}_{b_{v}^{k}b_{w}^{\,}}=\mathfrak{R}_{b_{w'}b_{v}^{k}}=v^{k}(\mathfrak{R}_{w'})\cup\mathfrak{R}_{v}\cup-\mathfrak{R}_{v},
\]
and hence
\[
\mathfrak{R}_{b}\subseteq v^{k+1}(\mathfrak{R}_{w'})\cup-v(\mathfrak{R}_{v})=v^{k+1}(\mathfrak{R}_{w'})\cup\mathfrak{R}_{v}.
\]
Thus if a root of $\mathfrak{R}_{b}$ is negative, it lies in $v^{k+1}(\mathfrak{R}_{w'})$.
If $k$ is odd then $v^{k+1}(\mathfrak{R}_{w'})=\mathfrak{R}_{w'}$
consists of positive roots, and if it is even then it lies in $v^{k+1}(\mathfrak{R}_{w'})=v(\mathfrak{R}_{w'})$.
But then
\[
\varnothing\neq v(\mathfrak{R}_{b})\cap\mathfrak{R}_{w'}\subseteq v(\mathfrak{R}_{b})\cap\mathfrak{R}_{v}=\Bigl(\bigl(w^{-1}(\mathfrak{R}_{v})\cup\mathfrak{R}_{w}\bigr)\backslash\mathfrak{R}_{v}\Bigr)\cap\mathfrak{R}_{v}=\varnothing,
\]
which is a contradiction.
\end{proof}
The converse is false:
\begin{example}
Consider $b=b_{2}$ and $b'=b_{12}$ in type $\mathsf{A}_{2}$. Then
$bb'b^{-1}=b_{21}^{\,}$, but $b^{2}b'b^{-2}=b_{2}^{\,}b_{21}^{\,}b_{2}^{-1}$.
\end{example}

And it may fail when $v$ is not an involution:
\begin{example}
Consider $b=b_{231}$ and $b'=b_{2321}$ in type $\mathsf{A}_{3}$.
Then $bb'b^{-1}=b_{23}^{\,}b_{13}^{\,}$, but $b^{2}b'b^{-2}=b_{1213}$.
\end{example}

For twisted finite Coxeter groups, the second proof is slightly weaker:
\begin{example}
Consider $v=s_{3}s_{4}s_{1}s_{2}s_{3}$ in type $\mathsf{A}_{4}$.
Then $\mathrm{DG}(b_{v}^{2})=b_{v}^{\,}$, but $v$ is not an involution.
\end{example}

We now generalise Definition \ref{def:shifts} to the Garside setting:
\begin{notation}
Mirroring $\geq$, we write $b'\leq b$ if there exists a morphism
$b''$ such that $b'\circ b''=b$.
\end{notation}

\begin{defn}
Let $\mathrm{Br}^{+}:=\Omega\ltimes\tilde{\mathrm{Br}}^{+}$ be a
cancellative twisted locally Garside category and pick two reduced
endomorphisms $b_{w}$ and $b_{w'}$ in $\mathrm{Br}^{+}$.

\begin{enumerate}[\normalfont(i)]

\item If there exists a sequence of reduced morphisms $b_{w'}=b_{w_{n+1}},\ldots,b_{w_{0}}=b_{w}$
in $\mathrm{Br}^{+}$ and reduced morphisms $b_{\tau_{n}},\ldots,b_{\tau_{0}}$
in $\tilde{\mathrm{Br}}^{+}$ such that
\[
b_{\tau_{i}}^{\,}b_{w_{i}}^{\,}=b_{w_{i+1}}^{\,}b_{\tau_{i}}^{\,}\textrm{ and }b_{\tau_{i}}^{\,}b_{w_{i}}^{\,}\textrm{ is reduced}
\]
or
\[
b_{w_{i}}^{\,}b_{\tau_{i}}^{\,}=b_{\tau_{i}}^{\,}b_{w_{i+1}}^{\,}\textrm{ and }b_{w_{i}}^{\,}b_{\tau_{i}}^{\,}\textrm{ is reduced}
\]
for each $0\leq i\leq n$, then we denote these \emph{strong conjugations}
by $b_{w}\overset{+}{\sim}b_{w'}$\emph{.}

\item If instead
\[
b_{\tau_{i}}^{\,}b_{w_{i}}^{\,}=b_{w_{i+1}}^{\,}b_{\tau_{i}}^{\,}\textrm{ and }b_{w_{i}}^{\,}\geq b_{\tau_{i}}^{\,}
\]
or
\[
b_{w_{i}}^{\,}b_{\tau_{i}}^{\,}=b_{\tau_{i}}^{\,}b_{w_{i+1}}^{\,}\textrm{ and }b_{\tau_{i}}^{\,}\leq b_{w_{i}}^{\,}
\]
for each $0\leq i\leq n$, then we denote these \emph{cyclic shifts}
by $b_{w}\overset{-}{\sim}b_{w'}$\emph{.}

\item If this sequence is a combination of these, then we denote
these \emph{mixed shifts} by $b_{w}\overset{\times}{\sim}b_{w'}$.

\item Analogously, we define transporters $\mathrm{Tran}_{\mathrm{Br}^{+\prime}}^{\star}(b_{w},b_{w'})$
for parabolic subcategories $\mathrm{Br}^{+\prime}\subseteq\tilde{\mathrm{Br}}^{+}$
\cite[§VII.1.4]{MR3362691} and $\star\in\{+,-,\times\}$.

\end{enumerate}
\end{defn}

Part (ii) of Proposition \ref{prop:mixed-shift} is then a special
case of
\begin{prop}
Let $\mathrm{Br}^{+}$ be a twisted Garside category and pick two
reduced endomorphisms $b_{w}$ and $b_{w'}$. Then for any parabolic
subcategory $\mathrm{Br}^{+\prime}\subseteq\tilde{\mathrm{Br}}^{+}$,
the projection map
\[
\mathrm{Tran}_{\mathrm{Br}^{+\prime}}^{\times}(b_{w},b_{w'})\longrightarrow\mathrm{Tran}_{\mathrm{Br}^{+\prime}}^{\,}(b_{w},b_{w'}):=\{b\in\mathrm{Br}^{+\prime}:bb_{w}=b_{w'}b\}
\]
surjects.
\end{prop}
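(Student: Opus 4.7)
The plan is to proceed by induction on the length $\ell(b)$ of the conjugator $b\in\mathrm{Tran}_{\mathrm{Br}^{+\prime}}(b_{w},b_{w'})$, using Lemma \ref{lem:mixed-shift}(ii) and its left-right dual as the central tool. The base case $\ell(b)=0$ is immediate, for then $b$ is invertible (an element of the $\Omega$-part of $\mathrm{Br}^{+\prime}$), the equation $bb_{w}b^{-1}=b_{w'}$ reduces $b_{w}$ to $b_{w'}$ without any shift needed, and the empty sequence lies in the transporter.

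For the inductive step, first observe that the hypotheses of Lemma \ref{lem:mixed-shift}(ii) are satisfied by $b$: both $b$ and $bb_{w}b^{-1}=b_{w'}$ lie in $\mathrm{Br}^{+}$. The lemma then furnishes a reduced right factor $b_{u}:=\mathrm{DG}(bb_{w})b_{w}^{-1}\leq b$ for which $b_{u}b_{w}=\mathrm{DG}(bb_{w})$ is reduced and $b_{w''}:=b_{u}b_{w}b_{u}^{-1}$ is reduced. The first fact says that conjugation by $b_{u}$ is a strong conjugation from $b_{w}$ to $b_{w''}$. Writing $b=b'b_{u}$ with $b'\in\mathrm{Br}^{+\prime}$ and using $bb_{w}=b_{w'}b$ one checks $b'b_{w''}=b_{w'}b'$, so $b'\in\mathrm{Tran}_{\mathrm{Br}^{+\prime}}(b_{w''},b_{w'})$. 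Whenever $\ell(b_{u})>0$, we have $\ell(b')<\ell(b)$ by Proposition \ref{prop:dg-properties}(ii) and may conclude by induction, prepending the strong conjugation realized by $b_{u}$.

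The delicate case is when $b_{u}$ is trivial, i.e.\ $\mathrm{DG}(bb_{w})=b_{w}$, so that $bb_{w}$ is already in right Deligne-Garside normal form with $b_{w}$ as its rightmost factor. Here I invoke the left-right dual of the entire setup: the category $(\mathrm{Br}^{+})^{\mathrm{op}}$ is itself a twisted Garside category, the equation $bb_{w}=b_{w'}b$ reads as $b^{\mathrm{op}}b_{w'}^{\mathrm{op}}=b_{w}^{\mathrm{op}}b^{\mathrm{op}}$ there, and applying Lemma \ref{lem:mixed-shift}(ii) inside $(\mathrm{Br}^{+})^{\mathrm{op}}$ extracts a reduced \emph{left} factor $b_{u}^{L}$ of $b$ such that $b_{w}b_{u}^{L}=b_{u}^{L}((b_{u}^{L})^{-1}b_{w}b_{u}^{L})$ is a reduced decomposition with $(b_{u}^{L})^{-1}b_{w}b_{u}^{L}$ reduced. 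This is precisely a cyclic shift of $b_{w}$ by $b_{u}^{L}$. Writing $b=b_{u}^{L}b''$ with $b''\in\mathrm{Br}^{+\prime}$ and $\ell(b'')<\ell(b)$ whenever $\ell(b_{u}^{L})>0$, induction closes this case after prepending the cyclic shift.

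The main obstacle, which is really the heart of the argument, is the doubly degenerate situation where both $b_{u}$ and $b_{u}^{L}$ are trivial: then $bb_{w}$ is simultaneously in right Deligne-Garside normal form with $b_{w}$ rightmost and in left Deligne-Garside normal form with $b_{w'}$ leftmost. The plan is to show that these two compatibility conditions, combined via the sharpness of $\mathrm{DG}(\cdot)$ (used before Corollary \ref{cor:dg-prop}), Proposition \ref{prop:dg-properties}(i), and uniqueness of the symmetric normal form (\ref{eq:symmetric-normal-form}), force $b$ itself to be a reduced morphism; at that point $b$ alone serves as either a strong conjugation (if $bb_{w}$ is reduced) or a cyclic shift (if $bb_{w}$ is not reduced, in which case $b$ must be a reduced right divisor of $b_{w}$), so a single step produces the required element of $\mathrm{Tran}_{\mathrm{Br}^{+\prime}}^{\times}(b_{w},b_{w'})$ projecting to $b$. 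Proving that the two greedy forms cannot remain simultaneously out of sync without collapsing $b$ to a single reduced factor is the technical crux of the proof.
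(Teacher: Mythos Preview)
Your induction scheme matches the paper's in the non-degenerate case, but the handling of the degenerate case has a real gap. The claim that ``both $b_{u}$ and $b_{u}^{L}$ trivial forces $b$ to be reduced'' is false: take $b_{w}=b_{w'}=b_{w_{\circ}}$ and $b=b_{w_{\circ}}^{2}$. Then $bb_{w}=b_{w_{\circ}}^{3}$ has $\mathrm{DG}(bb_{w})=b_{w_{\circ}}=b_{w}$, so $b_{u}=\mathrm{id}$, and symmetrically on the left; yet $b$ is not reduced. The ``plan'' you sketch for this case (greedy forms from both sides collapsing $b$) therefore cannot work as stated. There is also some confusion in your left-dual paragraph: a left factor $b_{u}^{L}$ of $b$ is the \emph{outermost} layer of the conjugation $bb_{w}b^{-1}=b_{w'}$, so the step it realises must involve $b_{w'}$, not $b_{w}$; the dual of Lemma~\ref{lem:mixed-shift}(ii) yields another strong conjugation (from the $b_{w'}$ end), not a cyclic shift from $b_{w}$.

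The paper avoids this entirely by a preliminary reduction you have not used: the super-summit convexity statement (the lemma and corollary immediately preceding the proposition) says that if $b_{w}$ and $bb_{w}b^{-1}$ are both reduced, then so is $\mathrm{DG}(b)\,b_{w}\,\mathrm{DG}(b)^{-1}$. Iterating through the symmetric normal form of $b$ reduces the problem to the case where the conjugator $b=b_{v}$ is itself a \emph{reduced} morphism in $\mathrm{Br}^{+\prime}$. Once $b_{v}$ is reduced, the degenerate case $b_{u}=\mathrm{id}$ is handled in one stroke by Lemma~\ref{lem:mixed-shift}(i): from $b_{v}b_{w}b_{v}^{-1}\in\mathrm{Br}^{+}$ one gets $\mathrm{DG}(b_{v}b_{w})\geq b_{v}$, and since $\mathrm{DG}(b_{v}b_{w})=b_{w}$ this means $b_{w}\geq b_{v}$, i.e.\ $b_{v}$ right-divides $b_{w}$, which is exactly the cyclic-shift condition. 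No left-dual argument or doubly-degenerate analysis is needed.
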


\begin{proof}
By the previous lemma and proposition, the claim follows inductively
if we prove the statement when $b=b_{v}$ is a reduced morphism $b_{v}$
in $\mathrm{Br}^{+\prime}$. For this we induct on the length of $b_{v}$;
now consider $b_{u}:=\mathrm{DG}(b_{v}^{\,}b_{w}^{\,})b_{w}^{-1}$.
If $b_{u}$ is the identity then $\mathrm{DG}(b_{v}^{\,}b_{w}^{\,})=b_{w}^{\,}$,
so Lemma \ref{lem:mixed-shift}(i) implies that $b_{v}$ is a cyclic
shift. Otherwise, we may decompose $b_{v}=b_{v'}b_{u}$ for a reduced
$b_{v'}$ lying in $\mathrm{Br}^{+\prime}$, as it is parabolic. Then
by Lemma \ref{lem:mixed-shift}(ii) $b_{u}$ is a strong conjugation
and we applying the induction hypothesis to the shorter morphism $b_{v'}$
to finish.
\end{proof}
Employing the induction hypothesis in the final step was necessary:
\begin{example}
Consider $v:=w:=b_{12}$ in type $\mathsf{A}_{2}$. Then $\mathrm{DGN}(b_{v}b_{w})=b_{1}b_{212}$
but 
\[
\mathrm{DGN}(b_{1}^{\,}b_{2}^{\,}b_{w}^{\,}b_{2}^{-1})=\mathrm{DGN}(b_{1}b_{21})=b_{121}.
\]
We won't be using the following statement; in the sequel it is shown
to preserve convexity \cite[Corollary 2.19]{WM-cross}.
\end{example}

\begin{prop}
Let $b$ be an endomorphism in a twisted locally Garside category.
Consecutively conjugating $b$ by $\mathrm{DG}(b^{d+1})\mathrm{DG}(b^{d})^{-1}$
for $d\geq0$, we obtain a sequence
\[
b,\qquad\mathrm{DG}(b^{2})\,b\,\mathrm{DG}(b^{2})^{-1},\qquad\mathrm{DG}(b^{3})\,b\,\mathrm{DG}(b^{3})^{-1},\qquad\ldots,
\]
 of cyclic shifts.
\end{prop}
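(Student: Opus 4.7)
The plan is to write the sequence as $c_d := x_d\,b\,x_d^{-1}$ for $x_d := \mathrm{DG}(b^d)$ (with $x_0 = \mathrm{id}$), and for each $d \geq 0$ to exhibit a reduced $z_d \in \tilde{\mathrm{Br}}^+$ together with some $y_d \in \mathrm{Br}^+$ such that $c_d = y_d z_d$ and $c_{d+1} = z_d y_d$. The two factorisations will imply $z_d c_d = c_{d+1} z_d$ with $c_d \geq z_d$ and $z_d$ reduced, which is exactly a cyclic shift in the Garside-categorical sense; so the proposition will follow by running this for every $d$.

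First I would use the lemma preceding Corollary \ref{cor:dg-prop} to obtain the identity
\[
\mathrm{DG}(x_d b) \;=\; \mathrm{DG}\bigl(\mathrm{DG}(b^d)\,b\bigr) \;=\; \mathrm{DG}(b^{d+1}) \;=\; x_{d+1},
\]
which yields the factorisation $x_d b = y_d x_{d+1}$ in $\mathrm{Br}^+$ with $y_d := \mathrm{DG}_{>1}(x_d b)$. Next I would show that $x_{d+1}$ admits a reduced factorisation $x_{d+1} = z_d x_d$ with $z_d$ lying in $\tilde{\mathrm{Br}}^+$: writing $b^d = \beta\,x_d$ with $\beta \in \mathrm{Br}^+$ gives $b^{d+1} = b\beta\,x_d \geq x_d$, so $x_d$ is a simple right-divisor of $b^{d+1}$; the greedy characterisation of the head in Garside categories (see \cite[Proposition IV.1.23]{MR3362691}) then forces $x_d$ to right-divide $x_{d+1} = \mathrm{DG}(b^{d+1})$, producing a reduced quotient $z_d$.

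Combining the two factorisations, a short computation in the enveloping groupoid yields
\[
c_d = x_d\,b\,x_d^{-1} = y_d\,x_{d+1}\,x_d^{-1} = y_d z_d, \qquad c_{d+1} = z_d\,c_d\,z_d^{-1} = z_d\,(y_d z_d)\,z_d^{-1} = z_d y_d,
\]
so in particular $c_d, c_{d+1} \in \mathrm{Br}^+$. The equation $z_d c_d = c_{d+1} z_d$ together with $c_d \geq z_d$ and $z_d$ reduced is exactly the cyclic shift relation for the Garside category, which is what was to be proved.

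The main obstacle is the inheritance step $x_d \leq x_{d+1}$: while the rest of the argument is just formal cancellation between the two factorisations in $\mathrm{Br}^+$, one has to pass right-divisibility from $b^{d+1}$ to its head $x_{d+1}$. This property of $\mathrm{DG}$ has not been exploited in the preceding material (Corollary \ref{cor:dg-prop} and the lemma before it concern only $\mathrm{DG}$ applied to specific products), so it must be imported from the general Garside toolkit.
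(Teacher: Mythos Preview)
Your argument is correct and coincides with the paper's approach. The paper's proof is a one-liner citing Corollary~\ref{cor:dg-prop}(ii), which for $i=1$ reads $\mathrm{DG}(b^{d})\,b \geq \mathrm{DG}(b^{d+1})$; this is exactly your factorisation $x_d b = y_d x_{d+1}$, and from it the cyclic shift relation $c_d = y_d z_d$, $c_{d+1} = z_d y_d$ follows as you describe.

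Two small remarks. First, the step $x_{d+1} \geq x_d$ that you flag as the ``main obstacle'' is actually immediate from Definition/Theorem~\ref{def:dgn}: the greedy property of the head says precisely that any reduced right-divisor of $b^{d+1}$ right-divides $\mathrm{DG}(b^{d+1})$, so this does not require importing anything beyond what is already stated in the paper. Second, once both $z_d \in \tilde{\mathrm{Br}}^+$ and $y_d \in \mathrm{Br}^+$ are established, you do not need to pass to the enveloping groupoid: right-cancellativity of $\mathrm{Br}^+$ lets you verify $c_d = y_d z_d$ and $c_{d+1} = z_d y_d$ directly from $x_d b = y_d x_{d+1}$ and $x_{d+1} = z_d x_d$ (multiply the former on the right by $b$ and rearrange), which is relevant since the proposition is stated for locally Garside categories, where embedding into the groupoid is not assumed.
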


\begin{proof}
Follows from (ii) of Corollary \ref{cor:dg-prop}.
\end{proof}

\subsection{From braiding sequences to braid powers}

He-Nie proved that for elements $w$ with a decreasing complete sequence
of eigenspaces, the braid $b_{w}^{\mathrm{ord}(w)}$ can be explicitly
decomposed in terms of the longest elements of the standard parabolic
subgroups corresponding to the filtration $F_{i}$ \cite[Theorem 5.3]{MR2999317}.
This subsection begins with generalising this to a statement for elements
with a braiding sequence of eigenspaces; subsequently specialising
it to Sevostyanov's elements, it will imply that they satisfy the
braid equation (\ref{eq:braid-equation}). After that, we prove the
main Theorem.
\begin{notation}
For any element $w$ of a twisted finite Coxeter group, we write $b_{\pm w}:=b_{w^{-1}}b_{w^{\phantom{-1}}}\!\!\!\!\!$.
\end{notation}

The following is well-known, but I believe the usual proof for the
conclusion in (ii) involves embedding $\mathrm{Br}^{+}$ into the
corresponding braid group and invoking (i); this embedding can be
avoided:
\begin{prop}
\label{prop:parabolic-subgroup-centraliser} Let $W$ be a twisted
finite Coxeter group.

\begin{enumerate}[\normalfont(i)]

\item Let $\delta$ be a twist of $\tilde{W}$. Any decomposition
of a maximal length element $\delta w_{\circ}=xy$ into elements $x,y$
in $W$ is a reduced decomposition, e.g.\
\[
(w_{\circ}w^{-1})w=w_{\circ}=(w_{\circ}ww_{\circ})(w_{\circ}w^{-1}).
\]
In particular, $b_{w_{\circ}}^{2}$ is central in $\mathrm{Br}^{+}$.

\item Let $W_{1}$ be a standard parabolic subgroup of $W$ and denote
its longest untwisted element by $w_{1}$. Then for any element $w$
in $W_{1}$ there are reduced decompositions
\[
(w_{\circ}w_{1})w=w_{\circ}w_{1}w=(w_{\circ}w_{1}ww_{1}w_{\circ})(w_{\circ}w_{1})
\]
 and 
\[
(w_{1}w_{0})(w_{\circ}w_{1}ww_{1}w_{\circ})=ww_{1}w_{\circ}=w(w_{1}w_{\circ}).
\]

In particular, the braid $b_{\pm w_{\circ}w_{1}}$ of $\mathrm{Br}^{+}$
centralises the standard parabolic submonoid $\mathrm{Br}_{W_{1}}^{+}\subseteq\mathrm{Br}^{+}$.

\end{enumerate}
\end{prop}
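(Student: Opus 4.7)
My plan is to reduce both parts to length computations via equation \eqref{inversion-set-additive}, which rephrases reducedness of a product $xy$ as the inclusion $\mathfrak{R}_y \subseteq \mathfrak{R}_{xy}$, and then translate the resulting length identities into braid identities.

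For part (i), since any twist $\delta$ preserves $\mathfrak{R}_+$ and $w_\circ$ negates every positive root, we have $\mathfrak{R}_{\delta w_\circ} = \mathfrak{R}_+$, so the inclusion $\mathfrak{R}_y \subseteq \mathfrak{R}_+ = \mathfrak{R}_{xy}$ is automatic for every factorisation $\delta w_\circ = xy$. For the centrality of $b_{w_\circ}^2$, I would combine the two reduced factorisations $w_\circ = (w_\circ w^{-1}) w = (w_\circ w w_\circ)(w_\circ w^{-1})$ to extract the braid identity
\[
b_{w_\circ}\, b_w \;=\; b_{w_\circ w w_\circ}\, b_{w_\circ},
\]
and then apply it twice to get $b_{w_\circ}^2 b_w = b_w b_{w_\circ}^2$; since a twist $\delta$ fixes the set of simple reflections it fixes $w_\circ$ as well, and so it automatically commutes with $b_{w_\circ}^2$.

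For part (ii), I would first compute $\mathfrak{R}_{w_\circ w_1} = \mathfrak{R}_+ \backslash {}_{W_1}\mathfrak{R}_+$ using that elements of $W_1$ preserve $\mathfrak{R}_+ \backslash {}_{W_1}\mathfrak{R}_+$ setwise. Combined with a case split on whether a positive root lies in ${}_{W_1}\mathfrak{R}_+$ or not, this yields
\[
\mathfrak{R}_{w_\circ w_1 w} \;=\; (\mathfrak{R}_+ \backslash {}_{W_1}\mathfrak{R}_+) \sqcup \mathfrak{R}_w
\]
for $w \in W_1$, and hence $\ell(w_\circ w_1 w) = \ell(w_\circ w_1) + \ell(w)$, so the first factorisation is reduced. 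Setting $v := w_\circ w_1 w w_1 w_\circ$, the hard step will be verifying $\ell(v) = \ell(w)$; this follows from observing that conjugation by the longest element of a standard parabolic is a length-preserving diagram automorphism of that parabolic, applied successively to $w$ inside $W_1$ (by $w_1$) and then to the result inside $\tilde W$ (by $w_\circ$, using that $w_\circ W_1 w_\circ$ is again a standard parabolic). This length identity forces the second factorisation $w_\circ w_1 w = v \cdot (w_\circ w_1)$ to be reduced, and the remaining two are handled by symmetric reasoning (or by taking inverses).

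The centraliser claim then drops out by multiplying the two braid identities extracted from the reduced factorisations,
\[
b_{w_\circ w_1}\, b_w \;=\; b_v\, b_{w_\circ w_1}, \qquad b_{w_1 w_\circ}\, b_v \;=\; b_w\, b_{w_1 w_\circ},
\]
giving $b_{w_1 w_\circ} b_{w_\circ w_1} b_w = b_w b_{w_1 w_\circ} b_{w_\circ w_1}$, i.e.\ $b_{\pm w_\circ w_1}$ commutes with every $b_w$ for $w \in W_1$, and by generation it centralises the entire submonoid $\mathrm{Br}_{W_1}^+$.
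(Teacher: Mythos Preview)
Your proposal is correct and follows essentially the same route as the paper: both parts are reduced to length/inversion-set computations via equation \eqref{inversion-set-additive}, and the braid centraliser statements are obtained by multiplying the two resulting braid identities $b_{w_\circ w_1} b_w = b_v b_{w_\circ w_1}$ and $b_{w_1 w_\circ} b_v = b_w b_{w_1 w_\circ}$. The only cosmetic difference is in verifying the second reduced decomposition of part~(ii): the paper checks the inversion-set inclusion $\mathfrak{R}_{w_\circ w_1} \subseteq \mathfrak{R}_{w_\circ w_1 w}$ directly (using that $w_1 w \in W_1$ preserves $\mathfrak{R}_+ \backslash {}_{W_1}\mathfrak{R}_+$), whereas you instead establish $\ell(v) = \ell(w)$ via the diagram-automorphism property of longest elements and deduce reducedness from length additivity---both arguments are standard and equally short.
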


\begin{proof}
(i): The first claim follows from equation (\ref{eq:inversion-set-additive})
and
\[
xy(\mathfrak{R}_{y})=\delta w_{\circ}(\mathfrak{R}_{y})\subseteq-\mathfrak{R}_{+}.
\]
The two given decompositions of $w_{\circ}$ then furnish

\[
b_{w_{\circ}}b_{w}=b_{w_{\circ}ww_{\circ}}b_{w_{\circ}w^{-1}}b_{w}=b_{w_{\circ}ww_{\circ}}b_{w_{\circ}}.
\]

(ii): From $w\in W_{1}$ it follows that $w(\mathfrak{R}_{w})\subseteq-\mathfrak{R}_{w_{1}}$.
The first reduced decomposition $(w_{\circ}w_{1})w$ then follows
from equation (\ref{eq:inversion-set-additive}) and 
\[
w_{\circ}w_{1}w(\mathfrak{R}_{w})\subseteq w_{\circ}w_{1}(-\mathfrak{R}_{w_{1}})=w_{\circ}(\mathfrak{R}_{w_{1}})\subseteq-\mathfrak{R}_{+}.
\]
 The second reduced decomposition of $w_{\circ}w_{1}w$ follows similarly,
using that $w_{1}w\in W_{1}$ implies $w_{1}w(\mathfrak{R}_{w_{\circ}w_{1}})\subseteq\mathfrak{R}_{+}$.
The second claim follows by taking inverses.

Combining these two statements then furnishes
\[
b_{\pm w_{1}w_{\circ}}b_{w}=b_{w_{1}w_{\circ}}b_{w_{\circ}w_{1}}b_{w}=b_{w_{1}w_{\circ}}b_{w_{\circ}w_{1}ww_{1}w_{\circ}}b_{w_{\circ}w_{1}}=b_{w}b_{w_{1}w_{\circ}}b_{w_{\circ}w_{1}}=b_{w}b_{\pm w_{1}w_{\circ}}.\qedhere
\]
\end{proof}
\begin{defn}
\label{def:braiding-sequence-elements}  Let $\underline{\Theta}=(V_{m},\ldots,V_{1})$
be a sequence of eigenspaces of an element of a twisted finite Coxeter
group, which is in good position with respect to the dominant Weyl
chamber. Let $\mathrm{arg}(\Theta):=\{\theta_{m},\ldots,\theta_{1}\}$
denote the set of normalised positive arguments as before. Then we
inductively define for $0\leq i\leq m-1$ the sequence $\underline{\Theta}^{i}$
and nonnegative rational
\begin{equation}
d_{i}':=\mathrm{min}\bigl\{\theta_{j}:\theta_{j}\in\mathrm{arg}(\Theta^{i})\bigr\}-d_{i-1}',\label{eq:eigensequence-integers}
\end{equation}
where $d_{-1}':=0$ and $d_{i}'=0$ if the sequence $\underline{\Theta}^{i}$
is empty, $\underline{\Theta}^{0}:=\underline{\Theta}$ and the sequence
$\underline{\Theta}^{i+1}$ is inductively obtained from $\underline{\Theta}^{i}$
by removing those eigenspaces $V_{k}$ with eigenvalue $\exp\bigl(2\pi i\,\mathrm{min}\{\theta_{j}:\theta_{j}\in\mathrm{arg}(\Theta^{i})\}\bigr)$.
Furthermore for $0\leq j\leq m-1$ set
\begin{equation}
w_{j}':=w_{j}w_{j+1},\qquad\vartheta_{i}:=w_{0}'\cdots w_{k-1}'\hat{w}_{k}'w_{k+1}'\cdots w_{m-1}',\label{eq:eigenspace-elements}
\end{equation}
where $w_{i}$ denotes the longest element of $\tilde{W}_{i}$, which
are defined by the filtration $F_{i}$ as in \ref{eq:parabolic-sequence},
and the hat denotes omission: we remove the $w_{k}'$-terms from that
product if $V_{k}$ no longer occurs in $\underline{\Theta}^{i}$.
\end{defn}

In particular, $\vartheta_{0}=w_{\circ}w_{m}$ and if some eigenspace
$V_{j}$ in $\underline{\Theta}$ is redundant, then $w_{j}'=\mathrm{id}$.
\begin{prop}
\label{prop:eigenspaces-dg-form} For any sequence of eigenspaces
of length $m$ and each integer $1\leq i\leq m-1$, these elements
satisfy $\vartheta_{i-1}\geq\vartheta_{i}$.
\end{prop}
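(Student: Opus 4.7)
My plan is to compare the inversion sets of $\vartheta_{i-1}$ and $\vartheta_i$ directly. Write each element as a product $\vartheta_j = \prod_{k \in S_j} w_k'$ in increasing order of $k$, where $S_j \subseteq \{0, 1, \ldots, m-1\}$ records the indices $k$ for which the factor $w_k'$ is retained. From the definition of $\underline{\Theta}^i$ we have $S_i \subseteq S_{i-1}$, so the problem reduces to showing that whenever $S' \supseteq S''$, one has $\vartheta_{S'} \geq \vartheta_{S''}$.

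The central step I would prove is the following lemma: for any $S = \{k_1 < k_2 < \cdots < k_t\} \subseteq \{0, 1, \ldots, m-1\}$, the product $\vartheta_S := w_{k_1}' w_{k_2}' \cdots w_{k_t}'$ is a reduced decomposition, with inversion set
\[
\mathfrak{R}_{\vartheta_S} = \bigsqcup_{j=1}^{t} \bigl({}_{\tilde{W}_{k_j}}\mathfrak{R}_+ \setminus {}_{\tilde{W}_{k_j + 1}}\mathfrak{R}_+\bigr),
\]
where $\tilde{W}_0 := \tilde{W}$. I would prove this by induction on $t$, processing the factors from right to left. The base case $t = 1$ is immediate: $w_{k_1}' = w_{k_1} w_{k_1+1}$ is a minimum left coset representative of $\tilde{W}_{k_1+1}$ inside $\tilde{W}_{k_1}$ by Proposition \ref{prop:parabolic-subgroup-centraliser}(ii) applied inside $\tilde{W}_{k_1}$, and its inversion set then has to be ${}_{\tilde{W}_{k_1}}\mathfrak{R}_+ \setminus {}_{\tilde{W}_{k_1+1}}\mathfrak{R}_+$ by comparing cardinalities. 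For the inductive step, the tail $\tau := w_{k_2}' \cdots w_{k_t}'$ lies in $\tilde{W}_{k_2} \subseteq \tilde{W}_{k_1+1}$, so Proposition \ref{prop:parabolic-subgroup-centraliser}(ii) again yields that $w_{k_1}' \cdot \tau$ is reduced. The inversion set then drops out of the identity $\mathfrak{R}_{xy} = \mathfrak{R}_y \sqcup y^{-1}(\mathfrak{R}_x)$ (valid for reduced products) once two observations are made: first, any element of $\tilde{W}_{k_1+1}$ stabilises the set ${}_{\tilde{W}_{k_1}}\mathfrak{R}_+ \setminus {}_{\tilde{W}_{k_1+1}}\mathfrak{R}_+$ bijectively, because elements of a standard parabolic leave the coefficients of simple roots outside that parabolic unchanged and any root in this set has a positive coefficient on at least one simple root in $\tilde{W}_{k_1} \setminus \tilde{W}_{k_1+1}$; and second, for each $j \geq 2$ the $j$-th piece of $\mathfrak{R}_\tau$ lies inside ${}_{\tilde{W}_{k_j}}\mathfrak{R}_+ \subseteq {}_{\tilde{W}_{k_1+1}}\mathfrak{R}_+$, hence is disjoint from the first piece ${}_{\tilde{W}_{k_1}}\mathfrak{R}_+ \setminus {}_{\tilde{W}_{k_1+1}}\mathfrak{R}_+$.

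Granting the lemma, the conclusion is immediate: since $S_i \subseteq S_{i-1}$, the formula gives $\mathfrak{R}_{\vartheta_i} \subseteq \mathfrak{R}_{\vartheta_{i-1}}$, and by the definition of the weak left Bruhat order this is precisely the desired inequality $\vartheta_{i-1} \geq \vartheta_i$.

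The main obstacle I anticipate is the careful bookkeeping in the inversion-set computation, especially verifying the set-theoretic stability under the tail actions and the pairwise disjointness of the pieces through the induction. Both rest on the strict nesting $\tilde{W}_{k_1} \supseteq \tilde{W}_{k_1+1} \supseteq \tilde{W}_{k_2} \supseteq \tilde{W}_{k_2+1} \supseteq \cdots$ that follows from the inequalities $k_{j+1} \geq k_j + 1$.
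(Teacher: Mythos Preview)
Your proof is correct and takes a genuinely different route from the paper. The paper argues constructively: starting from the reduced decomposition $\vartheta_{i-1}=w_{i_0}'\cdots w_{i_n}'$, it takes each omitted factor $w_{j_k}'$ and conjugates it leftward through the preceding $w_{i_l}'$ using the reduced decompositions of Proposition~\ref{prop:parabolic-subgroup-centraliser}(ii), eventually arriving at a reduced factorisation $\vartheta_{i-1}=(\vartheta_{j_l}'\cdots\vartheta_{j_1}')\vartheta_i$. You instead compute the right inversion set of $\vartheta_S$ in closed form as $\bigsqcup_j\bigl({}_{\tilde W_{k_j}}\mathfrak R_+\setminus{}_{\tilde W_{k_j+1}}\mathfrak R_+\bigr)$ and conclude by set containment. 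Both approaches rest on the same nested-parabolic structure and on the fact that $w_k'$ is a minimal coset representative, so neither is deeper than the other; your version is arguably cleaner and avoids the index-tracking of the conjugation argument, while the paper's version explicitly names the complementary factor $\vartheta_{i-1}\vartheta_i^{-1}$ as a product of conjugated $w_{j_k}'$, which is a small extra piece of information (though it is not used later).
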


\begin{proof}
By construction, we have a decomposition 
\[
\vartheta_{i-1}=w_{i_{0}}'w_{i_{1}}'\cdots w_{i_{n}}'
\]
for some $0\leq i_{0}<\cdots<i_{n}\leq m-1$, which is reduced by
the same reasoning as in Proposition \ref{prop:parabolic-subgroup-centraliser}(ii)
(allowing that some of these elements equal the identity, or simply
removing redundant eigenspaces). Moreover, we can obtain a reduced
decomposition for $\vartheta_{i}$ from this one by omitting factors
$w_{j_{1}}',\ldots,w_{j_{l}}'$ for some $j_{1}\leq\cdots\leq j_{l}$.
For each $j_{k}$ we inductively conjugate $w_{j_{k}}'$ by all the
$w_{i_{l}}'$ with $i_{l'}<j_{k}$: define $k'$ such that $i_{k'}=j_{k}$,
start with $i_{k'-1}$ and consecutively go downward, i.e.\ we set
\[
w_{j_{k},0}':=w_{j_{k}}',\qquad w_{j_{k},t}':=w'_{i_{k'-t}}w_{j_{k},t-1}'(w_{i_{k'-t}}')^{-1}
\]
for $1\leq t\leq k'$, and we denote the final element by $\vartheta_{j_{k}}':=w_{j_{k},k'}'$.
Then by induction each $w_{j_{k},t}'$ lies in the parabolic subgroup
$W_{i_{k'-t}}$, so the first two decompositions in Proposition \ref{prop:parabolic-subgroup-centraliser}(ii)
yields the reduced decompositions $w_{i_{k'-t}}'w_{j_{k},t-1}'=w_{j_{k},t}'w_{i_{k'-t}}'$.
Then inductively we find
\[
w_{i_{0}}'w_{i_{1}}'\cdots w_{i_{k'-2}}'w_{i_{k'-1}}'w_{j_{k}}'=w_{i_{0}}'w_{i_{1}}'\cdots w_{i_{k'-2}}'w_{j_{k},1}'w_{i_{k'-1}}'=\vartheta_{j_{k}}'w_{i_{0}}'w_{i_{1}}'\cdots w_{i_{k'-2}}'w_{i_{k'-1}}'
\]
which inductively yields a reduced decomposition
\begin{align*}
\vartheta_{i-1} & =(w_{i_{0}}'w_{i_{1}}'\cdots w_{i_{l'-1}}'w_{j_{l}}')\cdots w_{i_{n}}'\\
 & =\vartheta_{j_{l}}'(w_{i_{0}}'w_{i_{1}}'\cdots w_{i_{l'-1}}')\hat{w}_{j_{l}}'\cdots w_{i_{n}}'\\
 & =\vartheta_{j_{l}}'\cdots\vartheta_{j_{k+1}}'(w_{i_{0}}'w_{i_{1}}'\cdots w_{i_{k'-1}}'w_{j_{k}}')w_{i_{k'+1}}'\cdots w_{i_{l'-1}}'\hat{w}_{j_{l}}'\cdots w_{i_{n}}'=(\vartheta_{j_{l}}'\cdots\vartheta_{j_{1}}')\vartheta_{i}.\qedhere
\end{align*}
\end{proof}
\begin{rem}
Let $w$ be a nontrivial element of a twisted finite Coxeter group
and let $\underline{\Theta}$ be a braiding sequence of eigenspaces.
Unwinding the definitions, one finds that the sequence is
\[
\begin{array}{ccc}
\textrm{anisotropic} & \qquad\textrm{if and only if}\qquad & d_{0}'>0\textrm{ and }\mathrm{pb}(w)=\vartheta_{0},\\
\textrm{not anisotropic} & \qquad\textrm{if and only if}\qquad & d_{0}'=0,d_{1}'>0\textrm{ and }\mathfrak{R}_{\vartheta_{1}}\subseteq\mathfrak{R}_{+}\backslash\mathfrak{R}^{w}\subseteq\mathfrak{R}_{\vartheta_{0}}.
\end{array}
\]
\end{rem}

\begin{prop}
\label{prop:braiding-dgn} Let $W=\Omega\ltimes\tilde{W}$ be a twisted
finite Coxeter group with reflection representation $V$. Suppose
$w=\delta\tilde{w}$ is an element of $W$ with a braiding sequence
of eigenspaces $\underline{\Theta}$. Let $d\geq0$ denote any integer
such that for each $\theta_{i}$ we have $d\theta_{i}\in\mathbb{N}_{0}$
(e.g., $d=\mathrm{ord}(w)$). Let $d_{i}'$ and $\vartheta_{i}$ be
as in Definition \ref{def:braiding-sequence-elements}, and set $d_{i}:=d\cdot d_{i}'$
for $0\leq i\leq m-1$. Then

\[
\mathrm{DGN}(b_{w}^{d})=\delta^{d}b_{\pm\vartheta_{m-1}}^{d_{m-1}}\cdots b_{\pm\vartheta_{1}}^{d_{1}}b_{\pm\vartheta_{0}}^{d_{0}},
\]
and if moreover $d$ is even then
\[
\mathrm{DGN}(b_{w}^{d/2})=\delta^{d/2}b_{\pm\vartheta_{m-1}}^{d_{m-1}/2}\cdots b_{\pm\vartheta_{1}}^{d_{1}/2}b_{\pm\vartheta_{0}}^{d_{0}/2},
\]
where if $n$ is a half-integer we mean
\[
b_{\pm\vartheta_{i}}^{n}:=b_{\vartheta_{i}}^{\,}b_{\pm\vartheta_{i}}^{n-1/2}.
\]
\end{prop}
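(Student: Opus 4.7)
My proof plan splits into (A) the braid identity $b_w^d = \delta^d \prod_i b_{\pm\vartheta_i}^{d_i}$ in $\mathrm{Br}^+$ and (B) the verification that the resulting product is in Deligne-Garside normal form. I will tackle (B) first because it is almost immediate from the preceding preparatory results. Expanding $b_{\pm\vartheta_i}^{d_i} = (b_{\vartheta_i^{-1}}b_{\vartheta_i})^{d_i}$ produces a long product, and Proposition \ref{prop:dg-properties}(i) reduces the DGN check to each consecutive pair. The ``internal'' pairs $b_{\vartheta_i^{-1}}b_{\vartheta_i}$ and $b_{\vartheta_i}b_{\vartheta_i^{-1}}$ are in DGN by Proposition \ref{prop:dg-properties}(iii) using the trivial inclusion $\mathfrak{R}_{\vartheta_i} \subseteq \mathfrak{R}_{\vartheta_i}$, whilst each ``boundary'' pair $b_{\vartheta_{i+1}}b_{\vartheta_i^{-1}}$ between successive powers is in DGN by the same proposition combined with the inequality $\vartheta_i \geq \vartheta_{i+1}$ from Proposition \ref{prop:eigenspaces-dg-form}. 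The half-integer variant follows by the identical argument: truncating $b_{\pm\vartheta_0}^{d_0/2}$ to end in $b_{\vartheta_0}$ rather than $b_{\vartheta_0^{-1}}$ leaves all neighbouring pairs unchanged.

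For (A), I plan to induct on the number $N$ of distinct positive values in $\mathrm{arg}(\Theta)$. In the base case $N = 0$ every $\theta_i$ vanishes, so all $d_i' = 0$ and the formula reduces to $b_w^d = \delta^d$. Since $W$ is finite and acts faithfully on $V$, and the braiding condition $\mathfrak{H}_\Theta \subseteq \mathfrak{H}^w$ combined with all eigenvalues of $w$ on $V_\Theta$ being $1$ means $w$ fixes $V_\Theta$ pointwise, one deduces that $w^d$ coincides with $\delta^d$ on $V$ and hence in $W$, translating into the required braid identity via the semidirect product formula $w^d = \delta^d \cdot \delta^{1-d}(\tilde{w})\cdots\tilde{w}$. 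For the inductive step, let $\theta > 0$ be the smallest positive argument in $\mathrm{arg}(\Theta)$, let $V_1$ be the associated eigenspace of $\underline{\Theta}_+$ (so $d_0 = d\theta$), factor $w = xy$ via Lemma \ref{lem:factorise}(i) with $y \in \tilde{W}_{V_1}$ and $x$ permuting the simple roots of $\tilde{W}_{V_1}$, and invoke Lemma \ref{lem:factorise}(iii) to obtain the restricted sequence $\underline{\Theta}'$ for $\delta_x y$ inside $\langle \delta_x\rangle \ltimes \tilde{W}_{V_1}$, which now has $N - 1$ distinct positive arguments. The pivotal manoeuvre is to peel off the rightmost factor $b_{\pm\vartheta_0}^{d_0}$ from $b_w^d$: here I will use Proposition \ref{prop:parabolic-subgroup-centraliser}(ii), which tells us that $b_{\pm w_\circ w_1}$ centralises $\mathrm{Br}^+_{\tilde{W}_{V_1}}$ and supplies the reduced decompositions needed to reorganise factors, to show that the contribution corresponding to $V_1$ collects into $b_{\pm\vartheta_0}^{d_0}$. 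What remains is identified with $b_{\delta_x y}^d$ in the smaller braid monoid, to which the induction hypothesis applies.

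The principal obstacle is the combinatorial bookkeeping relating the original data $(d_i', \vartheta_i)$ of $\underline{\Theta}$ to the corresponding data of the restricted sequence $\underline{\Theta}'$ for $\delta_x y$. Tracking how the filtration $\tilde{W}_\Theta \subseteq \cdots \subseteq \tilde{W}_0$ restricts along $\tilde{W}_{V_1}$, and how the longest-element products $w_j'$ transform under this restriction, requires delicate care; in particular, verifying that the induction produces precisely the remaining factors $b_{\pm\vartheta_{m-1}}^{d_{m-1}}\cdots b_{\pm\vartheta_1}^{d_1}$ (rather than some twist or rearrangement) is the crux of the proof. A further technicality is handling redundant eigenspaces, where $w_j' = \mathrm{id}$ makes the corresponding factor collapse; this should cause no problems provided the bookkeeping respects these collapses. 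The half-integer variant should then follow cleanly, since $b_{\vartheta_0} = \mathrm{DG}(b_{\pm\vartheta_0})$ is already visible as the head factor in the expansion, so peeling off half a copy of $b_{\pm\vartheta_0}$ produces no new DGN violations.
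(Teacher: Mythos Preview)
Your treatment of (B) is correct and matches the paper exactly: the final line of the proof invokes Propositions \ref{prop:dg-properties} and \ref{prop:eigenspaces-dg-form} just as you describe.

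For (A) there is a genuine gap in your induction scheme. You propose to factor through the eigenspace with smallest positive argument (which you rename $V_1$), but the good-position hypothesis is for the \emph{given} sequence $\underline{\Theta}$: it only guarantees that $\overline{C}$ contains a regular point of each filtration piece $F_i = V_1 + \cdots + V_i$. If your smallest-argument eigenspace is $V_\ell$ with $\ell > 1$ in the original ordering, nothing ensures that $\overline{C}$ contains a regular point of $V_\ell$ alone, so $\tilde{W}_{V_\ell}$ need not be a standard parabolic and Lemma \ref{lem:factorise}(i) cannot be applied with $W' = \tilde{W}_{V_\ell}$. Lemma \ref{lem:factorise}(iii), which you invoke, is stated explicitly for the first eigenspace of $\underline{\Theta}$, not of $\underline{\Theta}_+$; good position is not preserved under reordering.

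The paper instead inducts on the rank of $\tilde{W}$ and always peels off the first eigenspace $V_1$ of the given sequence, regardless of its argument $\theta_1$. Lemma \ref{lem:factorise} then gives $w = xy$, and one obtains $b_w^d = b_x^d\cdot\bigl(\delta_x^{-d}(\delta_x b_y)^d\bigr)$, where $b_x^d = \delta^d b_{\pm w_\circ w_1}^{d\theta_1}$ by \cite[Lemma~5.2]{MR2999317} and the second factor is handled by the induction hypothesis applied to $\delta_x y$ in $\langle\delta_x\rangle\ltimes\tilde{W}_{V_1}$; one then commutes $b_{\pm w_\circ w_1}^{d\theta_1}$ past the smaller factors via Proposition \ref{prop:parabolic-subgroup-centraliser}(ii). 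The price is precisely the bookkeeping you flagged: since $\theta_1$ is not necessarily extremal, one must locate it among the remaining arguments (the integer $k := |\{\theta_i \in \mathrm{arg}(\Theta') : \theta_1 \geq \theta_i\}|$ in the proof), and the dictionary between $(d_i,\vartheta_i)$ and the restricted data $(\tilde d_i,\tilde\vartheta_i)$ splits into cases according to the position of $i$ relative to $k$. So the obstacle you identify is real and unavoidable; your proposed shortcut of reordering to make it the first eigenspace is exactly what the good-position hypothesis forbids.
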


We follow the proof of \cite[Theorem 5.3]{MR2999317}:
\begin{proof}
We induct on the rank of $\tilde{W}$ (or the dimension of $V_{\Theta}$);
the base case essentially follows from Lemma \ref{lem:factorise}(ii).
As before we let $\tilde{W}_{1}$ denote the standard parabolic subgroup
of elements in $\tilde{W}$ fixing $V_{1}$, where $\underline{\Theta}=(\ldots,V_{1})$.
Since $V_{1}$ is nontrivial, the rank of $\tilde{W}_{1}$ is smaller
than the rank of $\tilde{W}$. By Lemma \ref{lem:factorise}(i), we
have $w=xy$ for $y\in\tilde{W}_{1}$ and $x\in W$ an element whose
action preserves the set of simple roots of $\tilde{W}_{1}$. From
Lemma \ref{lem:factorise}(iii), it follows that $\delta_{x}y\in W':=\<\delta_{x}\>\ltimes\tilde{W}_{1}$
has a braiding sequence of eigenspaces $\underline{\Theta}'$ constructed
out of $\underline{\Theta}$. If we relabel the integers (\ref{eq:eigensequence-integers})
corresponding to the sequence $\underline{\Theta}'$ as $\tilde{d}'_{1},\ldots,\tilde{d}_{m-1}'$
and the elements on the right-hand side of (\ref{eq:eigenspace-elements})
as $\tilde{\vartheta}_{1},\ldots,\tilde{\vartheta}_{m-1}$, set $\tilde{d}_{i}:=d\cdot\tilde{d}_{i}'$
and write $k:=|\{\theta_{i}\in\mathrm{arg}(\Theta'):\theta_{1}\geq\theta_{i}\}|\geq0$,
then $k$ is the largest integer such that 
\[
\tilde{k}:=\theta_{1}-\sum_{i=1}^{k}\tilde{d}_{i}'\geq0.
\]
These are related to $d_{i}$ and $\vartheta_{i}$ by
\[
(d_{i},\vartheta_{i})=\begin{cases}
(\tilde{d}_{i+1},w_{\circ}w_{1}\tilde{\vartheta}_{i+1}) & \text{if }i<k,\\
(\tilde{d}_{i+1},\tilde{\vartheta}_{i+1}) & \text{if }\tilde{k}=0,i\geq k,\\
(\tilde{k},w_{\circ}w_{1}\tilde{\vartheta}_{i+1}) & \text{if }\tilde{k}\neq0\text{ and }i=k,\\
(\tilde{d}_{i}-\tilde{k},\tilde{\vartheta}_{i}) & \text{if }\tilde{k}\neq0\text{ and }i=k+1,\\
(\tilde{d}_{i},\tilde{\vartheta}_{i}) & \text{if }\tilde{k}\neq0\text{ and }i>k+1.
\end{cases}
\]

The induction hypothesis on $W'$ yields 
\[
(\delta_{x}b_{y})^{d}=\delta_{x}^{d}b_{\pm\tilde{\vartheta}_{m-1}}^{\tilde{d}_{m-1}}\cdots b_{\pm\tilde{\vartheta}_{1}}^{\tilde{d}_{1}}.
\]
By \cite[Lemma 5.2]{MR2999317} (the first condition in the second
sentence is satisfied because $x$ is a minimal coset representative),
Proposition \ref{prop:parabolic-subgroup-centraliser}(ii) and the
submonoid identification $\mathrm{Br}_{W'}^{+}\subset\mathrm{Br}^{+}$
it now follows that
\begin{align*}
b_{w}^{d} & =b_{x}^{d}b_{x^{1-d}yx^{d-1}}\cdots b_{x^{-1}yx}b_{y}\\
 & =b_{x}^{d}\bigl(\delta_{x}^{-d}(\delta_{x}b_{y})^{d}\bigr)\\
 & =(\delta^{d}b_{\pm w_{\circ}w_{1}}^{d\theta_{0}/2})(b_{\pm\tilde{\vartheta}_{m-1}}^{\tilde{d}_{m-1}}\cdots b_{\pm\tilde{\vartheta}_{1}}^{\tilde{d}_{1}})\\
 & =\delta^{d}b_{\pm\tilde{\vartheta}_{m-1}}^{\tilde{d}_{m-1}}\cdots b_{\pm\tilde{\vartheta}_{1}}^{\tilde{d}_{1}}b_{\pm w_{\circ}w_{1}}^{d\theta_{0}/2}\\
 & =\delta^{d}b_{\pm\tilde{\vartheta}_{m-1}}^{\tilde{d}_{m-1}}\cdots b_{\pm\tilde{\vartheta}_{1}}^{\tilde{d}_{1}-1}b_{\pm w_{\circ}w_{1}}^{d\theta_{0}/2-1}b_{\tilde{\vartheta}_{1}^{-1}}b_{w_{1}w_{\circ}}b_{w_{\circ}w_{1}}b_{\tilde{\vartheta}_{1}}\\
 & =\delta^{d}b_{\pm\tilde{\vartheta}_{m-1}}^{\tilde{d}_{m-1}}\cdots b_{\tilde{\vartheta}_{k}}^{\tilde{d}_{k}}b_{\pm w_{\circ}w_{1}}^{\tilde{k}}b_{\vartheta_{k-1}}^{d_{k-1}}\cdots b_{\vartheta_{0}}^{d_{0}}=\delta^{d}b_{\pm\vartheta_{m-1}}^{d_{m-1}}\cdots b_{\pm\vartheta_{1}}^{d_{1}}b_{\pm\vartheta_{0}}^{d_{0}}
\end{align*}
From Propositions \ref{prop:dg-properties} and \ref{prop:eigenspaces-dg-form},
it follows that the final expression is in Deligne-Garside normal
form.

The claim for $d$ even is proven analogously.
\end{proof}
\begin{cor}
\label{cor:anisotropic-braid} In particular, if the eigenspaces of
$\underline{\Theta}$ are not redundant then
\[
\mathrm{DG}(b_{w}^{d})=\begin{cases}
b_{\mathrm{pb}(w)} & \textrm{if }\underline{\Theta}\textrm{ is anisotropic},\\
b_{\vartheta_{1}} & \textrm{if }\underline{\Theta}\textrm{ is not anisotropic},
\end{cases}
\]
and similarly for $\mathrm{DG}(b_{w}^{d/2})$ when $d$ is even.
\end{cor}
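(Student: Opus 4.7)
The plan is to derive this corollary directly from Proposition \ref{prop:braiding-dgn} by reading off the rightmost reduced factor of the explicit Deligne-Garside normal form
\[
\mathrm{DGN}(b_w^d) = \delta^d b_{\pm\vartheta_{m-1}}^{d_{m-1}} \cdots b_{\pm\vartheta_1}^{d_1} b_{\pm\vartheta_0}^{d_0},
\]
combined with the characterisations of anisotropic/non-anisotropic sequences provided in the remark immediately preceding Proposition \ref{prop:braiding-dgn}. The non-redundancy hypothesis is used to ensure that the candidate rightmost factor is genuinely nontrivial, so that it does not collapse and shift the identification of $\mathrm{DG}(b_w^d)$.

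First I would handle the anisotropic case. There $d_0' > 0$, so $d_0 = d \cdot d_0' \geq 1$, and the remark yields $\vartheta_0 = \mathrm{pb}(w)$. Unwinding the abbreviation $b_{\pm \vartheta_0}^{d_0} = (b_{\vartheta_0^{-1}} b_{\vartheta_0})^{d_0}$, the rightmost reduced factor of the normal form is $b_{\vartheta_0} = b_{\mathrm{pb}(w)}$, and this is $\mathrm{DG}(b_w^d)$ by definition. In the non-anisotropic case $d_0' = 0$, so $b_{\pm\vartheta_0}^{d_0}$ is empty, while $d_1' > 0$ (since after removing the eigenvalue $1$ from $\Theta$ the remaining arguments are strictly positive), so $d_1 \geq 1$; the non-redundancy of $\underline{\Theta}$ guarantees that $\vartheta_1$ is genuinely nontrivial. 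The rightmost nontrivial reduced factor is then $b_{\vartheta_1}$, giving $\mathrm{DG}(b_w^d) = b_{\vartheta_1}$.

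Finally I would address the half-integer case by unwinding the recursive convention $b_{\pm\vartheta_i}^n := b_{\vartheta_i} b_{\pm\vartheta_i}^{n-1/2}$. A short case analysis shows that regardless of the parity of $d_i$, the block $b_{\pm\vartheta_i}^{d_i/2}$ has $b_{\vartheta_i}$ as its rightmost reduced factor: when $d_i = 2k$ it is $(b_{\vartheta_i^{-1}} b_{\vartheta_i})^k$, and when $d_i = 2k+1$ it is $b_{\vartheta_i}(b_{\vartheta_i^{-1}} b_{\vartheta_i})^k$. The identical case split for $i=0$ versus $i=1$ then recovers the claimed formulas for $\mathrm{DG}(b_w^{d/2})$.

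There is no substantive obstacle: the corollary is essentially a transcription of the rightmost Deligne-Garside factor from Proposition \ref{prop:braiding-dgn}, the only point requiring mild vigilance being the half-integer convention, which is resolved by unrolling its defining recurrence once.
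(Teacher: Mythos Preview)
Your proposal is correct and follows essentially the same approach as the paper, which simply states that the corollary ``follows immediately by combining the previous two statements'' (namely the Remark characterising anisotropic versus non-anisotropic sequences and Proposition \ref{prop:braiding-dgn}). You have made explicit precisely what the paper leaves implicit: reading off the rightmost reduced factor of the normal form, identifying where the non-redundancy hypothesis enters (to ensure $\vartheta_1$ is nontrivial in the non-anisotropic case), and unrolling the half-integer convention.
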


\begin{proof}
This follows immediately by combining the previous two statements.
\end{proof}
We now consider several specialisations of this proposition:
\begin{cor}
\label{cor:decreasing-decomp} Suppose furthermore that the sequence
$\underline{\Theta}$ is decreasing. Then
\[
\mathrm{DGN}(b_{w}^{d})=\delta^{d}b_{\pm w_{m-1}w_{m}}^{d(\theta_{m}-\theta_{m-1})}\cdots b_{\pm w_{1}w_{m}}^{d(\theta_{2}-\theta_{1})}b_{\pm w_{\circ}w_{m}}^{d\theta_{1}}
\]
 and if moreover $d$ is even then
\[
\mathrm{DGN}(b_{w}^{d/2})=\delta^{d/2}b_{\pm w_{m-1}w_{m}}^{d(\theta_{m}-\theta_{m-1})/2}\cdots b_{\pm w_{1}w_{m}}^{d(\theta_{2}-\theta_{1})/2}b_{\pm w_{\circ}w_{m}}^{d\theta_{1}/2}.
\]
If instead $\underline{\Theta}$ is increasing, then
\[
\mathrm{DGN}(b_{w}^{d})=\delta^{d}b_{\pm w_{\circ}w_{1}}^{d(\theta_{1}-\theta_{2})}\cdots b_{\pm w_{\circ}w_{m-1}}^{d(\theta_{m-1}-\theta_{m})}b_{\pm w_{\circ}w_{m}}^{d\theta_{m}}
\]
 and if moreover $d$ is even then
\[
\mathrm{DGN}(b_{w}^{d/2})=\delta^{d/2}b_{\pm w_{\circ}w_{1}}^{d(\theta_{1}-\theta_{2})/2}\cdots b_{\pm w_{\circ}w_{m-1}}^{d(\theta_{m-1}-\theta_{m})/2}b_{\pm w_{\circ}w_{m}}^{d\theta_{m}/2}.
\]

In particular, all of these braids centralise the submonoid $\mathrm{Br}_{\tilde{W}_{k}}^{+}$,
where $k\geq0$ is the largest integer such that $\tilde{W}_{k}$
is nontrivial.
\end{cor}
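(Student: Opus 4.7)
The plan is to deduce both normal forms as direct specialisations of Proposition \ref{prop:braiding-dgn}, and the centralisation claim from Proposition \ref{prop:parabolic-subgroup-centraliser}. The only genuine computation is to identify the quantities $d_i'$ and $\vartheta_i$ of Definition \ref{def:braiding-sequence-elements} when the sequence of eigenspaces is decreasing or increasing.

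For a decreasing sequence one has $\theta_1 < \theta_2 < \cdots < \theta_m$, so the iterative procedure of Definition \ref{def:braiding-sequence-elements} strips the eigenspaces off in the order $V_1, V_2, \ldots, V_m$. I would read off that $d_0' = \theta_1$ and $d_i' = \theta_{i+1} - \theta_i$ for $1 \leq i \leq m-1$, and that after excising the first $i$ eigenspaces from the reduced decomposition $\vartheta_0 = w_0' \cdots w_{m-1}'$, the remainder is $\vartheta_i = w_i' w_{i+1}' \cdots w_{m-1}'$. Since each $w_j$ is the longest element of a standard parabolic and hence an involution, the product
\[ (w_i w_{i+1})(w_{i+1} w_{i+2}) \cdots (w_{m-1} w_m) \]
telescopes via $w_{j+1}^2 = e$ to $\vartheta_i = w_i w_m$ (and in particular $\vartheta_0 = w_\circ w_m$). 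Substituting into Proposition \ref{prop:braiding-dgn} then produces the stated normal form for $b_w^d$, and the case $b_w^{d/2}$ with $d$ even is handled identically using the corresponding formula in that proposition. The increasing case is symmetric: the eigenspaces are now stripped off in the reverse order $V_m, V_{m-1}, \ldots, V_1$, the analogous telescoping furnishes $\vartheta_i = w_\circ w_{m-i}$ and $d_i' = \theta_{m-i} - \theta_{m-i+1}$ (with the convention $\theta_{m+1} := 0$ for $d_0'$), and relabelling the exponents delivers the second pair of formulae.

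For the centralisation claim, let $k$ denote the largest integer with $\tilde{W}_k$ nontrivial, so that $\tilde{W}_{k+1} = \cdots = \tilde{W}_m = \{e\}$. In the decreasing case I would treat each factor $b_{\pm w_i w_m}$ separately: if $k = m$, the factor centralises $\mathrm{Br}_{\tilde{W}_m}^{+} = \mathrm{Br}_{\tilde{W}_k}^{+}$ by Proposition \ref{prop:parabolic-subgroup-centraliser}(ii) applied inside $\tilde{W}_i$ with parabolic $\tilde{W}_m \subseteq \tilde{W}_i$; if $k < m$ then $w_m = e$, so the factor collapses to $b_{w_i}^{2}$ for $i \leq k$, which is central in $\mathrm{Br}_{\tilde{W}_i}^{+} \supseteq \mathrm{Br}_{\tilde{W}_k}^{+}$ by part (i) of the same proposition, or to the identity for $i > k$. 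The increasing case is symmetric: for $i \leq k$ each factor $b_{\pm w_\circ w_i}$ centralises $\mathrm{Br}_{\tilde{W}_i}^{+} \supseteq \mathrm{Br}_{\tilde{W}_k}^{+}$, while for $i > k$ the factor degenerates to $b_{\pm w_\circ} = b_{w_\circ}^{2}$, which is central in all of $\mathrm{Br}^{+}$. The main obstacle I anticipate is not depth but careful bookkeeping of the hatting convention in Definition \ref{def:braiding-sequence-elements}, specifically identifying which $w_j'$-factor should be removed when an eigenspace of a given index is excised; once that correspondence is pinned down, everything else reduces to substitution.
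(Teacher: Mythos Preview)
Your proposal is correct and follows essentially the same approach as the paper: specialise Proposition~\ref{prop:braiding-dgn} by computing $d_i'$ and $\vartheta_i$ (the paper simply asserts $\vartheta_i = w_i w_m$ and $d_i = d(\theta_{i+1}-\theta_i)$ with $\theta_0:=0$ for the decreasing case, where you supply the telescoping), and then derive the centralisation claim from Proposition~\ref{prop:parabolic-subgroup-centraliser}(ii) when $w_m$ is nontrivial and from the centrality of $b_{w_i}^2$ otherwise. Your case analysis for the centraliser, distinguishing $k=m$ from $k<m$ and handling the increasing case symmetrically, is exactly what the paper's one-line appeal to Proposition~\ref{prop:parabolic-subgroup-centraliser} unpacks to.
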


\begin{proof}
Decreasing implies that $\vartheta_{i}=w_{i}w_{m}$ and $d_{i}=d(\theta_{i+1}-\theta_{i})$
(briefly setting $\theta_{0}:=0$); the increasing case is similar.
The final claim then follows from Proposition \ref{prop:parabolic-subgroup-centraliser}(ii)
in the case when $w_{m}$ is nontrivial, and from the centrality of
$b_{w_{i}}^{2}$ in $\mathrm{Br}_{\tilde{W}_{i}}^{+}$ otherwise.
\end{proof}
\begin{example}
Consider the elliptic element $w=s_{2}s_{3}s_{1}s_{2}s_{3}s_{2}s_{1}$
in type $\mathsf{B}_{3}$. Then by setting $\underline{\Theta}=(V_{-1}^{w},V_{i}^{w})$
we obtain an anisotropic $V_{w}$-admissible sequence of eigenspaces
such that the dominant Weyl chamber is in good position, and as $d:=\mathrm{ord}(w)=4$
is even we find
\[
b_{w}^{2}=b_{12321}^{\,}b_{w_{\circ}}^{\,}.
\]
\end{example}

It has been conjectured that if braids $b,b'$ in $\mathrm{Br}^{+}$
satisfy $b^{d}=(b')^{d}$ for some integer $d\geq1$, then $b$ and
$b'$ are conjugate. In type $\mathsf{A}$ this was proven by employing
the Nielsen-Thurston classification \cite{MR2012967}, which can be
rephrased in terms of the conjugation action on the set of proper
parabolic subgroups:
\begin{defn}
[{\cite[§4]{MR2264551}}] Let $W$ be a twisted finite Coxeter group.
One defines the notion of \emph{(standard) parabolic} subgroups of
the corresponding untwisted braid group $\mathrm{Br}_{\tilde{W}}$
analogous to those of $\tilde{W}$, and considers the set of proper,
nontrivial parabolic subgroups of $\mathrm{Br}_{\tilde{W}}$; elements
of $\mathrm{Br}$ act on it by conjugating. An element $b$ of $\mathrm{Br}$
is then said to be \emph{pseudo-Anosov} if its action on this set
does not have a finite orbit.
\end{defn}

Finally, we deduce the main Theorem:
\begin{proof}
[Proof of Theorem:]

(i): This is a rephrasing of Corollary \ref{cor:anisotropic-braid}.

(ii): By induction it suffices to prove the claim for a single cyclic
shift between two convex elements, so there are reduced decompositions
$w=yx$ to $w'=xy$. Part (i) of Lemma \ref{lem:move-power-bound}
yields
\[
b_{w}^{d+1}=b_{y}b_{w'}^{d}b_{x}=b_{y}b'b_{\mathrm{pb}(w')}b_{x}=b''b_{\mathrm{pb}(w)}
\]
for some elements $b',b''$ in $B^{+}$. The upper bound on $d$ is
obtained from Proposition \ref{prop:dg-bound}(ii).

(iii): The quasiregular eigenspace of a quasiregular element furnishes
a braiding sequence of length 1 for some Weyl chamber; after conjugating,
the claim then follows from Proposition \ref{prop:braiding-dgn} for
certain minimally dominant elements in its conjugacy class. More generally,
suppose we are given a cyclic shift from $w=yx$ to a quasiregular
element $w'=xy$ satisfying this braid equation. Part (i) and (ii)
of Lemma \ref{lem:move-power-bound} yield
\[
b_{w}^{\mathrm{ord}(w)}=(b_{x}^{-1}b_{w'}b_{x})^{\mathrm{ord}(w)}=b_{x}^{-1}b_{w'}^{\mathrm{ord}(w)}b_{x}=b_{x}^{-1}b_{\mathrm{pb}(w')^{-1}}b_{\mathrm{pb}(w')}b_{x}=b_{x}^{-1}b_{\mathrm{pb}(w')^{-1}}b_{x'}b_{\mathrm{pb}(w)}=b_{\mathrm{pb}(w)^{-1}}b_{\mathrm{pb}(w)}.
\]

For the converse, we may assume that $w$ is nontrivial; from part
(iii) of the main Lemma and the braid equation we obtain that
\[
\mathrm{ord}(w)\cdot\ell(\CMcal O_{\mathrm{min}}^{\mathrm{dom}})\geq\mathrm{ord}(w)\cdot\frac{|\mathfrak{R}|-\ell_{f}(w)}{\mathrm{ord}(w)}=2\ell\bigl(\mathrm{pb}(w)\bigr)=\mathrm{ord}(w)\cdot\ell(w)
\]
with equality if and only if $w$ is quasiregular, so $\ell(\CMcal O_{\mathrm{min}}^{\mathrm{dom}})\geq\ell(w)$
with equality if and only if $w$ is quasiregular. But it was shown
by Broué, Digne and Michel that an element satisfying this braid equation
is quasiregular \cite[Lemma 8.4(iii)]{MR3187647}.

(iv): If an element $w$ lies inside of a proper parabolic subgroup,
then all roots outside of the corresponding standard parabolic subroot
system it are stable. Since the highest root does not lie in a standard
parabolic subroot system, it follows that if $w$ is convex then all
roots are stable and hence $w$ must be trivial.

Combined with part (i) of Lemma \ref{lem:standard-parab}, this also
yields $(b)\Rightarrow(a)$. From the definition of firmly convex
and convex one deduces $(c)\Rightarrow(b)$, and finally $(a)\Rightarrow(c)$
follows from part (ii) as those elements are minimally dominant.

(v): By choosing a suitable Weyl chamber, it follows that within $\CMcal O$
there exist an elements with an decreasing braiding sequence of eigenspaces;
depending on choice, they have maximal length, minimal length or are
minimally dominant. According to Corollary \ref{cor:decreasing-decomp},
the element $b_{w}^{\mathrm{ord}(w)}$ centralises a nontrivial ``standard
parabolic'' submonoid of $\mathrm{Br}^{+}$, but then it also centralises
the corresponding standard parabolic subgroup of $\mathrm{Br}$. Since
all minimal length, maximal length and minimally dominant elements
are in the same strong conjugacy or cyclic shift class, their Artin-Tits
braids are conjugate as braids, and hence each of these elements centralises
a nontrivial parabolic subgroup of $\mathrm{Br}$.
\end{proof}
Equation (\ref{eq:bound}) cannot be sharpened to $d\geq\mathrm{ord}(w)$:
\begin{example}
Consider the element $w=xyx^{-1}$ in the conjugacy class of $y=s_{5}s_{4}s_{3}s_{2}s_{1}s_{6}s_{4}s_{3}s_{2}s_{1}$
in type $\mathsf{D}_{6}$, where we conjugate by the element $x=s_{2}s_{3}s_{4}$.
As $y$ is elliptic and has order 6 the same is true for $w$, but
\[
\mathrm{DG}(b_{w}^{7})=w_{\circ}>w_{\circ}s_{2}=\mathrm{DG}(b_{w}^{6}).
\]
\end{example}

And even when a lower power $d$ initially works, cyclic shifts can
increase it:
\begin{example}
Consider the Coxeter elements $w=s_{3}s_{2}s_{1}$ and $w'=s_{2}s_{3}s_{1}$
in type $\mathsf{A}_{3}$. Then 
\[
\mathrm{DG}(b_{w}^{3})=\mathrm{DG}(b_{w'}^{2})=w_{\circ},
\]
 but $\mathrm{DG}(b_{w}^{2})\neq w_{\circ}$.
\end{example}

\begin{example}
Consider $w=s_{2}s_{3}s_{2}s_{1}$ in type $\mathsf{A}_{3}$. This
is a regular element which is neither elliptic nor of minimal length,
but $b_{w}^{\mathrm{ord}(w)}=b_{w_{\circ}}^{2}$.
\end{example}

Convexity and firm convexity are not necessarily preserved under cyclic
shifts:
\begin{example}
The elements $s_{1}s_{2}s_{3}s_{2}$ and $s_{2}s_{1}s_{2}s_{3}$ in
type $\mathsf{B}_{3}$ are conjugate by a cyclic shift, but they fix
different roots; the first element is convex and satisfies $\mathfrak{R}_{\mathrm{DG}(b_{w}^{d})}=\mathfrak{R}_{+}\backslash\mathfrak{R}^{w}$
for $d$ large, the second is not convex and hence by Proposition
\ref{prop:closed-complement} cannot satisfy this equation. The same
is true for the non-elliptic elements $s_{4}s_{1}s_{2}s_{3}s_{2}s_{1}$
and $s_{1}s_{2}s_{1}s_{3}s_{4}s_{3}$ in type $\mathsf{A}_{4}$.
\end{example}

\small

\bibliographystyle{alphakey}
\phantomsection\addcontentsline{toc}{section}{\refname}\bibliography{mindom}

\newcommand{\etalchar}[1]{$^{#1}$}
\begin{thebibliography}{DDG{\etalchar{+}}15}

\bibitem[Art25]{MR3069440}
Emil Artin.
\newblock Theorie der {Z}\"{o}pfe.
\newblock {\em Abh. Math. Sem. Univ. Hamburg}, 4(1):47--72, 1925.

\bibitem[BM97]{MR1429870}
Michel Brou\'{e} and Jean Michel.
\newblock Sur certains \'{e}l\'{e}ments r\'{e}guliers des groupes de {W}eyl et
  les vari\'{e}t\'{e}s de {D}eligne-{L}usztig associ\'{e}es.
\newblock In {\em Finite reductive groups ({L}uminy, 1994)}, volume 141 of {\em
  Progr. Math.}, pages 73--139. Birkh\"{a}user Boston, Boston, MA, 1997.

\bibitem[BNG95]{MR1361083}
Diego Bernardete, Zbigniew Nitecki, and Mauricio Guti\'{e}rrez.
\newblock Braids and the {N}ielsen-{T}hurston classification.
\newblock {\em J. Knot Theory Ramifications}, 4(4):549--618, 1995.

\bibitem[Bou68]{MR0240238}
N.~Bourbaki.
\newblock {\em \'{E}l\'{e}ments de math\'{e}matique. {F}asc. {XXXIV}. {G}roupes
  et alg\`ebres de {L}ie. {C}hapitre {IV}: {G}roupes de {C}oxeter et syst\`emes
  de {T}its. {C}hapitre {V}: {G}roupes engendr\'{e}s par des r\'{e}flexions.
  {C}hapitre {VI}: syst\`emes de racines}.
\newblock Actualit\'{e}s Scientifiques et Industrielles, No. 1337. Hermann,
  Paris, 1968.

\bibitem[BR08]{MR2427637}
C\'{e}dric Bonnaf\'{e} and Rapha\"{e}l Rouquier.
\newblock Affineness of {D}eligne-{L}usztig varieties for minimal length
  elements.
\newblock {\em J. Algebra}, 320(3):1200--1206, 2008.

\bibitem[Car70]{MR254174}
R.~W. Carter.
\newblock Weyl groups and finite {C}hevalley groups.
\newblock {\em Proc. Cambridge Philos. Soc.}, 67:269--276, 1970.

\bibitem[Cha92]{MR1157320}
Ruth Charney.
\newblock Artin groups of finite type are biautomatic.
\newblock {\em Math. Ann.}, 292(4):671--683, 1992.

\bibitem[CLT10]{MR2657442}
Kei~Yuen Chan, Jiang-Hua Lu, and Simon Kai-Ming To.
\newblock On intersections of conjugacy classes and {B}ruhat cells.
\newblock {\em Transform. Groups}, 15(2):243--260, 2010.

\bibitem[Cox49]{MR0027148}
H.~S.~M. Coxeter.
\newblock {\em Regular {P}olytopes}.
\newblock Methuen \& Co., Ltd., London; Pitman Publishing Corporation, New
  York, 1948; 1949.

\bibitem[DDG{\etalchar{+}}15]{MR3362691}
Patrick Dehornoy, Fran\c{c}ois Digne, Eddy Godelle, Daan Krammer, and Jean
  Michel.
\newblock {\em Foundations of {G}arside theory}, volume~22 of {\em EMS Tracts
  in Mathematics}.
\newblock European Mathematical Society (EMS), Z\"{u}rich, 2015.

\bibitem[Del72]{MR422673}
Pierre Deligne.
\newblock Les immeubles des groupes de tresses g\'{e}n\'{e}ralis\'{e}s.
\newblock {\em Invent. Math.}, 17:273--302, 1972.

\bibitem[DL76]{MR393266}
P.~Deligne and G.~Lusztig.
\newblock Representations of reductive groups over finite fields.
\newblock {\em Ann. of Math. (2)}, 103(1):103--161, 1976.

\bibitem[DM14]{MR3187647}
F.~Digne and J.~Michel.
\newblock Parabolic {D}eligne-{L}usztig varieties.
\newblock {\em Adv. Math.}, 257:136--218, 2014.

\bibitem[Dye93]{MR1248893}
M.~J. Dyer.
\newblock Hecke algebras and shellings of {B}ruhat intervals.
\newblock {\em Compositio Math.}, 89(1):91--115, 1993.

\bibitem[ECH{\etalchar{+}}92]{MR1161694}
David B.~A. Epstein, James~W. Cannon, Derek~F. Holt, Silvio V.~F. Levy,
  Michael~S. Paterson, and William~P. Thurston.
\newblock {\em Word processing in groups}.
\newblock Jones and Bartlett Publishers, Boston, MA, 1992.

\bibitem[EG04]{MR2042932}
Erich~W. Ellers and Nikolai Gordeev.
\newblock Intersection of conjugacy classes with {B}ruhat cells in {C}hevalley
  groups.
\newblock {\em Pacific J. Math.}, 214(2):245--261, 2004.

\bibitem[EM94]{MR1315459}
Elsayed~A. Elrifai and Hugh~R. Morton.
\newblock Algorithms for positive braids.
\newblock {\em Quart. J. Math. Oxford Ser. (2)}, 45(180):479--497, 1994.

\bibitem[Gar69]{MR248801}
F.~A. Garside.
\newblock The braid group and other groups.
\newblock {\em Quart. J. Math. Oxford Ser. (2)}, 20:235--254, 1969.

\bibitem[GKP00]{MR1769289}
Meinolf Geck, Sungsoon Kim, and G\"{o}tz Pfeiffer.
\newblock Minimal length elements in twisted conjugacy classes of finite
  {C}oxeter groups.
\newblock {\em J. Algebra}, 229(2):570--600, 2000.

\bibitem[GM97]{MR1425324}
Meinolf Geck and Jean Michel.
\newblock ``{G}ood'' elements of finite {C}oxeter groups and representations of
  {I}wahori-{H}ecke algebras.
\newblock {\em Proc. London Math. Soc. (3)}, 74(2):275--305, 1997.

\bibitem[GM03]{MR2012967}
Juan Gonz\'{a}lez-Meneses.
\newblock The {$n$}th root of a braid is unique up to conjugacy.
\newblock {\em Algebr. Geom. Topol.}, 3:1103--1118, 2003.

\bibitem[GMW11]{MR2869449}
Juan Gonz\'{a}lez-Meneses and Bert Wiest.
\newblock Reducible braids and {G}arside theory.
\newblock {\em Algebr. Geom. Topol.}, 11(5):2971--3010, 2011.

\bibitem[GP93]{MR1250466}
Meinolf Geck and G\"{o}tz Pfeiffer.
\newblock On the irreducible characters of {H}ecke algebras.
\newblock {\em Adv. Math.}, 102(1):79--94, 1993.

\bibitem[GP00]{MR1778802}
Meinolf Geck and G\"{o}tz Pfeiffer.
\newblock {\em Characters of finite {C}oxeter groups and {I}wahori-{H}ecke
  algebras}, volume~21 of {\em London Mathematical Society Monographs. New
  Series}.
\newblock The Clarendon Press, Oxford University Press, New York, 2000.

\bibitem[He07]{MR2355597}
Xuhua He.
\newblock Minimal length elements in some double cosets of {C}oxeter groups.
\newblock {\em Adv. Math.}, 215(2):469--503, 2007.

\bibitem[He15]{MR3495798}
Xuhua He.
\newblock Centers and cocenters of 0-{H}ecke algebras.
\newblock In {\em Representations of reductive groups}, volume 312 of {\em
  Progr. Math.}, pages 227--240. Birkh\"{a}user/Springer, Cham, 2015.

\bibitem[HL12]{MR2904572}
Xuhua He and George Lusztig.
\newblock A generalization of {S}teinberg's cross section.
\newblock {\em J. Amer. Math. Soc.}, 25(3):739--757, 2012.

\bibitem[HN12]{MR2999317}
Xuhua He and Sian Nie.
\newblock Minimal length elements of finite {C}oxeter groups.
\newblock {\em Duke Math. J.}, 161(15):2945--2967, 2012.

\bibitem[Jen17]{MR3648506}
Lars~Thorge Jensen.
\newblock The 2-braid group and {G}arside normal form.
\newblock {\em Math. Z.}, 286(1-2):491--520, 2017.

\bibitem[Kam15]{kamgarpour2015stabilisers}
Masoud Kamgarpour.
\newblock Stabilisers of eigenvectors of finite reflection groups.
\newblock {\em arXiv preprint arXiv:1512.01591}, 2015.

\bibitem[KLT10]{MR2747147}
Arkadius Kalka, Eran Liberman, and Mina Teicher.
\newblock Subgroup conjugacy problem for {G}arside subgroups of {G}arside
  groups.
\newblock {\em Groups Complex. Cryptol.}, 2(2):157--174, 2010.

\bibitem[Kra02]{MR1888796}
Daan Krammer.
\newblock Braid groups are linear.
\newblock {\em Ann. of Math. (2)}, 155(1):131--156, 2002.

\bibitem[Lus11a]{MR2833465}
G.~Lusztig.
\newblock From conjugacy classes in the {W}eyl group to unipotent classes.
\newblock {\em Represent. Theory}, 15:494--530, 2011.

\bibitem[Lus11b]{MR2907958}
G.~Lusztig.
\newblock On certain varieties attached to a {W}eyl group element.
\newblock {\em Bull. Inst. Math. Acad. Sin. (N.S.)}, 6(4):377--414, 2011.

\bibitem[Lus15]{MR3495802}
George Lusztig.
\newblock On conjugacy classes in a reductive group.
\newblock In {\em Representations of reductive groups}, volume 312 of {\em
  Progr. Math.}, pages 333--363. Birkh\"{a}user/Springer, Cham, 2015.

\bibitem[Mah11]{MR2772067}
Joseph Maher.
\newblock Random walks on the mapping class group.
\newblock {\em Duke Math. J.}, 156(3):429--468, 2011.

\bibitem[Mala]{WM-Alex}
Wicher Malten.
\newblock The {A}lexander polynomial of {A}rtin-{T}its braids.
\newblock Unpublished note, dated December 2020.

\bibitem[Malb]{WM-cross}
Wicher Malten.
\newblock From braids to cross sections in reductive groups.
\newblock Forthcoming.

\bibitem[Mat64]{MR183818}
Hideya Matsumoto.
\newblock G\'{e}n\'{e}rateurs et relations des groupes de {W}eyl
  g\'{e}n\'{e}ralis\'{e}s.
\newblock {\em C. R. Acad. Sci. Paris}, 258:3419--3422, 1964.

\bibitem[Mic15]{MR3343221}
Jean Michel.
\newblock The development version of the {\tt {chevie}} package of {\tt
  {gap}3}.
\newblock {\em J. Algebra}, 435:308--336, 2015.

\bibitem[MP19]{MR3950949}
Simon-Philipp Merz and Christophe Petit.
\newblock Factoring products of braids via {G}arside normal form.
\newblock In {\em Public-key cryptography---{PKC} 2019. {P}art {II}}, volume
  11443 of {\em Lecture Notes in Comput. Sci.}, pages 646--678. Springer, Cham,
  2019.

\bibitem[Nie13]{MR3045154}
Sian Nie.
\newblock Minimal length elements of {C}oxeter groups.
\newblock {\em J. Algebra}, 384:135--142, 2013.

\bibitem[OR08]{MR2427642}
S.~Orlik and M.~Rapoport.
\newblock Deligne-{L}usztig varieties and period domains over finite fields.
\newblock {\em J. Algebra}, 320(3):1220--1234, 2008.

\bibitem[Pap94]{MR1169886}
Paolo Papi.
\newblock A characterization of a special ordering in a root system.
\newblock {\em Proc. Amer. Math. Soc.}, 120(3):661--665, 1994.

\bibitem[Par06]{MR2264551}
Luis Paris.
\newblock From braid groups to mapping class groups.
\newblock In {\em Problems on mapping class groups and related topics},
  volume~74 of {\em Proc. Sympos. Pure Math.}, pages 355--371. Amer. Math.
  Soc., Providence, RI, 2006.

\bibitem[PR02]{MR1892601}
Sarah~B. Perkins and Peter~J. Rowley.
\newblock Minimal and maximal length involutions in finite {C}oxeter groups.
\newblock {\em Comm. Algebra}, 30(3):1273--1292, 2002.

\bibitem[PR04]{MR2040170}
Sarah~B. Perkins and Peter~J. Rowley.
\newblock Lengths of involutions in {C}oxeter groups.
\newblock {\em J. Lie Theory}, 14(1):69--71, 2004.

\bibitem[Ric82]{MR679916}
R.~W. Richardson.
\newblock Conjugacy classes of involutions in {C}oxeter groups.
\newblock {\em Bull. Austral. Math. Soc.}, 26(1):1--15, 1982.

\bibitem[Sage]{sagemath}
{The Sage Developers}.
\newblock {\em {S}ageMath, the {S}age {M}athematics {S}oftware {S}ystem
  ({V}ersion 8.7)}, 2019.
\newblock {\tt https://www.sagemath.org}.

\bibitem[Sev11]{MR2806525}
Alexey Sevostyanov.
\newblock Algebraic group analogues of the {S}lodowy slices and deformations of
  {P}oisson {$W$}-algebras.
\newblock {\em Int. Math. Res. Not. IMRN}, (8):1880--1925, 2011.

\bibitem[Sev19]{MR3883243}
A.~Sevostyanov.
\newblock Strictly transversal slices to conjugacy classes in algebraic groups.
\newblock {\em J. Algebra}, 520:309--366, 2019.

\bibitem[Spr74]{MR354894}
T.~A. Springer.
\newblock Regular elements of finite reflection groups.
\newblock {\em Invent. Math.}, 25:159--198, 1974.

\bibitem[Ste59]{MR106428}
Robert Steinberg.
\newblock Finite reflection groups.
\newblock {\em Trans. Amer. Math. Soc.}, 91:493--504, 1959.

\bibitem[Ste64]{MR167535}
Robert Steinberg.
\newblock Differential equations invariant under finite reflection groups.
\newblock {\em Trans. Amer. Math. Soc.}, 112:392--400, 1964.

\bibitem[STW15]{stump2015cataland}
Christian Stump, Hugh Thomas, and Nathan Williams.
\newblock Cataland: why the fuss?
\newblock {\em arXiv preprint arXiv:1503.00710}, 2015.

\bibitem[Thu88]{MR956596}
William~P. Thurston.
\newblock On the geometry and dynamics of diffeomorphisms of surfaces.
\newblock {\em Bull. Amer. Math. Soc. (N.S.)}, 19(2):417--431, 1988.

\bibitem[Tri21]{trinh2021hecke}
Minh-T{\^a}m~Quang Trinh.
\newblock From the {H}ecke {C}ategory to the {U}nipotent {L}ocus.
\newblock {\em arXiv preprint arXiv:2106.07444}, 2021.

\bibitem[Wil18]{wilson}
Sinead Wilson.
\newblock Stabilisers of eigenvectors in complex reflection groups.
\newblock Master's thesis, The University of Queensland, 2018.

\bibitem[Yun20]{yun2020minimal}
Zhiwei Yun.
\newblock Minimal reduction type and the {K}azhdan-{L}usztig map.
\newblock {\em arXiv preprint arXiv:2010.13642}, 2020.

\bibitem[Zhe87]{MR911771}
D.~P. Zhelobenko.
\newblock Extremal cocycles on {W}eyl groups.
\newblock {\em Funktsional. Anal. i Prilozhen.}, 21(3):11--21, 95, 1987.

\end{thebibliography}

\end{document}